\DeclareRobustCommand{\SkipTocEntry}[5]{}
\newtheorem{defnIntro}{Definition}
\newtheorem{thmIntro}[defnIntro]{Theorem}
\newtheorem{prop}{Proposition}[section]
\newtheorem{thm}[prop]{Theorem}
\newtheorem{lemma}[prop]{Lemma}
\newtheorem{cor}[prop]{Corollary}
\theoremstyle{definition}
\newtheorem{defn}[prop]{Definition}
\newtheorem{question}[prop]{Question}
\newtheorem{example}[prop]{Example}
\newtheorem{remark}[prop]{Remark}
\newcommand{\rr}{\mathbb{R}}
\newcommand{\nn}{\mathbb{N}}
\newcommand{\zz}{\mathbb{Z}}
\newcommand{\qq}{\mathbb{Q}}
\newcommand{\ff}{\mathbb{F}}
\newcommand{\kk}{\mathbb{K}}
\renewcommand{\aa}{\mathbb{A}}
\def \slnf {\ensuremath{\mathrm{SL}_n(\mathbb{F})}}
\def \glnf {\ensuremath{\mathrm{GL}_n(\mathbb{F})}}
\newcommand{\frakg}{\mathfrak{g}}
\newcommand{\fraka}{\mathfrak{a}}
\newcommand{\frakk}{\mathfrak{k}}
\newcommand{\frakp}{\mathfrak{p}}
\newcommand{\com}[1]{}
\DeclareMathOperator{\Id}{Id}
\newcommand{\build}{B}
\newcommand{\atlas}{\mathcal{A}}
\newcommand{\affwg}{W_{a}}
\newcommand{\sphwg}{W_{s}}
\tikzset{%
  symbol/.style={
    draw=none,
    every to/.append style={
      edge node={node [sloped, allow upside down, auto=false]{$#1$}}
    },
  },
}
\newcommand{\BT}{\textnormal{BT}}
\newcommand{\pr}{\textnormal{pr}}
\title{Morphisms of generalized affine buildings}
\author[Appenzeller, Flamm, Jaeck]{Raphael Appenzeller, Xenia Flamm, Victor Jaeck}
\date{\today}      
\address{Department of Mathematics, Heidelberg University, Germany}
\email{rappenzeller@mathi.uni-heidelberg.de}
\address{Institut des Hautes \'Etudes Scientifiques, France\newline
\indent Max-Planck Institute for Mathematics in the Sciences, Germany}
\email{flamm@ihes.fr, xenia.flamm@mis.mpg.de}
\address{Department of Mathematics, ETH Z\"{u}rich, Switzerland}
\email{victor.jaeck@math.ethz.ch}
\def\subjclassname{\textup{2020} Mathematics Subject Classification}
\let\csname subjclassname@1991\endcsname=\subjclassname
\subjclass{
20E42, 
51E24, 
20G15
}
\keywords{affine $\Lambda$-buildings, non-Archimedean fields, algebraic groups, root systems, morphisms}
\begin{document}

\begin{abstract}
We define a notion of morphism for generalized affine buildings, also known as affine $\Lambda$-buildings, extending existing definitions and giving rise to a category of generalized affine buildings.
For affine $\Lambda$-buildings equipped with a transitive group action, we provide sufficient conditions for the existence of morphisms between them.
As an application, we investigate under which conditions morphisms or isomorphisms between various generalized affine buildings from the literature (defined via lattices, norms, non-standard symmetric spaces, or \`a la Bruhat--Tits) can be defined.
For generalized affine buildings coming from non-standard symmetric spaces we further show functoriality for subgroups and under change of valued field.
\end{abstract}

\maketitle

\section{Introduction}

Generalized affine buildings, such as (possibly non-discrete) Euclidean buildings, appear in a wide range of mathematical contexts, including the asymptotic geometry of symmetric spaces \cite{KleinerLeebRigidityofquasiisometries}, the structure theory of Kac--Moody groups over valued fields \cite{TitsuniquenessandpresentationofKacMoody, Remyconstructiondereseauxentheoriedekacmoody}, and compactifications of character varieties \cite{BIPPtherealspectrumnewarticle}.
The theory of affine buildings has a long history, beginning with the foundational work of Bruhat and Tits \cite{BruhatTits,BruhatTits84}. It was later extended from discrete to non-discrete settings, leading to Euclidean buildings \cite{KleinerLeebRigidityofquasiisometries}, $\mathbb{R}$-buildings \cite{BruhatTits, Parreau_ImmeublesAffines, RousseauBook23}, and finally affine $\Lambda$-buildings, introduced by Bennett \cite{BennettlambdabuildingsI} and further developed by many authors \cite{KramerTent, Schwer_GeneralizedAffineBuildings, BennettlambdabuildingsII, SchwerStruyve_LambdaBuildingsBaseChangeFunctors, BennettSchwerStruyve_OnAxiomaticDefNonDiscreteAffineBuildings,  HebertIzquierdoLoisel_LambdaBuildingsAssocToQuasiSplitGroups, appenzeller2024semialgebraic}.
However, there is still no general theory of morphisms that applies uniformly across these settings. The main difficulty is that different contexts give rise to distinct models of generalized affine buildings, often of different types, making it hard to define morphisms or subbuildings in a systematic way.

In this article, we introduce a new notion of morphisms between generalized affine buildings that addresses this challenge.
These morphisms preserve the apartment structure and use a new notion of morphisms of apartments that allows for different types of root systems, while still preserving the action of the affine Weyl group. 
Composing morphisms yields another morphism, thus defining two categories, whose objects are apartments and generalized affine buildings respectively. 
Our theory is particularly well-suited for buildings equipped with a natural group action on their atlases.
In this setting, we construct morphisms under mild hypotheses and identify additional conditions ensuring that they are injective, surjective, or isomorphisms; see \Cref{intro:thm:baby_morphismofGbuildings}.
The main advantage of our approach is that these conditions are easy to verify in explicit examples. 
This result applies to several well-known constructions of generalized affine buildings, defined via lattices, norms, non-standard symmetric spaces, or \`a la Bruhat--Tits, and permits to relate them.
This brings some clarity to the subject as there is a fair bit of confusion about terminology.
Furthermore the new notion of morphisms allows to prove functoriality properties under valued field extensions (\Cref{intro:corol_FieldExt}) and under injective group morphisms (\Cref{intro:thm:InjMorphBuildings}) for the generalized affine buildings defined via non-standard symmetric spaces.
Although our results are formulated in the general setting of affine $\Lambda$-buildings, the most relevant case is $\Lambda = \mathbb{R}$, where the results are new even for non-discrete affine buildings.
We now describe the main results in more detail.

\subsection{A new notion of morphisms of apartments and buildings}
We begin by defining the new notions of morphisms of apartments and generalized affine buildings.
Recall that a \emph{generalized affine building}, or just a building, is a set $\build$ together with an atlas of maps $\atlas$ from the model apartment $\mathbb{A}$ to $\build$ satisfying certain compatibility axioms; see \Cref{dfn:Building} for the precise conditions.
The \emph{model apartment} is given by $\mathbb{A}=\operatorname{Span}_{\mathbb{Q}}(\Phi) \otimes_\mathbb{Q} \Lambda$, where $\Phi$ is a root system and $\Lambda$ an ordered abelian group, together with an action of the \emph{affine Weyl group} defined by $W_{T,\Phi} \coloneqq T \rtimes \sphwg(\Phi)$.
Here $\sphwg(\Phi)$ is the (spherical) Weyl group associated to $\Phi$, and $T$ is a subgroup of the abelian group $\aa$ which acts by translation on $\aa$.
When $T$ and $\Phi$ can be deduced from the context, we write $\affwg$ for the affine Weyl group.
In this case the apartment $\mathbb{A}$ (and the building $(\build,\atlas)$ as well) is said to be \emph{of type $\mathbb{A}(\Phi,\Lambda,T)$}.

The main motivation for our definition is to be able to account for buildings of different types, especially when the root systems are different.
For example, for buildings associated to algebraic groups, a subgroup generally has a root system that does not admit an inclusion to the root system of the ambient group, see e.g.\ the inclusion $\operatorname{Sp}_2 < \operatorname{SL}_4$ where the root systems are of type $B_2$ and $A_3$. 

\begin{figure}[h]
    \centering
    \includegraphics[width=0.4\linewidth]{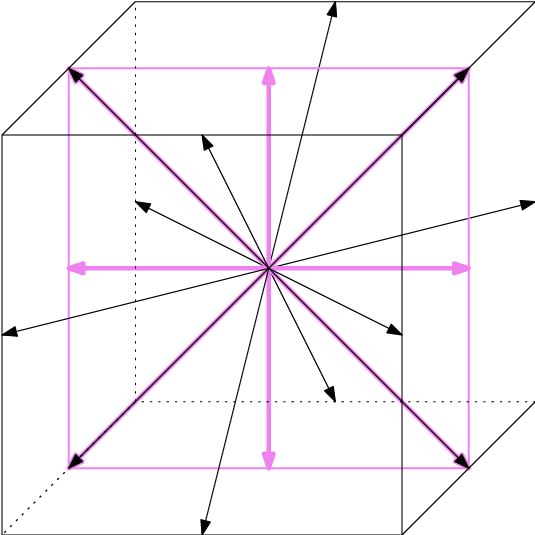}
    \caption{The root systems of type $B_2$ (in pink) and $A_3$ (in black).}
    \label{fig:intro:Sp4inSL4}
\end{figure}

Let now $\Phi$ and $\Phi'$ be two crystallographic root systems and $\Lambda$ and $\Lambda'$ two ordered abelian groups, that are also $\qq$-vector spaces.

\begin{defnIntro}[{\Cref{def:morphism_apartment}}]
\label{intro:def:morphism_apartment}
    Let $\mathbb{A} = \mathbb{A}(\Phi, \Lambda, T)$ and $\mathbb{A}' = \mathbb{A}(\Phi', \Lambda', T')$ be two model apartments.
    A \emph{morphism of apartments} is a triple $(L,\gamma, \sigma)$, where $L\colon \operatorname{Span}_{\mathbb{Q}}(\Phi) \rightarrow \operatorname{Span}_{\mathbb{Q}}(\Phi')$ is a $\mathbb{Q}$-linear map, $\gamma \colon \Lambda \rightarrow \Lambda '$ is a morphism of ordered abelian groups (an order-preserving group homomorphism) and $\sigma \colon \affwg \to \affwg'$ is a group homomorphism, such that for all $x \in \mathbb{A}$ and $w \in \affwg$
\[\tau (w.x) = \sigma(w) . \tau (x)\]
for $\tau \coloneqq L \otimes \gamma \colon \aa \to \aa'$.
\end{defnIntro}

Since a building is made out of copies of the model apartment, the notion of morphism of apartments naturally extends to the following notion of morphism of buildings.

\begin{defnIntro}[\Cref{def:morphism_building}]
\label{intro:def:morphism_building}
    Let $\build= (\build,\atlas)$ be a building of type $\mathbb{A}= \mathbb{A}(\Phi,\Lambda,T)$ and $\build'=(\build',\atlas')$ a building of type $\mathbb{A}'=\mathbb{A}(\Phi',\Lambda',T')$. A \emph{morphism of generalized affine buildings} is a triple $(\psi,\varphi,\tau)$, where $\psi \colon \build \to \build'$ and $\varphi \colon \atlas  \to \atlas'$ are maps, and $\tau \colon \aa \to \aa'$ is a morphism of apartments, such that for all $f \in \atlas$ we have
    \[\psi \circ f = \varphi(f) \circ \tau. \]
\end{defnIntro}

We say that a morphism of buildings is \emph{injective} (resp.\ \emph{surjective}) if $\psi$, $\varphi$ and $\tau$ are injective (resp.\ surjective).
On the one hand, this definition provides considerable flexibility compared to existing notions of building morphisms in the literature; for instance, it is flexible enough to relate buildings associated with different root systems.
On the other hand, it remains rigid enough to prevent pathological maps: for example, the only morphism from the $\mathbb{R}$-building $\mathbb{R}$ to the $\mathbb{Q}$-building $\mathbb{Q}$ is trivial, reflecting the absence of non-trivial order-preserving group homomorphisms $\mathbb{R} \to \mathbb{Q}$ (\Cref{example: R and Q building}).

Notions of isomorphisms between buildings have been studied since the early works of Tits and Bruhat–Tits; see for instance \cite{Titssphericalbuildings, Titsaffinebuildings, BruhatTits}, and for affine $\Lambda$-buildings in \cite{Schwer_GeneralizedAffineBuildings, SchwerStruyve_LambdaBuildingsBaseChangeFunctors}.
In the discrete setting—when $\Lambda$ is a discrete subgroup of $\mathbb{R}$ and the building is thus simplicial—various notions of morphisms and subbuildings have been proposed and studied, see \cite{Landvogt, kaletha2023bruhat}.
A more metric perspective is taken in \cite{KleinerLeebRigidityofquasiisometries, RousseauBook23}, where morphisms are viewed as isometries between spaces endowed with a building structure.
A more detailed discussion can be found in \Cref{section:ExistingNotions}.
 
\subsection{Extension of morphisms of apartments to morphisms of buildings}
We construct morphisms of generalized affine buildings from morphisms of apartments when a group acts transitively on the atlas of the building.
The idea is to translate the morphism between the standard apartments using the group action.
Let us now make this idea precise.

Let $G$ be a group and $(\build, \atlas)$ an affine $\Lambda$-building of type $\mathbb{A}= \mathbb{A}(\Phi,\Lambda,T)$.
We say that $\build$ is a \emph{$G$-building} if $G$ acts on both $\build$ and $\atlas$ in a compatible way, i.e.\ for all $g \in G$, $f \in \atlas$ and $x \in \mathbb{A}$ it holds
\[(g.f)(x)=g.f(x).\]
If $G'$ is another group and $\rho \colon G\to G'$ is a group homomorphism, a morphism ($\psi,\varphi,\tau$) from a $G$-building to a $G'$-building is $\rho$-\emph{equivariant} if both $\psi$ and $\varphi$ are $\rho$-equivariant.

The main result is the following theorem, which gives sufficient conditions on when morphisms of apartments extend to (injective or surjective) equivariant morphisms of buildings. For this version of the theorem, we assume that $G$ acts transitively both on $\build$ and $\atlas$, but there is another theorem, where we do not need to assume that $G$ acts transitively on $\build$, see \Cref{thm:morphismofGbuildings}.
An important role is played by stabilizers of points and charts. 

\begin{thmIntro}
[\Cref{thm:baby_morphismofGbuildings}]
\label{intro:thm:baby_morphismofGbuildings}
    Let  $(\build,\atlas)$ be a $G$-building and $(\build',\atlas')$ a $G'$-building. Let $\rho \colon G \to G'$ be a group homomorphism and $\tau \colon \aa \to \aa'$ a morphism of the respective model apartments of $\build$ and $\build'$.
    If $G$ acts transitively both on $\build$ and on $\atlas$ and there exist charts $f \in \atlas$ and $f' \in \atlas'$ such that 
    \begin{enumerate}[label=(\arabic*)]
        \item \label{intro:condition:baby_stabilizerofapoint} 
        $\rho(\operatorname{Stab}_G(f(0))) \subseteq \operatorname{Stab}_{G'}(f'(0))$, and
        \item \label{intro:condition:baby_stabilizerofchart} 
        $\rho(\operatorname{Stab}_G(f)) \subseteq \operatorname{Stab}_{G'}(f')$, and
        \item \label{intro:condition:baby_Afw} 
        for all $x\in \aa$, there exists $g\in G$ such that 
        \[
        g.f(0) = f(x) \quad \text{ and } \quad \rho(g).f'(0) = f'(\tau(x)),
        \]
    \end{enumerate}
    then there exists a $\rho$-equivariant morphism $m\coloneqq(\psi, \varphi, \tau) \colon \build \to \build'$.
    
    Additionally,
    \begin{enumerate}[label=(\alph*)]
        \item \label{intro:thm:baby_monomorphism}
        if $\tau$ and $\rho$ are injective and $\rho(\mathrm{Stab}_G(f))=\mathrm{Stab}_{G'}(f')$, then $m$ is injective;
        
        \item \label{intro:thm:baby_epimorphism}
        if $G'$ acts transitively on $\build'$ and $\atlas'$, and $\tau$ and $\rho$ are surjective, then $m$ is surjective; 
        
        \item \label{intro:thm:baby_isomorphism}
        if $\tau$ is an isomorphism, $G'$ acts transitively on $\build'$ and $\atlas'$, $\rho$ is an isomorphism of groups, and the two inclusions \ref{condition:baby_stabilizerofapoint} and \ref{condition:baby_stabilizerofchart} hold as equalities, then there exists an inverse morphism. That is, $(\build,\atlas)$ and $(\build',\atlas')$
        are isomorphic.
    \end{enumerate}
\end{thmIntro}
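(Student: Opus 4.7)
The plan is to construct $\psi$ and $\varphi$ by transport under the group action and then verify the conditions of \Cref{intro:def:morphism_building} and the additional statements in turn. Since $G$ acts transitively on $\build$ and on $\atlas$, every point of $\build$ is of the form $g.f(0)$ and every chart of the form $g.f$ for some $g \in G$. I would set
\[
\psi(g.f(0)) \coloneqq \rho(g).f'(0) \qquad \text{and} \qquad \varphi(g.f) \coloneqq \rho(g).f'.
\]
Well-definedness of $\psi$ follows from \ref{intro:condition:baby_stabilizerofapoint}: if $g_1.f(0) = g_2.f(0)$, then $g_1^{-1} g_2 \in \operatorname{Stab}_G(f(0))$, whose image under $\rho$ lies in $\operatorname{Stab}_{G'}(f'(0))$, so $\rho(g_1).f'(0) = \rho(g_2).f'(0)$. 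Well-definedness of $\varphi$ is analogous via \ref{intro:condition:baby_stabilizerofchart}. Both maps are $\rho$-equivariant by construction.

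For the compatibility $\psi \circ h = \varphi(h) \circ \tau$, I would first treat the reference chart $h = f$. For $x \in \aa$, \ref{intro:condition:baby_Afw} produces $g \in G$ with $g.f(0) = f(x)$ and $\rho(g).f'(0) = f'(\tau(x))$, giving
\[
\psi(f(x)) = \psi(g.f(0)) = \rho(g).f'(0) = f'(\tau(x)) = \varphi(f)(\tau(x)).
\]
For a general chart $h = g_0.f$, the $G$-building compatibility $(g_0.f)(x) = g_0.f(x)$ together with $\rho$-equivariance of $\psi$ yields $\psi(h(x)) = \rho(g_0).f'(\tau(x)) = (\rho(g_0).f')(\tau(x)) = \varphi(h)(\tau(x))$.

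For injectivity \ref{intro:thm:baby_monomorphism}, $\tau$ is injective by hypothesis. Any two points of $\build$ lie in a common apartment $h(\aa)$, so if $\psi(h(x_1)) = \psi(h(x_2))$, the compatibility just established gives $\varphi(h)(\tau(x_1)) = \varphi(h)(\tau(x_2))$, and injectivity of the chart $\varphi(h)$ together with injectivity of $\tau$ forces $x_1 = x_2$. For $\varphi$, the equation $\rho(g_1).f' = \rho(g_2).f'$ places $\rho(g_1^{-1} g_2)$ in $\operatorname{Stab}_{G'}(f') = \rho(\operatorname{Stab}_G(f))$, and injectivity of $\rho$ forces $g_1^{-1} g_2 \in \operatorname{Stab}_G(f)$, whence $g_1.f = g_2.f$. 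Surjectivity \ref{intro:thm:baby_epimorphism} is immediate from transitivity of the $G'$-action on $\build'$ and on $\atlas'$ combined with surjectivity of $\rho$. For the isomorphism statement \ref{intro:thm:baby_isomorphism}, I would apply the existence part of the theorem to the reverse data $(\rho^{-1}, \tau^{-1})$: both stabilizer conditions become equalities, and the reverse of \ref{intro:condition:baby_Afw} is witnessed by $g' = \rho(g)$ for the $g$ associated with $\tau^{-1}(x')$ in the original. The two morphisms are mutual inverses, which one checks on $f(0)$ and $f'(0)$ and then propagates by $\rho$-equivariance.

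The most delicate point is the injectivity of $\psi$: a naive stabilizer argument would require the equality $\rho(\operatorname{Stab}_G(f(0))) = \operatorname{Stab}_{G'}(f'(0))$, which is not among the hypotheses. The detour through the common-apartment axiom, reducing injectivity on $\build$ to injectivity on a single apartment where charts are injective by definition, is what makes the weaker chart-stabilizer assumption of \ref{intro:thm:baby_monomorphism} sufficient; everything else is essentially bookkeeping around the transitive actions and the defining relations of a $G$-building.
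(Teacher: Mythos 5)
Your proposal is correct and follows essentially the same route as the paper's proof: define $\psi$ and $\varphi$ by transport of $f(0)$ and $f$ under the transitive actions, check well-definedness via the two stabilizer inclusions, verify the chart-compatibility diagram using condition (3) and equivariance, reduce injectivity of $\psi$ to injectivity on a single common apartment via axiom (A3), and obtain the isomorphism statement by applying the existence part to $(\rho^{-1},\tau^{-1})$. The only cosmetic difference is that you verify the commuting diagram first on the base chart and then propagate by $\rho$-equivariance, whereas the paper computes directly inside an arbitrary chart $g.f$; the substance is identical.
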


We say that the $\rho$-equivariant morphism constructed in the theorem above is \emph{induced} by the group homomorphism~$\rho$.
The induced morphism is unique, see \Cref{rem:unique}.

The strength of this result lies in its applicability: in concrete examples, the apartment morphism $\tau$ is easy to define, and the required conditions can often be verified directly.
This will be illustrated in the following results, which show how the theorem applies in several explicit settings.

\subsection{Applications and examples}
The main applications of the above result are twofold.
First it allows to relate diverse models of generalized affine buildings in the literature.
Secondly, it allows to prove functoriality properties for symmetric buildings.

\subsubsection{Relationship between different models of buildings} 
 
In this article we focus on four models of (families of) buildings, namely the \emph{norm buildings} $\build_N$  \cite{GoldmanIwahori63,Parreau_ImmeublesAffines,RousseauBook23}, the \emph{lattice buildings} $\build_L$ \cite{BennettlambdabuildingsI}, the \emph{Bruhat--Tits buildings} $\build_\BT$ (for split algebraic groups) \cite{BruhatTits, BruhatTits84, HebertIzquierdoLoisel_LambdaBuildingsAssocToQuasiSplitGroups} and the \emph{symmetric buildings} $\build_S$ obtained from non-standard symmetric spaces \cite{KramerTent,appenzeller2025buildings}.
Their precise definitions are given in \Cref{subsection:ExamplesNonDiscBuildings}.

All four models above are constructed with the help of some algebraic group $\mathbf{G}$, some field $\ff$ and some valuation $v\colon \ff^\times \twoheadrightarrow \Lambda$. In \Cref{subsection:ExamplesGBuildings} we show that all these buildings constitute examples of $\mathbf{G}(\ff)$-buildings for appropriate $\mathbf{G}$ and $\ff$.
In \Cref{section:RelationsBuildings}, we use \Cref{intro:thm:baby_morphismofGbuildings} to construct morphisms 
\[
\build_L \xrightarrow[\cong]{(\textnormal{\Cref{thm:Lattice_to_KT} })} 
\build_S \xhookrightarrow[]{(\textnormal{\Cref{corol:MorphismKramerTentToBruhatTits} })} 
\build_\BT \xhookrightarrow[]{(\textnormal{\Cref{thm:BTNorm} })} 
\build_N
\]
between the models whenever both the source and the target buildings exist for a given choice of group and valued field. 
When $\Lambda = \rr$, all these morphisms are isomorphisms, which is a folklore result that we make precise.
Through personal communication, the proof of the isomorphism between $\build_\BT$ and $\build_N$ (see \Cref{thm:BTNorm}) when $\Lambda=\rr$ was known to Anne Parreau.
Already much earlier, the strong relation between these two buildings was observed by Bruhat--Tits, see  \cite[Note ajoutée sur épreuves]{BruhatTits} and \cite{BrTi84_norms}.

We believe that it is also possible to find $\glnf$-equivariant morphisms $\build_L \to \build_\BT$ and $\build_L \to \build_N$, even though in general this does not directly follow from composing the above morphisms (as the symmetric building is only defined when $\ff$ is real closed).

To apply \Cref{intro:thm:baby_morphismofGbuildings}, substantial knowledge about stabilizers in the buildings is needed. This theory for the buildings $\build_S$, $\build_\BT$ and $\build_N$ is well established in the literature, but for the lattice buildings $\build_L$ we provide the missing statements in \Cref{sec: L_H_Setup}.
Especially, \Cref{prop:L_H_stabApartment} describes the stabilizers of apartments in this setting, which to our knowledge is new.

\subsubsection{Functoriality properties}
We now use the notion of morphism to address certain functoriality questions for symmetric buildings (\Cref{ex:SymmetricBuilding1}). 
Symmetric buildings are associated to the $\ff$-points of a semisimple self-adjoint connected linear algebraic group $\mathbf{G}< \operatorname{SL}_n$ defined over $\qq$ with reduced root system, and a maximal $\rr$-split torus $\mathbf{S}$ all of whose elements are self-adjoint, where $\ff$ is a real closed field with order-compatible valuation. All algebraic groups and fields in this section are assumed to be of this form, so that the symmetric buildings are defined.

Functoriality with respect to valued field extensions has already been investigated for Bruhat--Tits buildings \cite{Rousseau}, where one asks under which conditions a morphism of valued fields induces an equivariant injection between the associated buildings.
Such functoriality fails in general \cite[III.\ \S 5]{Rousseau}, but holds in the split and quasi-split cases, and more generally under additional technical assumptions on the fields, see  \cite[5.1.2 and errata]{Rousseau} or \cite[1.3.4]{RemyThuillierWerner}.

For symmetric buildings, we obtain the following functoriality property under morphisms of ordered valued fields.

\begin{thmIntro}[{\Cref{thm:corol_FieldExt}}]
\label{intro:corol_FieldExt}
Let $\eta \colon \ff \rightarrow \ff'$ be a morphism of ordered valued fields and $\rho \colon \mathbf{G}(\ff) \to \mathbf{G}(\ff')$ the induced group homomorphism. Then there exists an equivariant morphism of buildings $m \colon \build \rightarrow \build'$, where $\build$ and $
\build'$ are the symmetric buildings associated to $\mathbf{G}(\ff)$ and~$\mathbf{G}(\ff')$. 

   Additionally, if $\gamma \colon \Lambda \to \Lambda'$ denotes the morphism of the value groups induced by $\eta$, then
    \begin{enumerate}[label=(\alph*)]
        \item
        \label{item:intro: injective eta and gamma gives injective morphism}
        if $\gamma$ is injective, then $m$ is injective;
        \item
        \label{item:intro: surjective eta and gamma gives surjective morphism} 
        if $\eta$ is surjective, then $m$ is surjective;
        \item
        \label{item:intro: bijective eta and gamma gives bijective morphism}
        if $\eta$ and $\gamma$ are isomorphisms, then $m$ is an isomorphism.
    \end{enumerate}
\end{thmIntro}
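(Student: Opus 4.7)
The plan is to reduce the statement to the master result \Cref{intro:thm:baby_morphismofGbuildings}. Since the symmetric buildings $\build$ and $\build'$ associated to $\mathbf{G}(\ff)$ and $\mathbf{G}(\ff')$ are $\mathbf{G}(\ff)$- and $\mathbf{G}(\ff')$-buildings on which the groups act transitively both on the space and on the atlas (as recorded in \Cref{subsection:ExamplesGBuildings}), it suffices to exhibit a morphism of apartments $\tau \colon \aa \to \aa'$ and base charts $f \in \atlas$, $f' \in \atlas'$ for which the three conditions \ref{intro:condition:baby_stabilizerofapoint}, \ref{intro:condition:baby_stabilizerofchart}, \ref{intro:condition:baby_Afw} hold with respect to $\rho$.

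First I construct $\tau$. Since both buildings arise from the same algebraic group $\mathbf{G}$ defined over $\Qq$, the underlying root system $\Phi$ is the same on both sides, so I take $L = \operatorname{id}$ on $\operatorname{Span}_{\Qq}(\Phi)$ and let $\gamma \colon \Lambda \to \Lambda'$ be the morphism of ordered abelian groups induced by $\eta$ on the value groups; the compatibility of $\eta$ with the valuations ensures $\gamma$ is well-defined and order-preserving. Setting $\tau \coloneqq L \otimes \gamma$ and letting $\sigma \colon \affwg \to \affwg'$ act as the identity on the spherical Weyl group and as $\gamma$ on the translation subgroup gives a morphism of apartments in the sense of \Cref{intro:def:morphism_apartment}. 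I then pick the distinguished charts $f, f'$ associated to the fixed maximal $\rr$-split self-adjoint torus $\mathbf{S}$ over $\ff$ and $\ff'$, with $f(0)$ and $f'(0)$ the basepoints coming from the identity matrix. Conditions \ref{intro:condition:baby_stabilizerofapoint} and \ref{intro:condition:baby_stabilizerofchart} then assert that $\rho$ sends the basepoint stabilizer (the maximal self-adjoint bounded subgroup) and the chart stabilizer (generated by $N_{\mathbf{G}(\ff)}(\mathbf{S}(\ff))$ and the torus basepoint stabilizer) into their $\ff'$-analogues; these subgroups are cut out inside $\mathbf{G}$ by polynomial equations defined over $\Qq$ and by the ordering/valuation, all of which $\eta$ respects, so the inclusions are automatic. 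For condition \ref{intro:condition:baby_Afw}, given $x \in \aa$, I use the surjectivity of the map $\mathbf{S}(\ff) \to \aa$ coming from the valuation on the diagonal entries to pick $s \in \mathbf{S}(\ff)$ with $s.f(0)=f(x)$; then $\rho(s) \in \mathbf{S}(\ff')$ satisfies $\rho(s).f'(0) = f'(\tau(x))$ by compatibility of $\eta$ with the valuation.

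For the refined assertions, I note that any field morphism $\eta$ is injective and hence so is $\rho$. Part \ref{item:intro: injective eta and gamma gives injective morphism} then requires $\tau$ injective, which follows from $\gamma$ being injective together with $L=\operatorname{id}$, plus the equality $\rho(\mathrm{Stab}_{\mathbf{G}(\ff)}(f)) = \mathrm{Stab}_{\mathbf{G}(\ff')}(f')$, which I would establish by identifying the chart stabilizer with the $\ff$-points of the algebraic normalizer $N_{\mathbf{G}}(\mathbf{S})$ intersected with the basepoint stabilizer, and checking that every element of $\mathbf{G}(\ff')$ stabilizing $f'$ actually comes from such an $\ff$-point. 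Part \ref{item:intro: surjective eta and gamma gives surjective morphism} follows from \ref{intro:thm:baby_epimorphism}: surjectivity of $\eta$ gives surjectivity of $\gamma$ (hence of $\tau$) and of $\rho$, and transitivity on the target side is already known. Part \ref{item:intro: bijective eta and gamma gives bijective morphism} then follows from \ref{intro:thm:baby_isomorphism} by combining the two situations and upgrading both stabilizer inclusions to equalities, using that an isomorphism $\eta$ makes $\rho$ a group isomorphism.

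The main obstacle I expect is the chart-stabilizer equality needed for injectivity and for the isomorphism part: one must rule out the existence of elements of $\mathbf{G}(\ff')$ that stabilize the standard apartment of $\build'$ but do not lie in the image of the corresponding $\ff$-stabilizer, even though $\mathbf{G}(\ff')$ is generically much larger than $\mathbf{G}(\ff)$. Handling this will require a careful Cartan-type decomposition argument and, crucially, an application of the hypothesis that $\eta$ is an isomorphism (or at least preserves the algebraic structure faithfully enough) to descend stabilizing elements back to $\ff$-points of $N_{\mathbf{G}}(\mathbf{S})$.
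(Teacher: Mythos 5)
Your overall route is the paper's: take $\tau = \Id \otimes \gamma$ with $L = \Id$ and $\sigma$ the identity on the spherical Weyl group, use the standard charts, verify conditions (1)--(3) of \Cref{thm:baby_morphismofGbuildings} via stabilizer descriptions and the compatibility condition \eqref{eq:BH_compatible}, then invoke parts (a)--(c). One concrete slip: you misidentify the chart stabilizer. By \Cref{prop:homogeneous_stabilizers} it is $\operatorname{Stab}_{\mathbf{G}(\ff)}(f_0) = M_\ff A_\ff(\mathcal{O}) = \operatorname{Cent}_{\mathbf{G}(\ff)}(A_\ff) \cap \mathcal{O}^{n\times n}$, the bounded part of the \emph{centralizer} of the torus; it is not generated by the normalizer $N_{\mathbf{G}(\ff)}(\mathbf{S}(\ff))$ together with the basepoint stabilizer. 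A representative in $K$ of a nontrivial Weyl element lies in your group but sends $f_0$ to the distinct chart $f_0 \circ w$, so it does not stabilize $f_0$; condition (2) and the injectivity argument must be run with the centralizer description.

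The more serious gap is in part (a). Your plan --- ``checking that every element of $\mathbf{G}(\ff')$ stabilizing $f'$ actually comes from such an $\ff$-point'' --- fails whenever $\eta$ is not surjective: for a proper extension $\ff \subsetneq \ff'$ of real closed valued fields with $\gamma$ an isomorphism, $A_{\ff'}(\mathcal{O}')$ contains elements with entries outside $\eta(\ff)$ that stabilize $f_0'$ and cannot be descended. You concede this yourself by asking for the hypothesis that $\eta$ be an isomorphism, which part (a) does not grant, so no Cartan-type decomposition will rescue the step. What is actually needed for injectivity of $\varphi$ is only control of $\operatorname{Stab}_{\mathbf{G}(\ff')}(f_0')$ \emph{inside the image of} $\rho$: injectivity of $\gamma$ forces $\eta^{-1}(\mathcal{O}') = \mathcal{O}$, hence $\rho(g) \in A_{\ff'}(\mathcal{O}')M_{\ff'}$ already implies $g \in A_\ff(\mathcal{O})M_\ff$, and the argument closes with no descent of arbitrary $\ff'$-points. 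This is exactly how the paper discharges (a); your treatments of (b) and (c) match the paper's.
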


The proof uses \Cref{intro:thm:baby_morphismofGbuildings} applied to an apartment morphism of the form $(\Id,\gamma,\sigma)$, where $\sigma$ is the identity on the spherical Weyl group, and given by $\Id \otimes \gamma$ on translations.

In a more general setting, an analogous statement to the above theorem 
was proven by Schwer--Struyve \cite{SchwerStruyve_LambdaBuildingsBaseChangeFunctors}; see also \Cref{subsection: functoriality generalized affine buildings} for more details.
Given an affine $\Lambda$-building $B$ and a morphism of ordered abelian groups $\gamma \colon \Lambda \to \Lambda'$, they construct an affine $\Lambda'$-building $B'$, and a morphism $B \to B'$.
When $B$ is the symmetric building associated to $\mathbf{G}(\ff)$, we expect their morphism $B \to B'$ to be the same as the one constructed in \Cref{intro:corol_FieldExt}, and $B'$ to be isomorphic to the symmetric building associated to $\mathbf{G}(\ff')$. \\

Secondly, a natural question is whether a (surjective, injective) morphism of algebraic groups $\rho \colon \mathbf{G} \to \mathbf{G'}$, where $\mathbf{G'}$ is another semisimple linear algebraic group as above, induces a natural (surjective, injective) morphism of the associated symmetric buildings $\build$ respectively $\build'$.
A similar question was answered positively for (discrete) Bruhat--Tits buildings over quasi-local fields by Landvogt in \cite{Landvogt} for his notion of morphisms.
The first example is when $\mathbf{G}$ is a subgroup of $\mathbf{G'}$ and $\rho$ is the inclusion.
In this case we have the following result.

\begin{thmIntro}[{\Cref{thm:FunctorialitySubgroups}}]
\label{intro:thm:FunctorialitySubgroups} Let $\iota$ denote the inclusion $\mathbf{G(\ff)} \hookrightarrow \mathbf{G}'(\ff)$ and assume there exist maximal $\rr$-split tori $\mathbf{S}<\mathbf{S'}$ of $\mathbf{G}$, $\mathbf{G'}$ all of whose elements are self-adjoint.
If $\mathbf{G}$ is $\rr$-split, then $\iota$ induces an equivariant injective morphism $\build \to \build'$ of the corresponding symmetric buildings.
\end{thmIntro}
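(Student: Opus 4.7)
The plan is to invoke \Cref{intro:thm:baby_morphismofGbuildings} with $\rho=\iota$ and a morphism of apartments $\tau=(L,\gamma,\sigma)$ built from the torus inclusion. Let $o,o'$ denote the canonical basepoints of the symmetric buildings $\build$ and $\build'$, and choose $f\in\atlas$, $f'\in\atlas'$ with $f(0)=o$, $f'(0)=o'$ whose images are the standard apartments associated to $\mathbf{S}$ and $\mathbf{S}'$. Take $L\colon\mathrm{Span}_\qq(\Phi)\hookrightarrow\mathrm{Span}_\qq(\Phi')$ to be the natural inclusion induced by $\mathbf{S}\subseteq\mathbf{S}'$ (identifying each apartment with the corresponding cocharacter space tensored with $\Lambda$), $\gamma=\mathrm{id}_\Lambda$, and $\sigma$ equal to $L\otimes\gamma$ on the translation subgroup and, on the spherical factor $\sphwg(\Phi)=W(\mathbf{G},\mathbf{S})$, induced by $\mathbf{S}\subseteq\mathbf{S}'$ combined with the $\rr$-splitness of $\mathbf{G}$.

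Granting that $\tau$ is a morphism of apartments, the three hypotheses of \Cref{intro:thm:baby_morphismofGbuildings} follow. For (1): $\mathrm{Stab}_{\mathbf{G}(\ff)}(o)=K_{\mathbf{G}}=K_{\mathbf{G}'}\cap\mathbf{G}(\ff)\subseteq K_{\mathbf{G}'}=\mathrm{Stab}_{\mathbf{G}'(\ff)}(o')$, where $K_\mathbf{G}, K_{\mathbf{G}'}$ denote the basepoint stabilizers. For (2), the $\rr$-splitness of $\mathbf{G}$ is used decisively: it gives $Z_\mathbf{G}(\mathbf{S})=\mathbf{S}$, hence $\mathrm{Stab}_{\mathbf{G}(\ff)}(f)=\mathbf{S}(\ff)\cap K_\mathbf{G}$; combined with $\mathbf{S}\subseteq\mathbf{S}'\subseteq Z_{\mathbf{G}'}(\mathbf{S}')$ and $K_\mathbf{G}\subseteq K_{\mathbf{G}'}$, this is contained in $Z_{\mathbf{G}'}(\mathbf{S}')(\ff)\cap K_{\mathbf{G}'}=\mathrm{Stab}_{\mathbf{G}'(\ff)}(f')$. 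For (3): every $x\in\mathbb{A}$ arises as $g.f(0)$ for some $g\in\mathbf{S}(\ff)$, and since $\mathbf{S}(\ff)\subseteq\mathbf{S}'(\ff)$, the element $\iota(g)$ sends $o'$ to $f'(\tau(x))$. \Cref{intro:thm:baby_morphismofGbuildings} then produces the $\iota$-equivariant morphism $m=(\psi,\varphi,\tau)$. Injectivity of $\tau$ is straightforward ($L$ is an inclusion, $\gamma=\mathrm{id}$, and the spherical part of $\sigma$ is determined by the faithful action on $\tau(\mathbb{A})$); injectivity of $\psi$ and $\varphi$ follows directly from the coset description of symmetric buildings, as $K_\mathbf{G}=K_{\mathbf{G}'}\cap\mathbf{G}(\ff)$ gives an injection $\mathbf{G}(\ff)/K_\mathbf{G}\hookrightarrow\mathbf{G}'(\ff)/K_{\mathbf{G}'}$ and charts correspond to $\mathbf{G}(\ff)$-conjugates of the pair $(\mathbf{S},o)$, so they inject into the analogous $\mathbf{G}'(\ff)$-orbits.

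The hard part will be the construction and well-definedness of the spherical part of $\sigma$. Because $N_\mathbf{G}(\mathbf{S})$ does not in general normalize $\mathbf{S}'$, a lift $n\in N_\mathbf{G}(\mathbf{S})(\ff)\cap K_\mathbf{G}$ of $w\in\sphwg(\Phi)$ viewed in $\mathbf{G}'(\ff)$ may send the standard apartment $\mathbb{A}'$ to a different apartment through $o'$. One must therefore correct $\iota(n)$ by a suitable element of $K_{\mathbf{G}'}$ (using the transitivity of $K_{\mathbf{G}'}$ on apartments through $o'$) to obtain a representative of a well-defined Weyl element $\sigma(w)\in\sphwg(\Phi')$, and then verify independence of the choice of lift, the homomorphism property of $\sigma$, and the required equivariance $\tau(w.x)=\sigma(w).\tau(x)$ for $x\in\mathbb{A}$ and $w\in\affwg$. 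All three should reduce to the observation that $\sigma(w)$ is uniquely pinned down by its prescribed action on the sub-apartment $\tau(\mathbb{A})\subseteq\mathbb{A}'$, which coincides with the action of $w$ on $\mathbb{A}$.
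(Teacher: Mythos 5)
Your overall architecture matches the paper's: build an apartment morphism $\tau=(L,\gamma,\sigma)$ from the torus inclusion, verify the three hypotheses of \Cref{intro:thm:baby_morphismofGbuildings} for the standard charts, and check injectivity by hand via stabilizers. Two of your stabilizer identifications are off --- in the symmetric building the stabilizer of the base point is $\mathbf{G}(\mathcal{O})=\mathbf{G}(\ff)\cap\mathcal{O}^{n\times n}$ and the pointwise stabilizer of the standard apartment is $\mathbf{S}(\mathcal{O})$ (see \Cref{prop:homogeneous_stabilizers}), not $K_{\mathbf{G}}$ and $\mathbf{S}(\ff)\cap K_{\mathbf{G}}$ --- but the shape of your arguments for conditions (1), (2) and for injectivity of $\psi$ and $\varphi$ survives this correction essentially unchanged, and condition (3) goes through once one expresses $\alpha'|_{\operatorname{Span}_{\qq}(\Sigma^\vee)}$ as a $\qq$-combination of roots of $\Sigma$ and invokes the compatibility condition \eqref{eq:BH_compatible}.

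The genuine gap is in the construction of the spherical part of $\sigma$, precisely the step you flag as hard. Your proposed resolution --- that $\sigma(w)$ is ``uniquely pinned down by its prescribed action on the sub-apartment $\tau(\mathbb{A})\subseteq\mathbb{A}'$'' --- is false in general. Whenever $\tau(\mathbb{A})$ is contained in a wall of $\Sigma'$ (i.e.\ some $\alpha'\in\Sigma'$ vanishes on $V=\operatorname{Span}_\qq(\Sigma^\vee)$), the reflection $r_{\alpha'}\in\sphwg'$ fixes $V$ pointwise, so any candidate for $\sigma(w)$ can be composed with $r_{\alpha'}$ without changing its restriction to $V$; the paper's \Cref{fig:A1xA1} ($A_1$ inside $A_1\times A_1$) is exactly such an example, with two distinct chambers of $\mathbb{A}'$ containing $w(p)$ and hence two candidates for $\sigma(w)$. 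Without a canonical choice among these candidates, neither well-definedness nor the homomorphism property of $\sigma$ follows, and the ``correction by an element of $K_{\mathbf{G}'}$'' you describe is not determined. The paper resolves this in \Cref{prop:subroot} and \Cref{lem:regular_unique_chamber} by fixing once and for all the region $F=\bigcap_{\alpha'\in\Sigma'_{>0}\cap\Sigma'_V}H_{\alpha'}^+$ and defining $\sigma(w)$ as the unique element of $\sphwg'$ carrying $C_0'$ to the unique chamber containing $w(p)$ and contained in $F$; multiplicativity then follows because every $\sigma(w)$ preserves $F$. Moreover, even the basic compatibility that the corrected element acts as $w$ on $V$ is not automatic: it is the content of condition \eqref{condition: subroot_cells}, verified for symmetric spaces via the Cartan projection in \Cref{lem:condition_cells_for_symmetric_building}, which your sketch does not supply. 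These are the missing ideas; the rest of your plan is sound.
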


This theorem highlights the full flexibility of our notion of morphism of apartments, as it requires changing the root system of the model apartment.
The key step is to relate the root systems of $\mathbf{G}$ and $\mathbf{G}'$ using the Cartan decomposition for semisimple algebraic groups over real closed fields, see \Cref{lem:condition_cells_for_symmetric_building}.
The main technical challenge is then establishing the existence of a map $\sigma \colon \sphwg \to \sphwg'$, see \Cref{prop:subroot}.
The result then follows from \Cref{intro:thm:baby_morphismofGbuildings}~\ref{intro:thm:baby_monomorphism}.
For more general monomorphisms, we obtain the following theorem, which extends the above result. 

\begin{thmIntro}[{\Cref{thm:inj_group_morphism_implies_inj_morphism_buildings}}]
\label{intro:thm:InjMorphBuildings}
    Let $\rho \colon \mathbf{G} \to \mathbf{G}'$ be a transposition-invariant monomorphism of algebraic groups.
    If $\mathbf{G}$ is $\rr$-split, then there exists an equivariant injective morphism  $\build \to \build'$ of the corresponding symmetric buildings.
\end{thmIntro}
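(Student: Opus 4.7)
The strategy is to reduce to \Cref{intro:thm:FunctorialitySubgroups} by factoring $\rho$ through its image. Since $\rho$ is a monomorphism of algebraic groups, it induces an isomorphism $\bar{\rho} \colon \mathbf{G} \xrightarrow{\sim} \rho(\mathbf{G}) =: \mathbf{H}$, which can be composed with the inclusion $\iota \colon \mathbf{H} \hookrightarrow \mathbf{G}'$. The plan is to build an equivariant injective morphism of symmetric buildings at each step: an isomorphism $B \xrightarrow{\sim} B_{\mathbf{H}}$ coming from $\bar{\rho}$, and an injection $B_{\mathbf{H}} \hookrightarrow B'$ coming from $\iota$, and then compose them.

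First I would check that $\mathbf{H}$ satisfies the setup that makes its symmetric building well-defined. Since $\bar{\rho}$ is an algebraic isomorphism, $\mathbf{H}$ is semisimple, $\mathbb{R}$-split, and has reduced root system. By transposition-invariance of $\rho$, for each $x \in \mathbf{G}$ we have $\rho(x)^t = \rho(x^t) \in \mathbf{H}$, so $\mathbf{H}$ is self-adjoint. Letting $\mathbf{S}$ denote the fixed maximal $\mathbb{R}$-split torus of $\mathbf{G}$ consisting of self-adjoint elements, $\mathbf{T} \coloneqq \rho(\mathbf{S})$ is then a maximal $\mathbb{R}$-split torus of $\mathbf{H}$ whose elements are all self-adjoint. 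In particular, $\mathbf{T}$ is an $\mathbb{R}$-split torus of $\mathbf{G}'$ with self-adjoint elements, so it can be enlarged to a maximal such torus $\tilde{\mathbf{S}}' < \mathbf{G}'$. Since the symmetric building of $\mathbf{G}'(\ff)$ does not depend, up to equivariant isomorphism, on the choice of maximal self-adjoint $\mathbb{R}$-split torus used in its construction, we may replace $\mathbf{S}'$ by $\tilde{\mathbf{S}}'$ (or equivalently transport the statement along that isomorphism) and assume $\mathbf{T} \subseteq \mathbf{S}'$.

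With these choices in place, \Cref{intro:thm:FunctorialitySubgroups} applied to $\iota$ with the tori $\mathbf{T} < \mathbf{S}'$ produces an $\iota$-equivariant injective morphism $B_{\mathbf{H}} \hookrightarrow B'$. For the first factor, $\bar{\rho}$ is a transposition-invariant isomorphism intertwining $\mathbf{S}$ and $\mathbf{T}$, hence it identifies the corresponding root systems, affine Weyl groups, and the data needed to invoke \Cref{intro:thm:baby_morphismofGbuildings}\ref{intro:thm:baby_isomorphism} with apartment morphism given by the identity on root system coordinates and the identity on $\Lambda$: all stabilizer conditions transfer verbatim under the group isomorphism, producing a $\bar{\rho}$-equivariant isomorphism $B \xrightarrow{\sim} B_{\mathbf{H}}$. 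Composing yields the desired $\rho$-equivariant injective morphism $B \to B'$.

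The main obstacle is the second step above, namely ensuring that $\mathbf{T}$ sits inside a maximal $\mathbb{R}$-split torus of $\mathbf{G}'$ whose elements are all self-adjoint, and that the resulting building is canonically isomorphic to the one built from $\mathbf{S}'$. Over a real closed field this rests on a conjugacy statement for maximal self-adjoint $\mathbb{R}$-split tori in semisimple self-adjoint groups, an ingredient already implicit in the setup of symmetric buildings; once this is in place, the rest is an application of the two theorems above.
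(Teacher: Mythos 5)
Your proposal is correct and follows essentially the same route as the paper: factor $\rho$ as an isomorphism onto its image followed by an inclusion, apply \Cref{prop:isomorphism_of_group_implies_isomorphism_of_buildings} to the isomorphism and \Cref{thm:FunctorialitySubgroups} to the inclusion, then compose. The only (cosmetic) difference is that the paper conjugates $\rho(\mathbf{S})$ into the fixed torus $\mathbf{S}'$ by an element of $\mathbf{K}'(\rr\cap\ff)$ (using \cite[Theorem 6.51]{Knapp02}), whereas you enlarge $\rho(\mathbf{S})$ to a maximal self-adjoint $\rr$-split torus and invoke independence of the building from that choice --- which, as you note, rests on the same conjugacy statement.
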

\begin{remark}\label{rmk:independence_S}
    A priori, the construction of the symmetric building $B_S$ depends not only on a group $\mathbf{G}$ and a valued field $\ff$, but also on the choice of a maximal $\rr$-split torus $\mathbf{S}$, all of whose elements are self-adjoint. However, such $\mathbf{S}$ always exist \cite[Theorem 5.17]{appenzeller2024semialgebraic} and are conjugate to each other \cite[Theorem 20.9]{Borellinearalgebraixgroups}. A corollary of  \Cref{intro:thm:InjMorphBuildings} is that up to isomorphism, the building $B_S$ does not depend on $\mathbf{S}$. In particular, the condition on the maximal tori in \Cref{intro:thm:FunctorialitySubgroups} is not necessary.
\end{remark}
The proof of \Cref{intro:thm:InjMorphBuildings} uses \Cref{intro:thm:FunctorialitySubgroups}.
We first show that if $\mathbf{G}$ and $\mathbf{G'}$ are isomorphic, then there is an isomorphism between their associated symmetric buildings, see \Cref{prop:isomorphism_of_group_implies_isomorphism_of_buildings}.
This first needs to be established for the associated model apartments (\Cref{prop:FunctorialitySubgroups_apartments}), and then we use again \Cref{intro:thm:baby_morphismofGbuildings}~\ref{intro:thm:baby_isomorphism}.

We leave open the question of whether every (not necessarily mono-) morphism of semisimple algebraic groups $\mathbf{G} \to \mathbf{G}'$ induces a morphism of the associated buildings.
In that case, the symmetric building (for a specific choice of a valued real closed field $\ff$) would be a functor from the category of semisimple algebraic groups to the category of affine buildings. 

\begin{question}
\label{intro:qu:symm_functor_simple}
    Can the symmetric building be seen as a functor from some category of semisimple algebraic groups to the category of affine buildings?
\end{question}

\begin{remark}
    We believe that most of these functoriality properties for the symmetric buildings can be extended to other models of buildings, for example to Bruhat--Tits buildings.
    However, our proof uses the theory of symmetric spaces and real algebraic geometry, and different proof techniques may be needed for general fields.
\end{remark}

Another loose end are the spherical buildings associated to an affine building.
One may ask whether a morphism of generalized affine buildings induces a simplicial morphism of their respective buildings at infinity or, equivalently, their local buildings.
The answer in general is no, this may not be true, when the affine buildings are not modeled on the same root system, see for example $\operatorname{Sp}_4(\ff) < \operatorname{SL}_4(\ff)$. 
In this case, a chamber of the smaller apartment is not sent to any Weyl simplex (of any dimension) in the larger apartment.
However, when the root systems agree, we believe that our notion of morphism between generalized affine buildings induces a simplicial morphism between the spherical buildings.

\subsection{Structure of the paper}
The article is organized as follows.
In \Cref{section:Buildings} we recall the definition of generalized affine buildings.
The background on valued fields is summarized in \Cref{subsection:ValuedFields}.
We then continue to present all the examples of buildings that will be studied throughout this article in \Cref{subsection:ExamplesNonDiscBuildings}.
We define morphisms of apartments and buildings in \Cref{section:Morphisms}.
In order to prove \Cref{intro:thm:baby_morphismofGbuildings} in \Cref{subsection:ExtendingMorphismApartments}, we introduce $G$-buildings in \Cref{section:GBuildings}, and we investigate the notion of morphisms and the above examples of buildings within this new context.
We then apply \Cref{intro:thm:baby_morphismofGbuildings} in \Cref{section:RelationsBuildings} and \Cref{section:Functoriality} to prove \Cref{intro:corol_FieldExt}, \Cref{intro:thm:FunctorialitySubgroups} and \Cref{intro:thm:InjMorphBuildings}, and the results on the relations between the different examples.
Necessary background on ordered fields and real algebraic geometry, which is only needed for the example of the symmetric buildings, is given in \Cref{appendix} and can be consulted at any moment.

\addtocontents{toc}{\SkipTocEntry}
\subsection*{Acknowledgments}
The authors would like to thank Jacques Audibert, Marc Burger, Auguste Hébert, 
Anne Parreau, José Pedro Quintanilha, Bertrand Rémy, Guy Rousseau and Petra Schwer for interesting discussions.
The authors are particularly grateful to Anne Parreau for sharing her extensive knowledge of buildings.
Her insights were especially valuable in finding a suitable definition of morphism of apartments.

R.A.\ acknowledges support from the Procope project ``Buildings, galleries and beyond''.
X.F.\ is funded by the Max-Planck Institute for Mathematics in the Sciences and the Labex Carmin project.
This project has received funding from the European Research Council (ERC) under the European Union’s Horizon 2020 research and innovation programme (grant agreement No 101018839), ERC GA 101018839.

\setcounter{tocdepth}{2}
\tableofcontents

\newpage

\section{Generalized affine buildings}
\label{section:Buildings}
In this section we recall the definition of generalized affine buildings, and then give several examples.
\subsection{Definition}
\label{subsection:AffineLambdaBuildings}
For more details and a thorough introduction to generalized affine buildings we recommend for example \cite{BennettlambdabuildingsI,Schwer_GeneralizedAffineBuildings, appenzeller2024semialgebraic}. 

\begin{defn}(\cite[Chapter VI \S 1]{Bourbaki2002} and \cite[14.7]{Borellinearalgebraixgroups}) Let $V$ be a $\qq$-vector space and $V^\star$ its dual. A subset $\Phi \subseteq V$ is a \emph{root system} if
\begin{enumerate}
    \item[($\text{RS}_{\text{I}}$)] $\Phi$ is finite, $0 \notin \Phi$, and $\Phi$ generates $V$; and
    \item[($\text{RS}_{\text{II}}$)] for all $\alpha \in \Phi$, there exists $\alpha^\vee \in V^\star$, such that $\alpha^\vee(\alpha) = 2$, and $r_\alpha(\Phi) = \Phi$ for the reflections defined by $r_\alpha(x) = x-\alpha^\vee(x) \alpha$.
\end{enumerate}
The reflections $r_\alpha$ and the coroots $\alpha^\vee$ are uniquely determined by $\alpha \in \Phi$ and there is a scalar product $\langle \cdot, \cdot \rangle$ on $V$ such that $\alpha^\vee(x) = 2\langle \alpha, x \rangle / \langle \alpha, \alpha\rangle$. A root system $\Phi$ is called \emph{crystallographic} if
\begin{enumerate}
    \item[($\text{RS}_{\text{III}}$)] for all $\alpha \in \Phi$, $\alpha^\vee(\Phi) \subseteq \zz$.
\end{enumerate}
\end{defn}

We restrict ourselves to crystallographic root systems, as all root systems that come from algebraic groups are crystallographic, but affine $\Lambda$-buildings have been defined more generally \cite{Schwer_GeneralizedAffineBuildings}.
Let $\Phi$ be a crystallographic root system in a vector space $V$ and $\Lambda$ a non-trivial ordered abelian group.
As is usually done, we assume that $\Lambda$ is a $\mathbb{Q}$-vector space, as otherwise we may replace $\Lambda$ by $\Lambda \otimes_{\mathbb{Z}} \mathbb{Q}$.
In particular, both $\Lambda$ and $\operatorname{Span}_{\mathbb{Q}}(\Phi)$ have the structure of $\mathbb{Q}$-vector spaces and we define the \emph{model apartment} as 
\[
\mathbb{A} \coloneqq \operatorname{Span}_{\mathbb{Q}}(\Phi) \otimes_\mathbb{Q} \Lambda .
\]
If $\Delta \subseteq \Phi$ is a basis of $\Phi$, then a model for the apartment is given by
\[
\mathbb{A} = \left\{ \sum_{\alpha \in \Delta} \lambda_\alpha \alpha \colon \lambda_\alpha \in \Lambda \right\},
\]
so that $\mathbb{A}$ is isomorphic as a group to $\Lambda^n$ for $n = |\Delta| \in \mathbb{N}$, which is called the \emph{dimension of the apartment}. 
Moreover, the root system $\Phi$ defines a spherical Weyl group $\sphwg$.
Let $T$ be a subgroup of $\aa \cong \Lambda^n$ which acts by translation on $\aa$ and is normalized by $\sphwg$. We define the \emph{affine Weyl group with respect to $T$} as $\affwg \coloneqq T \rtimes \sphwg $.
Formally, we call the combined data $(\Phi,\Lambda,T)$ an \emph{apartment}. An apartment determines the model apartment $\aa$ together with the action of the affine Weyl group $\affwg$.
It is customary to write $\aa = \aa(\Phi,\Lambda,T)$. 

\begin{figure}[ht]
  \centering
  \begin{overpic}[width=0.65\linewidth]{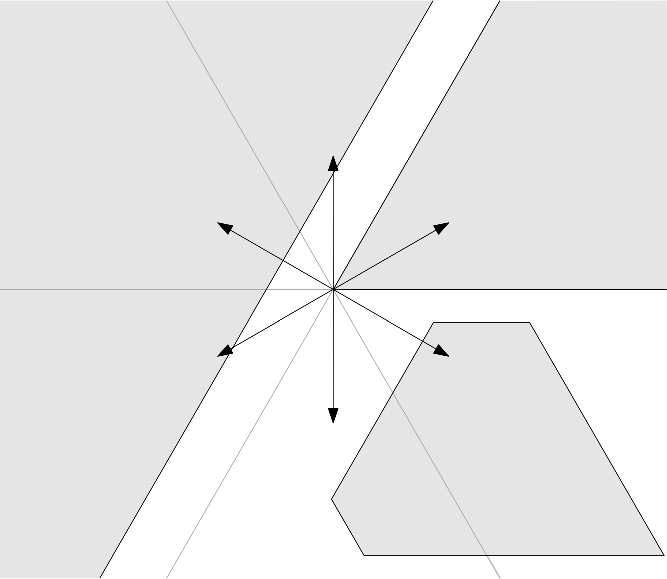}
    \put(10,70){$H_{\delta_1,k}^-$}
    \put(68,32){$\delta_1$}
    \put(45,63){$\delta_2$}
    \put(85,70){$C_0$}
  \end{overpic}
  \caption{An apartment of type $\aa(\Phi, \rr,\rr^2)$ where $\Phi$ is a root system of type $A_2$. Depicted are the fundamental Weyl chamber $C_0$ associated to the basis $\Delta = \{\delta_1, \delta_2\}$, a half-apartment $H_{\delta_1,k}^-$ and a closed subset.}
  \label{fig:apartment}
\end{figure}

The scalar product $\langle \cdot, \cdot \rangle$ on $V$ extends to a bilinear pairing
\[
\langle \cdot, \cdot \rangle \colon \aa \times \operatorname{Span}_\qq(\Phi) \to \Lambda, \quad 
\left\langle \sum_{\alpha \in \Delta} \lambda_\alpha \alpha , \sum_{\delta \in \Delta} q_\delta\delta\right\rangle \coloneqq \sum_{\alpha,\delta \in \Delta} \lambda_\alpha q_\delta \langle \alpha, \delta \rangle ,
\]
which in general cannot be extended to all of $\aa \times \aa$, since $\Lambda$ is only a $\qq$-vector space and may not have a multiplication. The apartment can be equipped with a $\sphwg$-invariant $\Lambda$-valued norm given by
$$
\lVert x \rVert \coloneqq \sum_{\alpha \in \Phi} \lvert \langle x , \alpha \rangle \rvert,
$$
and the induced $\Lambda$-valued distance, though other norms on $\aa$ may also be considered.
Every root $\alpha \in \Phi$ determines a reflection $r_\alpha \colon \aa \to \aa$ defined by
\[
r_\alpha\left(\sum_{\delta \in \Delta} \lambda_\delta \delta\right) 
\coloneqq \sum_{\delta \in \Delta} \lambda_\delta r_\alpha(\delta) 
= \sum_{\delta\in \Delta} \lambda_\delta \left( \delta - 2\frac{\langle \delta, \alpha \rangle}{\langle \alpha, \alpha \rangle} \alpha \right) .
\]
Elements of the affine Weyl group $\affwg$ that are conjugate to $r_\alpha$ are called \emph{reflections}.
Every reflection $r$ determines a hyperplane
\[
M_r \coloneqq \left\{ x \in \aa \colon r(x) = x  \right\},
\]
which is also called a \emph{wall}. If $r= t r_\alpha  t^{-1}$ for some $\alpha \in \Sigma$ and $t\in T$ a translation, $M_{r} = \{x\in \aa \colon \langle x, \alpha \rangle = k\}$ for $k \coloneqq  \langle t, \alpha \rangle$. Associated to each wall there are thus two \emph{half-apartments} of $\mathbb{A}$ which are of the form
\[
H_{\alpha,k}^+ \coloneqq \left\{  x \in \aa \colon  \langle x , \alpha \rangle \geq k \right\} \quad \text{ and } \quad H_{\alpha,k}^- \coloneqq \left\{  x \in \aa \colon  \langle x , \alpha \rangle \leq k \right\}
\]
for $\alpha \in \Phi$ and $k \in \Lambda$.
The \emph{fundamental Weyl chamber} associated to a basis $\Delta \subseteq \Phi$ is given by
\[
C_0 \coloneqq \bigcap_{\alpha \in \Delta} H_{\alpha,0}^+.
\]
A \emph{Weyl chamber} of $\aa$ is any of the sets $w(C_0)$ for $w \in \affwg$. 

Following \cite[\S 2.5]{BennettlambdabuildingsI}, we say that a subset $\Omega \subset \mathbb{A}$ is \emph{convex}, if it is an intersection of half-apartments. 
A convex set $\Omega \subset \mathbb{A}$ is \emph{closed}, if it is the intersection of finitely many half-apartments.
The definition of affine $\Lambda$-building resembles that of a manifold, where we we have a set $B$ together with an atlas $\atlas$ consisting of charts mapping the model apartment into $B$.
The images under charts of the model apartment, walls, half-apartments, Weyl chambers and closed sets are called \emph{apartments, walls, half-apartments, sectors and closed sets}. If a sector $s$ is a subset of another sector $s'$, then $s$ is called a \emph{subsector} of $s'$.
We follow the definition in \cite[\S 3.1]{BennettlambdabuildingsI}, based on ideas of Tits \cite{Titsaffinebuildings} and generalizing the notion of $\Lambda$-trees in Morgan--Shalen \cite{MorganShalen_ValuationsTressDegenerationsHyperbolicStructuresI}. 

\begin{defn}
\label{dfn:Building}
Let $\build$ be a set and $\atlas$ a set of maps from $\mathbb{A}$ to $\build$. 
We say that $(\build,\atlas)$ is an \emph{affine $\Lambda$-building of type $\mathbb{A}= \mathbb{A}(\Phi,\Lambda,T)$}, if it satisfies the following six axioms:
\begin{enumerate}[label=(A\arabic*)]
    \item \label{axiom:A1}
    For all $f \in \atlas$ and $w \in \affwg$ we have $f \circ w \in \atlas$.
    
    \item \label{axiom:A2}
    For all $f,f'\in \atlas$, the set $\Omega \coloneqq f^{-1}(f(\mathbb{A}) \cap f'(\mathbb{A})) \subseteq \mathbb{A}$ is a closed set and there exists $w \in \affwg$ such that $f|_\Omega = f' \circ w |_\Omega$.
    
    \item \label{axiom:A3}
    For any two points in $\build$ there is an apartment containing both. That is, for any $p,q\in \build$, there exists $f \in \atlas$ such that $p,q \in f(\mathbb{A})$.
    
    \item \label{axiom:A4}
    Given sectors $s_1, s_2$ there exist subsectors $s_1' \subseteq s_1$, $s_2' \subseteq s_2$, such that $s_1'$ and $s_2'$ are contained in a common apartment.

    \item \label{axiom:A5}
    If three apartments pairwise intersect in a halfapartment, then they intersect non-trivially.
    
    \item \label{axiom:A6}
    For any chart $f\in \atlas$ and any point $p \in f(\mathbb{A})$, there is a distance-diminishing retraction $r_{f,p} \colon \build \to f(\mathbb{A})$ with $(r_{f,p})^{-1}(p) = \{p\}$.
\end{enumerate}
\end{defn}

Note that axiom \ref{axiom:A6} makes sense since axioms \ref{axiom:A1}-\ref{axiom:A3} can be used to define a 
$\Lambda$-distance $d$ on $B$ from the $\Lambda$-distance on $\aa$, except that it is not clear whether it satisfies the triangle inequality, see \cite[Remarks 3.1 and 3.2]{BennettlambdabuildingsI}.  
Axiom \ref{axiom:A6} implies that $d$ in fact satisfies the triangle inequality, hence defines a $\Lambda$-distance on $\build$.

If $\Lambda$ can be inferred from the context, we say that $\build$ is a \emph{generalized affine building}, or sometimes only building.
The set $\atlas$ is called the \emph{atlas} of the generalized affine building, and the elements of $\atlas$ are called \emph{charts}.

An important source of generalized affine buildings comes from algebraic groups over valued fields.

\subsection{Valued fields}
\label{subsection:ValuedFields}
For a thorough introduction to the theory of valuations and valued fields we refer to \cite{EnglerPrestel_ValuedFields}.
Let $\kk$ be a field and $\Lambda$ an \emph{ordered abelian group}, i.e.\ $\Lambda$ is an abelian group together with a total order that is compatible with the group operations.

\begin{defn}
A map $v \colon \kk \to \Lambda \cup \{\infty\}$ is called a \emph{$\Lambda$-valuation} (or short just \emph{valuation}) if $v$ is surjective 
and satisfies the following three conditions for all $x, y \in \kk$:
\begin{itemize}
    \item $v(x)=\infty \iff x = 0$,
    \item $v(xy)=v(x)+v(y)$,
    \item $v(x+y) \geq \min\{v(x),v(y)\}$.
\end{itemize}
\end{defn}
If $\Lambda = \{0\}$, we call $v$ the \emph{trivial valuation}; if $\Lambda$ has rank $1$ (i.e.\ it is isomorphic as an ordered abelian group to a subgroup of $\rr$), we call $v$ a \emph{rank-$1$ valuation}.
More generally, we define the \emph{rank} of $v$ as the rank (as a torsion-free abelian group) of the value group $\Lambda = v(\kk^\times)$.
The subset
\[ \mathcal{O} \coloneqq \{ x \in \kk^\times \colon v(x) \geq 0\}\]
forms a subring, which is a \emph{valuation ring} of $\kk$, i.e.\ a subring of $\kk$ such that for all $x \in \kk^\times$ we have $x \in \mathcal{O}$ or $x^{-1} \in \mathcal{O}$.
A field together with a valuation is called a \emph{valued field}.

\begin{example}
    Examples of valued fields are the $p$-adic numbers.
    Whenever $\ff$ is any field, the field of rational functions with coefficients in $\ff$ is naturally a valued field, where the valuation is given by $v \colon \ff(X)^\times \to \zz$, $\frac{P}{Q} \mapsto \deg Q - \deg P$.
\end{example}
\subsection{Examples from the literature}
\label{subsection:ExamplesNonDiscBuildings}

We now give examples of generalized affine buildings from the literature, namely the norm building $\build_N$, the lattice building $\build_L$, symmetric building $\build_S$ and Bruhat--Tits building $\build_{\BT}$. All these constructions use a reductive algebraic group $\mathbf{G}$ and a field $\ff$ endowed with a valuation $v \colon \ff \to \Lambda \cup \{\infty\}$.
In \Cref{section:RelationsBuildings} we will relate these different models in the contexts.
Let us now define the buildings in question.

\begin{example}[Norm building $\build_N$, see also \Cref{ex:NormBuilding2}] \label{ex:NormBuilding1}
This model for an affine building has been studied in various settings in \cite{GoldmanIwahori63,Gerardin_ImmGroupesLinGen,BrTi84_norms,BrTi87,Parreau_ImmeublesAffines,RousseauBook23}; we follow \cite[\S 3]{Parreau_AffineBuildings}.
Let $\ff$ be any field with a rank-$1$ valuation $v \colon \ff^\times \to \Lambda \subseteq \rr$.
For $a\in \ff$, set $|a|\coloneqq \exp(-v(a)) \in \rr$ and let $V\coloneqq \ff^n$.
An \emph{ultrametric norm} is a function $\eta \colon V \to \rr_{\geq 0} $ that satisfies for all $a\in \ff$ and $v,w \in V$
\begin{itemize}
    \item  $\eta(v) = 0$ if and only if $v=0$,
    \item  $\eta(av) = |a|\eta(v)$, and
    \item  $\eta(v+w) \leq \max \{ \eta(v) , \eta(w) \}$.
\end{itemize}
An ultrametric norm $\eta$ is \emph{adapted} to a basis $\mathcal{E} = \{ e_1, \ldots , e_n\}$ of $V$ if 
\[
\eta\left(\sum_{i=1}^n a_i e_i \right) = \max\{ |a_1|\eta(e_1), \ldots , |a_n|\eta(e_n) \},
\]
and $\eta$ is \emph{adaptable} if there exists a basis to which it is adapted.
The \emph{norm building} $\build_N$ is the set of all $\rr^\times$-homothety classes of adaptable ultrametric norms. 

The model apartment of type $A_{n-1}$ can be identified with 
\[
\aa \cong \rr^n/\rr (1, \ldots , 1) \cong \Big\{ (x_1, \ldots , x_n)\in \rr^n \colon \sum_{i=1}^n x_i = 0 \Big\}
\]
and the spherical Weyl group is the symmetric group on $n$ letters acting on $\aa$ by permuting the entries.

To a basis $\mathcal{E}$ and a homothety class $[\eta]$ of an ultrametric norm $\eta$ adapted to $\mathcal{E}$, we associate a \emph{chart} $f_{[\eta],\mathcal{E}} \colon \aa \to \build_N$ by
\[
f_{[\eta],\mathcal{E}} (x_1, \ldots , x_n) \Big( \sum_{i=1}^n a_i e_i \Big) \coloneqq \max_i \left\{  e^{-x_i} \lvert a_i \rvert \eta(e_i)  \right\}.
\]
Let $\atlas$ denote the set of all these charts.
The pair $(\build_N, \atlas)$ is an affine $\rr$-building of type $(A_{n-1}, \rr, \rr^n/\rr(1, \ldots , 1))$, see \cite[Sections 3B-3F]{Parreau_AffineBuildings}. 
We note that $\build_N$ also admits a different atlas $\atlas'$, so that $(\build_N, \atlas')$ is an affine $\mathbb{R}$-building of type $(A_{n-1}, \mathbb{R}, \Lambda^n/\Lambda(1,\ldots,1))$, see \cite[Remark in Section 3B4]{Parreau_AffineBuildings}.
\end{example}

\begin{example}[Lattice building $\build_L$, see also \Cref{ex:LatticeBuilding2}]
\label{ex:LatticeBuilding1}

We now recall what we call the \emph{lattice building}, i.e.\ the space of homethety classes of lattices of $\ff^n$.
This was defined in \cite[Section 9.2]{Ronan_LectursBuildings} in the discrete case, and in \cite[Example 3.2]{BennettlambdabuildingsI} for general $\Lambda$.
We follow the latter exposition.

Let $\Lambda$ be an ordered abelian group (not necessarily of rank one) and $\ff$ a field with a $\Lambda$-valuation $v \colon \ff^\times \to \Lambda$.
Denote by $\mathcal{O}\coloneqq \{x \in \ff \colon v(x) \geq 0\}$ the valuation ring.
A \emph{lattice} (sometimes called $\mathcal{O}$-lattice) of $\ff^n$ is the set $\mathcal O e_1+ \ldots + \mathcal O e_n$, where $\{e_1,\ldots,e_n\}$ is a basis of $\ff^n$.
Two lattices $L_1$ and $L_2$ are \emph{homothetic} if there exists $x \in \ff$ such that $x L_1 =L_2$.
We write $[L]$ for the homothety class of a lattice $L$.
The \emph{lattice building} $\build_L$ is the set of all homothety classes of lattices in $\ff^n$, i.e.\ \[\build_L \coloneqq \{[L] \colon L \textnormal{ is a lattice}\}.\]
The model apartment of type $A_{n-1}$ can be identified with $\aa \cong \Lambda^{n-1}$.
A basis $\mathcal{E}=\{e_1,\ldots,e_n\}$ determines the lattice $\mathcal O ^n$ of $\ff^n$.
To a basis we define a \emph{chart} $f_{\mathcal{E}} \colon \Lambda^{n-1} \to \build_L$ as follows.
For $(\lambda_1,\ldots,\lambda_{n-1}) \in \Lambda^{n-1}$, we set
\begin{align*}
    f_{\mathcal{E}}(\lambda_1&,\ldots,\lambda_{n-1}) \coloneqq \\
    &[\mathcal{O}(x_{\lambda_1}e_1) +\mathcal{O}(x_{\lambda_2-\lambda_1}e_2) +\ldots +\mathcal{O}(x_{\lambda_{n-1}-\lambda_{n-2}}e_{n-1})+\mathcal{O}(x_{-\lambda_{n-1}}e_n)],
\end{align*}
where $x_{\lambda_i} \in \ff$ with $v(x_{\lambda_i})=\lambda_i$.
Note that the so obtained homothety class of lattices is independent of the choices of $x_{\lambda_i}$.

Let $\atlas$ denote the set of all these charts.
The pair $(\build_L,\atlas)$ is a generalized affine building of type $(A_{n-1},\Lambda,\Lambda^{n-1})$, see \cite[Example 3.2]{BennettlambdabuildingsI}.
\end{example}

\begin{example}[Bruhat--Tits buildings $\build_\BT$, see also \Cref{ex:BruhatTitsBuilding2}]
\label{ex:BruhatTitsBuilding1}

In their seminal works \cite{BruhatTits,BruhatTits84}, Bruhat--Tits construct affine buildings from reductive algebraic groups that are quasi-split with respect to a valued field $\ff$ with value group $\Lambda \subseteq \rr$.
More general cases in which the Bruhat--Tits building exists have been investigated over the years for example in \cite{MulherrStruyveVanMaldeghem,Struyve14}.
More recently, this construction was generalized to value groups $\Lambda$ that are not necessarily a subgroup of $\rr$ \cite{HebertIzquierdoLoisel_LambdaBuildingsAssocToQuasiSplitGroups}. 
For simplicity we restrict ourselves to the case when the algebraic group is semisimple and split \cite[18.6]{Borellinearalgebraixgroups}, but we allow for general value groups.
Our main reference when $\Lambda \subseteq \rr$ is \cite[Chapters 6 and 7]{BruhatTits}, especially examples 6.1.3 b) and 6.2.3 b), and for general $\Lambda$ \cite{HebertIzquierdoLoisel_LambdaBuildingsAssocToQuasiSplitGroups}, especially \S 7.44 therein. For a more gentle introduction, we recommend Izquierdo's notes \cite{Izquierdo2024HigherBT} of a mini-course he gave on the topic.

Let $\ff$ be a field with a valuation $v\colon \ff^\times \to \Lambda$. By Hahn's embedding theorem, see \Cref{thm:hahn}, there exists an ordered abelian group $\mathfrak{R}$ that contains $\Lambda$ as an ordered subgroup and that is an $\rr$-vector space.
In the classical case \cite{BruhatTits}, $\Lambda \subseteq \rr = \mathfrak{R}$.
Let $\mathbf{G}$ be a semisimple, connected, simply-connected algebraic $\ff$-group that is split over $\ff$ (such as $\operatorname{SL}_n$).
Let $\mathbf{S} $ be a maximal torus. 
Since $\mathbf{G}$ is $\ff$-split, $\mathbf{S}$ is $\ff$-split and since $\mathbf{G}$ is semisimple, we have $\mathbf{S} = \operatorname{Cent}_{\mathbf{G}}(\mathbf{S})$. 
For a root $\alpha \in \Sigma$ in the relative root system $\Sigma \coloneqq \,_{\ff}\Phi$, we consider the root group $\mathbf{U}_\alpha$.
Let $G \coloneqq \mathbf{G}(\ff)$ and $U_\alpha \coloneqq \mathbf{U}_\alpha(\ff)$ for $\alpha \in \Sigma$. 
From \cite[(6.1.3)b)]{BruhatTits} and \cite[Section 7.19]{HebertIzquierdoLoisel_LambdaBuildingsAssocToQuasiSplitGroups}, we get that there are subgroups $M_\alpha$ of $G$ such that $(\mathbf{S}(\ff),(U_\alpha, M_\alpha)_{\alpha \in \Sigma} )$ is a \emph{generating root group datum} that admits a \emph{valuation} $\varphi_\alpha \colon U_\alpha \to \mathfrak{R} \cup \{\infty\}$ for all $\alpha \in \Sigma$, see \cite[(6.2.3)b)]{BruhatTits}, \cite[(4.1.19)(ii)]{BruhatTits84}, and \cite[Proposition 7.24]{HebertIzquierdoLoisel_LambdaBuildingsAssocToQuasiSplitGroups}; for the classical groups see \cite[Chapter 10, page 208ff]{BruhatTits}.

We consider the Euclidean vector space $(V, \langle \cdot, \cdot \rangle )$ such that the dual space $(V^\star, \langle \cdot , \cdot \rangle )$ is spanned by $\Sigma$.
The group $N \coloneqq \operatorname{Nor}_{\mathbf{G}}(\mathbf{S})(\ff)$ acts on the root groups by conjugation $nU_{\alpha}n^{-1} = U_{n.\alpha}$ and this action descends to the action of the spherical Weyl group $\,_{\ff}W \coloneqq N/\mathbf{S}(\ff)$ on $\Sigma \subseteq V^\star$. The dual root system $\Sigma^\vee \subseteq V$ of $\Sigma \subset V^\star$ consists of the dual roots $\alpha^\vee \in \Sigma^{\vee}$ defined by
\[
 \langle \alpha^\vee , x \rangle = 2 \frac{  \alpha(x)}{\langle \alpha , \alpha \rangle} \quad \text{ for all } x \in V,
\]
where $\alpha \in \Sigma$. 
We note that the linear maps $\alpha \in V^\star$ extend to linear maps $\alpha \colon V \otimes_\rr \mathfrak{R} \to \mathfrak{R}$ on the $\rr$-vector space $V \otimes_{\rr} \mathfrak{R}$, and the action of the spherical Weyl group $\,_{\ff}W$ extends to $V \otimes_{\rr} \mathfrak{R}$.
The space $V\otimes_{\rr} \mathfrak{R} \cong \operatorname{Span}_{\mathbb{Q}}(\Sigma^\vee) \otimes_{\qq} \mathfrak{R}$ can be identified with the \emph{affine space of root group valuations}
\begin{align*}
  \mathfrak{A} \coloneqq \{ (\varphi_\alpha^x  \colon U_\alpha \to \mathfrak{R} \cup \{\infty\} )_\alpha \colon & 
\exists \, x \in V\otimes_{\rr}\mathfrak{R},
\forall \alpha \in \Sigma, \\
& \forall u \in U_\alpha  \colon \varphi_\alpha^x(u) = \varphi_\alpha(u) + \alpha(x) \}  ,
\end{align*}
and we abbreviate the root group valuation $(\varphi_\alpha^x)_{\alpha \in \Sigma}$ by $\varphi + x$. For $n \in N$, 
\[
(n.\varphi)_\alpha (u) = \varphi_{n^{-1}.\alpha}(n^{-1}un)
\]
defines a root group valuation and an action $ \nu$ of $N $ on $\mathfrak{A}$ by
\[
\nu(n)(\varphi+x) \coloneqq n.\varphi + n.x,
\]
where the $n.x$ comes from the action of the spherical Weyl group on $V\otimes_{\rr} \mathfrak{R}$.
The kernel of this action is denoted by $H$ and $N/H$ is called the affine Weyl group. For $x \in \mathfrak{A} \cong V \otimes_{\rr} \mathfrak{R}$, let 
\[
P_x \coloneqq \left\langle u \in U_{\alpha} \colon \exists\, \lambda \in \Lambda \textnormal{ such that }\varphi_\alpha(u) \geq \lambda \geq -\alpha(x)  \right\rangle \cdot H \subseteq  G.
\]
The \emph{Bruhat--Tits building} is now defined as the quotient 
\[
\build_\BT \coloneqq (G \times  \mathfrak{A}) / \!\!\sim
\] 
for the equivalence relation $(g,x) \sim (h,y)$ whenever
\[
 \exists\, n \in N \colon g^{-1}hn \in P_x \quad \text{ and } \quad \nu(n)(x) = y.
\]
The equivalence class of $(g,x)$ is denoted by $[g,x]$.

We now explain how $\build_\BT$ is an affine $\mathfrak{R}$-building of type $(\Sigma^\vee, \mathfrak{R}, \Lambda^n)$, see \cite[Theorem 3.30, Notation 4.37, Fact 7.45]{HebertIzquierdoLoisel_LambdaBuildingsAssocToQuasiSplitGroups}. The model apartment $\aa \coloneqq \operatorname{Span}_\qq(\Sigma^\vee) \otimes_\qq \mathfrak{R}$ can be identified with $\mathfrak{A}$ and is thus endowed with an action by $N/H$, which is the affine Weyl group $\affwg$. Note that the translational part $T$ of $\affwg$ consists only of translations in $\Lambda^n \subseteq \mathfrak{R}^n$. We have the \emph{standard chart} 
$$
f_0 \colon \aa \to \build_\BT, \quad x \mapsto [\Id,x].
$$
The natural action of $G$ on $G \times \mathfrak{A}$ descends to an action on $\build_{\BT}$ given by $g.[h,x] = [gh,x]$.
The atlas of $\build_\BT$ is defined by $\mathcal{A} \coloneqq \{g.f_0 \colon g \in G\}$, where $g.f_0(x) \coloneqq [g,x]$ for all $x \in \mathfrak{A}$.

We remark that the root group valuations $(\varphi_\alpha)_\alpha$ are \emph{compatible with} the field valuation $v$, see \cite[4.2.7(2)]{BruhatTits84} and \cite[Section 7.44, Fact 7.12]{HebertIzquierdoLoisel_LambdaBuildingsAssocToQuasiSplitGroups}, namely for all $t \in \mathbf{S}(\ff)$ and $\alpha \in \Sigma$ we have
\begin{align}
   \varphi_\alpha(t u t^{-1}) &= \varphi_\alpha(u) + v(\alpha(t)).
   \tag{$C_{\BT}$}
   \label{eq:BT_compatible} 
\end{align}
In particular for all $t\in \mathbf{S}(\ff)$ and $x\in \mathfrak{A} \cong V\otimes_\rr \mathfrak{R}$ we have that $[t,0] = [\Id, x] \in \build_\BT$ if and only if $(-v)(\chi_\alpha(t)) = \alpha(x)$ for all $\alpha\in \Sigma$.
\end{example}

\begin{example}[Symmetric buildings $\build_S$, see also \Cref{ex:SymmetricBuilding2}] \label{ex:SymmetricBuilding1}

For this example we need to assume that the field in question is real closed, and that the valuation is compatible with the order.
For the precise definitions and a short introduction to ordered fields and real algebraic geometry we refer to \Cref{appendix}.

Following ideas from \cite{KramerTent} (and \cite{Brumfiel_TreeNonArchimedeanHyperbolicPlane} for $\operatorname{SL}_2$), the first author defines in \cite{ appenzeller2024semialgebraic, appenzeller2025buildings, appenzeller2026semialgebraic} a building associated to the following data.
Let $\mathbf{G}$ be a semisimple connected self-adjoint linear algebraic $\qq$-group $\mathbf{G} < \operatorname{SL}_n$ for some $n\in \mathbb{N}$ and let $\ff$ be a real closed field.
To define a building, we need $\ff$ to be endowed with an order-compatible valuation $v\colon \ff^\times \to \Lambda$ (not necessarily of rank one), but for now let $\ff$ just be a real closed field, possibly $\ff=\rr$.
Denote by $G\coloneqq \mathbf{G}(\ff)$ the $\ff$-points of the algebraic group $\mathbf{G}$.

The group $\operatorname{SL}_n(\ff)$ acts transitively on the set 
\[
P_1(n,\ff)\coloneqq\{ M \in \ff^{n\times n} \colon M=M^T, \det(M)=1, M \gg 0 \}\] 
of positive definite symmetric matrices of determinant one by congruence, i.e.\ $g.M \coloneqq gMg^T$. Let $X_\ff $ be the orbit $G .\operatorname{Id} $ of $\operatorname{Id}\in P_1(n,\ff)$. 
If $\ff = \rr$, $X_\rr$ is a model of the symmetric space of non-compact type associated to $\mathbf{G}(\rr)$, which is totally geodesic submanifold of the symmetric space $P_1(n,\rr)$. When $\ff$ is not a subfield of $\rr$, we call $X_\ff$ a \emph{non-standard symmetric space}.
Given an order-compatible valuation $v \colon \ff^\times \to \Lambda$, it is then possible to define a $G$-invariant, $\Lambda$-valued pseudo-distance $d \colon X_\ff \times X_\ff \to \Lambda$ whose quotient $\build_S = X_\ff / \!\! \sim$ after identifying all points of distance $0$ is an affine $\Lambda$-building in the following sense \cite{appenzeller2025buildings}.

Let $\mathbf{K} \coloneqq \mathbf{G} \cap \operatorname{SO}_n$.
Let $\mathbf{S}<\mathbf{G}$ be a maximal $\rr$-split torus (equivalently maximal $\ff$-split for any real closed field, see \cite[Theorem 5.17]{appenzeller2024semialgebraic}) whose elements are self-adjoint ($g = g^T$ for all $g\in \mathbf{S}$).
Let $A_\ff$ be the semialgebraically connected component of the identity in $\mathbf{S}(\ff)$.
When $\ff =\rr$, the Lie algebra $\frakg$ of $G$ admits a Cartan decomposition $\frakg = \frakk \oplus \frakp$, where $\frakk$ is the Lie-algebra of $\mathbf{K}(\rr)$, given by the Cartan involution $\theta \colon X \mapsto -X^T$.
The group $A_\rr$ is then a connected real Lie group whose Lie algebra $\fraka \subseteq \frakp$ can be used to obtain the restricted root space decomposition
\[
\frakg = \frakg_0 \oplus \bigoplus_{\alpha \in \Sigma} \frakg_\alpha
\]
for a root system $(\Sigma, \fraka^\star)$ with spherical Weyl group $\sphwg \cong N_\rr / M_\rr$, where
\[
 N_\rr \coloneqq \operatorname{Nor}_{\mathbf{K}(\rr)}(A_\rr) \quad \text{ and } \quad M_\rr \coloneqq \operatorname{Cent}_{\mathbf{K}(\rr)}(A_\rr).
\]
Associated to every root $\alpha \in \Sigma$, there is an algebraic character $\chi_\alpha \colon A_\ff \to \ff_{>0}$ \cite[Lemma 6.1]{appenzeller2024semialgebraic}.

The Cartan decomposition $G = \mathbf{K}(\ff) \mathbf{A}(\ff) \mathbf{K}(\ff)$ \cite[Theorem 6.8]{appenzeller2024semialgebraic} can then be used to define a \emph{Cartan projection} 
$$
\delta_\ff \colon X_\ff \to  \{ a \in A_\ff \colon \chi_\delta(a) \geq 1 \text{ for all } \delta \in \Delta\}, \quad kak'.\Id \mapsto a,
$$
where $\Delta$ is a basis of $\Sigma$.
Then, $\lVert \cdot \rVert_{\ff} \colon A_\ff  \to \ff_{>0}$ defined by
\[
\lVert a \rVert_{\ff} \coloneqq \prod_{\alpha \in \Sigma} \max\left\{ \chi_\alpha(a), \chi_\alpha(a)^{-1}   \right\} 
\]
is a semialgebraic multiplicative $\ff_{\geq 1}$-valued norm on $A_\ff$.
The norm $\lVert \cdot \rVert_{\ff}$ and $\delta_\ff$ together with the order-compatible valuation $v$, can be used to define the distance of a point $x \in X_\ff$ to the base point $\Id$ by 
$$
d(\Id, x) \coloneqq -v ( \lVert \delta_\ff(x) \rVert_{\ff} ).
$$
The $G$-action extends this to a $\Lambda$-valued pseudo-metric $d$ on $X_\ff$, and a $\Lambda$-valued metric on $\build_S \coloneqq X_\ff / \!\! \sim $, where $\sim$ identifies points of distance $0$ \cite[Theorem 4.5]{appenzeller2025buildings}.

Let $o = [\operatorname{Id}] \in \build_S$ be a base point.
It is then possible to show that $A_\ff .o$ admits a $\qq$-vector space structure isomorphic to $\aa \coloneqq \operatorname{Span}_\qq(\Sigma^\vee) \otimes_{\qq} \Lambda$, where $\Sigma^\vee \subseteq \fraka$ is the coroot system.
If $f_0 \colon \aa \to \build_S$ denotes the identification $\aa \cong A_\ff.o$ followed by the inclusion $A_\ff.o \subseteq \build_S$, the atlas is given by $\atlas = \{ g.f_0 \colon \aa \to \build_S \colon g \in G\}$. 
Taking $T\coloneqq\aa \cong \Lambda^r$, $(\build_S,\atlas)$ is an affine $\Lambda$-building of type $(\Sigma^\vee, \Lambda, T)$ in the case when $\Sigma^\vee$ is reduced \cite[Theorem 5.1]{appenzeller2025buildings}.

The following characterizing compatibility condition will be useful 
\cite[Proposition 4.13]{appenzeller2025buildings}: for every $x\in \aa$ and $a\in A_\ff$, one has $f_0(x) = a.o$ if and only if
\begin{align}
    (-v)(\chi_\alpha(a)) = \alpha(x) \quad \text{for all $\alpha \in \Sigma$.}
    \tag{$C_S$}
   \label{eq:BH_compatible}
\end{align}
\end{example}

\section{Morphisms of generalized affine buildings}
\label{section:Morphisms}
The goal of this section is to define
morphisms of generalized affine buildings.
Before we can do so we define morphisms of apartments.

\subsection{Morphisms of apartments}
Let $\Phi$ and $\Phi'$ be two crystallographic root systems, $\Lambda$ and $\Lambda'$ ordered abelian groups, and $T$ and $T'$ translation groups such that $\aa = \aa(\Phi, \Lambda, T)$ and $\aa' = \aa(\Phi', \Lambda', T')$ are model apartments in the sense of \Cref{subsection:AffineLambdaBuildings}. 

\begin{defn}\label{def:morphism_apartment}
A \emph{morphism of apartments} is a triple $(L,\gamma, \sigma)$ of maps, where $L\colon \operatorname{Span}_{\mathbb{Q}}(\Phi) \rightarrow \operatorname{Span}_{\mathbb{Q}}(\Phi')$ is a $\mathbb{Q}$-linear map, $\gamma \colon \Lambda \rightarrow \Lambda '$ is a morphism of ordered abelian groups and $\sigma \colon \affwg \to \affwg'$ is a group homomorphism, such that for all $w \in \affwg$ the following diagram commutes
\[
\begin{tikzcd}
    \mathbb{A} \arrow[r, "L \otimes \gamma"] \arrow[d, "w"'] & \mathbb{A}' \arrow[d, "\sigma(w)"] \\
    \mathbb{A} \arrow[r, "L \otimes \gamma"']                & \mathbb{A}' \ .
\end{tikzcd}
\]
We will denote morphisms of apartments by $\tau = (L,\gamma, \sigma)$, and when it is convenient we will write by slight abuse of notation $\tau \colon \aa \to \aa'$ and $\tau = L \otimes \gamma$.

We call $\tau$ \emph{injective (resp.\ surjective)} if $L$ and $\gamma$ are injective (resp.\ surjective).
It is a linear algebra exercise to see that $\tau$ is injective (resp.\ surjective) if and only if $L \otimes \gamma$ is injective (resp.\ surjective). If $\tau$ is injective, so is $\sigma$, but if $\tau$ is surjective, $\sigma$ may not be surjective, see \Cref{fig:A2_in_G2}.

\begin{figure}[ht]
  \centering
  \includegraphics[width=0.4\linewidth]{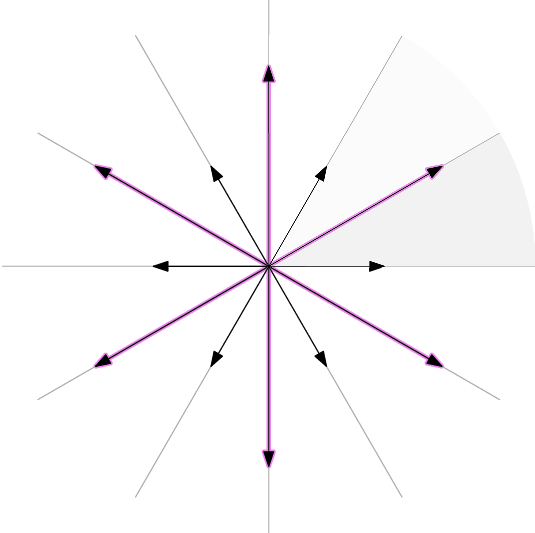} 
  \caption{The root system $\Phi'$ of type $G_2$ (in black) contains a subsystem $\Phi$ of type $A_2$ (in pink) formed by the long roots of $\Phi'$. The inclusion $\sigma \colon \sphwg \to \sphwg'$ gives rise to a surjective morphism $(\Id,\Id,\sigma)$ of apartments, but $\sigma$ is not surjective.}
  \label{fig:A2_in_G2}
\end{figure}

\begin{example}\label{remark:groups_to_modules}
    Suppose $\Phi = \Phi'$ and $\gamma \colon \Lambda \to \Lambda'$ is a morphism of ordered abelian groups such that $\gamma(T)\subseteq T'$.
    Then $\Id \otimes \gamma \,  \colon \mathbb{A} \to \mathbb{A}'$ defines a morphism of apartments.
    This recovers the definition implicitly present in \cite{SchwerStruyve_LambdaBuildingsBaseChangeFunctors}.
\end{example}

\begin{remark}\label{rem:morph_apart_category}
    The identity morphism is given by $L=\Id_{\operatorname{Span}_{\mathbb{Q}}(\Phi)}$, $\gamma = \Id_\Lambda$ with $\sigma= \Id_{\affwg}$ and composition of morphisms is given by composition of $L$, $\gamma$ and $\sigma$.
It is not hard to check that with these notions, model apartments form a category.
A morphism $\tau \colon \aa \to \aa'$ then is an \emph{isomorphism of apartments} if there is a morphism $\tau^{-1} \colon \aa' \to \aa$ with $\tau^{-1} \circ \tau = \Id_{\aa}$ and $\tau \circ \tau^{-1} = \Id_{\aa'}$.
\end{remark}

By the following lemma it suffices to verify the diagram in the definition for $w \in \sphwg$.
\end{defn}
\begin{lemma}\label{lem:morphism_apartment}
    Let $L\colon \operatorname{Span}_{\mathbb{Q}}(\Phi) \rightarrow \operatorname{Span}_{\mathbb{Q}}(\Phi')$ be a $\mathbb{Q}$-linear map, $\gamma \colon \Lambda \rightarrow \Lambda '$ a morphism of ordered abelian groups and set $\tau = L \otimes \gamma$. If $\tau(T) \subseteq T'$ and $\sigma_s \colon \sphwg \to \sphwg'$ is a group homomorphism (of the spherical Weyl group only) such that the diagram
    \[
\begin{tikzcd}
    \mathbb{A} \arrow[r, "\tau"] \arrow[d, "w"'] & \mathbb{A}' \arrow[d, "\sigma_s(w)"] \\
    \mathbb{A} \arrow[r, "\tau"']                & \mathbb{A}'
\end{tikzcd}
\]
    commutes for all $w\in \sphwg$, then $\sigma \coloneqq  \tau |_T \times \sigma_s  \colon \affwg \to \affwg'$ is a group homomorphism such that $(L,\gamma,\sigma)$ is a morphism of apartments $\aa \to \aa'$.
\end{lemma}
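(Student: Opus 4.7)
The plan is to unpack the semidirect-product structure $\affwg = T \rtimes \sphwg$ and reduce everything to the two given ingredients: (i) $\tau = L\otimes \gamma$ is a group homomorphism $\aa \to \aa'$ with $\tau(T) \subseteq T'$, and (ii) $\sigma_s$ intertwines the spherical action via $\tau$. An element of $\affwg$ can be written uniquely as $w = tw_s$ with $t\in T$ and $w_s \in \sphwg$, and I simply define $\sigma(tw_s) \coloneqq \tau(t)\sigma_s(w_s) \in T' \rtimes \sphwg' = \affwg'$; the condition $\tau(T)\subseteq T'$ guarantees the codomain makes sense.

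First I check that $\sigma$ is a group homomorphism. Using the semidirect-product multiplication $(t_1w_1)(t_2w_2) = \bigl(t_1 + w_1.t_2\bigr)(w_1 w_2)$, I need to verify that
\[\sigma\bigl((t_1w_1)(t_2w_2)\bigr) = \bigl(\tau(t_1) + \tau(w_1.t_2)\bigr)\sigma_s(w_1)\sigma_s(w_2)\]
equals
\[\sigma(t_1w_1)\sigma(t_2w_2) = \bigl(\tau(t_1) + \sigma_s(w_1).\tau(t_2)\bigr)\sigma_s(w_1)\sigma_s(w_2).\]
Equality of the two expressions reduces to $\tau(w_1.t_2) = \sigma_s(w_1).\tau(t_2)$, which is exactly the hypothesis that the given diagram commutes, evaluated at the point $t_2 \in T \subseteq \aa$ with $w = w_1 \in \sphwg$. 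Here I also use that $\tau = L\otimes \gamma$ is additive, so it respects the sum inside $T$.

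Next I verify that the diagram commutes for an arbitrary $w = tw_s \in \affwg$. For $x \in \aa$,
\[\tau(w.x) = \tau(t + w_s.x) = \tau(t) + \tau(w_s.x) = \tau(t) + \sigma_s(w_s).\tau(x) = \sigma(w).\tau(x),\]
where the second equality uses additivity of $\tau$, the third uses the given commuting diagram for $w_s \in \sphwg$, and the fourth unfolds the action of $\sigma(w) = \tau(t)\sigma_s(w_s)$ on $\aa'$ (translation by $\tau(t)$ after applying $\sigma_s(w_s)$).

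I do not expect any real obstacle here: once the hypothesis $\tau(T)\subseteq T'$ is used to land $\sigma$ in $\affwg'$, the statement is a formal consequence of $\tau$ being a $\mathbb{Z}$-linear intertwiner of the $\sphwg$-actions, together with the standard semidirect-product identities. The only mild subtlety is keeping track of the fact that translations in $T$ act on $\aa$ by addition inside $\aa$, so that additivity of $L \otimes \gamma$ is what makes the computation go through; this is why the hypothesis is stated at the level of $\sphwg$ only, rather than all of $\affwg$.
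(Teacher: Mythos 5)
Your proof is correct and follows essentially the same route as the paper's: decompose $\affwg = T \rtimes \sphwg$, define $\sigma(tw_s) = \tau(t)\sigma_s(w_s)$, reduce the homomorphism check to the identity $\tau(w_1.t_2) = \sigma_s(w_1).\tau(t_2)$ obtained from the commuting diagram evaluated at points of $T \subseteq \aa$, and then verify the intertwining relation for a general $w = tw_s$ using additivity of $\tau$. No gaps.
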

\begin{proof}
    We denote by $t^{x}\colon \aa \to \aa$ the translation by $x\in \aa$. Let $t^x, t^{x_1},t^{x_2}\in T$, $w, w_1,w_2 \in \sphwg$. We define the map $\sigma \colon \affwg \to \affwg'$ by $\sigma(t^{x}w) \coloneqq t^{\tau (x)}\sigma_s(w)$. By the semidirect product structure of $\affwg = T \rtimes \sphwg$, $w_1t^{x_2} = t^{w_1(x_2)}w_1$, so
    \begin{align*}
        \sigma(t^{x_1}w_1\cdot t^{x_2}w_2) 
        &= \sigma\left( t^{x_1} t^{w_1(x_2)} w_1 w_2 \right)
        = t^{\tau (x_1 + w_1(x_2))} \sigma_s(w_1 w_2) \\
        &= t^{\tau(x_1)} t^{\tau (w_1(x_2))} \sigma_s(w_1)\sigma_s(w_2) \\
        &= t^{\tau (x_1)} t^{\sigma_s(w_1)(\tau(x_2))}\sigma_s(w_1)\sigma_s(w_2)\\
        &= t^{\tau (x_1)} \sigma_s(w_1) t^{\tau (x_2)} \sigma_s(w_2)
        = \sigma(t^{x_1}w_1)\cdot \sigma(t^{x_2}w_2)
    \end{align*}
    shows that $\sigma$ is a group homomorphism. If $y \in \aa$, we have
    \begin{align*}
        \tau (t^{x}w(y)) &= \tau ( x + w(y))
        = \tau (x) + \tau (w(y)) 
        = t^{\tau (x)}(\sigma_s(w)(y)) 
        = \sigma(t^xw)(y),
    \end{align*}
    so the diagram commutes.
\end{proof}

\subsection{Morphisms of buildings}
 Since buildings are made out of apartments, morphisms of apartments lead to the definition of morphisms of generalized affine buildings as follows.

\begin{defn}\label{def:morphism_building}
Let $\build= (\build,\atlas)$ be an affine $\Lambda$-building of type $\mathbb{A}= \mathbb{A}(\Phi,\Lambda,T)$ and $\build'=(\build',\atlas')$ an affine $\Lambda'$-building of type $\mathbb{A}'=\mathbb{A}(\Phi',\Lambda',T')$. A \emph{morphism of generalized affine buildings} is a triple of maps
\[\psi \colon \build  \to \build',  \quad
    \varphi \colon \atlas  \to \atlas', \quad
    \tau \colon \mathbb{A}  \to \mathbb{A}',
\]
where $\tau$ is a morphism of apartments, such that the following diagram commutes for all $f \in \atlas$
\[\begin{tikzcd}
	{\mathbb{A}} & {\mathbb{A}'} \\
	\build  & \build' \ . 
	\arrow["\tau", from=1-1, to=1-2]
	\arrow["f"', from=1-1, to=2-1]
	\arrow["\varphi(f)", from=1-2, to=2-2]
	\arrow["{\psi}"', from=2-1, to=2-2]
\end{tikzcd}\]
We write morphisms of generalized affine buildings as $m= (\psi,\varphi,\tau)$, and when it is convenient, we will write by slight abuse of notation $m \colon (\build,\atlas) \to (\build',\atlas')$ or $m \colon \build \to \build'$.
We say that a morphism $(\psi,\varphi, \tau)$ is \emph{injective (resp.\ surjective)} if $\psi$, $\varphi$ and $\tau$ are injective (resp.\ surjective).
\end{defn}

\begin{example}\label{ex:apt_as_build}
If $(\build,\atlas)$ is a generalized affine building of type $\aa=\aa(\Phi,\Lambda,T)$, then any $f \in \atlas$ defines an injective morphism of generalized affine buildings $\psi = f \colon \aa \to \build$, where we view $\aa$ itself as a generalized affine building with atlas $\affwg$. Indeed, we define $\varphi \colon \affwg \to \atlas$ by precomposition with $f$, which by axiom \ref{axiom:A1} is contained in $\atlas$, and $\tau \colon \aa \to \aa$ to be the identity.
Then the above diagram commutes.
\end{example}
\begin{example} For a fixed ordered abelian group $\Lambda$, the projection of a product of affine $\Lambda$-buildings (with the product atlas and of product type) to one of the factors is a surjective morphism of affine $\Lambda$-buildings.
\end{example}
\begin{example}\label{ex:completion_translations}
If $(\build,\atlas)$ is a generalized affine building of type $(\Phi, \Lambda, T)$, let $T' \coloneqq \aa = \operatorname{Span}_{\mathbb{Q}}(\Phi) \otimes \Lambda$ be the full translation group, $\affwg' = \aa \rtimes \sphwg$ and $\atlas' \coloneqq \{ f\circ w \colon f \in \atlas, w \in \affwg' \}$. Then $(\build,\atlas')$ is a generalized affine building of type $(\Phi,\Lambda, \aa)$ and the inclusions $\sigma \colon \affwg \to \affwg'$, $\varphi \colon \atlas \to \atlas'$ together with the identities on $\aa$ and $\build$ define an injective morphism $(\build,\atlas) \to (\build, \atlas')$.
\end{example}
\begin{example}
\label{example: R and Q building}
We may view $\rr$ as an affine $\rr$-building and $\qq$ as an affine $\qq$-building. The inclusion $\qq \subseteq \rr$ gives rise to an injective morphism $\qq \to \rr$ of generalized affine buildings (for appropriately chosen atlases). However, since the only order-preserving group homomorphism from $\rr$ to $\qq$ is the trivial one, the only morphisms of generalized affine buildings from $\rr$ to $\qq$ are the trivial morphisms (where $\psi$ is constant).
\end{example}

\begin{remark}\label{rem:morph_build_category}
    The identity morphism is given by $\psi = \Id_{\atlas}$, $\varphi = \Id_B$ and  $\tau = \Id_\aa$ and composition of morphisms is given by the composition of all three $\psi$, $\varphi$ and $\tau$. It is not hard to check that with these notions, generalized affine buildings form a category. A morphism $m = (\psi,\varphi,\tau) \colon \build \to \build'$ is then called an \emph{isomorphism} if there exists a morphism $m^{-1} \colon \build' \to \build$ with $m^{-1} \circ m = \Id_{\build}$ and $m \circ m^{-1} = \Id_{\build'}$.
\end{remark}

\begin{remark}\label{rem:distance_preserving}
    Morphisms of generalized affine buildings are not necessarily distance-preserving.
    Contrary to what it seems like, this is a good property, as a single building can carry many different distances: in \cite{BruhatTits}, the distance comes from a choice of scalar product on the apartment and in \cite{appenzeller2025buildings}, the distance comes from a choice of a norm on the apartment. When $\Lambda \subsetneq \rr$, these norms may not come from a scalar product, so there may not be a compatible way to choose distances such that the injective morphism $B_S \to B_\BT$ in \Cref{corol:MorphismKramerTentToBruhatTits} preserves distances.
\end{remark}

\begin{remark}\label{remark: why L linear not affine}
    Since morphisms of apartments always fix $0$, affine maps $\aa \to \aa$ may not be morphisms of apartments. However, if we view the apartment $ \aa$ as an affine building itself as in \Cref{ex:apt_as_build}, then any affine map $w \in \affwg $ is a morphism of affine buildings. Indeed, setting $\psi \coloneqq w \colon \build \to \build$, $\varphi\colon \atlas \to \atlas$ defined by $\varphi(f) \coloneqq w \circ f $ and $\tau \coloneqq \Id_\aa \colon \aa \to \aa$ defines a morphism of affine buildings $(\psi, \varphi, \tau) $ realizing the affine transformation $w$ on $B=\aa$. We note that by \ref{axiom:A1}, $\affwg \subseteq \atlas$, and by \ref{axiom:A2}, $\atlas \subseteq \affwg$, so $w \circ f \in \affwg = \atlas$ and $\varphi$ is well-defined.
\end{remark}

\subsection{Relation to existing notions in the literature}
\label{section:ExistingNotions}

We finish this section by relating our definition to some existing notions of morphisms and isomorphisms between apartments and (generalized affine) buildings already present in the literature. 

\subsubsection{Notions of isomorphisms}
Already in \cite{Titssphericalbuildings, Titsaffinebuildings}, Tits defines a notion of isomorphism of affine $\rr$-buildings, and partially classifies them. 
In the discrete case, a detailed treatment can be found in \cite{Weiss_StructureAffineBuildings}.
Similarly, still in the discrete case, Gérardin states that the norm building ``is'' the Bruhat--Tits building
\cite[Theorem in \S 2.3.6]{Gerardin_ImmGroupesLinGen}.
He defines in \S 1.3.7 an \emph{isomorphism of apartments} as a linear map that preserves distances and that exchanges the walls.
This is stronger than our definition, as we do not ask a morphism of apartments to preserve distances, see \Cref{rem:distance_preserving}.

\subsubsection{Discrete valuations}
When the valuation takes values in a discrete subset of $\rr$, there are several notions of morphisms, see e.g.\ \cite{Landvogt, kaletha2023bruhat}.
We briefly explain Landvogt's notion \cite{Landvogt} and his result on Bruhat--Tits buildings, as it can be compared to \Cref{intro:thm:InjMorphBuildings}.
Landvogt showed a functoriality property, namely that a homomorphism of algebraic $k$-groups, where $k$ is a quasi-local field, induces an equivariant continuous map between the associated Bruhat--Tits buildings, that is \emph{toral}---a notion that ensures that apartments are mapped to apartments.
Furthermore, after suitable normalization of the metric, this map is an isometry.
A toral map should be thought of as the analog of our notion of morphism of apartments, and it would be interesting to compare these two notions in the discrete setting.

A comparison to the notion in \cite{kaletha2023bruhat} would also be desirable.
However in general it is not so clear how $\Lambda$-buildings for $\Lambda$ a discrete subgroup of $\rr$ relate to simplicial affine buildings. This is why we have restricted our attention to divisible $\Lambda$.

\subsubsection{Euclidean buildings}
In the non-discrete case, but in the setting of Euclidean buildings, Rousseau defines notions of (weak) morphisms of apartments, buildings and subbuildings
\cite{RousseauBook23}.
A \emph{weak morphism of apartments} (endowed with a Euclidean distance) is an affine map $\varphi \colon \aa\to \aa'$ that satisfies three axioms related to the affine Weyl group actions, the walls of the apartments and their Euclidean distances \cite[Definition 1.1.4.1]{RousseauBook23}, compare with \Cref{rem:distance_preserving}.
However the composition of two weak morphisms is in general not a weak morphism.
Rousseau then defines a \emph{morphism of apartments} to be a metric weak morphism, meaning that $\varphi$ preserves distances up to the kernel of the linear part of $\varphi$.
Note that our definition does not take the metrics on $\aa$ and $\aa'$ into account, and allows thus for more flexibility.
The notion of \emph{(weak) morphisms of buildings} \cite[Definition 2.1.13.1]{RousseauBook23} is defined as a map between two Euclidean buildings that maps apartments to apartments, and that when restricted to an apartment is a (weak) morphism of apartments, which resembles our construction.
When the (weak) morphism of buildings is injective, this gives rise to \emph{(weak) subbuildings}.
It is however unclear, that, even when we place ourselves in the setting of Euclidean buildings, how his notion relates to ours.

Similarly, still in the setting of Euclidean buildings, Kleiner--Leeb define a \emph{subbuilding}
as a metric subspace of a Euclidean building which admits a Euclidean building structure \cite[Subsections 4.7]{KleinerLeebRigidityofquasiisometries}.
Note that not every subbuilding in that sense can be realized by an injective morphism: consider geodesics in higher rank buildings and compare with \Cref{fig:A1_in_A2}. We leave open whether the image of every injective morphism of Euclidean buildings is a subbuilding.

\begin{question}
    If $(\psi,\varphi,\tau) \colon \build \to \build'$ is an injective morphism (in our sense) of Euclidean buildings ($\Lambda = \rr$) with CAT(0)-metrics à la Kleiner--Leeb, can the metric on $\build$ be rescaled (on factors) so that $\psi$ is an isometric embedding?  
\end{question}

\subsubsection{Morphisms of root systems}
The main difficulty in defining morphisms of apartments, is in the case when the root systems do not agree. 
Several notions of morphisms of root systems have been developed. In \cite{LoosNeher2004} and similarly in \cite{Dyer_root_systems} morphisms are linear maps $L \colon \operatorname{Span}_{\mathbb{R}}(\Phi) \to \operatorname{Span}_{\mathbb{R}}(\Phi')$ such that $L(\Phi) \subseteq \Phi'$, possibly with some extra conditions. 
In our definition of morphisms of apartments, we also have a linear map $L$, but roots may not be sent to roots. In applications this happens for instance in the context of functoriality under subgroups $\mathbf{G} \subseteq \mathbf{G}'$ (see Section \ref{sec:functoriality_subgroups}, and Figure \ref{fig:intro:Sp4inSL4} for $\operatorname{Sp}_4 < \operatorname{SL}_4$), where the linear map $L$ does not send roots to roots. 
A morphism of root systems in the category $\mathbf{RCE}$ defined in \cite{LoosNeher2004} satisfies the compatibility condition on the Weyl groups \cite[Theorem 5.7]{LoosNeher2004}, and thus defines a morphism of apartments (when $\Lambda = \mathbb{R}$) in our setting. Thus, \Cref{def:morphism_apartment} generalizes the one in \cite{LoosNeher2004}. The notion in \cite{Dyer_root_systems} is neither stronger nor weaker than ours.

\subsubsection{Generalized affine buildings and functoriality}
\label{subsection: functoriality generalized affine buildings}
The most general and at the same time closest to our notions are probably \cite{Schwer_GeneralizedAffineBuildings} and \cite{SchwerStruyve_LambdaBuildingsBaseChangeFunctors}, where the latter generalizes the former.
Let us discuss these two now in more detail.
In \cite[Definition 5.5]{Schwer_GeneralizedAffineBuildings}, Schwer defines a notion of \emph{isomorphism} of generalized affine buildings, which agrees with our definition (see \Cref{rem:morph_build_category}).

Schwer--Struyve consider in \cite[Sections 3 and 5]{SchwerStruyve_LambdaBuildingsBaseChangeFunctors} a generalized affine building $(\build,\atlas)$ of type $\mathbb{A} \coloneqq \mathbb{A}(\Phi,\Lambda,T)$ and an order-preserving group homomorphism $\gamma \colon \Lambda \rightarrow \Lambda'$.
From this data, they construct a model apartment $\mathbb{A}'\coloneqq \mathbb{A}(\Phi,\Lambda',T')$, where $T'$ is the component-wise image of $T$ under $\gamma$.
In Section 3, the authors construct a map $\mathbb{A} \rightarrow \mathbb{A}'$, which in our notation corresponds to two maps $\Id \otimes \gamma \colon \mathbb{A} \rightarrow \mathbb{A}'$ and $\Id \colon \affwg \rightarrow \affwg$.
In fact, they show that these maps define a morphism of apartments in the sense of \Cref{def:morphism_apartment}.
Note however that the root system $\Phi$ is the same.
Let us denote by $\tau$ this morphism of apartments.

In a second step, Schwer--Struyve construct a space $\build'$ as the quotient of $\build$ by an equivalence relation, a map $\psi \colon \build \rightarrow \build'$ and for each $f\in \atlas$ a map $f' \colon \mathbb{A}' \rightarrow \build'$.
If we denote by $\atlas'$ the set of these maps $f'$ constructed, then we get a map $\varphi \colon \atlas \rightarrow \atlas'$.
They then show in \cite[Sections 3 and 5]{SchwerStruyve_LambdaBuildingsBaseChangeFunctors} that $(\build',\atlas')$ is a building and that $m=(\psi,\varphi,\tau)$ is a morphism (in the sense of \Cref{def:morphism_building}) from $(\build,\atlas)$ to $(\build',\atlas')$. When $\gamma$ is surjective or injective, then so is $m$. 
Furthermore, in \cite[Theorem 1.1]{SchwerStruyve_LambdaBuildingsBaseChangeFunctors} the authors prove that the respective spherical buildings of $\build$ and $\build'$ at infinity are isomorphic, and this isomorphism is induced by the morphism of buildings $m$.
    
In the same spirit, in \cite[\S 7]{HebertIzquierdoLoisel_LambdaBuildingsAssocToQuasiSplitGroups}, the authors associate to a quasi-split reductive algebraic group $\mathbf{G}$ and a Henselian field $\ff$, equipped with a valuation $v \colon \ff^\times \to \Lambda$, a generalized affine building, denoted by $I(\ff, v, \mathbf{G})$.
If the group is split, this is described in \Cref{ex:BruhatTitsBuilding1}.
They then construct, given a surjective morphism of ordered abelian groups $f \colon \Lambda \to \Lambda'$, a surjective map $I(\ff, v, \mathbf{G}) \to I(\ff, f \circ v , \mathbf{G})$, which is compatible with the action of $\mathbf{G}(\ff)$.
It would be interesting to investigate whether $f$ is a surjective morphism of generalized affine buildings in the sense of \Cref{def:morphism_building}, and to compare it with \Cref{intro:corol_FieldExt} in the context of symmetric buildings.

\section{%
\texorpdfstring{%
$G$-buildings and their morphisms}%
{G-buildings and their morphisms}}
\label{section:GBuildings}
The goal of this section is to construct morphisms of generalized affine buildings that are equipped with group actions.
Under sufficient transitivity assumptions this allows to define a morphism of generalized affine buildings by specifying a morphism between the model apartments, and then moving this map around under the action of the group.
We recall the following definition.

\begin{defn}\label{def:G_building}
Let $G$ be a group and $(\build,\atlas)$ an affine $\Lambda$-building.
We call $(\build,\atlas)$ a \emph{$G$-building} if $G$ acts on both $\build$ and the atlas $\atlas$ such that the actions are compatible, i.e.\ for all $ g \in G$, $f \in \atlas$, and $x\in \mathbb{A}$ we have
\[
(g.f)(x) = g.(f(x)).
\]

If $\rho \colon G \to G'$ is a group homomorphism, $B$ is a $G$-building and $B'$ is a $G'$-building, a morphism $B \to B'$ is called $\rho$-\emph{equivariant} if both $\psi$ and $\varphi$ are $\rho$-equivariant, i.e.\ for all $g\in G$, $x\in \build$ and $f \in \atlas$ we have
\[
\psi(g.x) = \rho(g).\psi(x), \textnormal{ and } \varphi(g.f)= \rho(g).\varphi(f).
\]
\end{defn}

\begin{remark}
    In the language of category theory, an action of a group $G$ on an affine building $(\build, \atlas)$ is a group homomorphism $G \to \operatorname{Aut}(\build, \atlas) \coloneqq \{m \colon (\build,\atlas) \to (\build, \atlas) \colon m \text{ is an isomorphism}\}$, so for each $g\in G$ the action associates an isomorphism $m_g=(\psi_g,\varphi_g,\tau_g)$.
    The actions that appear in the definition of $G$-buildings correspond exactly to those actions $G\to \operatorname{Aut}(\build, \atlas)$ for which $\tau_g = \Id_{\aa}$ for all $g\in G$.
\end{remark}

\subsection{%
\texorpdfstring{%
Transitivity properties of $G$-buildings}%
{Transitivity properties of G-buildings}}
Here are some direct consequences of the definition of $G$-buildings.

\begin{prop}\label{prop:TransitiveActionOnBuilding}
    Let $(\build,\atlas)$ be a $G$-building of type $\mathbb{A}= \mathbb{A}(\Phi, \Lambda, T)$. 
    If $G$ acts transitively on $\build$, then $T =\aa $.
\end{prop}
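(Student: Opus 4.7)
My plan is to show $\aa \subseteq T$ by picking an arbitrary point $x \in \aa$ and producing a translation $t \in T$ such that $w(0) = t$ for some affine Weyl element $w$ with $w(0) = x$. Combined with the given inclusion $T \subseteq \aa$, this yields equality.

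Concretely, I would fix any chart $f \in \atlas$ (for instance a ``standard'' chart). Let $x \in \aa$ be arbitrary. The points $f(0)$ and $f(x)$ both lie in $\build$, so by transitivity of the $G$-action on $\build$, there is some $g \in G$ with $g.f(0) = f(x)$. Setting $f' \coloneqq g.f$, the compatibility condition in \Cref{def:G_building} gives
\[
f'(0) = (g.f)(0) = g.f(0) = f(x),
\]
so in particular $f'(0) \in f(\aa) \cap f'(\aa)$, i.e.\ $0$ lies in the set $\Omega' \coloneqq (f')^{-1}(f(\aa) \cap f'(\aa))$.

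Now I apply axiom \ref{axiom:A2} to the pair $(f',f)$: the set $\Omega'$ is a closed subset of $\aa$, and there exists $w \in \affwg$ such that $f'|_{\Omega'} = f \circ w|_{\Omega'}$. Evaluating at $0 \in \Omega'$ yields
\[
f(x) = f'(0) = f(w(0)).
\]
Charts in a generalized affine building are injective (this is standard and follows from \ref{axiom:A6}: the distance-diminishing retraction $r_{f,f(x)}$ together with the $\Lambda$-distance on $\aa$ derived from \ref{axiom:A1}--\ref{axiom:A3} forces $f(x) = f(y) \Rightarrow x = y$). Hence $x = w(0)$.

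Finally, write $w = t \cdot s$ with $t \in T$ and $s \in \sphwg$ according to the decomposition $\affwg = T \rtimes \sphwg$. Since $\sphwg$ fixes the origin of $\aa$, we have
\[
x = w(0) = t(s(0)) = t(0) = t \in T.
\]
As $x \in \aa$ was arbitrary, $\aa \subseteq T$, and combined with $T \subseteq \aa$ we conclude $T = \aa$. The main subtlety in writing this up cleanly is invoking chart injectivity, but everything else reduces to directly unpacking the $G$-building compatibility and axiom \ref{axiom:A2}.
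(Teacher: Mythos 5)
Your proposal is correct and follows essentially the same route as the paper's proof: use transitivity to find $g$ relating $f(0)$ and $f(x)$, apply axiom \ref{axiom:A2} to $f$ and $g.f$ to produce $w \in \affwg$ with $f(x) = f(w(0))$, invoke injectivity of charts, and read off the translation part of $w$ via the semidirect product decomposition. The only cosmetic difference is the direction in which you choose $g$ (the paper takes $f(0) = g.f(x)$ and then applies $g^{-1}$), which changes nothing of substance.
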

\begin{proof}
    Let $f\in \atlas$ and $x\in \mathbb{A}$. By the transitivity of the action of $G$ on $\build$, there is $g\in G$ such that $f(0) = g.f(x)$. 
    Recall axiom \ref{axiom:A2}, that says that for all $f$, $f'\in \atlas$, the set $\Omega \coloneqq f^{-1}(f(\mathbb{A}) \cap f'(\mathbb{A}))$ is $\affwg$-convex and there exists $w \in \affwg$ such that $f|_\Omega = f \circ w |_\Omega$.
    We apply it to $f$ and $f'=g.f$.
    We note that $0 \in \Omega$.
    The second part of axiom \ref{axiom:A2} implies that there exists $w\in \affwg$ such that $f(0)=g.f \circ w (0)$.
    Applying $g^{-1}$ we get $f (w(0))= g^{-1} .f(0) = f(x)$.
    Since $f$ is injective we have $w(0)=x$.
    The element $w \in \affwg = T \rtimes \sphwg$ can be written as $w = (t,w_s)$ with $t \in T$ and $w_s \in \sphwg$.
    Since $w_s(0)=0$, we have $w(0) = t(0) = x$, hence $t=t^x$ is the translation by $x$.
    This shows that for all $x \in \mathbb{A}$, $t^x \in T$, and hence $T = \aa$.    
\end{proof}

Conversely we have the following.

\begin{prop}\label{lem:trans}
    Let $(\build,\atlas)$ be a $G$-building of type $\mathbb{A}= \mathbb{A}(\Phi, \Lambda, T)$. If $T = \aa$ is the full translation group and $G$ acts transitively on $\atlas$, then $G$ also acts transitively on $\build$.
\end{prop}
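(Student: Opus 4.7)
The plan is to reduce transitivity on points to transitivity on charts by centering a chart at any given point, which is possible precisely because $T=\aa$ contains all translations.

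More explicitly, given two points $p,q \in \build$, I would first invoke axiom \ref{axiom:A3} to find a chart $f \in \atlas$ with $p,q \in f(\aa)$, and write $p = f(x)$, $q = f(y)$ for some $x,y \in \aa$. Since $T = \aa$, the translations $t^x$ and $t^y$ both lie in $\affwg$. By axiom \ref{axiom:A1}, the charts $f_1 \coloneqq f \circ t^x$ and $f_2 \coloneqq f \circ t^y$ belong to $\atlas$, and by construction they satisfy $f_1(0) = p$ and $f_2(0) = q$.

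Now I would use the transitivity of the $G$-action on $\atlas$ to obtain an element $g \in G$ with $g.f_1 = f_2$. The compatibility condition defining a $G$-building then gives
\[
g.p = g.f_1(0) = (g.f_1)(0) = f_2(0) = q,
\]
so $G$ acts transitively on $\build$.

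There is essentially no obstacle here: the only subtle point is recognizing that the hypothesis $T = \aa$ is exactly what is needed so that for every point of an apartment one can find a chart sending $0$ to that point. Without $T = \aa$, the argument would produce an $\affwg$-orbit of the origin in $\aa$ that need not cover the whole model apartment, and the reduction to chart-transitivity would break down. This proposition is essentially a converse to \Cref{prop:TransitiveActionOnBuilding}, and the proof is symmetric in spirit: there one recovers translations from point-transitivity, here one uses translations together with chart-transitivity to recover point-transitivity.
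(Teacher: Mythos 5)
Your proof is correct and follows essentially the same route as the paper: both use axiom \ref{axiom:A3} to place $p$ and $q$ in a common chart, the hypothesis $T=\aa$ together with axiom \ref{axiom:A1} to produce translated charts, and transitivity on $\atlas$ plus the compatibility condition to move $p$ to $q$. The only (cosmetic) difference is that you recenter both points at the origin via $t^x$ and $t^y$, whereas the paper uses the single translation $w=y-x$ and evaluates $g.f=f\circ w$ at $x$.
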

\begin{proof}
    Let $p,q \in \build$.
    By axiom \ref{axiom:A3}, there exist $f \in \atlas$ and $x,y \in \mathbb{A}$ such that $p=f(x)$ and $q=f(y)$.
    Since we assumed that $T=\aa$ is the full translation group, we have $w=y-x \in \affwg = T \rtimes \sphwg$.
    By axiom \ref{axiom:A1}, we have that $f \circ w \in \atlas$.
    Since $G$ acts transitively on $\atlas$, there is a $g\in G$, such that $g.f = f \circ w$.
    In particular $g.p = g.f(x) = f\circ w (x) = f(y) = q$, and hence $G$ acts transitively on $\build$.    
\end{proof}
\subsection{%
\texorpdfstring{%
Examples of $G$-buildings}%
{Examples of G-buildings}}
\label{subsection:ExamplesGBuildings}

All the constructions of affine buildings discussed in \Cref{subsection:ExamplesNonDiscBuildings} fall in fact in the framework of $G$-buildings for some appropriate group $G$.
Let us now explain this and discuss certain transitivity properties in more detail.

\begin{example}[Norm building $\build_N$, \Cref{ex:NormBuilding1} revisited]
\label{ex:NormBuilding2}
    Recall that the norm building $\build_N$ associated to $V=\ff^n$, where $\ff$ is a field with a rank-$1$ valuation $v \colon \ff^\times \to \Lambda \subseteq \rr$, is an $\rr$-building of type $(A_{n-1},\rr,\rr^n/\rr(1,\ldots,1))$.
    We claim that $\build_N$ is a $G$-building for $G=\operatorname{GL}_n(\ff)$.
    
    Indeed, for $g \in \operatorname{GL}_n(\ff)$ the action on $\build_N$ is given by $g. \eta\coloneqq \eta \circ g^{-1}$ for $\eta$ (a homothety class of) an adaptable ultrametric norm on $V$, and, if $\eta$ is adapted to a basis $\mathcal{E}$, then $g.\eta$ is adapted to $g\mathcal{E}$.
    The action on $\atlas$ is given by $g.f_{[\eta],\mathcal{E}}\coloneqq f_{[g.\eta],g\mathcal{E}}$, and this action is compatible with the action on $\build_N$; see also \cite[Sections 3A and 3B2]{Parreau_AffineBuildings}.

    We claim that, if $\Lambda=\rr$, then  $\glnf$ acts transitively on $\build_N$ and $\atlas$.
    Otherwise, $\glnf$ does not act transitively neither on $\build_N$ nor on $\atlas$.
    To prove this, we first remark that $\glnf$ acts transitively on the bases of $\ff^n$, thus to show that $\glnf$ acts transitively on $\build_N$ it suffices to show that any two norms adapted to the standard basis are in the same $\glnf$-orbit.
    
    If $\Lambda=\rr$, then for two ultrametric norms $\eta$ and $\eta'$ adapted to the standard basis $\mathcal{E}_0= \{e_1, \ldots , e_n\}$ there is some $g \in \operatorname{GL}_n(\ff)$ such that $g.\eta' = \eta$.
    Indeed, take $g=a=\operatorname{Diag}(a_1, \ldots , a_n)\in \operatorname{GL}_n(\ff)$ such that $\eta(e_i)=|a_i|\eta'(e_i)$ (recall that $|\cdot| =\exp(-v(\cdot))$ and use the surjectivity of $v$) for all $i=1,\ldots,n$.
    Then since both $\eta$ and $\eta'$ are adapted to $\mathcal{E}_0$, we have
    \[
    a.\eta\left(\sum_{i=1}^n v_i e_i \right) =
    \eta\left(\sum_{i=1}^n a_i^{-1}v_i e_i \right)=
    \max_i \left\{ |v_i||a_i|^{-1}\eta(e_i)  \right\} = \eta'\left(\sum_{i=1}^n v_i e_i \right)
    \]
    for all $v_i\in \ff$.
    Thus $\glnf$ acts transitively on $\build_N$.

    Now if $\Lambda\neq \rr$, we also see that the action on $\build_N$ is not surjective: For example, let $\eta$ be the ultrametric norm adapted to $\mathcal{E}_0$ satisfying $\eta(e_i)=1$ for all $i=1,\ldots,n$.
    Then for $s\in \rr \setminus \Lambda$, there is no $g \in \glnf$ such that $\eta$ is in the same orbit as $\eta'$, where $\eta'$ is the ultrametric norm adapted to $\mathcal{E}_0$ satisfying $\eta'(e_1)=s$ and $\eta'(e_2)=\ldots=\eta'(e_n)=1$.

    From the above discussion and the definition of the atlas $\atlas$, we also directly see that $\glnf$ acts transitively on $\atlas$ if and only if $\Lambda=\rr$.
    Thus $\operatorname{GL}_n(\ff)$ acts transitively on the set of apartments, but not on $\build_N$ or $\atlas$, unless $\Lambda=\rr$.
\end{example}

\begin{example}[Lattice building $\build_L$, \Cref{ex:LatticeBuilding1} revisited]\label{ex:LatticeBuilding2}
    The lattice building $\build_L$ associated to $\ff^n$, where $\ff$ is a field with a valuation $v \colon \ff^\times \to \Lambda$ (not necessarily of rank one), is an affine $\Lambda$-building of type $(A_{n-1},\Lambda,\Lambda^{n-1})$.
    It is also a $G$-building for both $G=\slnf$ and $G=\glnf$.
    Indeed, if $G$ is either $\slnf$ or $\glnf$, $\build_L$ consists of homothety classes of lattices $L$ of the form $\mathcal O e_1 + \mathcal O e_2 + \ldots + \mathcal O e_n$, where $\{e_1, \ldots , e_n\}$ is a basis of $\ff^n$.
    Thus $G$ acts on a lattice by acting on the basis, i.e.\ $g.L = \mathcal O ge_1 + \ldots + \mathcal O ge_n$.
    We also define an action of $G$ on a chart $f_\mathcal{E}$ for $\mathcal{E}$ a basis of $\ff^n$ by setting $g.f_\mathcal{E} = f_{g\mathcal{E}}$.
    Then these two actions commute.
    Recall that the atlas $\atlas$ consists of all charts $f_\mathcal{E}$ for $\mathcal{E}$ a basis of $\ff^n$. The group $G$ acts transitively on the set of homothety classes of unordered bases of $\ff^n$.
    Thus $G$ acts transitively on $\atlas$.
    Furthermore, $G$ also acts transitively on the set of lattices up to homothety, and thus $G$ acts transitively on the building $\build_L$.
\end{example}

\begin{example}[Bruhat--Tits building $\build_\BT$, \Cref{ex:BruhatTitsBuilding1} revisited]\label{ex:BruhatTitsBuilding2}
    Let $\build_{\BT}$ be the Bruhat--Tits building associated to $G \coloneqq \mathbf{G}(\ff)$, 
    where $\mathbf{G}$, $\ff$, $\Lambda$, and $\Sigma$ are as in \Cref{ex:BruhatTitsBuilding1}.
    We claim that $\build_{\BT}$ is a $G$-building of type $(\Sigma^\vee,\mathfrak{R},\Lambda^n)$.
    Recall that $\build_{\BT} = ( G\times \mathfrak{A})/\!\!\sim$, where $\mathfrak{A}$ is the affine space of root group valuations.
    We already saw in \Cref{ex:BruhatTitsBuilding1} that $G$ acts on $\build_{\BT}$ via $g.[h,x] = [gh,x]$.
    There is a chart $f_0 \colon \aa \to \build_{\BT}$, $x \mapsto [\Id,x]$ and the atlas $\atlas$ is the orbit of this chart under the action of $G$.
    Thus $\atlas$ is naturally endowed with a $G$-action and these actions are compatible, hence $\build_{\BT}$ is a $G$-building of type $(\Sigma^\vee,\mathfrak{R},\Lambda^n)$.
    Note that by definition $G$ acts transitively on the atlas $\atlas$.
    However, when $\Lambda \neq \rr$, the action of $G$ on $\build_\BT$ is not transitive, see e.g.\ \Cref{prop:TransitiveActionOnBuilding}.
\end{example}

\begin{example}[Symmetric building $\build_S$, \Cref{ex:SymmetricBuilding1} revisited]
\label{ex:SymmetricBuilding2}
We claim that the symmetric building $\build_S$ associated to a semisimple self-adjoint linear algebraic $\qq$-group $\mathbf{G} < \operatorname{SL}_n$ and a real closed field $\ff$ endowed with an order-compatible valuation $v \colon \ff^\times \to \Lambda$ (not necessarily of rank one), as defined in \Cref{ex:SymmetricBuilding1}, is an example of a $G$-building, where $G\coloneqq \mathbf{G}(\ff)$.

Recall that $G$ acts on $X_\ff=G. \operatorname{Id} \subseteq P_1(n,\ff)$ by congruence, and thus on $\build_S=X_\ff/\!\! \sim$.
It is left to define the action on the atlas $\atlas$ and to show that these actions are compatible.
Fix the base point $o =[\Id] \in \build_S$.
The standard apartment is identified with $\aa = A_\ff. o$, where $A_\ff$ is the semialgebraically connected component of the $\ff$-points $\mathbf{S}(\ff)$ of a maximal $\ff$-split torus $\mathbf{S}<\mathbf{G}$.
Denote by $f_0$ the inclusion from $\aa$ to $\build_S$.
The atlas $\atlas$ is the set $\{g.f_0 \colon \aa \to \build_S \colon g \in G\}$, which is naturally endowed with a left $G$-action.
Clearly, the two actions are compatible, and hence $\build_S$ is a $G$-building of type $(\Sigma^\vee, \Lambda,\Lambda^n)$.
By definition, the actions of $G$ on $\build$ and $\atlas$ are transitive.

The descriptions of the following stabilizers for the symmetric building will be useful.
We define $M_\ff$ to be the $\ff$-extension of $M_\rr$, or equivalently the centralizer of $A_\ff$ in $\mathbf{K}(\ff)$, see \Cref{ex:SymmetricBuilding1}.
Recall that $\mathcal{O}$ denotes the valuation ring of $\ff$.
For any semialgebraic group $H \subseteq \slnf \subset \ff^{n \times n}$, let $H(\mathcal O) \coloneqq H \cap \mathcal{O}^{n \times n}$.
Then $H(\mathcal O)$ is again a group.
If $H$ are the $\ff$-points of a semisimple algebraic group $\mathbf{H}<\operatorname{SL}_n$, we sometimes also write $\mathbf{H}(\mathcal{O})$.

\begin{prop}\label{prop:homogeneous_stabilizers}
    The stabilizers satisfy
    \begin{itemize}
        \item $\operatorname{Stab}_{G}(o) = \mathbf{G}(\mathcal{O})$, and
        \item
        $\operatorname{Stab}_{G}(f_0) =        M_\ff A_\ff(\mathcal{O}) 
        = \operatorname{Cent}_{G}(A_\ff) \cap \mathcal{O}^{n\times n} = \operatorname{Cent}_{G}(\mathbf{S}(\ff)) \cap \mathcal{O}^{n \times n}$. 
    \end{itemize}
    When $\mathbf{G}$ is $\ff$-split, $\operatorname{Stab}_{G}(f_0) = \mathbf{S}(\ff) \cap \mathcal{O}^{n \times n}$.
\end{prop}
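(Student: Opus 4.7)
The plan is to combine three ingredients: the Cartan decomposition $G = \mathbf{K}(\ff) A_\ff \mathbf{K}(\ff)$ from \cite[Theorem 6.8]{appenzeller2024semialgebraic}, the compatibility condition \eqref{eq:BH_compatible} which links the chart $f_0$ to the characters of $A_\ff$, and the order-compatibility of the valuation $v$, which forces $\mathbf{K}(\ff) \subseteq \mathcal{O}^{n \times n}$. Indeed, every $k \in \operatorname{SO}_n(\ff)$ has columns of squared Euclidean norm $1$, so each entry of $k$ and of $k^{-1}=k^T$ lies in $\mathcal{O}$. In particular $M_\ff \subseteq \mathbf{K}(\ff) \subseteq \mathbf{G}(\mathcal{O})$.

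For the identity $\operatorname{Stab}_G(o) = \mathbf{G}(\mathcal{O})$, I would write $g \in G$ in Cartan form $g = k_1 a k_2$ with $a$ in the positive Weyl chamber of $A_\ff$. Since $k_1, k_2 \in \mathbf{K}(\ff)$ fix $o$, one has $g.o = o$ if and only if $a.o = o$. By \eqref{eq:BH_compatible}, $a.o = f_0(x_a)$ with $\alpha(x_a) = -v(\chi_\alpha(a))$ for every $\alpha \in \Sigma$, so by injectivity of $f_0$ this is equivalent to $v(\chi_\alpha(a)) = 0$ for all $\alpha$. On the other hand, since $k_i^{-1} = k_i^T \in \mathcal{O}^{n \times n}$, we have $g \in \mathcal{O}^{n \times n}$ if and only if $a = k_1^T g k_2^T \in \mathcal{O}^{n \times n}$, if and only if $a \in A_\ff(\mathcal{O})$. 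To match these two conditions on $a$, I would use that in the semisimple setting the root lattice has finite index in the character lattice $X^*(A_\ff)$, so combined with $\Lambda$ being a $\qq$-vector space, vanishing of $v(\chi_\alpha(a))$ for all $\alpha \in \Sigma$ upgrades to vanishing for every character; since $\mathbf{S}(\ff)$ is a commuting family of self-adjoint matrices, it is simultaneously diagonalizable by an element of $\operatorname{SO}_n(\ff) \subseteq \operatorname{GL}_n(\mathcal{O})$, so this is equivalent to $a \in A_\ff(\mathcal{O})$.

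For the second identity, the equality $M_\ff A_\ff(\mathcal{O}) = \operatorname{Cent}_G(A_\ff) \cap \mathcal{O}^{n \times n}$ will follow from the Langlands decomposition $\operatorname{Cent}_G(A_\ff) = M_\ff A_\ff$ together with $M_\ff \subseteq \mathbf{K}(\ff) \subseteq \operatorname{GL}_n(\mathcal{O})$, which yields $ma \in \mathcal{O}^{n\times n}$ iff $a \in \mathcal{O}^{n\times n}$. The identity $\operatorname{Cent}_G(A_\ff) = \operatorname{Cent}_G(\mathbf{S}(\ff))$ follows from Zariski-density of the semialgebraic identity component $A_\ff$ in the irreducible algebraic group $\mathbf{S}(\ff)$. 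The inclusion $\operatorname{Cent}_G(A_\ff) \cap \mathcal{O}^{n \times n} \subseteq \operatorname{Stab}_G(f_0)$ is then immediate from the group-action identity $g.(b.o) = (gb).o = (bg).o = b.(g.o) = b.o$, valid whenever $g$ centralizes every $b \in A_\ff$ and fixes $o$. The split case is also easy: if $\mathbf{G}$ is $\ff$-split, then $\mathbf{S}$ is a maximal torus and thus self-centralizing, whence $M_\ff A_\ff(\mathcal{O}) = \mathbf{S}(\ff) \cap \mathcal{O}^{n\times n}$.

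The main obstacle will be the reverse inclusion $\operatorname{Stab}_G(f_0) \subseteq \operatorname{Cent}_G(A_\ff) \cap \mathcal{O}^{n\times n}$. For $g \in \operatorname{Stab}_G(f_0)$ and any $b \in A_\ff$, the relation $g.(b.o) = b.o$ unwinds to $b^{-1} g b \in \operatorname{Stab}_G(o) = \mathbf{G}(\mathcal{O}) \subseteq \mathcal{O}^{n\times n}$. I would then decompose $g \in M_n(\ff)$ into weight spaces for the conjugation action of $A_\ff$, writing $g = g_0 + \sum_{\chi \neq 0} g_\chi$, so that $b^{-1} g b = g_0 + \sum_{\chi \neq 0} \chi(b)^{-1} g_\chi \in \mathcal{O}^{n \times n}$ for every $b \in A_\ff$. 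Since $v \colon \ff^\times \to \Lambda$ is surjective onto a nontrivial $\qq$-vector space, any nonzero weight $\chi$ admits $b \in A_\ff$ with $v(\chi(b)^{-1})$ arbitrarily negative; a linear-independence-of-characters argument, isolating entries of most negative valuation, then forces $g_\chi = 0$ for every $\chi \neq 0$, so $g = g_0 \in \operatorname{Cent}_G(A_\ff)$, completing the proof.
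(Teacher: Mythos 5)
Your argument is correct, but it is a genuinely different route from the paper's. The paper's proof is essentially a citation: the identity $\operatorname{Stab}_G(o)=\mathbf{G}(\mathcal{O})$ is quoted as \cite[Theorem 4.19]{appenzeller2025buildings}, and $\operatorname{Stab}_G(f_0)=M_\ff A_\ff(\mathcal{O})=\operatorname{Cent}_G(A_\ff)\cap\mathcal{O}^{n\times n}$ as \cite[Lemma 5.18 and Theorem 5.19]{appenzeller2025buildings}; only the passage from $\operatorname{Cent}_G(A_\ff)$ to $\operatorname{Cent}_G(\mathbf{S}(\ff))$ (Zariski density, as you also do) and the split case are argued in the text. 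You instead reprove the imported statements from first principles, and the three ingredients you use — $\mathbf{K}(\ff)\subseteq\mathcal{O}^{n\times n}$ from order-compatibility, the Cartan decomposition, and the compatibility condition \eqref{eq:BH_compatible} — do suffice. Two points deserve slightly more care than your sketch gives them. First, the bridge between ``$v(\chi_\alpha(a))=0$ for all $\alpha\in\Sigma$'' and ``$a\in A_\ff(\mathcal{O})$'' is cleanest if you diagonalize $\mathbf{S}(\ff)$ by some $k\in\operatorname{SO}_n(\ff)$ and note that the diagonal entries are the weights $\mu_i(a)$ of the standard representation; since the restricted roots span $X^*(\mathbf{S})\otimes\qq$ for semisimple $\mathbf{G}$ (the ``finite index'' phrasing is the split-case version of this) and $\Lambda$ is a $\qq$-vector space, all $v(\mu_i(a))$ vanish, and conversely $a\in\mathcal{O}^{n\times n}$ with $\det a=1$ forces $v(\mu_i(a))=0$. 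Second, in the reverse inclusion $\operatorname{Stab}_G(f_0)\subseteq\operatorname{Cent}_G(A_\ff)$, the ``linear independence of characters'' step is unnecessary if you work in that same diagonalizing basis: there the weight components of $g$ occupy disjoint matrix entries, so $b^{-1}gb\in\mathcal{O}^{n\times n}$ for all $b$ directly gives $v(g_{ij})+v(\mu_j(b)/\mu_i(b))\geq 0$ entrywise, and unboundedness of $v\circ(\mu_j/\mu_i)$ on $A_\ff$ (which follows from \eqref{eq:BH_compatible} and surjectivity of $v$) kills every off-block entry. What your approach buys is a self-contained proof not relying on \cite{appenzeller2025buildings}; what the paper's buys is brevity and consistency with the toolkit it uses elsewhere (e.g.\ in \Cref{lem:BT_stab_compatible}).
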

\begin{proof}
    The first statement is \cite[Theorem 4.19]{appenzeller2025buildings}. By \cite[Lemma 5.18 and Theorem 5.19]{appenzeller2025buildings} we have
    $
    \operatorname{Stab}_{G}(f_0) = M_\ff A_\ff(\mathcal{O}) = \operatorname{Cent}_{G}(A_\ff) \cap \mathcal{O}^{n\times n},
    $
    and since $A_\ff$ is Zariski-dense in $\mathbf{S}(\ff)$ ($\mathbf{S}$ is Zariski-connected), also the last equality follows.

    When $\mathbf{G}$ is $\ff$-split, $\mathbf{S}$ is a maximal torus (not just a maximal $\ff$-split torus). Since $\mathbf{G}$ is semisimple, the root space decomposition then implies that the Lie algebra of $\mathbf{S}$ coincides with the Lie algebra of $\operatorname{Cent}_{G}(\mathbf{S}(\ff))$.
    However, $\mathbf{S}(\ff) \subseteq \operatorname{Cent}_{G}(\mathbf{S}(\ff)) = \operatorname{Cent}_{\mathbf{G}}(\mathbf{S})(\ff)$ and $\operatorname{Cent}_{\mathbf{G}}(\mathbf{S})$ is connected \cite[Corollary 11.12]{Borellinearalgebraixgroups}, so equality holds. 
\end{proof}
Since $G$ acts transitively on $\build_S$, by \Cref{prop:homogeneous_stabilizers}, the orbit-stabilizer-theorem implies that $\build_S \cong G/\mathbf{G}(\mathcal{O})$ as $G$-homogeneous sets.
\end{example}

\subsection{Extending morphisms of apartments I}
\label{subsection:ExtendingMorphismApartments}
The goal of this section is to prove \Cref{intro:thm:baby_morphismofGbuildings}, which gives conditions to construct morphisms of $G$-buildings.  
Let us recall the theorem. 

For this let $\Lambda$, $\Lambda'$ be ordered abelian groups, $\Phi$, $\Phi'$ crystallographic root systems of rank $n$, $m$ respectively with spherical Weyl groups $\sphwg$, $\sphwg'$, and $\mathbb{A}$, $\mathbb{A}'$ the associated model apartments and consider $T < \mathbb{A}$, $T'< \mathbb{A}'$ the translation subgroups of the respective affine Weyl groups $\affwg = T \rtimes \sphwg$, $\affwg' = T' \rtimes \sphwg'$.

We assume $G$ acts transitively on $\build$ and $\atlas$, (for the situation when $G$ does not act transitively on $\build$, see \Cref{thm:morphismofGbuildings} in the next section).

\begin{thm}[\Cref{intro:thm:baby_morphismofGbuildings}]
\label{thm:baby_morphismofGbuildings}
    Let  $(\build,\atlas)$ be a $G$-building and $(\build',\atlas')$ a $G'$-building. Let $\rho \colon G \to G'$ be a group homomorphism and $\tau \colon \aa \to \aa'$ a morphism of the respective model apartments of $\build$ and $\build'$.
    If $G$ acts transitively both on $\build$ and on $\atlas$ and there exist charts $f \in \atlas$ and $f' \in \atlas'$ such that 
    \begin{enumerate}[label=(\arabic*)]
        \item \label{condition:baby_stabilizerofapoint} 
        $\rho(\operatorname{Stab}_G(f(0))) \subseteq \operatorname{Stab}_{G'}(f'(0))$, and
        \item \label{condition:baby_stabilizerofchart} 
        $\rho(\operatorname{Stab}_G(f)) \subseteq \operatorname{Stab}_{G'}(f')$, and
        \item \label{condition:baby_Afw} 
        for all $x\in \aa$, there exists $g\in G$ such that 
        $$
        g.f(0) = f(x) \quad \text{ and } \quad \rho(g).f'(0) = f'(\tau(x)),
        $$
    \end{enumerate}
    then there exists a $\rho$-equivariant morphism $m\coloneqq(\psi, \varphi, \tau) \colon \build \to \build'$.
    
    Additionally,
    \begin{enumerate}[label=(\alph*)]
        \item \label{thm:baby_monomorphism}
        if $\tau$ and $\rho$ are injective and $\rho(\mathrm{Stab}_G(f))=\mathrm{Stab}_{G'}(f')$, then $m$ is injective;
        
        \item \label{thm:baby_epimorphism}
        if $G'$ acts transitively on $\build'$ and $\atlas'$, and $\tau$ and $\rho$ are surjective, then $m$ is surjective; 
        
        \item \label{thm:baby_isomorphism}
        if $\tau$ is an isomorphism, $G'$ acts transitively on $\build'$ and $\atlas'$, $\rho$ is an isomorphism of groups, and the two inclusions \ref{condition:baby_stabilizerofapoint} and \ref{condition:baby_stabilizerofchart} hold as equalities, then there exists an inverse morphism. That is, $(\build,\atlas)$ and $(\build',\atlas')$
        are isomorphic.
    \end{enumerate}
\end{thm}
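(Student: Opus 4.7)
The plan is to construct $\psi$ and $\varphi$ explicitly from the parametrization of $\build$ and $\atlas$ by $G$, verify the diagram defining a morphism, and then deduce each of the additional properties by elementary group-theoretic bookkeeping.

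First, using the transitivity of $G$ on $\build$ and $\atlas$, every $p \in \build$ admits a (non-unique) presentation $p = g.f(0)$, and every $h \in \atlas$ a presentation $h = g.f$. I would define
\[
\psi(p) \coloneqq \rho(g).f'(0), \qquad \varphi(h) \coloneqq \rho(g).f'.
\]
Well-definedness of $\psi$ and $\varphi$ is exactly what \ref{condition:baby_stabilizerofapoint} and \ref{condition:baby_stabilizerofchart} say: changing the representative by an element of the appropriate stabilizer in $G$ produces a representative of the image that differs by an element of the corresponding stabilizer in $G'$. The $\rho$-equivariance $\psi(g'.p) = \rho(g').\psi(p)$ and $\varphi(g'.h) = \rho(g').\varphi(h)$ is then immediate from the definition.

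Second, I would check the compatibility $\psi \circ h = \varphi(h) \circ \tau$ for every $h \in \atlas$. Writing $h = g_1.f$, this reduces to showing $\psi(g_1.f(x)) = \rho(g_1).f'(\tau(x))$ for each $x \in \aa$. Here \ref{condition:baby_Afw} supplies an element $g \in G$ such that $g.f(0) = f(x)$ and \emph{simultaneously} $\rho(g).f'(0) = f'(\tau(x))$. Then $g_1.f(x) = (g_1 g).f(0)$, so by the definition of $\psi$ we get $\psi(g_1.f(x)) = \rho(g_1)\rho(g).f'(0) = \rho(g_1).f'(\tau(x))$. The role of \ref{condition:baby_Afw} is precisely to let one element $g$ do both jobs at once, and this is the key conceptual content of the hypothesis.

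For \ref{thm:baby_monomorphism}, injectivity of $\psi$ would follow from axiom \ref{axiom:A3}: given $\psi(p_1) = \psi(p_2)$, pick an apartment $h(\aa)$ containing both, write $p_i = h(x_i)$, use the commutative diagram to get $\varphi(h)(\tau(x_1)) = \varphi(h)(\tau(x_2))$, and conclude $x_1 = x_2$ from injectivity of the chart $\varphi(h)$ together with injectivity of $\tau$. Injectivity of $\varphi$ would come from the equality $\rho(\mathrm{Stab}_G(f)) = \mathrm{Stab}_{G'}(f')$ and injectivity of $\rho$: if $\rho(g_1).f' = \rho(g_2).f'$, then $\rho(g_2^{-1}g_1) \in \mathrm{Stab}_{G'}(f') = \rho(\mathrm{Stab}_G(f))$, and injectivity of $\rho$ forces $g_2^{-1}g_1 \in \mathrm{Stab}_G(f)$. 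Part \ref{thm:baby_epimorphism} is essentially a lift: transitivity of $G'$ on $\build'$ and on $\atlas'$ writes any element as $g'.f'(0)$ or $g'.f'$, surjectivity of $\rho$ gives $g \in G$ with $\rho(g) = g'$, and $g.f(0)$ (resp.\ $g.f$) is then a preimage.

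Part \ref{thm:baby_isomorphism} is obtained by reapplying the theorem in reverse, with the data $(\tau^{-1}, \rho^{-1}, f', f)$, to produce an inverse morphism $m^{-1}$. The two stabilizer inclusions required for $m^{-1}$ are the reverse inclusions of the original ones, which follow from the assumed equalities; condition \ref{condition:baby_Afw} for $m^{-1}$ follows by feeding $x = \tau^{-1}(x')$ into \ref{condition:baby_Afw} for $m$ and taking $g' \coloneqq \rho(g)$. Once $m^{-1}$ is built, its composites with $m$ act as the identity on the base point $f(0)$ and the base chart $f$ (and their primed versions) by the explicit formulas, and by $\rho$-equivariance they are the identity everywhere. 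The main obstacle throughout is the careful bookkeeping of which representative in $G$ one chooses at each step; \ref{condition:baby_Afw} is precisely what enables a coherent choice across the two buildings, and once that condition is correctly exploited, the rest is a sequence of routine transitivity and injectivity arguments.
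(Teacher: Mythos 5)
Your proposal is correct and follows essentially the same route as the paper's own proof: the same explicit definitions $\psi(g.f(0))=\rho(g).f'(0)$ and $\varphi(g.f)=\rho(g).f'$, well-definedness from the two stabilizer conditions, the diagram check via the single element $g$ supplied by condition \ref{condition:baby_Afw}, injectivity of $\psi$ via axiom \ref{axiom:A3} and a common chart, and the inverse in part \ref{thm:baby_isomorphism} obtained by reapplying the theorem to $(\tau^{-1},\rho^{-1},f',f)$. Your verification that condition \ref{condition:baby_Afw} holds for the reversed data is in fact slightly more explicit than the paper's, which simply asserts the reapplication.
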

\begin{proof}
We start with the construction of $\psi \colon \build \to \build'$. Since $G$ acts transitively on $\build$, every element of $\build$ is of the form $g.f(0)$ for some $g\in G$.
    We define $\psi$ by
    \[
    \psi \left(g.f\left(0\right)\right) \coloneqq \rho(g).f'\left(0\right)
    \]
    and note that $\psi$ is well-defined: if $g.f(0) = h.f(0)$, then $g^{-1}h \in \operatorname{Stab}_G(f(0))$, so $\rho(g^{-1}h) \in \operatorname{Stab}_{G'}(f'(0))$ by condition \ref{condition:baby_stabilizerofapoint} and hence $\psi(g.f(0)) = \rho(g).f'(0)=  \rho(h).f'(0) =\psi(h.f(0))$.

    Similarly, since $G$ acts transitively on $\atlas$, every element of $\atlas$ is of the form $g.f$ for some $g \in G$, so we define $\varphi \colon \atlas \to \atlas'$ by 
    \[
    \varphi(g.f) \coloneqq \rho(g).f'.
    \]
    The map $\varphi$ is well-defined by condition \ref{condition:baby_stabilizerofchart}.
    By construction, $\psi$ and $\varphi$ are $\rho$-equivariant.

    To show that $(\psi,\varphi,\tau)$ is a morphism of buildings,
    it remains to check that the following diagram commutes for every chart $ z  \in \atlas$
\[
\begin{tikzcd}
	{\mathbb{A}} & \build \\
	{\mathbb{A'}} & \build'.
	\arrow["z", from=1-1, to=1-2]
	\arrow["\tau"', from=1-1, to=2-1]
	\arrow["\psi", from=1-2, to=2-2]
	\arrow["{\varphi(z)}"', from=2-1, to=2-2]
\end{tikzcd}\] 
    Let $z\in \atlas$ and $x \in \mathbb{A}$.
    By transitivity of the $G$-action on $\atlas$, there exists $g\in G$ with $g.f = z$. Moreover, condition \ref{condition:baby_Afw} provides us with $h\in G$ such that $h.f = f\circ t^x$ and $\rho(h).f' = f' \circ t^{\tau(x)}$. Then
    \begin{align*}
         (\psi \circ z) (x) &= \psi ( g.f(x) ) 
         = \psi(g.h.f(0)) = \rho(g).\rho(h).f'(0)  \\ 
        &= \rho(g).f'(\tau(x)) 
        = \varphi(g.f)(\tau(x)) 
        = \varphi(z)(\tau(x)),
    \end{align*}
    so the diagram commutes.
    
    To prove \ref{thm:monomorphism} (the injectivity of $m$) we assume that $\tau$ and $\rho$ are injective and that $\rho(\operatorname{Stab}_G(f)) = \operatorname{Stab}_{G'}(f')$.
     To show that $\psi$ is injective, let $g,h\in G$ such that $\psi(g.f(0)) = \psi(h.f(0))$, and we check that $g.f(0) = h.f(0)$.
    From axiom \ref{axiom:A3} and the fact that $G$ acts transitively on $\atlas$, there exist $k \in G$ and $x$, $y \in \mathbb{A}$ so that 
    \[
        k.f(x)=g.f(0) \text{ and } k.f(y)=h.f(0).
    \]
    We compose with $\psi$ and obtain 
    \[
        \rho(k).f'(\tau(x))
        = \psi(k.f(x))
        =\psi(k.f(y))
        =\rho(k).f'(\tau(y)).
    \]
    Thus by the injectivity of $f'$ and $\tau$, it follows that $x=y$, so
    \[
        g.f(0)=k.f(x)=h.f(0),
    \]
    which proves that $\psi$ is injective. 
    
    Now let $f_1,f_2\in \atlas$ with $\varphi(f_1)=\varphi (f_2)$.
    Since $G$ acts transitively on $\atlas$, there exist $g_1$, $g_2 \in G$ such that $f_1=g_1.f$ and $f_2=g_2.f$.
    By construction of $\varphi$ we get $\rho(g_1).f' = \rho(g_2).f'$, so
    \[
        \rho(g_1^{-1}g_2) \in \mathrm{Stab}_{G'}(f')= \rho (\mathrm{Stab}_{G}(f)).
    \]
    By injectivity of $\rho$, we have $g_1^{-1}g_2\in \mathrm{Stab}_{G}(f)$, and thus $f_1=g_1.f = g_2.f =f_2$, so $\varphi$ is injective.

    To prove \ref{thm:epimorphism} (the surjectivity of $m$), we assume that $G'$ acts transitively on $\atlas'$, and $\tau$ and $\rho$ are surjective.
    Let $z'\in \atlas'$. By transitivity of the $G'$-action on $\atlas'$, there exists $g'\in G'$ such that $z'=g'.f'$.
    Moreover, $\rho$ is surjective so that $g' = \rho(g)$ for some $g\in G$. Hence $\varphi(g.f)=g'.f'=z'$, so $\varphi$ is surjective.
    
    We now prove the surjectivity of $\psi$.
    Let $p'\in \build'$. 
    By transitivity of the action of $G$ on $\build'$, there exists $z'\in \atlas'$ such that $p'=z'(0)$. 
    Since the action of $G'$ on $\atlas'$ is transitive and $\rho$ is surjective, there exists $g\in G$ with $p' = z'(0) = \rho(g).f'(0)$. Hence 
    \[
    \psi(g.f(0))=\rho(g).f'(0)=z'(0)=p',
    \]
    thus $\psi$ is also surjective. Finally, $\tau$ is surjective by assumption.

    To prove \ref{thm:isomorphism} (that $m$ is an isomorphism) we suppose that $\tau \colon \mathbb{A} \to \mathbb{A}'$ is an isomorphism of apartments, $G'$ acts transitively on $\build'$ and $\atlas'$, $\rho$ is an isomorphism, and the two inclusions in the conditions of the theorem are equalities.
    We can now apply \Cref{thm:baby_morphismofGbuildings} to $\tau^{-1}$, $\rho^{-1}$ and the charts $f'$ and $f$ to obtain a $\rho^{-1}$-equivariant morphism $(\psi',\varphi',\tau^{-1})$ from $(\build', \atlas')$ to $(\build,\atlas)$, such that $\varphi'(f')=f$.
    By equivariance of $\psi$ and $\varphi$, we have for every $g
    \in G$ and $x\in \mathbb{A}$ 
        \begin{align*}
        \psi' \circ \psi (g.f(x))
        &= \psi' (\rho(g).f'(x)) 
        = g. f(x) \quad \text{ and }\\
        \varphi' \circ \varphi (g.f)
        &= \varphi' (\rho(g). f') 
        = g.f,
        \end{align*}
    and similarly $\psi \circ \psi' = \Id_{\build'}$ and $ \varphi \circ \varphi' = \Id_{\atlas'}$.
    Hence $\build$ and $\build'$ are isomorphic as generalized affine buildings.
\end{proof}

\subsection{Extending morphisms of apartments II}
\label{subsection:ExtendingMorphismApartments2}

The goal of this section is to prove \Cref{thm:morphismofGbuildings}, which provides morphisms between $G$-buildings even when $G$ does not act transitively on $\build$. The tradeoff is, that the conditions \ref{condition:baby_stabilizerofapoint}, \ref{condition:baby_stabilizerofchart} and \ref{condition:baby_Afw} are replaced by slightly stronger conditions. For this we introduce some notation to study more closely the stabilizers (and some of their cosets) of charts.

\begin{defn}\label{def:A_fw}
Let $G$ be a group and $(\build,\atlas)$ a $G$-building.
For $f \in \atlas$ and $w \in \affwg = T \rtimes \sphwg$, we define the subset $A_{f,w}$ of elements of $G$ that act on $f(\aa)$ the same way as $w$ acts on $\aa$, i.e.\
\[
A_{f,w} \coloneqq \{g \in G \colon g.f = f \circ w \} \subseteq G.
\]
\end{defn}
Note that $A_{f,w}$ may be empty. However if $G$ acts transitively on the atlas $\atlas$ (which is often the case), then axiom \ref{axiom:A1} is equivalent to asking that $A_{f,w} \neq \emptyset$ for all $f \in \atlas $ and $ w \in \affwg$. 

\begin{prop}\label{prop:Afw}
Assume $G$ acts transitively on $\atlas$ and let $f\in \atlas$. 
Then,
\begin{itemize}
    \item for all $w,w'\in \affwg$ we have $A_{f,w}A_{f,w'} = A_{f,ww'}$;
    \item  for $w=\Id$ we have $A_{f,\Id} = \operatorname{Stab}_G(f)$, and for every $w\in \affwg$ there exists $g\in G$ such that $A_{f,w} = gA_{f,\Id}$, so $A_{f,w}$ is a coset of $\operatorname{Stab}_G(f)$;
    \item the union $N_f \coloneqq \bigcup_{w\in \affwg} A_{f,w} $ of all these cosets is a group satisfying $ N_f/\operatorname{Stab}_{G}(f) \cong \affwg$.
\end{itemize}
\end{prop}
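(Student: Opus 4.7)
The plan is to prove the three bullets in order, using only the definition $A_{f,w}=\{g\in G:g.f=f\circ w\}$, the compatibility axiom $(g.f)(x)=g.f(x)$ for a $G$-building, axiom \ref{axiom:A1} (so that $f\circ w\in\atlas$ for every $w\in\affwg$), and the transitivity of $G$ on $\atlas$. Transitivity together with \ref{axiom:A1} immediately gives that $A_{f,w}$ is non-empty for every $w\in\affwg$, which is the key ingredient throughout.

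For the first bullet, the inclusion $A_{f,w}A_{f,w'}\subseteq A_{f,ww'}$ is a direct one-line computation: if $g.f=f\circ w$ and $g'.f=f\circ w'$, then
\[
(gg').f=g.(g'.f)=g.(f\circ w')=(g.f)\circ w'=f\circ(ww').
\]
For the reverse inclusion, I would pick any $g\in A_{f,w}$ (non-empty by the previous paragraph) and, given $h\in A_{f,ww'}$, check that $g^{-1}h\in A_{f,w'}$ by the same kind of direct computation; then $h=g\cdot(g^{-1}h)\in A_{f,w}A_{f,w'}$.

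For the second bullet, $A_{f,\Id}=\operatorname{Stab}_G(f)$ is immediate from the definition. To obtain the coset structure, I would fix $g_0\in A_{f,w}$ and verify that $A_{f,w}=g_0\operatorname{Stab}_G(f)$ by showing both inclusions: any $g\in A_{f,w}$ satisfies $(g_0^{-1}g).f=f$, and any $s\in\operatorname{Stab}_G(f)$ satisfies $(g_0s).f=g_0.f=f\circ w$.

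For the third bullet, the first bullet gives closure of $N_f$ under multiplication, and closure under inversion is another short computation: if $g.f=f\circ w$, applying $g^{-1}$ shows $g^{-1}\in A_{f,w^{-1}}$. To get the isomorphism $N_f/\operatorname{Stab}_G(f)\cong\affwg$, I would define $\pi\colon N_f\to\affwg$ by sending $g\in A_{f,w}$ to $w$. Well-definedness requires that $f\circ w=f\circ w'$ implies $w=w'$, which follows from injectivity of charts; $\pi$ is then a homomorphism by the first bullet, surjective because $A_{f,w}\neq\emptyset$ for every $w$, with kernel $A_{f,\Id}=\operatorname{Stab}_G(f)$.

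The only point that requires any vigilance, rather than routine computation, is the non-emptiness of each $A_{f,w}$, which is where axiom \ref{axiom:A1} and transitivity of $G$ on $\atlas$ enter; everything else is formal manipulation using the compatibility of the $G$-action with charts and the injectivity of charts in a building.
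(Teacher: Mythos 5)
Your proposal is correct and follows essentially the same route as the paper: non-emptiness of each $A_{f,w}$ via axiom \ref{axiom:A1} and transitivity on $\atlas$, the direct computations for the product and inverse, the coset description, and the projection $N_f \to \affwg$ with kernel $A_{f,\Id}$. Your explicit remark that well-definedness of $\pi$ rests on injectivity of charts is a small point the paper leaves implicit, but it is the right justification.
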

\begin{remark}
\label{remark: Nf for BT}
    For the Bruhat--Tits building (\Cref{ex:BruhatTitsBuilding1}), one has $N_{f_0} = \operatorname{Nor}_{\mathbf{G}}(\mathbf{S})(\ff)$ for the standard chart $f_0 \colon \aa \to \build_\BT $, \cite[(7.4.10)]{BruhatTits}.
\end{remark}
\begin{proof}[Proof of \Cref{prop:Afw}]
    By definition, $A_{f,\Id} = \operatorname{Stab}_G(f)$.
    For $w\in \affwg$, we have $f\circ w \in \atlas$ by axiom \ref{axiom:A1} and by the transitivity of the action of $G$ on $\mathcal{A}$ there exists some $g\in G$ with $g.f = f\circ w$.
    Then
    \begin{align*}
        A_{f,w} &= \{ h \in G \colon h.f = f\circ w = g.f \} 
        = \{h \in G \colon g^{-1}h.f = f\} \\
        &= g\{h' \in G \colon h'.f=f\} = gA_{f,\Id} = g \operatorname{Stab}_G(f).
    \end{align*}
    
    If $w,w' \in \affwg$ and $g\in A_{f,w}$, $g' \in A_{f,w'}$, then $gg'.f = g.f \circ w' = f \circ ww'$, so $gg' \in A_{f,ww'}$. Similarly we have $g^{-1} \in A_{f,w^{-1}}$, since for all $x \in \aa$ we have
    $$
    g^{-1}.f(x) = g^{-1}.(f \circ w) (w^{-1} x) = g^{-1}g.f(w^{-1} x) = f \circ w^{-1} (x).
    $$
    This shows that $N_f$ is a subgroup of $G$ and that the projection $N_f \twoheadrightarrow \affwg$ sending $g\in A_{f,w}$ to $w$ is a group homomorphism with kernel $A_{f,\Id} \triangleleft N_f$.
    
    We have seen that $A_{f,w}A_{f,w'} \subseteq A_{f,ww'}$.
    For the other direction let $g\in A_{f,ww'}$, take some arbitrary $h\in A_{f,w}$ and define $h' \coloneqq h^{-1}g \in A_{f,w^{-1}}A_{f,ww'} \subseteq A_{f,w'}$.
    Then $g= hh' \in A_{f,w}A_{f,w'}$.
\end{proof}

We can now state the version of our main theorem, that holds even in the case when $G$ does not act transitively on $\build$. 
The proof is analogous to the one of \Cref{thm:baby_morphismofGbuildings}, but a bit more involved.

\begin{thm}
\label{thm:morphismofGbuildings}
    Let $(\build,\atlas)$ be a $G$-building and $(\build',\atlas')$ a $G'$-building. Let $\rho \colon G \to G'$ be a group homomorphism and $\tau \colon \aa \to \aa'$ a morphism of the respective model apartments of $\build$ and $\build'$.
    If $G$ acts transitively on $\atlas$ and there exist charts $f \in \atlas$ and $f' \in \atlas'$ such that 
    \begin{enumerate}[label=(\roman*)]
        \item
        \label{condition:stabilizerofapoint}
        $\rho(\operatorname{Stab}_G(f(x
        ))) \subseteq \operatorname{Stab}_{G'}(f'(\tau (x)))$ for all $x \in \mathbb{A}$, and
        \item
        \label{condition:Afw} 
        $\rho(A_{f,w}) \subseteq A_{f',\sigma(w)}$ for all $w\in \affwg$,
    \end{enumerate}
    then there exists a $\rho$-equivariant morphism $m\coloneqq(\psi, \varphi, \tau) \colon \build \to \build'$.
    
    Additionally,
    \begin{enumerate}[label=(\alph*)]
        \item \label{thm:monomorphism}
        if $\tau$ and $\rho$ are injective and $\rho(\mathrm{Stab}_G(f))=\mathrm{Stab}_{G'}(f')$, then $m$ is injective;
        
        \item \label{thm:epimorphism}
        if $G'$ acts transitively on $\atlas'$, and $\tau$ and $\rho$ are surjective, then $m$ is surjective;
        
        \item \label{thm:isomorphism}
        if $\tau$ is an isomorphism, $G'$ acts transitively on $\atlas'$, $\rho$ is an isomorphism of groups, and the two inclusions \ref{condition:stabilizerofapoint} and \ref{condition:Afw} hold as equalities, then there exists an inverse morphism. That is, $(\build,\atlas)$ and $(\build',\atlas')$
        are isomorphic.
    \end{enumerate}
\end{thm}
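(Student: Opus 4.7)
The plan is to mimic the argument of Theorem \ref{thm:baby_morphismofGbuildings}, but since $G$ is no longer assumed to act transitively on $\build$, I cannot define $\psi$ on a single orbit. Instead, I will first define $\varphi$ using the orbit of $f$, and then define $\psi$ chart by chart, invoking Axiom \ref{axiom:A3} to ensure every point of $\build$ lies in some apartment. Explicitly, set $\varphi(g.f) \coloneqq \rho(g).f'$; this is well defined because for any $g \in \operatorname{Stab}_G(f) = A_{f,\Id}$, condition \ref{condition:Afw} taken at $w = \Id$ yields $\rho(g) \in A_{f',\Id} = \operatorname{Stab}_{G'}(f')$. Next, for $p \in \build$, pick any $z \in \atlas$ and $x \in \aa$ with $z(x) = p$, and set $\psi(p) \coloneqq \varphi(z)(\tau(x))$.

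The main obstacle will be well-definedness of $\psi$. Suppose $z_1(x_1) = z_2(x_2)$; writing $z_i = g_i.f$ and $g \coloneqq g_1^{-1}g_2$, this reads $g.f(x_2) = f(x_1)$. Axiom \ref{axiom:A2} applied to $f$ and $g.f$ produces $w \in \affwg$ such that, on the relevant convex set, $f = (g.f) \circ w$, and evaluating at $x_1$ together with injectivity of $g.f$ forces $w(x_1) = x_2$. Transitivity of $G$ on $\atlas$ combined with Axiom \ref{axiom:A1} gives some $h \in A_{f,w}$, i.e.\ $h.f = f \circ w$. Then $h^{-1}.f(x_2) = f(w^{-1}(x_2)) = f(x_1) = g.f(x_2)$, so $hg \in \operatorname{Stab}_G(f(x_2))$. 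Condition \ref{condition:stabilizerofapoint} delivers $\rho(hg) \in \operatorname{Stab}_{G'}(f'(\tau(x_2)))$, hence $\rho(g).f'(\tau(x_2)) = \rho(h^{-1}).f'(\tau(x_2))$. On the other hand, condition \ref{condition:Afw} gives $\rho(h) \in A_{f',\sigma(w)}$, and the apartment morphism identity $\sigma(w)^{-1} \circ \tau = \tau \circ w^{-1}$ yields $\rho(h^{-1}).f'(\tau(x_2)) = f'(\tau(w^{-1}(x_2))) = f'(\tau(x_1))$. Combining both equalities gives $\rho(g_1).f'(\tau(x_1)) = \rho(g_2).f'(\tau(x_2))$, i.e.\ $\varphi(z_1)(\tau(x_1)) = \varphi(z_2)(\tau(x_2))$, as required. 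Commutativity of the chart diagram $\psi \circ z = \varphi(z) \circ \tau$ and $\rho$-equivariance of $\psi$ and $\varphi$ are then immediate from the definitions.

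For parts \ref{thm:monomorphism}--\ref{thm:isomorphism}, the arguments are comparatively routine. For injectivity of $\psi$, given $\psi(p_1) = \psi(p_2)$, I would use Axiom \ref{axiom:A3} to find a common chart $z$ with $p_i = z(x_i)$; then $\varphi(z)(\tau(x_1)) = \varphi(z)(\tau(x_2))$, and injectivity of $\tau$ and of the chart $\varphi(z)$ forces $x_1 = x_2$. Injectivity of $\varphi$: if $\rho(g_1).f' = \rho(g_2).f'$ then $\rho(g_1^{-1}g_2) \in \operatorname{Stab}_{G'}(f') = \rho(\operatorname{Stab}_G(f))$, and injectivity of $\rho$ yields $g_1^{-1}g_2 \in \operatorname{Stab}_G(f)$. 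Surjectivity of $\varphi$ follows from transitivity of $G'$ on $\atlas'$ together with surjectivity of $\rho$; surjectivity of $\psi$ then follows because every $p' \in \build'$ lies in some $\varphi(z)(\aa')$ and $\tau$ is surjective.

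Finally, for the isomorphism statement \ref{thm:isomorphism}, my plan is to apply the theorem a second time to the data $(\rho^{-1}, \tau^{-1}, f', f)$. The assumption that $\tau$ is an isomorphism of apartments makes $\sigma$ bijective, so that $\tau^{-1}$ is again a morphism of apartments in the sense of Definition \ref{def:morphism_apartment}; the equality versions of \ref{condition:stabilizerofapoint} and \ref{condition:Afw}, combined with $\rho$ being a group isomorphism, translate exactly into the same conditions (as equalities) with the roles of $(\rho, f)$ and $(\rho^{-1}, f')$ swapped. This produces a morphism $m' = (\psi', \varphi', \tau^{-1}) \colon \build' \to \build$, and a direct check on representatives $g.f(x)$ and $g.f$ confirms that $m'$ is a two-sided inverse of $m$ componentwise.
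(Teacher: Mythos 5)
Your proposal is correct and follows essentially the same route as the paper: the same definitions of $\varphi$ and $\psi$, the same well-definedness argument for $\psi$ via axiom \ref{axiom:A2}, a representative of $A_{f,w}$, and conditions \ref{condition:stabilizerofapoint} and \ref{condition:Afw} combined with the intertwining relation $\tau \circ w^{-1} = \sigma(w)^{-1} \circ \tau$, and the same treatment of parts \ref{thm:monomorphism}--\ref{thm:isomorphism} (including proving \ref{thm:isomorphism} by applying the theorem to the inverse data). Your explicit remark that bijectivity of $\sigma$ is what lets the equality version of \ref{condition:Afw} be inverted is a point the paper leaves implicit, but otherwise the arguments coincide.
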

\begin{proof}
    Assume $G$ acts transitively on $\atlas$, and fix the charts $f \in \atlas$ and $f' \in \atlas'$ satisfying \ref{condition:stabilizerofapoint} and \ref{condition:Afw}.
    We start with the construction of $\varphi \colon \atlas \to \atlas'$.
    Indeed, since $G$ acts transitively on $\atlas$, every element of $\atlas$ is of the form $g.f$ for some $g \in G$, so we define a $\rho$-equivariant function $\varphi \colon \atlas \to \atlas'$ by 
    \[
    \varphi(g.f) \coloneqq \rho(g).f'.
    \]
    The map $\varphi$ is well-defined.
    Indeed, if $g.f=h.f$ for some $g,h \in G$, then $g^{-1}h.f = f$ so that using \ref{condition:Afw} for $w=\Id$ implies that $\rho(g^{-1}h) \in \operatorname{Stab}_{G'}(f')$ and thus
    \[
    \varphi (g.f)=\rho(g).f' = \rho(h).f'=\varphi(h.f).
    \]
    
    Next, we define $\psi \colon \build \to \build'$.
    Any element in $\build$ is of the form $g.f(x)$ for some $g\in G$ and $x \in \mathbb{A}$---this follows from axiom \ref{axiom:A3} and the fact that $G$ acts transitively on $\atlas$.
    For every $g\in G$ and $x\in \mathbb{A}$, we define $\psi$ by
    \[
    \psi \left(g.f\left(x\right)\right) \coloneqq \rho(g).f'\left(\tau \left(x\right)\right).
    \]
    We check that $\psi$ is well-defined.
    Let $g,h \in G$ and $x,y \in \mathbb{A}$ such that $g.f(x)=h.f(y)$.
    From axiom \ref{axiom:A2} there exists $w \in \affwg$ such that  
    \[
    (g.f)|_\Omega = \left(h.f \circ w \right)|_\Omega, \ \text{ where } \Omega \coloneqq (g.f)^{-1}\left(g.f\left(\mathbb{A}\right) \cap h.f\left(\mathbb{A}\right) \right).
    \]
    Note that $x\in \Omega$ since $g.f(x)=h.f(y)$. 
    In particular 
    \[
    h.f(y)=g.f(x) = h.f(w(x)),
    \]
    so that $w(x)=y$ by injectivity of $f$. 
    The set $A_{f,w}$ is non-empty by transitivity of the action of $G$ on $\atlas$ and axiom \ref{axiom:A1}.
    For every $g_w \in A_{f,w}$ we have
    \[
    g.f(x) = h.f(w(x)) = hg_w.f(x),
    \]
    so that $\rho(g^{-1}hg_w) \in \rho(\operatorname{Stab}_G(f(x))) \subseteq \operatorname{Stab}_G(f'(\tau(x)))$ using \ref{condition:stabilizerofapoint}, or equivalently 
    \[
    \rho(g).f'(\tau (x)) = \rho(hg_w).f'(\tau(x)).
    \]
    By \ref{condition:Afw}, we also have $\rho(g_w) \in \rho(A_{f,w}) \subseteq A_{f',\sigma(w)}$ so that
    \begin{align*}
        \rho(g).f'(\tau (x)) &= \rho(hg_w).f'(\tau(x)) 
        = \rho(h).f' \left( \sigma\left(w\right)\left(\tau\left(x\right)\right) \right)\\
        &= \rho(h).f'(\tau(w(x)))  
        = \rho(h).f'(\tau(y)),
    \end{align*}
    where the third equality follows since $\tau$ is a morphism of apartments.
    This shows that $\psi$ is well-defined.
    By construction, $\varphi$ and $\psi$ are $\rho$-equivariant.

    To show that $(\psi,\varphi,\tau)$ is a $\rho$-equivariant morphism of buildings,
    it remains to check that the following diagram commutes for every chart $ z  \in \atlas$
\[
\begin{tikzcd}
	{\mathbb{A}} & \build \\
	{\mathbb{A'}} & \build'.
	\arrow["z", from=1-1, to=1-2]
	\arrow["\tau"', from=1-1, to=2-1]
	\arrow["\psi", from=1-2, to=2-2]
	\arrow["{\varphi(z)}"', from=2-1, to=2-2]
\end{tikzcd}\] 
    Let $z\in \atlas$ and $x \in \mathbb{A}$.
    By transitivity of the $G$-action on $\atlas$, there exists $g\in G$ with $g.f = z$. Then
    \[
        (\psi \circ z) (x) = \psi ( g.f(x) ) = \rho(g).f'(\tau(x)) 
        = \varphi(g.f)(\tau(x)) 
        = \varphi(z)(\tau(x)),
    \]
    so the diagram commutes.
    
    To prove \ref{thm:monomorphism} (the injectivity of $m$) we assume that $\tau$ and $\rho$ are injective and $\rho(\operatorname{Stab}_G(f)) = \operatorname{Stab}_{G'}(f')$.
     For every $p\in \build$, there exists $g\in G$ and $x\in \mathbb{A}$ such that $g.f(x)=p$.
     So to show that $\psi$ is injective, let $g,h\in G$ and $x,y\in \mathbb{A}$ such that $\psi(g.f(x)) = \psi(h.f(y))$,  and we check that $g.f(x) = h.f(y)$. 
    From axiom \ref{axiom:A3} and the fact that $G$ acts transitively on $\atlas$, there exist $g_1 \in G$ and $x_1$, $y_1\in \mathbb{A}$ so that 
    \[
        g_1.f(x_1)=g.f(x) \text{ and } g_1.f(y_1)=h.f(y).
    \]
    We compose with $\psi$ and obtain 
    \[
        \rho(g_1).f'(\tau(x_1))
        = \psi(g_1.f(x_1))
        =\psi(g_1.f(y_1))
        =\rho(g_1).f'(\tau(y_1)).
    \]
    Thus by the injectivity of $\tau$ and the injectivity of $f$ and $f'$, it follows that $x_1=y_1$, so
    \[
        g.f(x)=g_1.f(x_1)=h.f(y),
    \]
    which proves that $\psi$ is injective. 
    
    Now suppose $\rho$ is injective and $\rho(\mathrm{Stab}_G(f))=\mathrm{Stab}_{G'}(f')$. 
    Let $f_1,f_2\in \atlas$ with $\varphi(f_1)=\varphi (f_2)$.
    Since $G$ acts transitively on $\atlas$, there exist $g_1$, $g_2 \in G$ such that $f_1=g_1.f$ and $f_2=g_2.f$.
    By construction of $\varphi$ we get $\rho(g_1).f' = \rho(g_2).f'$, so
    \[
        \rho(g_1^{-1}g_2) \in \mathrm{Stab}_{G'}(f')= \rho (\mathrm{Stab}_{G}(f)).
    \]
    By injectivity of $\rho$, we have $g_1^{-1}g_2\in \mathrm{Stab}_{G}(f)$, and thus $f_1=g_1.f = g_2.f =f_2$, so $\varphi$ is injective.

    To prove \ref{thm:epimorphism} (the surjectivity of $m$), we assume that $G'$ acts transitively on $\atlas'$, and $\tau$ and $\rho$ are surjective.
    Let $z'\in \atlas'$. By transitivity of the $G'$-action on $\atlas'$, there exists $g'\in G'$ such that $z'=g'.f'$.
    Moreover, $\rho$ is surjective so that $g' = \rho(g)$ for some $g\in G$. Hence $\varphi(g.f)=g'.f'=z'$, so $\varphi$ is surjective.
    
    We now prove the surjectivity of $\psi$.
    Let $p'\in \build'$. 
    By axiom \ref{axiom:A3}, there exist $x' \in \mathbb{A}'$ and $z'\in \atlas'$ such that $p'=z'(x')$. 
    Since the action of $G'$ on $\atlas'$ is transitive and $\rho$ is surjective, there exists $g\in G$ with $p' = z'(x') = \rho(g).f'(x')$.
    By the surjectivity of $\tau$, there exists $x\in\mathbb{A}$ such that $x' = \tau(x)$. Hence 
    \[
    \psi(g.f(x))=\rho(g).f'(\tau(x))=z'(x')=p',
    \]
    thus $\psi$ is also surjective.

    To prove \ref{thm:isomorphism} (that $m$ is an isomorphism) we suppose that $\tau \colon \mathbb{A} \to \mathbb{A}'$ is an isomorphism of apartments, $G'$ acts transitively on $\atlas'$, $\rho$ is an isomorphism, and the two inclusions in the conditions of the theorem are equalities.
    We can now apply \Cref{thm:morphismofGbuildings} to $\tau^{-1}$, $\rho^{-1}$ and the charts $f'$ and $f$ to obtain a $\rho^{-1}$-equivariant morphism $(\psi',\varphi',\tau^{-1})$ from $(\build', \atlas')$ to $(\build,\atlas)$, such that $\varphi'(f')=f$.
    By equivariance of $\psi$ and $\varphi$, we have for every $g
    \in G$ and $x\in \mathbb{A}$ 
        \begin{align*}
        \psi' \circ \psi (g.f(x))
        &= \psi' (\rho(g).f'(x)) 
        = g. f(x) \quad \text{ and }\\
        \varphi' \circ \varphi (g.f)
        &= \varphi' (\rho(g). f') 
        = g.f,
        \end{align*}
    and similarly $\psi \circ \psi' = \Id_{\build'}$ and $ \varphi \circ \varphi' = \Id_{\atlas'}$.
    Hence $\build$ and $\build'$ are isomorphic as generalized affine buildings.
\end{proof}

\begin{remark}\label{rem:unique}
    The morphisms of Theorems \ref{thm:baby_morphismofGbuildings} and \ref{thm:morphismofGbuildings} are unique in the following sense. Assume $G$ acts transitively on $\atlas$ and fix a chart $f\in \atlas$. If two $\rho$-equivariant morphisms $m = (\psi, \varphi, \tau)$, $m'=(\psi',\varphi', \tau') \colon (\build, \atlas) \to (\build', \atlas')$ satisfy $\tau = \tau'$ and $\varphi(f) = \varphi(f')$, then $m = m'$. 
\end{remark}
\begin{proof}
    Every element of $\atlas$ is of the form $g.f$ for some $g\in G$. Then
    $$
    \varphi(g.f) = \rho(g)\varphi(f) = \rho(g)\varphi'(f') = \varphi'(g.f),
    $$
    so $\varphi = \varphi'$. Any element of $\build$ is of the form $g.f(x)$ for some $g\in G$, $x \in \aa$. Then
    $$
    \psi(g.f(x)) = \rho(g).\varphi(f)(\tau(x)) = \rho(g).\varphi'(f)(\tau(x)) = \psi'(g.f(x)),
    $$
    so $\psi = \psi'$.
\end{proof}

\section{Relations between the different buildings}
\label{section:RelationsBuildings}
The goal of this section is to apply \Cref{thm:morphismofGbuildings} to construct morphisms 
\[
\build_L \xrightarrow[\cong]{(\textnormal{\Cref{thm:Lattice_to_KT} })} 
\build_S \xhookrightarrow[]{(\textnormal{\Cref{corol:MorphismKramerTentToBruhatTits} })} 
\build_\BT \xhookrightarrow[]{(\textnormal{\Cref{thm:BTNorm} })} 
\build_N,
\]
whenever both the source and the target are defined for a choice of algebraic group $\mathbf{G}$ and valued field $\ff$.
All three morphisms are injective morphisms, and when $\Lambda = \rr$ (the valuation is surjective to $\rr$), all morphisms are isomorphisms.

\subsection{Lattice and symmetric buildings}
\label{subsection:LatticeSymmetricBuildings}

The goal of this section is the proof of \Cref{thm:Lattice_to_KT}, in which we construct an isomorphism from the lattice building $\build_L$ (\Cref{ex:LatticeBuilding1}) to the symmetric building $\build_S$ (\Cref{ex:SymmetricBuilding1}), in the case where $\mathbf{G} = \operatorname{SL}_n$ and $\ff$ is a real closed field with an order-compatible valuation $v \colon \ff^\times \to \Lambda = \ff^\times / \mathcal O^\times$.
The isomorphism is less natural than one might expect: it uses a non-trivial isomorphism $\tau$ of apartments given by multiplying by $(-1)$, see \Cref{lem:L_KT_aptiso}. 
This is a situation analogous to isomorphisms in some discrete buildings that are not label-preserving.

Both $\build_L$ and $\build_S$ are of type $\aa(\Phi, \Lambda, \Lambda^{n-1})$, where the underlying root system of type $A_{n-1}$ is given by
\[
\Phi = \left\{ x_{ij} \coloneqq e_i-e_j \in V, \right\},
\]
where $V=\rr^n$ is the standard vector space with standard basis $\{e_1, \ldots , e_n\}$.
The basis $\Delta = \left\{ x_{i, i+1} \colon i \in \{1,2, \ldots, n-1\}\right\}$ of $\Phi$ induces an isomorphism 
\begin{align*}
   \operatorname{Span}_\qq(\Phi) \otimes \Lambda \to \Lambda^{n-1}, \quad \sum_{i=1}^{n-1}  x_{i, i+1} \otimes \lambda_i   \mapsto ( \lambda_1 , \ldots , \lambda_{n-1})
\end{align*}
of the apartment $\aa = \operatorname{Span}_\qq(\Phi) \otimes \Lambda$ with $\Lambda^{n-1}$.
The spherical Weyl group $\sphwg$ is the symmetric group $S_n$ on $n$ elements and acts on $\Phi$ via $\sigma(x_{ij}) = x_{\sigma(i)\sigma(j)}$ for $\sigma \in \sphwg$, and by linear extension on $\aa$.
The translation group $T$ is the full translation group $T \cong \Lambda^{n-1}$. 
We first develop the theory for the lattice building $\build_L$.

\subsubsection{Setup for the lattice building}
\label{sec: L_H_Setup}
For now, let $\ff$ be any valued field (not necessarily ordered).
Recall from \Cref{ex:LatticeBuilding1} that lattices are of the form $\mathcal O e_1' + \ldots + \mathcal O e_n'$, where $\{e_1', \ldots , e_n'\}$ is a basis of $\ff^n$ and $\mathcal{O}$ is the valuation ring, the lattice building $\build_L$ is the set of homothety classes of lattices, and that $\slnf$ acts transitively on $\build_L$ (\Cref{ex:LatticeBuilding2}).
Viewing $\slnf \subseteq \ff^{n\times n}$ we define $\operatorname{SL}_n(\mathcal O) \coloneqq \operatorname{SL}_n(\ff) \cap \mathcal O ^{n\times n}$.
The lattice $L_0= \mathcal O ^n$ corresponding to the standard basis of $\ff^n$ is called the \emph{standard lattice} and we call $[L_0]$ the \emph{base point} of $\build_L$.
The following is well-known in the discrete case, and a proof can be found in \cite{HebertIzquierdoLoisel_LambdaBuildingsAssocToQuasiSplitGroups}.

\begin{prop}[{\cite[Lemma 11.4]{HebertIzquierdoLoisel_LambdaBuildingsAssocToQuasiSplitGroups}}]  \label{prop:L_H_stab}
We have $\operatorname{Stab}_{\operatorname{SL}_n(\ff)}([L_0]) = \operatorname{SL}_n(\mathcal O)$.
\end{prop}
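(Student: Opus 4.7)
The plan is to verify both inclusions separately, with the reverse inclusion being the one that requires genuine argument.

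For the inclusion $\operatorname{SL}_n(\mathcal{O}) \subseteq \operatorname{Stab}_{\operatorname{SL}_n(\ff)}([L_0])$, I would observe that any $g \in \operatorname{SL}_n(\mathcal{O})$ sends the standard basis to a tuple whose $\mathcal{O}$-span is contained in $L_0 = \mathcal{O}^n$; since $g^{-1} \in \operatorname{SL}_n(\mathcal{O})$ as well, the reverse inclusion holds, so $g L_0 = L_0$ and a fortiori $g.[L_0] = [L_0]$.

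For the reverse inclusion, I would take $g \in \operatorname{SL}_n(\ff)$ with $g.[L_0] = [L_0]$, which by definition of the homothety relation means there exists $x \in \ff^\times$ with $g L_0 = x L_0$. Setting $h \coloneqq x^{-1} g \in \operatorname{GL}_n(\ff)$, this rewrites as $h L_0 = L_0$. The key auxiliary statement, which I would either recall or prove in a line, is that an element $h \in \operatorname{GL}_n(\ff)$ satisfies $h L_0 = L_0$ if and only if $h \in \operatorname{GL}_n(\mathcal{O})$: indeed the columns of $h$ must lie in $L_0 = \mathcal{O}^n$ (so $h$ has entries in $\mathcal{O}$), and the same applied to $h^{-1}$ forces $h^{-1} \in \mathcal{O}^{n\times n}$ as well, which is equivalent to $\det(h) \in \mathcal{O}^\times$.

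The final step is to pin down $x$. Since $g \in \operatorname{SL}_n(\ff)$, we have $\det(h) = x^{-n}\det(g) = x^{-n}$, and since $h \in \operatorname{GL}_n(\mathcal{O})$, this determinant lies in $\mathcal{O}^\times$. Applying $v$ gives $-n\, v(x) = 0$, hence $v(x) = 0$, so $x \in \mathcal{O}^\times$. Therefore $g = x h$ has entries in $\mathcal{O}$, and since $\det(g) = 1$, we conclude $g \in \operatorname{SL}_n(\mathcal{O})$. The only subtle point is the determinant computation forcing $v(x) = 0$; everything else is formal manipulation of lattice homothety classes, and I do not anticipate any real obstacle.
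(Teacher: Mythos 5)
Your proof is correct. The paper does not actually prove this proposition itself — it simply cites \cite[Lemma 11.4]{HebertIzquierdoLoisel_LambdaBuildingsAssocToQuasiSplitGroups} — so there is no in-text argument to compare against; your argument is the standard direct one (columns of $h$ and $h^{-1}$ lie in $L_0=\mathcal O^n$, hence $\det(h)\in\mathcal O^\times$, and the determinant normalization $\det(h)=x^{-n}$ forces $v(x)=0$ since ordered abelian groups are torsion-free), and it is complete.
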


Recall that the chart given by \cite{BennettlambdabuildingsI} corresponding to the standard basis $\mathcal{E}$ is given by
\begin{align*}
f_{\mathcal{E}} \colon \aa \cong \Lambda^{n-1} & \to \build_L \\
    (\lambda_1,\ldots,\lambda_{n-1}) &\mapsto 
    \left[\mathcal{O}x_{\lambda_1}e_1 +\mathcal{O}\frac{x_{\lambda_2}}{x_{\lambda_1}} e_2 +\ldots +\mathcal{O}\frac{x_{\lambda_{n-1}}}{x_{\lambda_{n-2}}}e_{n-1}+\mathcal{O}\frac{1}{x_{\lambda_{n-1}}}e_n\right],
\end{align*}
where $x_\lambda \in \ff^\times$ such that $v(x_{\lambda}) = \lambda$.
\begin{lemma}\label{lem:L_KT_Lambdan_to_Lattice}
    For $a = \operatorname{Diag}(a_1, \ldots , a_n) \in \operatorname{SL}_n(\ff)$ and $\lambda_k = v\left( \prod_{i=1}^k a_i \right)$, we have 
    \[
    f_{\mathcal{E}}(\lambda_1 ,\ldots,\lambda_{n-1}) = a.[L_0].
    \]
    Every point in the image of $f_{\mathcal E}$ is of the form $a.[L_0]$ for some diagonal matrix $a = \operatorname{Diag}(a_1, \ldots , a_n) \in \slnf$.

    Moreover, if $\ff$ is also ordered, we can choose $a_i > 0$. 
\end{lemma}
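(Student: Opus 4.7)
The plan is to prove both claims by a direct coordinate-wise comparison of lattices. For the first claim, let $a = \operatorname{Diag}(a_1,\ldots,a_n) \in \slnf$ and set $\lambda_0 \coloneqq 0$; then $\lambda_n = v(\det a) = v(1) = 0$, and $v(a_k) = \lambda_k - \lambda_{k-1}$ for every $k \in \{1,\ldots,n\}$. Since $a.L_0 = \mathcal{O}a_1 e_1 + \cdots + \mathcal{O}a_n e_n$, and since $v(a_k) = v(x_{\lambda_k}/x_{\lambda_{k-1}})$ (with the convention $x_{\lambda_0} = x_{\lambda_n} = 1$) implies that $a_k$ and $x_{\lambda_k}/x_{\lambda_{k-1}}$ differ by a unit of $\mathcal{O}^\times$, the two $\mathcal{O}$-modules $\mathcal{O}a_k e_k$ and $\mathcal{O}(x_{\lambda_k}/x_{\lambda_{k-1}}) e_k$ coincide coordinate by coordinate. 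Thus $a.L_0$ equals the representative lattice of $f_{\mathcal{E}}(\lambda_1,\ldots,\lambda_{n-1})$ on the nose, not merely up to homothety, proving the formula.

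For the second claim, any element in the image of $f_{\mathcal{E}}$ is of the form $f_{\mathcal{E}}(\lambda_1,\ldots,\lambda_{n-1})$ for some $(\lambda_1,\ldots,\lambda_{n-1}) \in \Lambda^{n-1}$. Using the surjectivity of $v$, I choose $a_i \in \ff^\times$ with $v(a_i) = \lambda_i - \lambda_{i-1}$ for $i = 1,\ldots,n-1$ (setting $\lambda_0 = 0$) and let $a_n \coloneqq (a_1 \cdots a_{n-1})^{-1}$. Then $a \coloneqq \operatorname{Diag}(a_1,\ldots,a_n) \in \slnf$ satisfies $\sum_{i=1}^k v(a_i) = \lambda_k$ for all $k$, so the first part of the lemma yields $f_{\mathcal{E}}(\lambda_1,\ldots,\lambda_{n-1}) = a.[L_0]$.

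Finally, when $\ff$ is also ordered with order-compatible valuation, for every $\lambda \in \Lambda$ there exists a positive element of valuation $\lambda$: if $v(y) = \lambda$, then one of $\pm y$ is positive, and both have valuation $\lambda$. Hence in the construction above each of $a_1,\ldots,a_{n-1}$ can be chosen positive, and then $a_n = (a_1 \cdots a_{n-1})^{-1}$ is automatically positive as well. There is no substantial obstacle to this lemma; the only points of care are the boundary conventions $\lambda_0 = \lambda_n = 0$ (which rely on $a \in \slnf$) and the observation that the representatives agree as lattices in $\ff^n$, not merely as homothety classes.
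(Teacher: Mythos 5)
Your proposal is correct and follows essentially the same route as the paper: both parts reduce to the observation that elements of equal valuation generate the same $\mathcal{O}$-module (the paper simply picks the representatives $x_{\lambda_k}=\prod_{i\le k}a_i$ outright, whereas you keep arbitrary representatives and invoke the unit $\mathcal{O}^\times$), and the existence and positivity arguments for the second and third claims match the paper's. No gaps.
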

\begin{proof}
    For $k \in \{1,\ldots,n-1\}$, we set $x_{\lambda_k} \coloneqq \prod_{i=1}^k a_i $, so that $x_{\lambda_k}/x_{\lambda_{k-1}} = a_k$ for all $ k \in \left\{2,3, \ldots , n-1\right\} $  and $1/x_{\lambda_{n-1}} =a_n$, since $\det(a)=1$.
    The first description then follows directly from the definition of $f_{\mathcal E}$.
    If we start with some $\lambda = (\lambda_1, \ldots, \lambda_{n-1}) \in \Lambda^{n-1}$, we can choose $a_1 \in \ff^\times$ with $v(a_1) = \lambda_1$, and then iteratively $a_k\in \ff^\times$ with $v(a_k) = \lambda_k-\lambda_{k-1}$ for all $k \in \{2, 3, \ldots , n-1\}$.
    Finally define $a_n \coloneqq 1/(a_1 \cdots a_{n-1}) \in \ff_{>0}$.
    Then
    \[
    v\left(\prod_{i=1}^k a_i\right) = \sum_{i=1}^k v(a_i) = \lambda_1 + (\lambda_2 - \lambda_1) + \ldots + (\lambda_k - \lambda_{k-1}) = \lambda_k,
    \]
    so $f_{\mathcal E}(\lambda_1, \ldots ,\lambda_{n-1}) = a.[L_0]$.
    
    Since $v(a_i) = v(-a_i)$, the last claim follows.
\end{proof}
The dual roots $\alpha_{ij} \in \Sigma \subseteq V^\star$ defined by $\alpha_{ij} \colon V \to \rr$, $(v_1, \ldots , v_n) \mapsto v_i - v_j$ define linear maps
\begin{align*}
    \alpha_{ij}\colon \operatorname{Span}_{\qq}(\Phi)\otimes_\qq \Lambda \to \Lambda, \quad \sum_{k=1}^{n-1} x_{k,k+1} \otimes \lambda_k \mapsto \sum_{k=1}^{n-1} \lambda_k \alpha_{ij}(x_{k,k+1}),
\end{align*}
that can be used to characterize the diagonal element $a$ in \Cref{lem:L_KT_Lambdan_to_Lattice}. 

\begin{lemma}\label{lem:L_KT_char_A_L}
    Let $a = \operatorname{Diag}(a_1, \ldots , a_n) \in \operatorname{SL}_n(\ff)$.
    A point $x\in \aa$ satisfies $f_{\mathcal E}(x) = a.[L_0]$ if and only if $\alpha_{ij}(x) = v(a_i/a_j)$ for all $\alpha_{ij} \in \Sigma$.
\end{lemma}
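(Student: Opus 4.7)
The strategy is to change coordinates on $\aa$ from the basis $\Delta$ of $\Phi$ to the natural ambient coordinates of $V=\rr^n$, where the roots $\alpha_{ij}$ are simply coordinate differences, and then match these with $v(a_i)$ componentwise.

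First I would unwrap the identification of $\aa = \operatorname{Span}_\qq(\Phi)\otimes_\qq \Lambda$ inside $\Lambda^n$. Writing $x = \sum_{k=1}^{n-1} x_{k,k+1}\otimes \lambda_k$ and using $x_{k,k+1}=e_k-e_{k+1}$, a direct computation gives
\[
x = \sum_{k=1}^{n}\mu_k e_k, \qquad \mu_k \coloneqq \lambda_k - \lambda_{k-1},
\]
with the convention $\lambda_0 = \lambda_n = 0$; in particular $\sum_{k=1}^n\mu_k = 0$, as expected for $\operatorname{Span}_\qq(\Phi)$. From $\alpha_{ij}(e_l) = \delta_{il}-\delta_{jl}$ extended $\Lambda$-linearly, we then obtain $\alpha_{ij}(x) = \mu_i - \mu_j$.

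Next I would translate the condition $\alpha_{ij}(x)=v(a_i/a_j)$ for every $\alpha_{ij}\in\Sigma$ into the statement that $\mu_i - v(a_i)$ is independent of $i$. Since $\sum_i \mu_i = 0$ and $\sum_i v(a_i) = v(\det a) = v(1) = 0$, this common value must be $0$, so $\mu_i = v(a_i)$ for all $i$. Reading this back through $\lambda_k = \mu_1 + \cdots + \mu_k$, it is equivalent to $\lambda_k = v\!\left(\prod_{i=1}^k a_i\right)$ for every $k\in\{1,\dots,n-1\}$, which is precisely the condition that, by \Cref{lem:L_KT_Lambdan_to_Lattice}, forces $f_{\mathcal E}(x) = a.[L_0]$. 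This gives the ``if'' direction immediately.

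For the converse, I would use the fact that the chart $f_{\mathcal E}\colon \aa \to \build_L$ is injective (a standard property of charts of generalized affine buildings, already used in the proof of \Cref{prop:TransitiveActionOnBuilding}). Given $x \in \aa$ with $f_{\mathcal E}(x) = a.[L_0]$, let $y \in \aa$ be the unique point with coordinates $\lambda_k(y) = v(\prod_{i=1}^k a_i)$; by \Cref{lem:L_KT_Lambdan_to_Lattice} we have $f_{\mathcal E}(y) = a.[L_0] = f_{\mathcal E}(x)$, so by injectivity $x = y$ and the desired formula for $\alpha_{ij}(x)$ follows. I do not anticipate any real obstacle here: the entire argument is a linear-algebraic bookkeeping between the two coordinate systems on $\aa$, combined with the one non-trivial input $\sum v(a_i) = 0$ coming from $\det a = 1$.
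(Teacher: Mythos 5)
Your proof is correct and follows essentially the same route as the paper's: both reduce to \Cref{lem:L_KT_Lambdan_to_Lattice} via the linear computation $\alpha_{ij}(x)=\lambda_i+\lambda_{j-1}-\lambda_{i-1}-\lambda_j=\mu_i-\mu_j$. The only cosmetic differences are that you pin down the additive constant explicitly using $\sum_i v(a_i)=v(\det a)=0$ where the paper appeals to uniqueness of the $\lambda_k$ solving the resulting system, and that you handle the ``only if'' direction via injectivity of the chart $f_{\mathcal E}$ (a property the paper also uses freely elsewhere) rather than by re-running the computation.
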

\begin{proof}
    Let $A \coloneqq \left\{\operatorname{Diag}(a_1, \ldots , a_n) \colon a_i \neq 0\right\}$ be the diagonal subgroup of $\slnf$.
    We describe the point $x\in\aa$ that corresponds to $a.[L_0]$ via the identifications
    \[\begin{tabular}{rcccl}
        $\aa$ & $\xleftarrow{\sim}$ &  $\Lambda^{n-1}$ & $\xrightarrow{\sim}$ & $A . [L_0]$\\
        $\sum_{k=1}^{n-1} x_{k,k+1} \otimes \lambda_k$ & $\mapsfrom$ & $(\lambda_k)_{k=1}^{n-1}$ & $\mapsto $ & $f_{\mathcal E} (\lambda_1, \ldots , \lambda_{n-1})$,
    \end{tabular}
    \]
    where $\mathcal{E}$ is the standard basis.
    By \Cref{lem:L_KT_Lambdan_to_Lattice}, $\lambda_k = \sum_{\ell = 1}^{k} v(a_\ell)$, so 
    \[
    x = \sum_{k=1}^{n-1} x_{k,k+1}\otimes v\left(\prod_{\ell=1}^k a_\ell\right).
    \]
    Now applying $\alpha_{ij}$ and using $\alpha_{ij}(x_{k,k+1}) = \delta_{ik} + \delta_{j,k+1} - \delta_{i,k+1} - \delta_{jk}$, where $\delta$ is the Kronecker-symbol, we obtain
    \begin{align*}
        \alpha_{ij}(x) & = 
        \sum_{k=1}^{n-1} v\left(\prod_{\ell =1}^k a_\ell \right) \alpha_{ij}(x_{k,k+1}) \\
        &= v\left(\prod_{\ell =1}^i a_\ell \right) + v\left(\prod_{\ell =1}^{j-1} a_\ell \right)
        - v\left(\prod_{\ell =1}^{i-1} a_\ell \right) - v\left(\prod_{\ell =1}^j a_\ell \right) 
        = v(a_i/a_j).
    \end{align*}
    On the other hand, if we know that $\alpha_{ij}(x)=v(a_i/a_j)$ for some $x \in \aa$ with $x = \sum_{k=1}^{n-1}x_{k,k+1} \otimes \lambda_k$, then we know by the same calculation and by the uniqueness of the $\lambda_k$ that $\lambda_k = v\left( \prod_{\ell = 1}^k a_\ell \right)$.
    By \Cref{lem:L_KT_Lambdan_to_Lattice}, $f_{\mathcal{E}}(x) = a.[L_0]$.
\end{proof}

\begin{prop} \label{prop:L_H_stabApartment}
The pointwise stabilizer of the standard apartment in $\build_L$ is given by
\[
\operatorname{Stab}_{\slnf}(f_{\mathcal E}(\aa)) = \left\{ \operatorname{Diag}(a_1, \ldots, a_n) \in \operatorname{SL}_n(\ff) \colon v(a_i) = 0  \right\}.
\]
\end{prop}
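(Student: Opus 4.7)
The plan is to prove both inclusions using the description from \Cref{lem:L_KT_Lambdan_to_Lattice}, which says that every point of the standard apartment $f_{\mathcal E}(\aa)$ is of the form $a.[L_0]$ for some diagonal matrix $a \in \slnf$. Combined with \Cref{prop:L_H_stab}, which identifies the stabilizer of $[L_0]$ with $\operatorname{SL}_n(\mathcal O)$, an element $g\in \slnf$ stabilizes $a.[L_0]$ pointwise if and only if $a^{-1}ga \in \operatorname{SL}_n(\mathcal O)$. Therefore, stabilizing the entire apartment is equivalent to the condition
\[
a^{-1} g a \in \operatorname{SL}_n(\mathcal O) \quad \text{for every diagonal } a \in \slnf.
\]

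For the inclusion $\supseteq$, take $g = \operatorname{Diag}(g_1,\ldots, g_n)$ with $v(g_i) = 0$ for all $i$, so that every $g_i \in \mathcal O^\times$ and $g\in \operatorname{SL}_n(\mathcal O)$. Since $g$ is diagonal, it commutes with every diagonal $a\in \slnf$, so $a^{-1}ga = g \in \operatorname{SL}_n(\mathcal O)$, proving that $g$ stabilizes every point of $f_{\mathcal E}(\aa)$.

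For the inclusion $\subseteq$, suppose $g = (g_{ij})\in \slnf$ stabilizes the standard apartment pointwise. Taking $a = \operatorname{Id}$ already shows $g \in \operatorname{SL}_n(\mathcal O)$, so $v(g_{ij}) \geq 0$ for all $i,j$. For general diagonal $a = \operatorname{Diag}(a_1,\ldots,a_n)$, the matrix $a^{-1} g a$ has $(i,j)$-entry $a_i^{-1} g_{ij} a_j$, so the condition $a^{-1} g a \in \operatorname{SL}_n(\mathcal O)$ yields
\[
v(g_{ij}) \geq v(a_i) - v(a_j) \quad \text{for all } i,j.
\]
Now I claim that for any $i \neq j$ and any $\mu \in \Lambda$, there exists a diagonal $a\in \slnf$ with $v(a_i) - v(a_j) > \mu$. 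Indeed, since $v\colon \ff^\times \twoheadrightarrow \Lambda$ is surjective and $\Lambda$ is a $\mathbb{Q}$-vector space (so in particular unbounded), one can choose $\lambda\in \Lambda$ arbitrarily large, lift it to $b \in \ff^\times$ with $v(b)=\lambda$, and define $a$ to have entries $b$ in position $i$, $b^{-1}$ in position $j$, and $1$ elsewhere; then $a \in \slnf$ and $v(a_i) - v(a_j) = 2\lambda$. Applied to our situation, this forces $g_{ij} = 0$ whenever $i\neq j$, so $g$ is diagonal. Finally, $g \in \operatorname{SL}_n(\mathcal O)$ gives $v(g_i)\geq 0$ together with $\sum_i v(g_i) = v(\det g) = 0$, which forces $v(g_i)=0$ for all $i$.

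The only genuinely non-formal step is the argument that off-diagonal entries vanish. It relies crucially on the fact that although we must respect the constraint $\sum v(a_i) = 0$ in $\slnf$, for $n \geq 2$ the differences $v(a_i) - v(a_j)$ can still be made arbitrarily large in $\Lambda$; the rest is bookkeeping with \Cref{lem:L_KT_Lambdan_to_Lattice} and \Cref{prop:L_H_stab}.
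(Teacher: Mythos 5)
Your proof is correct and follows essentially the same route as the paper: both reduce pointwise stabilization of the apartment to the condition $a^{-1}ga \in \operatorname{SL}_n(\mathcal O)$ for all diagonal $a \in \slnf$ via \Cref{lem:L_KT_Lambdan_to_Lattice} and \Cref{prop:L_H_stab}, and then force the off-diagonal entries to vanish by varying $a$. The only (harmless) differences are that you spell out explicitly how to make $v(a_i)-v(a_j)$ arbitrarily large within $\slnf$ — a detail the paper leaves implicit — and that you conclude $v(g_{ii})=0$ from $\det g = 1$ rather than from closure of the stabilizer under inverses.
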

\begin{proof}
    A point $p\in f_{\mathcal E}(\aa)$ is a homothety class of lattices  of the form $p = \left[ \sum_{i=1}^n \mathcal O x_i e_i \right]$ where $x_i \in \ff^\times$.
    Acting by $g = \operatorname{Diag}(a_1, \ldots, a_n) $ with $v(a_i) = 0 $  gives $g.p = \left[ \sum_{i=1}^n \mathcal O a_i x_i e_i \right] = p$ since $\mathcal O a_i = \mathcal O$ when $a_i \in \mathcal O^\times$.
    On the other hand if
    \[
    g = \begin{pmatrix}
        g_{11} & \cdots & g_{1n} \\
        \vdots & \ddots& \vdots \\
        g_{n1} & \cdots & g_{nn}
    \end{pmatrix} \in \operatorname{SL}_n(\ff)
    \]
    fixes all points $p \in f_{\mathcal E}(\aa)$, then writing $p=a.[L_0]$ for $a = \operatorname{Diag}(a_1, \ldots , a_n)$ (using \Cref{lem:L_KT_Lambdan_to_Lattice} ) this means that $g.a.[L_0] = a.[L_0]$, so $a^{-1}ga \in \operatorname{Stab}_{\operatorname{SL}_n(\ff)}([L_0]) = \operatorname{SL}_n(\mathcal O)$ by \Cref{prop:L_H_stab}.
    In coordinates $g_{ij}a_j/a_i \in \mathcal O$ for all such $a$. For $i \neq j$, this implies $g_{ij} = 0$, so $g$ has to be diagonal and the diagonal entries have to satisfy $g_{ii} = g_{ii}a_i/a_i\in \mathcal O$. Since the stabilizer is closed under inverses, also $g_{ii}^{-1} \in \mathcal O$, so $v(g_{ii}) =0$.
\end{proof}

\subsubsection{Setup for the symmetric building}
Let us now suppose that $\ff$ is additionally real closed and the valuation and the order are compatible.
For $G=\slnf$, we have $K_\ff = \operatorname{SO}_n(\ff)$, and the semialgebraically connected component of the diagonal subgroup containing the identity is given by 
\[
A_{\ff} \coloneqq \left\{ \operatorname{Diag}(a_1, \ldots , a_n) \in \operatorname{SL}_n(\ff) \colon a_i > 0 \right\}.
\]
The root system relative to the maximal torus given by the diagonal subgroup can be identified with
\[
 \Sigma = \left\{ \alpha_{ij} \in V^\star \colon \alpha_{ij}(v_1, \ldots , v_n) = v_i - v_j \text{  for all } (v_1, \ldots , v_n) \in V \right\},
\]
so that the dual root system $\Sigma^\vee = \Phi$ is as defined earlier.
Similar to \Cref{lem:L_KT_char_A_L}, we set up a characterization of those $a\in A_\ff$ for which $a.o=f_0(x)$ for some $x \in \aa$, where $f_0$ is the standard chart as defined in \Cref{ex:SymmetricBuilding1}.

\begin{lemma}\label{lem:L_KT_char_A_H}
    A point $x\in \aa$ satisfies $f_0(x) = a.o$ for $a\in A_\ff$ if and only if $\alpha_{ij}(x) = (-v)(a_i/a_j)$ for all $\alpha_{ij} \in \Sigma$.
\end{lemma}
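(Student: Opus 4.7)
The plan is to directly invoke the characterizing compatibility condition \eqref{eq:BH_compatible} for the symmetric building from \Cref{ex:SymmetricBuilding1}: for $a \in A_\ff$ and $x\in \aa$, one has $f_0(x)=a.o$ if and only if $(-v)(\chi_\alpha(a))=\alpha(x)$ for all $\alpha \in \Sigma$. So the task reduces to identifying the algebraic characters $\chi_{\alpha_{ij}} \colon A_\ff \to \ff_{>0}$ in the concrete case $\mathbf{G}=\operatorname{SL}_n$ with maximal $\rr$-split torus $\mathbf{S}$ equal to the diagonal subgroup.

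First, I would recall the setup: the Lie algebra $\fraka$ of $A_\ff$ consists of traceless diagonal matrices, and the restricted root space decomposition of $\mathfrak{sl}_n$ with respect to $\fraka$ is the classical one, with root spaces $\frakg_{\alpha_{ij}} = \rr E_{ij}$ for $i\neq j$, where $E_{ij}$ is the standard elementary matrix. The associated root $\alpha_{ij} \in \fraka^\star$ is the linear functional $(v_1,\ldots,v_n)\mapsto v_i - v_j$, which agrees with the definition of $\alpha_{ij}$ already set up. By \cite[Lemma 6.1]{appenzeller2024semialgebraic}, the corresponding algebraic character $\chi_{\alpha_{ij}}\colon A_\ff \to \ff_{>0}$ satisfies $\chi_{\alpha_{ij}}(\operatorname{Diag}(a_1,\ldots,a_n)) = a_i/a_j$, since conjugation by such a diagonal element acts on $E_{ij}$ by scaling with $a_i/a_j$.

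Finally, substituting into condition \eqref{eq:BH_compatible} yields: $f_0(x)=a.o$ if and only if
\[
\alpha_{ij}(x) = (-v)(\chi_{\alpha_{ij}}(a)) = (-v)(a_i/a_j) \quad \text{for all } \alpha_{ij} \in \Sigma,
\]
which is the desired equivalence. The argument is purely a translation of the general compatibility condition into the explicit coordinates of $\operatorname{SL}_n$, so no real obstacle is expected; the only point requiring a little care is verifying that the roots and characters under the identification $\Sigma^\vee = \Phi$ match the conventions fixed in \Cref{subsection:LatticeSymmetricBuildings}, but this is immediate from the explicit formulas for $\alpha_{ij}$.
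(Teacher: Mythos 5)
Your proof is correct, and the forward direction is exactly the paper's: apply the compatibility condition \eqref{eq:BH_compatible} and identify $\chi_{\alpha_{ij}}(\operatorname{Diag}(a_1,\ldots,a_n))=a_i/a_j$ via the adjoint action on $E_{ij}$. Where you diverge is the converse. You invoke the ``if'' direction of \eqref{eq:BH_compatible} directly, which is legitimate since the paper states that condition as an equivalence (quoting \cite[Proposition 4.13]{appenzeller2025buildings}), and this makes the whole lemma a one-line translation. The paper instead proves the converse by a uniqueness argument: it only uses the forward implication of \eqref{eq:BH_compatible} together with the fact that the equations $\alpha_{ij}(x)=(-v)(a_i/a_j)$ determine the coordinates $\lambda_k$ of $x$ uniquely, because the resulting linear system is governed by the (invertible) Cartan matrix of type $A_{n-1}$. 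Your route is shorter but leans on the full iff strength of the cited proposition; the paper's route is self-contained modulo only the ``only if'' half and additionally records the explicit coordinate computation $\alpha_{ij}(x)=\lambda_i+\lambda_{j-1}-\lambda_{i-1}-\lambda_j$, which is reused implicitly elsewhere in \Cref{subsection:LatticeSymmetricBuildings}. Either way the statement is established; no gap.
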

\begin{proof}
    For the direction ``$\implies$'', recall that the compatibility condition (\ref{eq:BH_compatible}) implies that for $\alpha \in \Sigma$, we have
    \[
    (-v)(\chi_\alpha(a)) = \alpha(x).
    \]
    In our specific case, if $a.o=f_0(x)$ for some $x\in \aa$, setting $\alpha = \alpha_{ij}$ we obtain exactly $(-v)(a_i/a_j)= \alpha_{ij}(x)$.
    
    On the other hand, any $x = \sum_{k=1}^{n-1} x_{k,k+1} \otimes \lambda_k \in \aa$ is uniquely determined by the $\lambda_k \in \Lambda$.
    So if $(-v)(a_i/a_j) = \alpha_{ij}(x) $, then
    \[
    (-v)(a_i/a_j)  = \sum_{k=1}^{n-1} \lambda_k \alpha_{ij}(x_{k,k+1})
    = \lambda_i + \lambda_{j-1} - \lambda_{i-1} - \lambda_{j}
    \]
    (with the convention $\lambda_0  = 0$).
    This results in the system of linear equations
    \[
    \begin{pmatrix}
        2 & -1 & 0 & \cdots & \\
        -1 & 2 & -1 & \ddots & \vdots\\
        0 & \ddots  & \ddots  &\ddots & 0 \\
        \vdots &\ddots&\ddots &2 & -1 \\
        &\cdots &0&-1 & 2
    \end{pmatrix} 
\begin{pmatrix}
    \lambda_1 \\ \lambda_2 \\ \vdots \\ \\ \lambda_k
\end{pmatrix} =
\begin{pmatrix}
    (-v)(a_1/a_2) \\ (-v)(a_2/a_3) \\ \vdots \\ \\ (-v)(a_{n-1}/a_n)
\end{pmatrix},
    \]
    which determines the solution uniquely since the matrix is invertible.
\end{proof}

The stabilizers for charts and points of $\build_L$ in Propositions \ref{prop:L_H_stab} and \ref{prop:L_H_stabApartment} coincide with the stabilizers for the action on $\build_S$.

\begin{prop}\label{prop:L_H_stab_H}
    The stabilizer of the base point $o\in \build_S$ is $\operatorname{Stab}_{\operatorname{SL}_n(\ff)}( o ) = \operatorname{SL}_n(\mathcal O) $ and the pointwise stabilizer of the standard apartment is
    \[
    \operatorname{Stab}_{\operatorname{SL}_n(\ff)}(f_0(\aa)) = \left\{ \operatorname{Diag}(a_1, \ldots, a_n) \in \operatorname{SL}_n(\ff) \colon v(a_i) = 0  \right\}.
    \]
\end{prop}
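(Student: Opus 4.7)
The plan is to apply Proposition \ref{prop:homogeneous_stabilizers} directly, since the group $\mathbf{G} = \operatorname{SL}_n$ satisfies all the hypotheses needed there: it is semisimple, connected, self-adjoint, and crucially it is $\ff$-split (over any field). The maximal $\rr$-split torus $\mathbf{S}$ whose elements are self-adjoint is then the diagonal subgroup, and $A_\ff$ is its semialgebraic connected component of the identity, i.e.\ the diagonal matrices in $\operatorname{SL}_n(\ff)$ with positive entries.

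First I would dispatch the stabilizer of $o$. By the first bullet of Proposition \ref{prop:homogeneous_stabilizers} and the definition of $\mathbf{G}(\mathcal{O})$, we get immediately
\[
\operatorname{Stab}_{\operatorname{SL}_n(\ff)}(o) = \operatorname{SL}_n(\ff) \cap \mathcal{O}^{n\times n} = \operatorname{SL}_n(\mathcal{O}).
\]

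For the second statement I would first reconcile the pointwise stabilizer of $f_0(\aa)$ with $\operatorname{Stab}_{\operatorname{SL}_n(\ff)}(f_0)$. If $g \in \operatorname{SL}_n(\ff)$ fixes every point of $f_0(\aa)$, then $(g.f_0)(x) = g.f_0(x) = f_0(x)$ for every $x \in \aa$, so $g.f_0 = f_0$. The converse is immediate from the compatibility $(g.f_0)(x) = g.f_0(x)$. Now since $\operatorname{SL}_n$ is $\ff$-split with $\mathbf{S}$ equal to the diagonal subgroup, the last clause of Proposition \ref{prop:homogeneous_stabilizers} yields
\[
\operatorname{Stab}_{\operatorname{SL}_n(\ff)}(f_0) = \mathbf{S}(\ff) \cap \mathcal{O}^{n\times n}.
\]
A diagonal matrix $\operatorname{Diag}(a_1,\ldots,a_n)\in \operatorname{SL}_n(\ff)$ lies in this intersection precisely when $a_i \in \mathcal{O}$ for all $i$; but since the stabilizer is a group, also $a_i^{-1} \in \mathcal{O}$, so the condition is $v(a_i) = 0$ for all $i$, yielding the claimed description.

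There is no real obstacle here: the content has already been established in the references cited for Proposition \ref{prop:homogeneous_stabilizers}, and the present statement is simply the specialization to $\mathbf{G} = \operatorname{SL}_n$ together with the routine identification of $\operatorname{Stab}(f_0)$ with the pointwise stabilizer of $f_0(\aa)$. The only small point to check is that the diagonal torus is indeed a maximal $\rr$-split torus all of whose elements are self-adjoint, which is standard: in $\operatorname{SL}_n$, self-adjoint means symmetric, and a maximal torus of symmetric matrices is necessarily diagonalizable by an orthogonal conjugation and hence conjugate to the standard diagonal torus.
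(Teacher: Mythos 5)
Your proof is correct and follows essentially the same route as the paper: both are direct applications of Proposition \ref{prop:homogeneous_stabilizers}, the only cosmetic difference being that the paper invokes the description $\operatorname{Stab}_{G}(f_0) = M_\ff A_\ff(\mathcal O)$ (with $M_\ff$ the diagonal $\pm 1$ matrices) while you use the equivalent split-case clause $\mathbf{S}(\ff) \cap \mathcal{O}^{n\times n}$ and then extract the condition $v(a_i)=0$. Your explicit identification of $\operatorname{Stab}(f_0)$ with the pointwise stabilizer of $f_0(\aa)$ is a point the paper leaves implicit, but it is routine and your treatment of it is fine.
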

\begin{proof}
    By \Cref{prop:homogeneous_stabilizers}, we have $\operatorname{Stab}_{\slnf}(f_0(\aa)) = M_\ff A_\ff(\mathcal O)$, where $M_\ff \coloneqq \operatorname{Cent}_{\operatorname{SO}_{n}(\ff)}(A_\ff)$ consists of all diagonal matrices in $\operatorname{SL}_n(\ff)$ with entries $\pm 1$. 
\end{proof}

\subsubsection{Isomorphism between lattice and symmetric building}

In this subsection we will show that the lattice building $\build_L$ is isomorphic to the symmetric building $\build_S$.
Even though these two buildings have the same apartment $\aa$, we will not use the identity as an apartment morphism, but instead the inversion $\aa \to \aa$, $x \mapsto -x$.
In the discrete setting this corresponds to an isomorphism that may not preserve the type of the vertices.
\begin{lemma}\label{lem:L_KT_aptiso}
    Let $L \colon \operatorname{Span}_{\qq}(\Phi) \to \operatorname{Span}_{\qq}(\Phi)$, $v \mapsto -v$ be the inversion, $\gamma = \Id \colon \Lambda \to \Lambda$ the identity and $\sigma \colon \affwg \to \affwg$ the group homomorphism given by $\sigma(w)(x)=-\sigma(-x)$ for all $x \in \aa$.
    Then $\tau = (L,\Id, \sigma)$ is an isomorphism of apartments $\aa \to \aa$ of type $(\Phi, \Lambda, \Lambda^{n-1})$.
\end{lemma}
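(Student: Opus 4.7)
The plan is to verify in turn that $(L,\gamma,\sigma)$ satisfies each part of Definition 3.1 and then exhibit an inverse. The key subtlety is that the statement defines $\sigma$ on all of $\affwg$, so we should check that $\sigma(w)$ actually lies in $\affwg'=\affwg$ for every $w$, that $\sigma$ is a group homomorphism, and that the equivariance square commutes. (Alternatively one could invoke \Cref{lem:morphism_apartment} and only check for $w\in\sphwg$, but for this simple $\sigma$ the direct route is shorter.)

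First I would unpack $\tau\coloneqq L\otimes\gamma$. Since $L$ is the inversion and $\gamma=\Id$, the induced map $\tau\colon\aa\to\aa$ is simply $x\mapsto -x$. Next, to show $\sigma(\affwg)\subseteq\affwg$, write any $w\in\affwg=T\rtimes\sphwg$ uniquely as $w=t^y\circ s$ with $y\in T=\aa$ and $s\in\sphwg$. Since each $s\in\sphwg$ is linear, a direct computation gives
\[
\sigma(w)(x)=-w(-x)=-\bigl(s(-x)+y\bigr)=s(x)-y=(t^{-y}\circ s)(x),
\]
so $\sigma(w)=t^{-y}\circ s\in\affwg$. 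In particular $\sigma$ fixes $\sphwg$ pointwise and sends $t^y$ to $t^{-y}$. The homomorphism property then follows from
\[
\sigma(w_1w_2)(x)=-w_1\bigl(w_2(-x)\bigr)=-w_1\bigl(-(-w_2(-x))\bigr)=\sigma(w_1)\bigl(\sigma(w_2)(x)\bigr),
\]
valid for all $w_1,w_2\in\affwg$ and $x\in\aa$.

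For the equivariance square, I compute both sides: $\tau(w.x)=-w(x)$ on one hand, and $\sigma(w).\tau(x)=\sigma(w)(-x)=-w(-(-x))=-w(x)$ on the other. The square therefore commutes for every $w\in\affwg$, proving that $(L,\gamma,\sigma)$ is a morphism of apartments in the sense of \Cref{def:morphism_apartment}.

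Finally, to see that it is an isomorphism, note that $L\circ L=\Id_{\operatorname{Span}_\qq(\Phi)}$, that $\gamma\circ\gamma=\Id_\Lambda$, and that $\sigma\circ\sigma=\Id_{\affwg}$ by the computation $\sigma(\sigma(w))(x)=-\sigma(w)(-x)=-(-w(x))=w(x)$. Thus $(L,\gamma,\sigma)$ is its own inverse in the category of model apartments described in \Cref{rem:morph_apart_category}, completing the proof. The only step that requires any care is checking that $\sigma$ respects the semidirect-product decomposition of $\affwg$; once that is done everything else is a mechanical verification.
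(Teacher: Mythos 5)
Your proof is correct and follows essentially the same route as the paper's: a direct verification that $(L,\gamma,\sigma)$ satisfies \Cref{def:morphism_apartment} followed by the observation that $\tau$ is its own inverse. The paper's version simply asserts these checks as clear, whereas you spell out the computation $\sigma(t^y\circ s)=t^{-y}\circ s$ (which also implicitly handles the typo in the statement, where $\sigma(w)(x)=-\sigma(-x)$ should read $-w(-x)$); this is a useful but not essentially different elaboration.
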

\begin{proof}
By definition, $L$ is linear, $\gamma$ is an order-preserving group homomorphism, $\sigma$ is a group homomorphism and $L$, $\gamma$ and $\sigma$ clearly verify
\[ \tau (w(x)) = \sigma(w) (\tau (x))\]
for all $w \in \affwg$ and $x \in \aa$.
Thus $\tau$ is a morphism of apartments.
Since $\tau$ is also its own inverse, it is an isomorphism. 
\end{proof}

Let now $\ff$ be a valued real closed field. Before we prove that $\build_L$ and $\build_S$ are isomorphic, we investigate the action of the translation part of the affine Weyl group. 
\begin{lemma}\label{lem:L_H_translation}
    For every $x\in \aa$, there exists $a = \operatorname{Diag}(a_1, \ldots , a_n) \in A_\ff$ that satisfies $f_{\mathcal E}(x) = a.f_{\mathcal{E}}(0)$ and $f_0(\tau(y)) = a^{-1}.f_0(0)$.
\end{lemma}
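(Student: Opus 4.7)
The plan is to construct $a$ explicitly from the coordinates of $x$ using Lemma~\ref{lem:L_KT_Lambdan_to_Lattice}, and then verify both equalities via the chart characterizations of Lemmas~\ref{lem:L_KT_char_A_L} and~\ref{lem:L_KT_char_A_H} together with the identity $\tau(x) = -x$ coming from Lemma~\ref{lem:L_KT_aptiso}.

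Writing $x = \sum_{k=1}^{n-1} x_{k,k+1}\otimes \lambda_k \in \aa$ in the basis $\Delta$, Lemma~\ref{lem:L_KT_Lambdan_to_Lattice} produces elements $a_1,\ldots,a_n \in \ff^\times$ with $v(a_1 \cdots a_k) = \lambda_k$ for each $k$ and $a_1\cdots a_n = 1$, such that $f_{\mathcal E}(x) = \operatorname{Diag}(a_1,\ldots,a_n).[L_0]$. Since $\ff$ is real closed with an order-compatible valuation, I may replace each $a_k$ by its absolute value $|a_k| > 0$, which has the same valuation; the resulting matrix $a \coloneqq \operatorname{Diag}(a_1,\ldots,a_n)$ then lies in $A_\ff$ while still satisfying $f_{\mathcal E}(x) = a.[L_0] = a.f_{\mathcal E}(0)$, yielding the first equation.

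For the second equation, I would feed the first equation into Lemma~\ref{lem:L_KT_char_A_L} to extract that $\alpha_{ij}(x) = v(a_i/a_j)$ for every root $\alpha_{ij}\in\Sigma$. Using $\tau(x) = -x$ from Lemma~\ref{lem:L_KT_aptiso} and the linearity of each $\alpha_{ij}$, this gives $\alpha_{ij}(\tau(x)) = -v(a_i/a_j)$. Rewriting via the identity $(a^{-1})_i/(a^{-1})_j = a_j/a_i$, this expression matches the condition in Lemma~\ref{lem:L_KT_char_A_H} corresponding to $f_0(\tau(x)) = a^{-1}.f_0(0)$, completing the verification.

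The main (entirely routine) obstacle is the sign bookkeeping: the lattice building characterization (Lemma~\ref{lem:L_KT_char_A_L}) and the symmetric building characterization (Lemma~\ref{lem:L_KT_char_A_H}) use opposite signs in their compatibility conditions, and this is precisely what is compensated for by $\tau$ being the inversion (Lemma~\ref{lem:L_KT_aptiso}) together with the inverse on $a$ appearing in the second equality.
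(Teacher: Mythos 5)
Your proposal takes the same route as the paper's own proof: obtain $a$ from \Cref{lem:L_KT_Lambdan_to_Lattice} (with the positivity refinement so that $a \in A_\ff$), read off $\alpha_{ij}(x) = v(a_i/a_j)$ from \Cref{lem:L_KT_char_A_L}, and transfer to the symmetric building via \Cref{lem:L_KT_char_A_H} and $\tau = -\Id$. The first half is fine (note $\prod_i \lvert a_i\rvert = \lvert \prod_i a_i\rvert = 1$, so passing to absolute values keeps you in $\operatorname{SL}_n$, and the valuations, hence the lattice class, are unchanged).

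The final verification, however, does not close the way you assert. You correctly obtain $\alpha_{ij}(\tau(x)) = -v(a_i/a_j) = v(a_j/a_i)$. But \Cref{lem:L_KT_char_A_H} says that $f_0(\tau(x)) = a^{-1}.f_0(0)$ holds if and only if
\[
\alpha_{ij}(\tau(x)) \;=\; (-v)\bigl((a^{-1})_i/(a^{-1})_j\bigr) \;=\; (-v)(a_j/a_i) \;=\; v(a_i/a_j),
\]
which is the \emph{negative} of what you computed. Three sign changes are in play --- the inversion $\tau$, the passage from $a$ to $a^{-1}$, and the opposite conventions of \Cref{lem:L_KT_char_A_L} (which uses $v$) versus \Cref{lem:L_KT_char_A_H} (which uses $-v$) --- and an odd number of flips cannot cancel. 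What your computation actually proves is $f_0(\tau(x)) = a.f_0(0)$, equivalently $f_0(x) = a^{-1}.f_0(0)$: exactly one of $\tau$ or the inverse should appear, not both. That corrected form is also what is needed downstream, since condition \ref{condition:baby_Afw} in the proof of \Cref{thm:Lattice_to_KT} requires $f'(\tau(x)) = a.f'(0)$ with the \emph{same} element $a$ realizing $f(x) = a.f(0)$. (The statement as printed conflates the two equivalent formulations in the same way, so the discrepancy originates there; but the sentence ``this expression matches the condition'' is precisely where a careful check would have surfaced it, and as written that step fails.)
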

\begin{proof}
By \Cref{lem:L_KT_Lambdan_to_Lattice}, there exists $a = \operatorname{Diag}(a_1, \ldots , a_n) \in A_\ff$ with $a.f_{\mathcal E}(0) = f_{\mathcal E}(y)$. By \Cref{lem:L_KT_char_A_L}, it satisfies $\alpha_{ij}(y)=v(a_i/a_j)$. By \Cref{lem:L_KT_char_A_H}, $f_0(\tau(y)) = a^{-1}.f_0(0)$.
\end{proof}

\begin{thm}\label{thm:Lattice_to_KT}
There is an equivariant isomorphism of affine $\Lambda$-buildings between the lattice building $\build_L$ and the symmetric building $\build_S$.
\end{thm}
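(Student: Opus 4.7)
The plan is to apply \Cref{thm:baby_morphismofGbuildings}\ref{thm:baby_isomorphism} with $G = G' = \slnf$, $\rho = \Id$, the apartment isomorphism $\tau$ from \Cref{lem:L_KT_aptiso}, and the base charts $f = f_{\mathcal{E}}$ (the standard chart for $\build_L$ associated to the standard basis $\mathcal{E}$) and $f' = f_0$ (the standard chart for $\build_S$). By \Cref{ex:LatticeBuilding2} and \Cref{ex:SymmetricBuilding2}, $\slnf$ acts transitively on both $\build_L, \atlas_L$ and on $\build_S, \atlas_S$, so the transitivity hypotheses of the theorem are satisfied.

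It then suffices to verify the three numbered hypotheses as equalities. For \ref{condition:baby_stabilizerofapoint}, the base points satisfy $f_{\mathcal{E}}(0) = [L_0]$ and $f_0(0) = o$, and by \Cref{prop:L_H_stab} and \Cref{prop:L_H_stab_H} both stabilizers equal $\operatorname{SL}_n(\mathcal{O})$. For \ref{condition:baby_stabilizerofchart}, note that since the $G$-action on the atlas is compatible in the sense of \Cref{def:G_building}, stabilizing the chart $f$ is the same as pointwise stabilizing its image $f(\aa)$; by \Cref{prop:L_H_stabApartment} and \Cref{prop:L_H_stab_H} both of these pointwise stabilizers are precisely $\{\operatorname{Diag}(a_1,\ldots,a_n) \in \slnf \colon v(a_i)=0\}$. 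Finally, condition \ref{condition:baby_Afw} is exactly the content of \Cref{lem:L_H_translation}: for every $x \in \aa$, one can choose a diagonal element $a \in A_\ff$ so that $a.f_{\mathcal{E}}(0) = f_{\mathcal{E}}(x)$ on the lattice side, while simultaneously $a^{-1}.f_0(0) = f_0(\tau(x))$ on the symmetric side. Taking $g = a$ on the source and observing that $\rho(g) = a$ does \emph{not} directly provide $\rho(g).f_0(0) = f_0(\tau(x))$; rather one must use $g = a^{-1}$ in condition \ref{condition:baby_Afw} after exchanging $x \mapsto -x$, which is precisely why the apartment isomorphism $\tau$ is chosen to be the inversion $L \otimes \Id$ instead of the identity. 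This is the subtle point I anticipate as the main obstacle: the choice of $\tau = -\Id$ on $\operatorname{Span}_\qq(\Phi)$ is forced by the opposite signs of the valuation appearing in the compatibility conditions of \Cref{lem:L_KT_char_A_L} (namely $\alpha_{ij}(x)=v(a_i/a_j)$) versus \Cref{lem:L_KT_char_A_H} (namely $\alpha_{ij}(x)=(-v)(a_i/a_j)$), and verifying the compatibility requires bookkeeping with $\tau$ consistently.

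Once these three conditions are checked, \Cref{thm:baby_morphismofGbuildings}\ref{thm:baby_isomorphism} applies: $\tau$ is an isomorphism of apartments by \Cref{lem:L_KT_aptiso}, $\rho = \Id$ is a group isomorphism, $G' = \slnf$ acts transitively on $\build_S$ and $\atlas_S$, and the two stabilizer inclusions hold as equalities. This yields a $\rho$-equivariant isomorphism of generalized affine buildings $m \colon \build_L \to \build_S$, completing the proof.
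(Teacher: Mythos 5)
Your proposal is correct and takes essentially the same route as the paper: the same application of \Cref{thm:baby_morphismofGbuildings}~\ref{thm:baby_isomorphism} with $G=G'=\slnf$, $\rho=\Id$, the inversion $\tau$ from \Cref{lem:L_KT_aptiso}, the charts $f_{\mathcal E}$ and $f_0$, and the same stabilizer statements (\Cref{prop:L_H_stab}, \Cref{prop:L_H_stabApartment}, \Cref{prop:L_H_stab_H}) and \Cref{lem:L_H_translation} for the three conditions. Your sign bookkeeping in condition~\ref{condition:baby_Afw} is stated a bit awkwardly (the paper simply takes $g=a$: from $f_{\mathcal E}(x)=a.f_{\mathcal E}(0)$ one gets $\alpha_{ij}(x)=v(a_i/a_j)$ by \Cref{lem:L_KT_char_A_L}, hence $f_0(\tau(x))=a.f_0(0)$ by \Cref{lem:L_KT_char_A_H}), but your diagnosis that $\tau=-\Id$ is forced by the $v$ versus $-v$ discrepancy between those two characterization lemmas is exactly the paper's point.
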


\begin{proof}
We use \Cref{thm:baby_morphismofGbuildings} as $\slnf$ acts transitively on $\build_L$ and $\atlas_L$, see \Cref{ex:LatticeBuilding2}.
Let $G=G'=\operatorname{SL}_n(\ff)$ and $\rho$ the identity map.
We take the apartment isomorphism $\tau = (L,\gamma,\sigma)$ from Lemma \ref{lem:L_KT_aptiso}. We consider the standard charts
\[
f \coloneqq f_{\mathcal E} \colon \aa  \to \build_L \quad \text{ and }\quad f' \coloneqq f_0 \colon \aa \to \build_S.
\]
By Propositions \ref{prop:L_H_stab} and \ref{prop:L_H_stab_H} we have 
$
   \operatorname{Stab}_{\operatorname{SL}_n(\ff)}(f(0)) =  \operatorname{Stab}_{\operatorname{SL}_n(\ff)}(f' (0)), 
$
implying condition \ref{condition:baby_stabilizerofapoint}.
By Propositions \ref{prop:L_H_stabApartment} and \ref{prop:L_H_stab_H} we have $\operatorname{Stab}_{\slnf}(f) = \operatorname{Stab}_{\slnf}(f')$, which implies \ref{condition:baby_stabilizerofchart}.
By \Cref{lem:L_H_translation} there exists $a\in A_\ff$ such that $f(x) = a.f(0)$ and $f'(x) = a^{-1} . f'(0)$, or equivalently $f' (\tau(x)) = a.f'(0)$, which is condition \ref{condition:baby_Afw}.

Since $\slnf$ also acts transitively on $\build_S$ and its atlas (\Cref{ex:SymmetricBuilding2}), $\rho=\Id$ is an isomorphism of groups, $L$, $\gamma$ and $\sigma$ are injective, and the inclusions in the conditions \ref{condition:baby_stabilizerofapoint} and \ref{condition:baby_stabilizerofchart} are equalities. We conclude that $\build_L$ and $\build_{S}$ are equivariantly isomorphic by \Cref{thm:baby_morphismofGbuildings}~\ref{thm:isomorphism}.
\end{proof}

\subsection{Symmetric and Bruhat--Tits buildings}
\label{subsection:SymmetricBTBuildings}

In this section we show that there is an injective morphism from the symmetric building $\build_S$ to the Bruhat--Tits building $\build_\BT$. When $\Lambda = \rr$, it is an isomorphism.

Let thus $\ff$ be a real closed field with an order-compatible valuation $v \colon \ff^\times \to \Lambda \subseteq \mathfrak{R}$, where $\mathfrak{R}$ is as in Hahn's embedding \Cref{thm:hahn} (see also \Cref{ex:BruhatTitsBuilding1}), and denote by $\mathcal{O}$ the valuation ring of $v$.
Set $G \coloneqq \mathbf{G}(\ff)$, where $\mathbf{G}$ is a semisimple, connected, self-adjoint, $\ff$-split algebraic group $\mathbf{G} < \operatorname{SL}_n$.
Let $\mathbf{S}$ be a maximal ($\ff$-split) torus consisting of self-adjoint elements. Since $\mathbf{G}$ is $\ff$-split, the root system $\Sigma$ is reduced.
These conditions ensure that both the Bruhat--Tits building $\build_{\BT}$ (\Cref{ex:BruhatTitsBuilding1}) and the symmetric building $\build_S$ (\Cref{ex:SymmetricBuilding1}) associated to $\mathbf{G}$ and $\ff$ are defined.

Let $K=G \cap \operatorname{SO}_n(\mathbb{F})$ and $U_\alpha$ the root groups for $\alpha \in \Phi$.
\begin{lemma}\label{lem:Nor_equiv}
   Let $A_{\ff} \subseteq \operatorname{Cent}_G(\mathbf{S}(\ff)) = \mathbf{S}(\ff)$ be the semialgebraically connected component of the identity of $\mathbf{S}(\mathbb{F})$.
   Then
   \[
   \operatorname{Nor}_{G}(\mathbf{S}(\ff) )= \mathbf{S}(\ff) \cdot \operatorname{Nor}_{K}(\mathbf{S}(\ff)) = A_\ff \cdot \operatorname{Nor}_K(A_\ff).
   \]
\end{lemma}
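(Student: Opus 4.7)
The plan is to prove the two equalities separately, reducing both to statements about the spherical Weyl group and about semialgebraic connected components.

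For the first equality $\operatorname{Nor}_G(\mathbf{S}(\ff)) = \mathbf{S}(\ff) \cdot \operatorname{Nor}_K(\mathbf{S}(\ff))$, the inclusion ``$\supseteq$'' is immediate, as both factors lie in $\operatorname{Nor}_G(\mathbf{S}(\ff))$. For ``$\subseteq$'', I would first invoke that $\mathbf{G}$ is $\ff$-split semisimple and $\mathbf{S}$ is a maximal torus, so $\operatorname{Cent}_{\mathbf{G}}(\mathbf{S}) = \mathbf{S}$ is connected \cite[Corollary 11.12]{Borellinearalgebraixgroups}, whence $\operatorname{Cent}_G(\mathbf{S}(\ff)) = \mathbf{S}(\ff)$. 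Therefore $\operatorname{Nor}_G(\mathbf{S}(\ff))/\mathbf{S}(\ff)$ embeds in the spherical Weyl group $\sphwg$ of $\Sigma$. Given $n \in \operatorname{Nor}_G(\mathbf{S}(\ff))$ inducing a Weyl element $w$, I would produce $k \in \operatorname{Nor}_K(\mathbf{S}(\ff))$ inducing the same $w$; then $k^{-1}n \in \operatorname{Cent}_G(\mathbf{S}(\ff)) = \mathbf{S}(\ff)$, giving $n \in k\cdot \mathbf{S}(\ff) \subseteq \operatorname{Nor}_K(\mathbf{S}(\ff))\cdot \mathbf{S}(\ff) = \mathbf{S}(\ff)\cdot \operatorname{Nor}_K(\mathbf{S}(\ff))$.

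For the second equality $\mathbf{S}(\ff)\cdot \operatorname{Nor}_K(\mathbf{S}(\ff)) = A_\ff \cdot \operatorname{Nor}_K(A_\ff)$, I would first establish $\mathbf{S}(\ff) = A_\ff \cdot (\mathbf{S}(\ff) \cap K)$. Since $\mathbf{S}$ is $\ff$-split with self-adjoint elements, diagonalizing simultaneously in an orthonormal eigenbasis exhibits $\mathbf{S}(\ff)$ as a group of diagonal matrices with semialgebraic identity component $A_\ff$; the quotient $\mathbf{S}(\ff)/A_\ff$ is represented by the sign-involutions $\operatorname{Diag}(\pm 1,\ldots,\pm 1) \in \mathbf{S}(\ff)$, which are symmetric and square to $\operatorname{Id}$, hence lie in $\operatorname{SO}_n(\ff)\cap G = K$. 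As $\mathbf{S}(\ff)\cap K \subseteq \operatorname{Nor}_K(\mathbf{S}(\ff))$, the product collapses to $\mathbf{S}(\ff)\cdot \operatorname{Nor}_K(\mathbf{S}(\ff)) = A_\ff \cdot \operatorname{Nor}_K(\mathbf{S}(\ff))$. To finish, I would verify $\operatorname{Nor}_K(\mathbf{S}(\ff)) = \operatorname{Nor}_K(A_\ff)$: any $k\in K$ normalizing $\mathbf{S}(\ff)$ preserves its semialgebraically connected identity component $A_\ff$; conversely, $A_\ff$ is Zariski-dense in the connected group $\mathbf{S}$ (so in $\mathbf{S}(\ff)$), hence conjugation by $k \in \operatorname{Nor}_K(A_\ff)$ preserves the Zariski closure $\mathbf{S}$ and thus $\mathbf{S}(\ff)$.

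The main obstacle is producing, for each $w \in \sphwg$, a representative in $\operatorname{Nor}_K(\mathbf{S}(\ff))$. Over $\mathbb{R}$ this is the classical identification $\sphwg \cong \operatorname{Nor}_{\mathbf{K}(\rr)}(A_\rr)/\operatorname{Cent}_{\mathbf{K}(\rr)}(A_\rr)$ recalled in \Cref{ex:SymmetricBuilding1}. To transfer it to a general real closed field $\ff$, one can either apply Tarski's transfer principle to this semialgebraic statement (all the defining data are defined over $\qq$), or construct representatives root-by-root: for each $\alpha \in \Sigma$, the simple reflection $r_\alpha$ is realized by an element of the $\operatorname{SL}_2$-subgroup generated by $\mathbf{U}_{\pm\alpha}(\ff)$, and the self-adjointness of $\mathbf{S}$ allows this element to be chosen in $\operatorname{SO}_n(\ff)$, hence in $K$.
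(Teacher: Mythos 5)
Your proposal is correct and follows essentially the same route as the paper: both arguments reduce the first equality to finding, for the Weyl element induced by a normalizer element, a representative in $\operatorname{Nor}_K(\mathbf{S}(\ff))$ and then using $\operatorname{Cent}_G(\mathbf{S}(\ff))=\mathbf{S}(\ff)$ (valid since $\mathbf{G}$ is $\ff$-split), and both obtain the second equality from $\operatorname{Nor}_K(\mathbf{S}(\ff))=\operatorname{Nor}_K(A_\ff)$ (Zariski density of $A_\ff$) together with a decomposition of $\mathbf{S}(\ff)$ as $A_\ff$ times finite-order elements lying in $K$. The only difference is that you sketch proofs (Tarski transfer or $\operatorname{SL}_2$-representatives; explicit sign-matrix decomposition of the split torus) of the two inputs the paper simply cites, namely \cite[Proposition 6.2]{appenzeller2024semialgebraic} and \cite[Lemma 5.18]{appenzeller2025buildings}.
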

\begin{proof}
    Since $A_\ff$ is Zariski-dense in $\mathbf{S}(\ff)$ (as $S$ is connected as algebraic group), $\operatorname{Nor}_K(\mathbf{S}(\ff)) = \operatorname{Nor}_{K}(A_{\ff})$, hence the inclusions $\supseteq$ hold.
    For the first inclusion $\subseteq$, we note that any element $g \in N \coloneqq \operatorname{Nor}_{\mathbf{G}(\ff)}(\mathbf{S}(\ff))$ represents a unique $w \in \sphwg$ by the defining property $nU_\alpha n^{-1} = U_{w(\alpha)}$ for all $\alpha \in \Sigma$ \cite[Definition 4.2]{HebertIzquierdoLoisel_LambdaBuildingsAssocToQuasiSplitGroups}.
    We can also find $k \in \operatorname{Nor}_{K}(A_\ff)$ representing $w$ \cite[Proposition 6.2]{appenzeller2024semialgebraic} with the same defining property, so $k^{-1}g$ represents the trivial element of $\sphwg$ and thus $k^{-1}g \in \mathbf{S}(\ff)$ \cite[Notation 4.37]{HebertIzquierdoLoisel_LambdaBuildingsAssocToQuasiSplitGroups}, so $g \in \mathbf{S}(\ff) \cdot \operatorname{Nor}_K(\mathbf{S}(\ff))$. 
    Finally, we use that $A_\ff$ is Zariski-dense in $\mathbf{S}(\ff)$ and \cite[Lemma 5.18]{appenzeller2025buildings} to see $\mathbf{S}(\ff) = \operatorname{Cent}_{G}(\mathbf{S}(\ff)) = \operatorname{Cent}_{G}(A_\ff) = \operatorname{Cent}_{K}(A) \cdot A_\ff \subseteq A_\ff \cdot \operatorname{Nor}_K(A_\ff)$ concluding the proof.
\end{proof}

We now compute the stabilizer of the base point in the Bruhat--Tits building.
The following description holds in our context, and uses the description of the stabilizer of the base point in the symmetric building.

\begin{lemma}\label{lem:BT_stab_compatible} 
    For the base points $[\Id,0] \in \build_{\BT}$ and $o \in \build_S$, we have 
    \[\operatorname{Stab}_G([\Id,0]) = \operatorname{Stab}_{G}(o).\]
    In particular, $\operatorname{Stab}_G([\Id,0])=\mathbf{G}(\mathcal{O})$.
\end{lemma}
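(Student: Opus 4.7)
The ``in particular'' assertion will follow by combining the first equality with \Cref{prop:homogeneous_stabilizers}, which gives $\operatorname{Stab}_G(o)=\mathbf{G}(\mathcal{O})$. I plan to prove the equivalent statement $\operatorname{Stab}_G([\Id,0])=\mathbf{G}(\mathcal{O})$ by two inclusions.

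For the inclusion $\operatorname{Stab}_G([\Id,0])\subseteq\mathbf{G}(\mathcal{O})$: unpacking the equivalence relation from \Cref{ex:BruhatTitsBuilding1}, $g\in\operatorname{Stab}_G([\Id,0])$ if and only if $g\in P_0\cdot N_0$, where $N_0\coloneqq\{n\in N:\nu(n)(0)=0\}$. I will show that both factors lie in $\mathbf{G}(\mathcal{O})$. For $P_0$: the split hypothesis provides Chevalley parametrizations $x_\alpha\colon\ff\to U_\alpha$ with $\varphi_\alpha(x_\alpha(c))=v(c)$, so the root-group generators $u\in U_\alpha$ with $\varphi_\alpha(u)\geq 0$ of $P_0$ are exactly the $x_\alpha(c)$ with $c\in\mathcal{O}$, and these belong to $\mathbf{G}(\mathcal{O})$. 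For $N_0$ (and hence for the subgroup $H\subseteq N_0$ completing $P_0$): \Cref{lem:Nor_equiv} gives $N=A_\ff\cdot\operatorname{Nor}_K(A_\ff)$, and $\operatorname{Nor}_K(A_\ff)\subseteq\mathbf{K}(\ff)\subseteq\mathbf{G}(\mathcal{O})$ because orthogonal matrices over an ordered field have entries of absolute value at most $1$, hence non-negative valuation by order-compatibility of $v$. For $n=ak\in N_0$, the constraint $\nu(n)(0)=0$ combined with the compatibility~$(\ref{eq:BT_compatible})$ pins down the $A_\ff$-component $a$ to cancel the $\mathcal{O}$-valued translation contributed by $k$, placing $a$ in $A_\ff\cap\mathbf{G}(\mathcal{O})$.

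For the reverse inclusion $\mathbf{G}(\mathcal{O})\subseteq\operatorname{Stab}_G([\Id,0])$: the $\ff$-split hypothesis implies that $\mathbf{G}(\mathcal{O})$ is generated by its Chevalley root subgroups $x_\alpha(\mathcal{O})$ for $\alpha\in\Sigma$ together with $\mathbf{S}(\mathcal{O})$. The root-subgroup generators lie in $P_0$ by the preceding analysis, while $\mathbf{S}(\mathcal{O})\subseteq H\subseteq P_0$, since elements $t\in\mathbf{S}(\ff)$ with $v(\chi_\alpha(t))=0$ for every $\alpha\in\Sigma$ act trivially on $\mathfrak{A}$ via~$(\ref{eq:BT_compatible})$. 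Hence $\mathbf{G}(\mathcal{O})\subseteq P_0\subseteq\operatorname{Stab}_G([\Id,0])$.

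The main obstacle is the Chevalley-type generation statement for $\mathbf{G}(\mathcal{O})$ required in the reverse inclusion. This is classical for discretely valued complete fields, and in our $\Lambda$-valued setting it still holds because $\mathcal{O}$ remains a local ring; alternatively, one may invoke the explicit description of stabilizers of special points provided in \cite{HebertIzquierdoLoisel_LambdaBuildingsAssocToQuasiSplitGroups}. I also expect that the case analysis pinning down $a\in A_\ff\cap\mathbf{G}(\mathcal{O})$ in the first inclusion will need some care, since the translation contributed by $k\in\operatorname{Nor}_K(A_\ff)$ depends on the action of $k$ on the root group valuations, but this is controlled because $k$ has matrix entries in $\mathcal{O}$.
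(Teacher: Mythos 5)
Your route is genuinely different from the paper's. The paper never computes $\operatorname{Stab}_G([\Id,0])$ from scratch: it proves the equality $\operatorname{Stab}_G([\Id,0])=\operatorname{Stab}_G(o)$ directly, first intersected with $N$ (using \Cref{lem:Nor_equiv}, the compatibility condition (\ref{eq:BT_compatible}), and the criterion ``$v(\chi_\alpha(a))=0$ for all $\alpha$ iff $a\in\operatorname{Stab}_G(o)$'' from \cite{appenzeller2025buildings}), and then in general by showing $P_0=\langle U_{\alpha,0},\,H\rangle\subseteq\operatorname{Stab}_G(o)$ and, for the reverse inclusion, quoting $\operatorname{Stab}_G(o)\subseteq\langle N_0,U_{\alpha,0}\rangle\subseteq P_0$ from \cite[Theorem 5.45]{appenzeller2025buildings}; only at the very end is $\operatorname{Stab}_G(o)=\mathbf{G}(\mathcal{O})$ imported from \Cref{prop:homogeneous_stabilizers}. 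You instead compute $\operatorname{Stab}_G([\Id,0])=\mathbf{G}(\mathcal{O})$ directly. Both arguments ultimately need the same three ingredients: $U_{\alpha,0}\subseteq\mathbf{G}(\mathcal{O})$, the normalizer part (your observation that $\operatorname{Nor}_K(A_\ff)\subseteq\mathcal{O}^{n\times n}$ by order-compatibility, and that $\nu(ak)(0)=0$ forces $v(\chi_\alpha(a))=0$, is exactly the paper's first half and is sound, provided you actually justify $\nu(k)(0)=0$ for $k\in\operatorname{Nor}_K(A_\ff)$ --- your remark that $k$ and $k^{-1}=k^\top$ both lie in $\mathcal{O}^{n\times n}$ is the right reason), and a generation statement for $\mathbf{G}(\mathcal{O})$.

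That last ingredient is where your proposal has a genuine gap. The inclusion $\mathbf{G}(\mathcal{O})\subseteq P_0$ hinges on $\mathbf{G}(\mathcal{O})=\langle x_\alpha(\mathcal{O}) \ (\alpha\in\Sigma),\,\mathbf{S}(\mathcal{O})\rangle$, and neither of your proposed fixes closes it as stated. Appealing to classical Chevalley-group generation over local rings presupposes that $\mathbf{G}(\mathcal{O})\coloneqq\mathbf{G}(\ff)\cap\mathcal{O}^{n\times n}$ coincides with the $\mathcal{O}$-points of a Chevalley group scheme adapted to the chosen pinning; this identification is sensitive to the embedding $\mathbf{G}<\operatorname{SL}_n$ and is precisely what the remark following the lemma says the present matrix-group setup is designed to avoid. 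Invoking \cite[Proposition 7.48]{HebertIzquierdoLoisel_LambdaBuildingsAssocToQuasiSplitGroups} runs into the same issue, since it describes the stabilizer as $\mathfrak{G}(\mathcal{O})$ for a group $\mathcal{O}$-scheme $\mathfrak{G}$ built from a Chevalley basis. The paper resolves exactly this point by citing \cite[Theorem 5.45]{appenzeller2025buildings}, which proves the needed decomposition for the matrix group $\mathbf{G}(\ff)\cap\mathcal{O}^{n\times n}$ via the symmetric-space model. So either prove that generation statement for $\mathbf{G}(\ff)\cap\mathcal{O}^{n\times n}$ itself, or cite it; as written, the crux of your reverse inclusion is asserted rather than established.
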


\begin{remark}
     In \cite[Corollaire (4.6.7)]{BruhatTits84} and \cite[Proposition 7.48]{HebertIzquierdoLoisel_LambdaBuildingsAssocToQuasiSplitGroups}, it is shown that $\operatorname{Stab}([\Id,0]) = \mathfrak{G}(\mathcal O)$, where $\mathfrak{G}$ is a group $\mathcal O$-scheme defined in terms of some Chevalley basis.
     In our definition $G=\mathbf{G}(\ff)$ is just a group of matrices and the definition $\mathbf{G}(\mathcal O) \coloneqq G \cap \mathcal O^{n\times n}$ avoids any algebraic geometry.
 \end{remark}
 
\begin{proof}[Proof of \Cref{lem:BT_stab_compatible}]
    Let $N\coloneqq \operatorname{Nor}_{G}(\mathbf{S}(\ff))$.
    We first show that
    \[\operatorname{Stab}_G([\Id,0]) \cap N = \operatorname{Stab}_{G}(o) \cap N.\]

    Let $g \in \operatorname{Stab}_G([\Id,0]) \cap N$.
    Since $g\in \operatorname{Stab}_G([\Id,0])$, there exists $n \in N$ with $g^{-1}n \in P_0$ and $\nu(n)(0)=0$.
    Since we assumed $g \in N$, we also have $g^{-1}n \in N \cap P_0 \eqqcolon N_0$ and thus $\nu(g^{-1}n)(0) = 0$ \cite[(7.1.8)]{BruhatTits}\cite[Corollary 5.15]{HebertIzquierdoLoisel_LambdaBuildingsAssocToQuasiSplitGroups}, and thus $\nu(g)(0)=0$.
    Using \Cref{lem:Nor_equiv} to write $g=ak \in A_\ff \cdot \operatorname{Nor}_K(A_\ff)$, we conclude that $\nu(a)=\Id_{V\otimes_{\rr} \mathfrak{R}}$, since $\nu(k)(0)=0$. 
    By the compatibility condition (\ref{eq:BT_compatible}), we get that $v (\chi_\alpha(a)) = 0$ for all $\alpha \in \Sigma$.
    By \cite[Proposition 4.18]{appenzeller2025buildings} this implies that $a \in \operatorname{Stab}_G(o)$.
    Moreover $k\in K \subseteq \operatorname{Stab}_G(o)$ \cite[Theorem 4.19 and Corollary 4.20]{appenzeller2025buildings}, and so $g\in \operatorname{Stab}_G(o)$.

    Assume now that $g\in \operatorname{Stab}_G(o) \cap N$.
    Then we can write $g = ak \in N=A_\ff \cdot \operatorname{Nor}_K(A_\ff)$ with $a \in A_\ff \cap \mathcal O^{n\times n}$ and $k \in \operatorname{Nor}_K(A_\ff)$ \cite[Corollary 4.20]{appenzeller2025buildings}.
    Then by \cite[Proposition 4.18]{appenzeller2025buildings}, $v(\chi_\alpha(a)) =0$ for all $\alpha \in \Sigma$, so $\nu(a)=\Id_{V\otimes_\rr \mathfrak{R}}$ by (\ref{eq:BT_compatible}), in particular $\nu(a)(0)=0$.
    Now we can conclude that $g \in \operatorname{Stab}_G([\Id,0])$, since $a \in N$  and satisfies $g^{-1}a = k^{-1} \in \hat{N}_0 \coloneqq \{ n \in N \colon \nu(n)(0) = 0 \}$ and $\nu(a)(0) = 0$ \cite[(7.4.1)]{BruhatTits}\cite[Definition 6.1]{HebertIzquierdoLoisel_LambdaBuildingsAssocToQuasiSplitGroups}.

    We now prove that $\operatorname{Stab}_G([\Id,0]) = \operatorname{Stab}_{G}(o)$.
    For this we use the subgroups $U_{\alpha,0} \coloneqq \{ u \in U_\alpha \colon \varphi_\alpha(u) \geq 0 \}$, where $\varphi_\alpha$ is the root group valuation defined via the Jacobson--Morozov maps $\operatorname{SL}_2 \to \mathbf{G}$ \cite[(6.1.3.b.2), (6.2.3.b)]{BruhatTits}\cite[Notation 7.22]{HebertIzquierdoLoisel_LambdaBuildingsAssocToQuasiSplitGroups}\footnote{We emphasize that while $U_{\alpha,0}$ is defined differently in \cite{appenzeller2025buildings}, the two concepts agree due to \cite[Lemmas 5.22 and 5.26]{appenzeller2025buildings}.}.
    Recall that $H = \nu^{-1}(\Id_{V\otimes_\rr \mathfrak{R}}) \subseteq N$ and that
    \[
    P_0 = \langle U_{\alpha,0} , \,H   \rangle. 
    \]
    
    If now $g \in \operatorname{Stab}_G([\Id, 0])$, then, by definition, there exists $n\in N$ such that $g^{-1}n \in P_0$ and $\nu(n)(0) = 0$.
    Since $n \in \operatorname{Stab}_G([\Id,0]) \cap N$, we have $n\in \operatorname{Stab}_G(o)$ by the above.
    Any $h\in H \subseteq N$ satisfies $h.[\Id,0] = [\Id,\nu(h)(0)] = [\Id,0]$, and thus $h \in \operatorname{Stab}_G(o)$ by the above.
    By \cite[Lemma 5.22]{appenzeller2025buildings}, we also get that $U_{\alpha,0} \subseteq \operatorname{Stab}_G(o)$, hence $P_0 \in \operatorname{Stab}_G(o)$.
    Thus $g = n(g^{-1}n)^{-1} \in \operatorname{Stab}_G(o)$.

    If we start with $g \in \operatorname{Stab}_G(o)$, then we use \cite[Theorem 5.45]{appenzeller2025buildings} to obtain 
    \[
    g \in \langle N_0 , U_{\alpha,0} \rangle \subseteq P_0,
    \]
    so we can take $n=\Id\in N$ with $g^{-1}n \in P_0$ and $\nu(n)(0) = 0$ to obtain $g \in \operatorname{Stab}_G([\Id, 0])$.

    The stabilizer for the action on $B_S$ was computed in \Cref{prop:homogeneous_stabilizers}.
\end{proof}

\begin{lemma}\label{lem:S_BT_stab_apt}
    For the standard apartments $f_0(\aa) \subseteq \build_S$ and $f_0'(\aa') \subseteq \build_\BT$, the pointwise stabilizers agree
    \[
    \operatorname{Stab}_G(f_0(\aa)) = \operatorname{Stab}_G(f_0'(\aa')).
    \]
    In particular $\operatorname{Stab}_G(f_0'(\aa')) = \operatorname{Cent}_G(\mathbf{S}(\ff)) \cap \mathcal{O}^{n\times n}$.
\end{lemma}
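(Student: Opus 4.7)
The plan is to prove both inclusions of $\operatorname{Stab}_G(f_0(\aa)) = \operatorname{Stab}_G(f_0'(\aa'))$ separately, exploiting the fact that the compatibility relations~(\ref{eq:BT_compatible}) and~(\ref{eq:BH_compatible}) have identical form: the same element $a \in A_\ff$ simultaneously parametrizes a point in $f_0(\aa)$ and its counterpart in $f_0'(\iota(\aa))$, where $\iota \colon \aa \hookrightarrow \aa'$ is the inclusion induced by $\Lambda \hookrightarrow \mathfrak{R}$. The ``in particular'' statement then follows from Proposition~\ref{prop:homogeneous_stabilizers} (upon noting that the pointwise stabilizer of the image of a chart coincides with the stabilizer of the chart itself, since $g.f(x) = f(x)$ for every $x$ is equivalent to $g.f = f$).

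\emph{Inclusion $\operatorname{Stab}_G(f_0'(\aa')) \subseteq \operatorname{Stab}_G(f_0(\aa))$.} Fix $x \in \aa$ and choose $a_x \in A_\ff$ with $f_0(x) = a_x.o$; by~(\ref{eq:BH_compatible}) this amounts to $(-v)(\chi_\alpha(a_x)) = \alpha(x)$ for all $\alpha \in \Sigma$. By~(\ref{eq:BT_compatible}), the very same equations give $[a_x, 0] = [\Id, \iota(x)]$ in $\build_\BT$. If $g \in \operatorname{Stab}_G(f_0'(\aa'))$, then $g$ fixes $[\Id, \iota(x)] = [a_x, 0]$, so $a_x^{-1} g a_x \in \operatorname{Stab}_G([\Id, 0]) = \operatorname{Stab}_G(o)$ by Lemma~\ref{lem:BT_stab_compatible}. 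Therefore $g.f_0(x) = g.a_x.o = a_x.o = f_0(x)$, as required.

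\emph{Inclusion $\operatorname{Stab}_G(f_0(\aa)) \subseteq \operatorname{Stab}_G(f_0'(\aa'))$.} By Proposition~\ref{prop:homogeneous_stabilizers}, since $\mathbf{G}$ is $\ff$-split, any $g \in \operatorname{Stab}_G(f_0(\aa))$ lies in $\mathbf{S}(\ff) \cap \mathcal{O}^{n \times n}$. Since both $g$ and $g^{-1}$ have entries in $\mathcal{O}$, we have $\chi_\alpha(g) \in \mathcal{O}^\times$ and $v(\chi_\alpha(g)) = 0$ for every $\alpha \in \Sigma$, so by~(\ref{eq:BT_compatible}) we obtain $\nu(g) = \Id_{V \otimes_\rr \mathfrak{R}}$, i.e., $g \in H$. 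Since $H \subseteq P_x$ for every $x \in \aa'$, we may take $n = \Id \in N$ in the equivalence defining $\build_\BT$: then $g^{-1} \cdot \Id \cdot n = g^{-1} \in P_x$ and $\nu(n)(x) = x$, whence $g.[\Id, x] = [\Id, x]$ for all $x \in \aa'$, and therefore $g \in \operatorname{Stab}_G(f_0'(\aa'))$.

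The main delicacy is the size mismatch between $\aa$ and $\aa'$: for a point $[\Id, y]$ with $y \in \aa' \setminus \iota(\aa)$, one cannot write $[\Id, y] = [a, 0]$ for any $a \in \mathbf{S}(\ff)$, so the second inclusion must be routed through $\nu(g) = \Id$ and the containment $H \subseteq P_x$, rather than through Lemma~\ref{lem:BT_stab_compatible} applied pointwise as in the first inclusion.
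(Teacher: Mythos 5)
Your proof is correct, and the easy inclusion (via the pointwise identity $\operatorname{Stab}_G(f_0(x)) = \operatorname{Stab}_G(f_0'(\tau(x)))$ obtained by conjugating \Cref{lem:BT_stab_compatible} by $a_x$ and matching the compatibility conditions (\ref{eq:BH_compatible}) and (\ref{eq:BT_compatible})) is exactly the paper's. Where you genuinely diverge is the hard inclusion $\operatorname{Stab}_G(f_0(\aa)) \subseteq \operatorname{Stab}_G(f_0'(\aa'))$. The paper reduces it to showing that the pointwise stabilizer of the ``small'' subset $f_0'(\tau(\aa)) \subseteq \build_\BT$ already coincides with that of the full apartment $f_0'(\aa')$: for $\Lambda \subseteq \rr$ this is a density argument via \cite[Proposition (7.1.9)]{BruhatTits}, and in general it computes $\hat P_\Omega = U_\Omega(\hat P_\Omega \cap N)$ with $U_\Omega = \{\Id\}$ following \cite{HebertIzquierdoLoisel_LambdaBuildingsAssocToQuasiSplitGroups} and then forces $\nu(g) = \Id$ on a basis. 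You instead invoke the explicit split-case description $\operatorname{Stab}_G(f_0(\aa)) = \mathbf{S}(\ff) \cap \mathcal{O}^{n\times n}$ from \Cref{prop:homogeneous_stabilizers}, observe that $\chi_\alpha(g) \in \mathcal{O}^\times$ (both $g$ and its adjugate-inverse have entries in $\mathcal{O}$), deduce $\nu(g) = \Id$ from (\ref{eq:BT_compatible}), and conclude via $H \subseteq P_x$ with $n = \Id$ in the defining equivalence relation that $g$ fixes every $[\Id, x]$. This bypasses the density argument and the $U_{\alpha,\Omega}$ machinery entirely and is arguably more self-contained; the trade-off is that it leans harder on the $\ff$-split hypothesis (which is in force throughout \Cref{subsection:SymmetricBTBuildings}) and on the prior stabilizer computation for the symmetric building, whereas the paper's route stays internal to the Bruhat--Tits side and would survive a weakening of the split assumption. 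Your opening remark identifying the pointwise stabilizer of $f(\aa)$ with $\operatorname{Stab}_G(f)$ is also a point worth making explicit, and is consistent with how the paper itself uses \Cref{prop:homogeneous_stabilizers}.
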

\begin{proof}
    By (\ref{eq:BH_compatible}) at the end of \Cref{subsection:ExamplesNonDiscBuildings}, for all $x\in \aa$ there exists $a\in A_\ff \subseteq \mathbf{S}(\ff)$ such that $f_0(x) = a.f_0(0)$ with $(-v)(\chi_\alpha(a)) = \alpha(x)$ for all $\alpha \in \Sigma$. By (\ref{eq:BT_compatible}) in \Cref{ex:BruhatTitsBuilding1}, then $a.f_0'(0)=[a,0] = [\Id, \tau (x)] = f_0'(\tau(x)) \in \build_{\BT}$.
In particular, using \Cref{lem:BT_stab_compatible}, we have for all $x\in \aa$
\begin{align*}
     \operatorname{Stab}_G(f_0(x))  &= \operatorname{Stab}_{G}(a.o)
    = a \operatorname{Stab}_G(o) a^{-1} 
    = a \operatorname{Stab}_G([\Id,0]) a^{-1} \\
    &= \operatorname{Stab}_G([a,0]) 
    = \operatorname{Stab}_G([\Id,\tau (x)])= \operatorname{Stab}_G(f_0'(\tau (x))).
\end{align*}
Since $\tau(\aa) \subseteq \aa'$, we already have 
$$
\operatorname{Stab}(f_0'(\aa')) \subseteq \operatorname{Stab}_G(f_0'(\tau(\aa))) = \operatorname{Stab}_G(f_0(\aa)).
$$ 
In the setting of Bruhat-Tits ($\Lambda \subseteq \rr$), we can use that $\tau(\aa) \subseteq \aa'$ is dense, so equality holds by \cite[Proposition (7.1.9)]{BruhatTits}, so $A_{f,\Id} = A_{f',\Id}$. For the general case, we compute $\operatorname{Stab}_G(f_0'(\tau(\aa)))$ following \cite{HebertIzquierdoLoisel_LambdaBuildingsAssocToQuasiSplitGroups}. Let $\Omega \coloneqq \tau(\aa) \subseteq \aa'$. Since $\Omega$ contains points $x$ with arbitrarily small $\alpha(x)$ for every $\alpha \in\Sigma$, we have $U_{\alpha,\Omega} = \bigcap_{x\in \Omega} U_{\alpha, -\alpha(x)} = \{\Id\}$ and $U_\Omega = \langle U_{\alpha,\Omega} \colon \alpha \in \Sigma \rangle \{\Id\}$, see \cite[Definition 4.8 (V1), Notation 4.55]{HebertIzquierdoLoisel_LambdaBuildingsAssocToQuasiSplitGroups}. By \cite[Lemma 6.5, Corollary 5.14]{HebertIzquierdoLoisel_LambdaBuildingsAssocToQuasiSplitGroups}, $\operatorname{Stab}_G(f_0'(\Omega)) = \hat{P}_\Omega = U_\Omega (\hat{P}_\Omega \cap N) \subseteq N$, but by the calculation at the beginning of the proof we have for every $g\in \operatorname{Stab}_G(f_0'(\Omega)) \subseteq N$, $\nu(g) = \Id_{\aa'}$ on a basis of $\aa'$ and hence everywhere. Therefore, $\operatorname{Stab}_G(f_0'(\tau(\aa))) \subseteq \ker(\nu) = T_b = \operatorname{Stab}_G(f_0'(\aa'))$ \cite[Notation 4.37 and Corollary 6.9]{HebertIzquierdoLoisel_LambdaBuildingsAssocToQuasiSplitGroups}, whence equality holds.

The last statement follows from \Cref{prop:homogeneous_stabilizers}.
\end{proof}

We can now prove the existence of an injective morphism from $\build_S$ to $\build_\BT$.

\begin{thm}
\label{corol:MorphismKramerTentToBruhatTits}
    The identity homomorphism on $G$ induces an equivariant injective morphism of buildings from the symmetric building $(\build_S,\atlas)$ of type $\mathbb{A}(\Sigma^\vee, \Lambda, \Lambda^n)$ to the Bruhat--Tits building $(\build_\BT,\atlas')$ of type $\mathbb{A}(\Sigma^\vee, \mathfrak{R}, \Lambda^n)$.
    
    If $\Lambda = \mathbb{R}$, then this morphism is an isomorphism.
\end{thm}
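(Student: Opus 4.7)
\medskip

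\noindent\textbf{Proof plan.} The plan is to apply \Cref{thm:baby_morphismofGbuildings} with $G=G'=\mathbf{G}(\ff)$ and $\rho = \Id_G$, using the standard charts $f = f_0 \colon \aa \to B_S$ and $f' = f_0' \colon \aa' \to B_\BT$ that come with the two models. Recall that $G$ acts transitively on both $B_S$ and $\atlas$ by \Cref{ex:SymmetricBuilding2}, so the baby version of the main theorem is applicable. As the apartment morphism, we take $\tau = (L,\gamma,\sigma)$ with $L = \Id$ on $\operatorname{Span}_{\qq}(\Sigma^\vee)$, $\gamma \colon \Lambda \hookrightarrow \mathfrak{R}$ the inclusion coming from Hahn's embedding, and $\sigma$ constructed via \Cref{lem:morphism_apartment} from the identity on $\sphwg$ (both buildings have spherical Weyl group associated to $\Sigma$) and $\gamma$ on the translation part $T = \Lambda^n = T'$. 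This is well-defined since $\gamma(T) \subseteq T'$ is automatic.

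\medskip

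\noindent Next, I verify the three hypotheses of \Cref{thm:baby_morphismofGbuildings}. For \ref{condition:baby_stabilizerofapoint}, \Cref{lem:BT_stab_compatible} gives equality $\operatorname{Stab}_G(f_0(0)) = \operatorname{Stab}_G(o) = \operatorname{Stab}_G([\Id,0]) = \operatorname{Stab}_G(f_0'(0))$. For \ref{condition:baby_stabilizerofchart}, note that the stabilizer of a chart coincides with the pointwise stabilizer of its image, and \Cref{lem:S_BT_stab_apt} gives $\operatorname{Stab}_G(f_0(\aa)) = \operatorname{Stab}_G(f_0'(\aa'))$. For \ref{condition:baby_Afw}, given $x\in \aa$, the compatibility condition (\ref{eq:BH_compatible}) of \Cref{ex:SymmetricBuilding1} furnishes some $a \in A_\ff$ with $f_0(x) = a.o$ and $(-v)(\chi_\alpha(a)) = \alpha(x)$ for all $\alpha \in \Sigma$; then (\ref{eq:BT_compatible}) of \Cref{ex:BruhatTitsBuilding1} gives $a.[\Id,0] = [a,0] = [\Id,\tau(x)] = f_0'(\tau(x))$, so $g = a$ works. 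This yields a $\rho$-equivariant morphism $m = (\psi,\varphi,\tau)$.

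\medskip

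\noindent For injectivity, the maps $L = \Id$ and $\gamma$ are injective, so $\tau$ is injective; $\rho = \Id$ is injective; and the inclusion in \ref{condition:baby_stabilizerofchart} is in fact an equality by \Cref{lem:S_BT_stab_apt}. Part \ref{thm:baby_monomorphism} of \Cref{thm:baby_morphismofGbuildings} then gives injectivity of $m$.

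\medskip

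\noindent Finally, when $\Lambda = \mathbb{R}$ we may take $\mathfrak{R} = \mathbb{R}$ in Hahn's embedding, so that $\gamma = \Id$ and $\tau$ is an isomorphism of apartments; the homomorphism $\rho = \Id$ is an isomorphism; and both \ref{condition:baby_stabilizerofapoint} and \ref{condition:baby_stabilizerofchart} hold with equality. Moreover $G'$ acts transitively on $\atlas'$ by construction, and since in this case the translation group $T' = \mathbb{R}^n$ equals the full apartment $\aa'$, \Cref{lem:trans} ensures $G'$ acts transitively on $B_\BT$ as well. Part \ref{thm:baby_isomorphism} of \Cref{thm:baby_morphismofGbuildings} then concludes that $m$ is an isomorphism. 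The main technical content, namely the matching of stabilizers across the two models, has already been absorbed into Lemmas \ref{lem:BT_stab_compatible} and \ref{lem:S_BT_stab_apt}, so the only subtle point in the proof itself is translating between the two compatibility conditions (\ref{eq:BH_compatible}) and (\ref{eq:BT_compatible}) to verify \ref{condition:baby_Afw}.
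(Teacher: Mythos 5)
Your proposal is correct and follows essentially the same route as the paper: the same application of \Cref{thm:baby_morphismofGbuildings} with $\rho=\Id_G$, the same apartment morphism $(\Id,\gamma,\sigma)$ with $\gamma\colon\Lambda\hookrightarrow\mathfrak{R}$, the same use of \Cref{lem:BT_stab_compatible} and \Cref{lem:S_BT_stab_apt} for the stabilizer conditions, and the same translation between (\ref{eq:BH_compatible}) and (\ref{eq:BT_compatible}) for condition \ref{condition:baby_Afw}. Your explicit appeal to \Cref{lem:trans} to get transitivity of $G$ on $\build_\BT$ when $\Lambda=\rr$ just makes precise a step the paper leaves to \Cref{ex:BruhatTitsBuilding2}.
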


\begin{proof} 
We can apply \Cref{thm:baby_morphismofGbuildings}~\ref{thm:baby_monomorphism}, as $G$ acts transitively on $\build_S$ and $\atlas$, see \Cref{ex:SymmetricBuilding2}.
Let $L = \Id_{\operatorname{Span}_{\mathbb{Q}}(\Sigma^\vee)}$, $\gamma \colon \Lambda \to \mathfrak{R}$ be the inclusion, $\sigma = \Id_{\affwg}$ and $\rho = \Id_G$.
For all $w\in \affwg$ and $x \in \aa$ we have $L \otimes \gamma \, (w(x)) = \sigma(w) \left(L \otimes \gamma \, (x) \right)$, and thus $\tau \coloneqq L \otimes \gamma \colon \aa \to \aa'$ is a morphism of apartments, that is injective as $L$ and $\gamma$ are.

We verify the conditions of the theorem for the standard charts.
Namely, we choose $f=f_0 \colon \aa \to \build_S \in \atlas$ as in \Cref{ex:SymmetricBuilding1}.
For $\build_{\BT}$ we choose $f'=f_0' \colon \aa' \to \build_{\BT}$ by $f_0'(x') = [\Id,x']$ as in \Cref{ex:BruhatTitsBuilding1}.
By \Cref{lem:BT_stab_compatible}, we have $\operatorname{Stab}_G(f(0)) = \operatorname{Stab}_{G}(f'(0))$, which implies condition \ref{condition:baby_stabilizerofapoint} of \Cref{thm:baby_morphismofGbuildings}.
By \Cref{lem:S_BT_stab_apt}, we have $\operatorname{Stab}_{G}(f) = \operatorname{Stab}_{G'}(f')$, which implies condition \ref{condition:baby_stabilizerofchart}. 

By (\ref{eq:BH_compatible}) at the end of \Cref{subsection:ExamplesNonDiscBuildings}, for all $x\in \aa$ there exists $a\in A_\ff \subseteq \mathbf{S}(\ff)$ such that $f(x) = a.f(0)$ with $(-v)(\chi_\alpha(a)) = \alpha(x)$ for all $\alpha \in \Sigma$. By (\ref{eq:BT_compatible}) in \Cref{ex:BruhatTitsBuilding1}, then $a.f'(0)=[a,0] = [\Id, \tau (x)] = f'(\tau(x)) \in \build_{\BT}$.
We have shown condition \ref{condition:baby_Afw} and thus obtain an equivariant morphism $m=(\psi,\varphi,\tau) \colon \build_S \to \build_\BT$.

The conditions in \ref{thm:baby_monomorphism} hold since $\tau$ and $\rho$ are injective by definition and by \Cref{lem:S_BT_stab_apt}, $\operatorname{Stab}_G(f)=\operatorname{Stab}_G(f')$, so $m$ is injective.

If $\Lambda = \mathbb{R}$, then $\gamma$ is an isomorphism, so $\tau$ is an isomorphism of apartments, since in this case $\mathfrak{R}=\rr$, see \Cref{thm:hahn}.
By \Cref{ex:BruhatTitsBuilding2}, $G$ also acts transitively on $\build'$ and $\atlas'$ and we have already shown the two equalities, so the conditions of \Cref{thm:baby_morphismofGbuildings}~\ref{thm:baby_isomorphism} hold and $m$ is an equivariant isomorphism $\build_S \xrightarrow{\sim} \build_\BT$.
\end{proof}

\subsection{Bruhat--Tits and norm buildings}
\label{BTtonorms}

In this section, we use \Cref{thm:morphismofGbuildings} to show that there is an equivariant morphism from the Bruhat--Tits building $\build_\BT$ to the norms building $\build_N$.
For the definitions of the buildings we refer to the Examples \ref{ex:NormBuilding1} and \ref{ex:BruhatTitsBuilding1}.

Let now $G \coloneqq \operatorname{GL}_n(\ff)$, where $\ff$ is a field with a rank-$1$ valuation $v \colon \ff^\times \twoheadrightarrow \Lambda\subseteq \rr$, and $\Phi \subseteq V \coloneqq \rr^n/\rr(1,\ldots, 1)$ the standard root system of type $A_{n-1}$.
Then $\aa = \operatorname{Span}_{\qq}(\Phi) \otimes_\qq \rr$ identifies with the subspace $V_0 \coloneqq \{(x_1,\ldots,x_n) \in \rr^n \colon \sum_i x_i =0 \} \subseteq \rr^n$ equipped with the action of the spherical Weyl group action given by the permutations of entries.

 Let 
 \[f \colon \aa \to \build_\BT, \;x \mapsto [\Id, x] \quad \textnormal{ and } \quad
    f'\colon \aa \to \build_N, \; x \mapsto [\eta_x]\]
be the standard charts, where we recall
    \[
    \eta_x \Big( \sum_{i=1}^n v_i e_i \Big)
    = \max \left\{ e^{-x_1} |v_1|, \ldots , e^{-x_n} |v_n| \right\},
    \]
    for $x = (x_1, \ldots , x_n) \in V_0$ and $|a| = \exp(-v(a))$ for $a\in \ff$.

    We begin by recalling the description of stabilizer of points in $f(\aa)$ in the Bruhat--Tits building.\footnote{In \cite{BruhatTits}, $\alpha_{ij}(x)$ is denoted by $a_j(x)-a_i(x)$, see \cite[(10.2.1)]{BruhatTits}.}

\begin{lemma}[{\cite[Corollaire (10.2.9)]{BruhatTits}}]
    \label{lem:BBT_stabGL}
    For $x\in \aa$, we have 
    \[
    \operatorname{Stab}_{\glnf}(f(x))  = \left\{ g\in \glnf \colon 
    v(g_{ij}) \geq \frac{v(\det(g))}{n} - \alpha_{ij}(x) 
    \ \textnormal { for all }i,j \right\},
    \]
    where $g= (g_{ij})_{ij}$ as matrices. 
\end{lemma}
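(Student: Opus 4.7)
Since this is a citation from \cite{BruhatTits}, one natural self-contained argument proceeds by reducing to the base point via the torus action and then identifying the base-point stabilizer. I would structure the proof in four steps.

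First, using the surjectivity of $v$, choose $a_1,\dots,a_n \in \ff^\times$ with $v(a_i) = -x_i$ and set $a \coloneqq \operatorname{diag}(a_1,\dots,a_n) \in \glnf$ (note $\sum x_i = 0$ is consistent with $a$ being well-defined; it also forces $v(\det a) = 0$). The compatibility condition \eqref{eq:BT_compatible} for the root group valuations, applied as in the end of \Cref{ex:BruhatTitsBuilding1}, shows that $a.f(0) = [a,0] = [\Id,x] = f(x)$: indeed, for each root $\alpha_{ij}$ one has $(-v)(\chi_{\alpha_{ij}}(a)) = v(a_j)-v(a_i) = x_i - x_j = \alpha_{ij}(x)$. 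Consequently,
\[
\operatorname{Stab}_{\glnf}(f(x)) \;=\; a\,\operatorname{Stab}_{\glnf}(f(0))\,a^{-1}.
\]

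Second, I would identify the stabilizer of the base point $f(0)$. The cleanest route is through the norm-building picture (which will be related to $\build_\BT$ in the subsequent \Cref{thm:BTNorm} of the paper, so one must take care not to be circular): the base point corresponds to the homothety class of the standard norm $\eta_0$ with $\eta_0(e_i)=1$, and $g$ stabilizes $[\eta_0]$ iff $\eta_0(g^{-1}v) = c\,\eta_0(v)$ for some $c>0$ and all $v$. Testing on $v = e_k$ gives $v((g^{-1})_{ik}) \geq -\log c$ for all $i,k$, and the induced norm on $\Lambda^n V$ determines $c = e^{v(\det g)/n}$. Replacing $g$ by $g^{-1}$ (the stabilizer is a subgroup, hence closed under inverses) yields
\[
\operatorname{Stab}_{\glnf}(f(0)) \;=\; \bigl\{h\in\glnf \colon v(h_{ij}) \geq v(\det h)/n \text{ for all } i,j\bigr\}.
\]
Alternatively, this can be extracted directly from the $\slnf$-description $\operatorname{Stab}_{\slnf}(f(0)) = \slnf(\mathcal O)$ (which is proved in the paper for the related lattice/symmetric setting) together with the observation that the center $\ff^\times \cdot \Id$ of $\glnf$ acts trivially on $\build_\BT$.

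Third, conjugate. Writing $g = a h a^{-1}$ with $h \in \operatorname{Stab}_{\glnf}(f(0))$, one has $g_{ij} = a_i h_{ij} a_j^{-1}$ and $\det g = \det h$, hence $v(g_{ij}) = v(h_{ij}) + v(a_i) - v(a_j) = v(h_{ij}) - \alpha_{ij}(x)$. The defining inequality $v(h_{ij}) \geq v(\det h)/n$ thus translates verbatim into $v(g_{ij}) \geq v(\det g)/n - \alpha_{ij}(x)$, which is the stated characterization. Conversely, any $g$ satisfying these inequalities gives an $h = a^{-1}ga$ with the corresponding inequalities for $h$, placing $h$ in the base stabilizer.

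\textbf{Main obstacle.} The only nontrivial point is Step 2, since the Bruhat--Tits building of \Cref{ex:BruhatTitsBuilding1} is defined in the semisimple setting, whereas here we need the stabilizer in the full $\glnf$-action. One needs either to extend the $\slnf$-stabilizer description by the central torus (using triviality of the central action on $\build_\BT$), or to invoke the norm-building picture as sketched above; the latter is particularly clean since the determinantal factor $v(\det g)/n$ appears naturally as the scalar $c$ of the homothety $g.\eta_0 \sim \eta_0$.
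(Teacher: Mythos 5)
The paper offers no proof of this lemma at all --- it is quoted verbatim from \cite[Corollaire (10.2.9)]{BruhatTits} --- so the comparison here is between your self-contained argument and what such an argument would actually need to establish. Your Steps 2--3 are essentially sound in spirit (and Step 3 is a clean, correct computation), but Step 1 contains a genuine gap. You reduce to the base point by finding $a=\operatorname{diag}(a_1,\dots,a_n)$ with $v(a_i)=-x_i$, invoking surjectivity of $v$. But $v$ surjects only onto $\Lambda\subseteq\rr$, whereas the apartment in this lemma is $\aa\cong V_0\subseteq\rr^n$ with arbitrary \emph{real} coordinates. When $\Lambda\subsetneq\rr$ there is no $a\in\glnf$ with $[a,0]=[\Id,x]$ unless $\alpha_{ij}(x)\in\Lambda$ for all $i,j$; indeed the paper stresses (see \Cref{ex:BruhatTitsBuilding2} and \Cref{prop:TransitiveActionOnBuilding}) that the $G$-action on $\build_\BT$ is \emph{not} transitive in this case, and this non-transitivity is precisely why \Cref{thm:morphismofGbuildings} exists. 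Your conjugation argument therefore proves the formula only on the $\Lambda$-rational part of the apartment; for general $x$ one must work directly with the definition of $P_x$ via the root group valuations $\varphi_{\alpha}$ (which is what Bruhat--Tits do), and no limiting or density argument is supplied to bridge the remaining points.

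Step 2 also rests on shakier ground than you acknowledge. Using the norm-building stabilizer is, as you note, circular relative to \Cref{thm:BTNorm}. Your proposed repair --- combining $\operatorname{Stab}_{\slnf}(f(0))=\operatorname{SL}_n(\mathcal{O})$ with triviality of the central action --- does not suffice, because $\glnf$ is not generated by $\slnf$ and $\ff^\times\cdot\Id$ (e.g.\ $\operatorname{diag}(t,1,\dots,1)$ lies in neither factor's product unless $t$ is an $n$-th power), so the $\glnf$-stabilizer is not determined by these two pieces. Moreover, the base-point stabilizer computations actually proved in the paper live in other settings: \Cref{prop:L_H_stab} concerns the lattice building over a general valued field, and \Cref{lem:BT_stab_compatible} concerns the Bruhat--Tits building but only for semisimple split self-adjoint groups over a \emph{real closed} field, whereas here $\ff$ is an arbitrary field with a rank-$1$ valuation. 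So a correct self-contained proof would have to compute $\operatorname{Stab}_{\glnf}([\Id,0])$ directly from the generators $U_{\alpha,0}$ and $H$ of $P_0$ (together with $N$), and then handle general $x\in\aa$ without the conjugation shortcut.
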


Parreau \cite[Section 3B1]{Parreau_AffineBuildings} gave a description of the stabilizers of points in $f'(\aa)$ in the norm building.
In the case $x=0$ (and its $\glnf$-translates) the stabilizer can be described as $\ff^\times\cdot \operatorname{GL}_n(\mathcal{O})$ (and its conjugates), where
\[\operatorname{GL}_n(\mathcal{O}) \coloneqq \{g \in \glnf : g_{ij} \in \mathcal{O},\ \det(g) \in \mathcal{O}^\times\}.\]
We give an alternative description of the stabilizers that agrees with the one given in \Cref{lem:BBT_stabGL}.
Our computation is based on the one in \cite[Corollary 3.4]{Parreau_AffineBuildings}.

\begin{lemma}\label{lem:BN_stabGL}
    For $x\in \aa$, we have 
    \[
    \operatorname{Stab}_{\glnf}(f'(x))  = \left\{ g\in \glnf \colon 
    \begin{array}{l}
    v(g_{ij}) \geq \frac{v(\det(g))}{n} - \alpha_{ij}(x) \\
    v(g^{ij}) \geq \frac{v\left(\det\left(g^{-1}\right)\right)}{n} - \alpha_{ij}(x) \end{array}  
    \textnormal {for all }i,j \right\},
    \]
    where $g= (g_{ij})_{ij}$ and $g^{-1} = (g^{ij})_{ij}$ as matrices. 
\end{lemma}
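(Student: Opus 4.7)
The plan is to characterise $g \in \operatorname{Stab}_{\glnf}(f'(x))$ by the condition $g.\eta_x = c\,\eta_x$ for some $c>0$, and then translate this into the valuation inequalities on the entries of $g$ and of $g^{-1}$. Since $\glnf$ acts by $g.\eta = \eta \circ g^{-1}$, we have $g \in \operatorname{Stab}_{\glnf}([\eta_x])$ if and only if there exists $c \in \rr_{>0}$ with $\eta_x \circ g^{-1} = c\,\eta_x$, so the task is to characterise all such pairs $(g,c)$ and then determine $c$ in terms of $g$.

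For the forward direction, I would evaluate at the standard basis vectors. Writing $g^{-1}e_j = \sum_i g^{ij}e_i$, the identity $(g.\eta_x)(e_j) = c\,\eta_x(e_j)$ becomes $\max_i \{e^{-x_i}|g^{ij}|\} = c\,e^{-x_j}$, which yields
\[ v(g^{ij}) \geq -\alpha_{ij}(x) - \log c \quad \text{for all } i,j, \]
using $|a|=\exp(-v(a))$ and $\alpha_{ij}(x)=x_i-x_j$. The same argument applied to $g^{-1}$ (which scales $\eta_x$ by $c^{-1}$) yields $v(g_{ij}) \geq -\alpha_{ij}(x) + \log c$. To identify $\log c$, expand $\det(g) = \sum_{\sigma \in S_n} \operatorname{sgn}(\sigma) \prod_i g_{i\sigma(i)}$ and apply the ultrametric inequality: for every permutation $\sigma$,
\[ \sum_i v(g_{i\sigma(i)}) \geq n\log c - \sum_i \alpha_{i\sigma(i)}(x) = n\log c, \]
since $\sum_i \alpha_{i\sigma(i)}(x) = \sum_i (x_i - x_{\sigma(i)}) = 0$ because $x \in V_0$. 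Hence $v(\det g) \geq n\log c$, and running the same estimate for $g^{-1}$ gives the reverse inequality $v(\det g) \leq n\log c$. Therefore $\log c = v(\det g)/n$, and plugging this back into the two inequalities (together with $v(\det(g^{-1})) = -v(\det g)$) yields exactly the conditions in the statement.

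For the converse, suppose $g$ satisfies both inequalities and set $c \coloneqq \exp(v(\det g)/n)$. Using the ultrametric inequality and the hypothesis on $v(g^{ij})$,
\[ (g.\eta_x)(v) = \max_i\Bigl\{ e^{-x_i}\Bigl|\sum_j g^{ij}v_j\Bigr|\Bigr\} \leq \max_{i,j}\{e^{-x_i}|g^{ij}||v_j|\} \leq \max_j\{c\,e^{-x_j}|v_j|\} = c\,\eta_x(v). \]
The same estimate applied to $g^{-1}$ (with $c^{-1}$ in place of $c$) gives $\eta_x(gv) \leq c^{-1}\eta_x(v)$, and substituting $v = g^{-1}w$ yields the reverse inequality $c\,\eta_x(w) \leq (g.\eta_x)(w)$. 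Hence $g.\eta_x = c\,\eta_x$, so $g$ stabilises $[\eta_x]$. The main subtlety is the determinant computation forcing $c$ to equal $\exp(v(\det g)/n)$ rather than merely bounding it; this is where both sets of inequalities (on $g$ \emph{and} on $g^{-1}$) and the constraint $\sum_i x_i = 0$ from $x \in V_0$ are essential, which explains the two-sided form of the statement.
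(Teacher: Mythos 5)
Your proof is correct and follows the same overall strategy as the paper: characterise the stabiliser by $g.\eta_x = c\,\eta_x$, extract the valuation inequalities on the entries of $g$ and $g^{-1}$ by evaluating on basis vectors, and for the converse run the sandwich argument $(g.\eta_x) \leq c\,\eta_x$ and $\eta_x \circ g \leq c^{-1}\eta_x$ to force equality. The one genuine difference is how you pin down the scaling constant: the paper identifies $\log r = v(\det(g))/n$ by citing Parreau's formula $\lvert\det(g)\rvert = \prod_j \eta_x(ge_j)/\prod_j\eta_x(e_j)$ for a norm adapted to two bases, whereas you derive the same identity from the Leibniz expansion of $\det(g)$ and the ultrametric inequality, squeezing $v(\det g)$ between $n\log c$ from both sides using the inequalities for $g$ and for $g^{-1}$. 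Your version is self-contained and arguably more transparent, at the modest cost of a slightly longer computation. One small quibble with your closing commentary: the cancellation $\sum_i \alpha_{i\sigma(i)}(x) = \sum_i x_i - \sum_i x_{\sigma(i)} = 0$ holds for any $x$ and any permutation $\sigma$ by rearrangement, so the constraint $\sum_i x_i = 0$ from $x \in V_0$ is not actually needed at that step; what is genuinely essential for determining $c$ is, as you say, having the inequalities for both $g$ and $g^{-1}$.
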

Note that we could a posteriori, in view of \Cref{lem:BBT_stabGL}, remove the condition on the valuations of the entries of $g^{-1}$.

\begin{proof}
We denote
\[
\mathcal S \coloneqq  \left\{ g\in \glnf \colon \begin{array}{l} v(g_{ij}) \geq \frac{v(\det(g))}{n} - \alpha_{ij}(x) \\
    v(g^{ij}) \geq \frac{v\left(\det\left(g^{-1}\right)\right)}{n} - \alpha_{ij}(x) \end{array} \text{for all }i,j \right\}
\]
as in the statement of the lemma.
Let $x = (x_0 , \ldots , x_n) \in V_0 \cong \aa$. Let $g\in \glnf$ such that $g.f'(x) = f'(x)$.
This means that there exists some $r \in \rr_{>0}$ such that $g.\eta_x = r\eta_x$, equivalently $g.\eta_{x}(e_j)=r\eta_x(e_j)$ or $r^{-1} \eta_x(e_j) = g^{-1}.\eta_x(e_j) =  \eta_x(ge_j)$ for all $j$. We note that $\eta_x$ is adapted to both the standard basis $\mathcal{E} = \{e_1, \ldots , e_n\}$ and $g\mathcal{E}$, so by \cite[Corollary 3.3]{Parreau_AffineBuildings}, we have
    \[
    \lvert \det(g)\rvert = \frac{\prod_{j=1}^n \eta_x(ge_j)}{\prod_{j=1}^n \eta_x(e_j)} = r^{-n},
    \]
    and thus $\log(r) = v(\det(g))/n$.
    Since for all $j$
    \[
    \eta_x(ge_j) = \eta_x \Big(  \sum_{i=1}^n g_{ij}e_i \Big) = \max_i \left\{ \lvert g_{ij} \rvert e^{-x_i} \right\},
    \]
    we obtain that for all $i,j$
    \[
    \lvert g_{ij} \rvert e^{-x_i} \leq \eta_x(ge_j) = r^{-1} \eta_x(e_j) = r^{-1} e^{-x_j}.
    \]
    Taking the logarithm we obtain
    \[
    -v(g_{ij}) \leq -\log(r) -x_j + x_i = \alpha_{ij}(x) - \frac{v(\det(g))}{n},
    \]
    which implies the first condition in the definition of $\mathcal S$. If $g$ stabilizes $f'(x)$, then so does $g^{-1}$, hence the second condition in the definition of $\mathcal S$ also holds, and thus $g \in \mathcal S$.

    Starting with $g\in \mathcal S$, we can reverse the above arguments, to see that 
    \[
    \eta_x(ge_j) \leq \lvert\det(g)\rvert^{\frac{1}{n}}\eta_x(e_j)
    \]
    for all $j$. Since also $g^{-1} \in \mathcal S$, we have $g.\eta_x(e_j) = \eta_x(g^{-1}e_j) \leq \lvert \det(g)\rvert^{-\frac{1}{n}} \eta_x(e_j)$, so
    \[
    \eta_x(e_j) \leq \lvert \det(g) \rvert^{-\frac{1}{n}} g^{-1}.\eta_x(e_j) = \lvert \det(g) \rvert^{-\frac{1}{n}} \eta_x(ge_j) \leq  \eta_x(e_j)
    \]
    for all $j$, and equality holds.
    In particular, $g.\eta_x = \lvert \det(g) \rvert^{-\frac{1}{n}} \eta_x$, so $g$ stabilizes the equivalence class of $\eta_x$, which is exactly $f'(x)$.
\end{proof}

We summarize the results on stabilizers of (points in) the standard apartments $f$ for $\build_{\BT}$ and $f'$ for $\build_N$.
\begin{cor}
\label{lem:stab_BT_N}
    For $x\in  \aa$, we have 
    $
    \operatorname{Stab}_{\glnf}(f(x))= \operatorname{Stab}_{\glnf}(f'(x)).
    $
    Moreover, for the pointwise stabilizers, we have
    \[
    \operatorname{Stab}_{\glnf}(f(\aa))= \operatorname{Stab}_{\glnf}(f'(\aa)) .
    \]
\end{cor}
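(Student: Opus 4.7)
The plan is to deduce the pointwise equality $\operatorname{Stab}_{\glnf}(f(x)) = \operatorname{Stab}_{\glnf}(f'(x))$ by directly comparing the two characterizations of stabilizers given in Lemmas \ref{lem:BBT_stabGL} and \ref{lem:BN_stabGL}, and then to derive the statement for the apartment stabilizers by intersecting over all $x \in \aa$.

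For the inclusion $\operatorname{Stab}_{\glnf}(f'(x)) \subseteq \operatorname{Stab}_{\glnf}(f(x))$, the characterization of $\operatorname{Stab}_{\glnf}(f'(x))$ from \Cref{lem:BN_stabGL} imposes two sets of inequalities on $g$; keeping only the first set, namely $v(g_{ij}) \geq v(\det(g))/n - \alpha_{ij}(x)$ for all $i,j$, gives exactly the condition that characterizes $\operatorname{Stab}_{\glnf}(f(x))$ in \Cref{lem:BBT_stabGL}. Hence every element that stabilizes $f'(x)$ also stabilizes $f(x)$.

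For the reverse inclusion $\operatorname{Stab}_{\glnf}(f(x)) \subseteq \operatorname{Stab}_{\glnf}(f'(x))$, let $g$ stabilize $f(x)$. By \Cref{lem:BBT_stabGL} applied to $g$, the first set of inequalities in \Cref{lem:BN_stabGL} is satisfied. Since stabilizers are groups, $g^{-1}$ also stabilizes $f(x)$, so applying \Cref{lem:BBT_stabGL} to $g^{-1}$ yields $v(g^{ij}) \geq v(\det(g^{-1}))/n - \alpha_{ij}(x)$ for all $i,j$, which is precisely the second set of inequalities in \Cref{lem:BN_stabGL}. Thus $g \in \operatorname{Stab}_{\glnf}(f'(x))$.

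Finally, the equality of pointwise apartment stabilizers follows formally by taking intersections:
\[
\operatorname{Stab}_{\glnf}(f(\aa)) = \bigcap_{x \in \aa} \operatorname{Stab}_{\glnf}(f(x)) = \bigcap_{x \in \aa} \operatorname{Stab}_{\glnf}(f'(x)) = \operatorname{Stab}_{\glnf}(f'(\aa)).
\]
There is no real obstacle here since the two lemmas already do all the work; the only subtle observation is the use of closure under inverses to recover the second batch of inequalities in \Cref{lem:BN_stabGL} from the single batch in \Cref{lem:BBT_stabGL}. This also justifies the remark after \Cref{lem:BN_stabGL} that the second set of conditions is redundant.
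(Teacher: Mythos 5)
Your proof is correct and matches the paper's intent: the corollary is presented as an immediate consequence of Lemmas \ref{lem:BBT_stabGL} and \ref{lem:BN_stabGL}, with the redundancy of the second batch of inequalities (recovered by applying Lemma \ref{lem:BBT_stabGL} to $g^{-1}$, since stabilizers are closed under inverses) being exactly the observation the paper records in the remark following Lemma \ref{lem:BN_stabGL}. The passage to the pointwise apartment stabilizers by intersecting over all $x\in\aa$ is likewise the intended argument.
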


\begin{lemma}\label{lem:Stabx_GLO}
    For every $y = (y_1, \ldots, y_n) \in V_0 \cap \Lambda^n \subseteq \aa$, there exists a diagonal matrix $a =\operatorname{Diag}(a_1,\ldots, a_n) \in \operatorname{SL}_n(\ff)$ such that $-v(a_i) = y_i$. 
    
    Moreover, $a.f'(x) = f'(x+y)$ for all $x\in \aa$.
\end{lemma}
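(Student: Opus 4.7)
The first assertion is about producing a diagonal element in $\operatorname{SL}_n(\ff)$ whose entries have prescribed valuations. I would use the surjectivity of the valuation $v \colon \ff^\times \twoheadrightarrow \Lambda$ to pick, for $i = 1, \ldots, n-1$, any $a_i \in \ff^\times$ with $v(a_i) = -y_i$, and then define
\[
a_n \;\coloneqq\; \frac{1}{a_1 \cdots a_{n-1}} \in \ff^\times,
\]
so that $\det(a) = 1$ by construction. The hypothesis $y \in V_0$, i.e.\ $\sum_{i=1}^n y_i = 0$, then forces
\[
v(a_n) \;=\; -\sum_{i=1}^{n-1} v(a_i) \;=\; \sum_{i=1}^{n-1} y_i \;=\; -y_n,
\]
so that $-v(a_i) = y_i$ for all $i$, as required. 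This step is essentially the same trick used in the proof of \Cref{lem:L_KT_Lambdan_to_Lattice}.

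\textbf{Verification of the second assertion.} For the equality $a.f'(x) = f'(x+y)$, I would unwind the action on norms from \Cref{ex:NormBuilding2}: $(a.\eta_x)(w) = \eta_x(a^{-1}w)$. Since $a^{-1} = \operatorname{Diag}(a_1^{-1}, \ldots, a_n^{-1})$, for any $w = \sum_{i=1}^n v_i e_i \in \ff^n$ one computes
\[
(a.\eta_x)(w) \;=\; \eta_x\!\Big(\sum_{i=1}^n v_i a_i^{-1} e_i\Big) \;=\; \max_i\, e^{-x_i} |v_i| \, |a_i|^{-1}.
\]
Using $|a_i|^{-1} = \exp(v(a_i)) = \exp(-y_i)$, the right-hand side simplifies to $\max_i e^{-(x_i + y_i)} |v_i| = \eta_{x+y}(w)$. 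Thus $a.\eta_x = \eta_{x+y}$ as ultrametric norms (not merely as homothety classes), and passing to classes gives $a.f'(x) = [a.\eta_x] = [\eta_{x+y}] = f'(x+y)$.

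\textbf{Expected difficulty.} There is essentially no obstacle here; the only delicate bookkeeping item is the condition $y \in V_0$, which is exactly what allows the chosen diagonal matrix to have determinant one (a priori one could only guarantee $v(\det(a)) = 0$). Everything else reduces to the definitions of the standard chart $f'$, of the action of $\glnf$ on norms, and of $|\cdot| = \exp(-v(\cdot))$.
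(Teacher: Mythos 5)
Your proof is correct and follows essentially the same route as the paper's: choose $a_1,\ldots,a_{n-1}$ with prescribed valuations, force $\det(a)=1$ by setting $a_n=(a_1\cdots a_{n-1})^{-1}$ and use $\sum_i y_i=0$ to get $-v(a_n)=y_n$, then verify $a.\eta_x=\eta_{x+y}$ by the same direct computation with $|a_i|^{-1}=e^{-y_i}$. No gaps.
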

\begin{proof}
Let $y = (y_1, \ldots , y_n) \in V_0 \cap \Lambda^n$. Then there exist $a_1, \ldots , a_{n-1} \in \ff^\times$ such that $-v(a_i) = y_i$ for $i \leq n-1$. Let $a_n \coloneqq (a_1\cdots a_{n-1})^{-1}$, then $a = \operatorname{Diag}(a_1, \ldots , a_n)\in \operatorname{SL}_n(\ff)$ and
\[
-v(a_{n}) = - \sum_{i=1}^{n-1} (-v)(a_i) = - \sum_{i=1}^{n-1} y_i = y_n.
\]
Moreover, for all $v = (v_1, \ldots, v_n) \in V_0 \cong \aa$, we have
\begin{align*}
    a.\eta_x(v) &= \eta_x(a^{-1}v) = \max_i \left\{ e^{-x_i} \left\lvert \frac{v_i}{a_i}\right\rvert\right\}
    = \max_i \left\{ e^{-(x_i - v(a_i))} \lvert v_i \rvert \right\} = \eta_{x + y}(v). \qedhere
\end{align*}
\end{proof}

\begin{thm}
\label{thm:BTNorm}
    There is an $\glnf$-equivariant injective morphism of buildings from the Bruhat--Tits building $\build_\BT$ of type $\aa =\aa(\Phi, \rr, \Lambda^n/\Lambda (1,\ldots , 1 ))$ to the norms building $\build_N$ of type $\aa'=\aa(\Phi, \rr,  \rr^n/\rr(1, \ldots , 1)) $.
    
    If $\Lambda = \rr$, this morphism is an isomorphism. 
\end{thm}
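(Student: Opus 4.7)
The plan is to apply \Cref{thm:morphismofGbuildings} with $G = G' = \glnf$, $\rho = \Id_{\glnf}$, and morphism of apartments $\tau = (L,\gamma,\sigma)$ given by $L = \Id_{\operatorname{Span}_{\qq}(\Phi)}$, $\gamma = \Id_{\rr}$, and $\sigma \colon \affwg \hookrightarrow \affwg'$ the natural inclusion coming from the inclusion of translation groups $T = \Lambda^n/\Lambda(1,\ldots,1) \subseteq T' = \rr^n/\rr(1,\ldots,1)$. Then $L \otimes \gamma$ is the identity on the common underlying space $V_0$, so $\tau(w.x) = \sigma(w).x$ holds tautologically and $\tau$ is a well-defined morphism of apartments that is injective (and becomes an isomorphism exactly when $\Lambda = \rr$, since then $\sigma$ is surjective too). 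As reference charts I take the standard charts $f \colon \aa \to \build_\BT$ and $f' \colon \aa \to \build_N$. The group $\glnf \supseteq \slnf$ acts transitively on $\atlas_\BT$ by \Cref{ex:BruhatTitsBuilding2}, which is the only transitivity hypothesis required for the existence part of \Cref{thm:morphismofGbuildings}.

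Condition \ref{condition:stabilizerofapoint} is immediate from \Cref{lem:stab_BT_N}: the $\glnf$-stabilizers of $f(x)$ and $f'(\tau(x)) = f'(x)$ coincide for every $x \in \aa$. Condition \ref{condition:Afw} is the core of the argument. The idea is to produce, for each $w \in \affwg$, a single explicit element lying in both $A_{f,w}$ and $A_{f',\sigma(w)}$; then \Cref{prop:Afw} together with the equality of apartment stabilizers finishes the job. Concretely, decompose $w = ts$ with $t \in T$ and $s \in S_n$; by \Cref{lem:Stabx_GLO} there is a diagonal matrix $a = \operatorname{Diag}(a_1,\ldots,a_n) \in \slnf$ with $-v(a_i) = t_i$, and this same $a$ realizes the translation $t$ on the standard apartment of $\build_N$ (by the lemma) and on the standard apartment of $\build_\BT$ (by the compatibility condition $(C_{\BT})$ recalled in \Cref{ex:BruhatTitsBuilding1}). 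The permutation matrix $P_s$ normalizes the standard torus and realizes the permutation $s$ on both standard apartments. Setting $g_w \coloneqq a P_s$ therefore gives $g_w \in A_{f,w} \cap A_{f',\sigma(w)}$. By \Cref{prop:Afw}, $A_{f,w} = g_w \cdot \operatorname{Stab}_{\glnf}(f)$, and since the pointwise stabilizers of the two standard apartments agree (second part of \Cref{lem:stab_BT_N}), any $g = g_w h \in A_{f,w}$ with $h \in \operatorname{Stab}_{\glnf}(f) = \operatorname{Stab}_{\glnf}(f')$ satisfies $g.f' = g_w.f' = f' \circ \sigma(w)$, so $\rho(A_{f,w}) \subseteq A_{f',\sigma(w)}$.

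With both conditions verified, \Cref{thm:morphismofGbuildings} yields a $\glnf$-equivariant morphism $m = (\psi,\varphi,\tau)$. For injectivity I invoke part \ref{thm:monomorphism}: $\tau$ and $\rho$ are injective by construction, and $\rho(\operatorname{Stab}_{\glnf}(f)) = \operatorname{Stab}_{\glnf}(f')$ as already noted. For the second statement, when $\Lambda = \rr$ the map $\sigma$ is an isomorphism so $\tau$ is an isomorphism of apartments, $\glnf$ acts transitively on both $\build_N$ and $\atlas_N$ by \Cref{ex:NormBuilding2}, $\rho$ is an isomorphism, and both inclusions in \ref{condition:stabilizerofapoint} and \ref{condition:Afw} were established as equalities, so part \ref{thm:isomorphism} applies.

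The main obstacle is condition \ref{condition:Afw}: it is the one place where the structural compatibility between the Bruhat--Tits and norm models really has to be exploited. The two essential ingredients, $(C_{\BT})$ and \Cref{lem:Stabx_GLO}, show that a single diagonal matrix simultaneously realizes the same translation in both buildings; combined with the obvious behaviour of permutation matrices and the apartment-stabilizer equality from \Cref{lem:stab_BT_N}, this promotes one representative $g_w$ to the full coset $A_{f,w}$.
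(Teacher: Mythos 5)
Your proposal is correct and follows essentially the same route as the paper: identity $\rho$, the apartment morphism given by the identity on $V_0$ together with the inclusion of affine Weyl groups, condition \ref{condition:stabilizerofapoint} from \Cref{lem:stab_BT_N}, and condition \ref{condition:Afw} verified by exhibiting $a P_s$ (diagonal matrix from \Cref{lem:Stabx_GLO} plus permutation matrix) as a common element of $A_{f,w}$ and $A_{f',\sigma(w)}$ and then promoting it to the whole coset via the equality of pointwise apartment stabilizers. The only cosmetic difference is that you route the coset argument through \Cref{prop:Afw} while the paper computes $g^{-1}ak \in \operatorname{Stab}_{\glnf}(f(\aa))$ directly, and you state the injectivity step via part \ref{thm:monomorphism} slightly more explicitly than the paper does.
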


The morphism that we will construct in the proof consists of a bijective map $\psi \colon \build_\BT \to \build_N$ on the buildings themselves, but when $\Lambda \neq \rr$, the atlas map $\varphi$ is not bijective. Therefore the morphism is not an isomorphism. 
However, as remarked in \cite[Remark in Section 3B4]{Parreau_AffineBuildings}, $\build_N$ could also be equipped with a different atlas by restricting the translation part of the affine Weyl group, compare with \Cref{ex:completion_translations}. For the so-defined atlas, the morphism we construct is always an isomorphism.

Since the action of $G$ on $\build_{\BT}$ is not transitive, we use \Cref{thm:morphismofGbuildings} instead of \Cref{intro:thm:baby_morphismofGbuildings} in the proof. 

\begin{proof}
    We note that the apartments $\aa$, $\aa'$ are the same as sets by definition.
    However, if $\Lambda\neq \rr$ the affine Weyl group of the Bruhat--Tits building $\affwg = \Lambda^n/\Lambda(1,\ldots,1) \rtimes \sphwg$ includes as a proper subgroup into the affine Weyl group $\affwg' = \mathbb{R}^n/\rr(1, \ldots , 1) \rtimes \sphwg$ of the norms building.
    Together with the identity map $\aa \to \aa'$ this inclusion gives a morphism $\tau$ of apartments. 

    Let $G=\glnf$.
    We apply \Cref{thm:morphismofGbuildings} with $\rho=\Id_G$. 
    Since $G$ acts transitively on the atlas of the Bruhat--Tits building $\build_\BT$, see \Cref{ex:BruhatTitsBuilding2}, it remains to show that the conditions \ref{condition:stabilizerofapoint} and \ref{condition:Afw} in \Cref{thm:morphismofGbuildings} hold.
    
    Condition~\ref{condition:stabilizerofapoint} and condition~\ref{condition:Afw} for $w=\Id$ follow from \Cref{lem:stab_BT_N}.
   If $w = (t^y, w_s ) \in \affwg$ for $y \in V_0 \cap \Lambda^n$, let $a = \operatorname{Diag}(a_1, \ldots  , a_n) \in \operatorname{SL}_n(\ff)$ with $y_i = (-v)(a_i)$ as in \Cref{lem:Stabx_GLO}, and let $k$ be the permutation matrix in $\glnf$ associated to $w_s$.
    Then $a \in A_{f,t^y}$ by (\ref{eq:BT_compatible}), and $a \in  A_{f',t^y}$ by \Cref{lem:Stabx_GLO}.
    We also have $k\in A_{f,w_s} \cap A_{f',w_s}$.
    
    Now for $g \in A_{f,w} = \left\{ h\in \operatorname{GL}_{n}(\ff)  \colon h.f = f \circ w \right\}$ we have $g.f = f\circ w = a.f \circ w_s = ak.f $, so $g^{-1}ak \in \operatorname{Stab}_{\glnf}(f(\aa))$, which by \Cref{lem:stab_BT_N} is equivalent to $g.f' = ak.f' = a.f' \circ w_s = f' \circ (t^y, w_s) = f' \circ w$, so $g \in A_{f', w}$ as required for the condition \ref{condition:Afw}.
    We have shown $A_{f,w} = A_{f',w}$ for all $w\in \affwg$.

    When $\Lambda = \rr$ (meaning $v \colon \ff^\times \to \rr$ is surjective), recall from \Cref{ex:NormBuilding2} that $\glnf$ acts transitively on the atlas $\atlas'$.
    Moreover, $\rho$ is an isomorphism and the inclusions in conditions \ref{condition:stabilizerofapoint} and \ref{condition:Afw} are equalities.
    Thus all the conditions of \ref{thm:isomorphism} are satisfied and $\build_{\BT} \cong \build_N$.
\end{proof}

\section{Functoriality for symmetric buildings}
\label{section:Functoriality}
We would like to apply our notion of morphism of generalized affine buildings and their construction using \Cref{thm:morphismofGbuildings} to prove certain natural functoriality properties.
For this we put ourself in the setting of symmetric buildings (\Cref{ex:SymmetricBuilding1}), but we expect similar results also in the context of the other models of buildings introduced in \Cref{subsection:ExamplesNonDiscBuildings}.

The goal is to prove the existence of morphisms that are induced by homomorphisms of ordered valued fields and monomorphisms of algebraic groups.

\subsection{Functoriality under field morphisms}

We now discuss functoriality for symmetric buildings under field morphisms. This is a generalization of \cite[Corollary 5.19]{BurgerIozziParreauPozzetti_RSCCharacterVarieties} for a special class of rank-$1$ valuations coming from so called \emph{big} elements.
It also exemplifies the $\Lambda$-functoriality in \cite{SchwerStruyve_LambdaBuildingsBaseChangeFunctors}.
At the end of this section, we use two order-compatible valuations on the field of real Puiseux series to construct an explicit example of a surjective morphism between two symmetric buildings associated to the same group, see \Cref{ex:morphism_puiseux}.

Let $\ff$, $\ff'$ be two ordered fields with order-compatible valuations $v \colon \ff^\times \to \Lambda$, $v' \colon (\ff')^\times \to \Lambda'$ and respective valuation rings $\mathcal{O}$,  $\mathcal{O}'$.
A field homomorphism $\eta \colon \ff \to \ff'$ is called a \emph{morphism of ordered valued fields} if it is order-preserving and $\eta(\mathcal{O}) \subseteq \mathcal{O}'$.

\begin{lemma}\label{lem:field_morphism_equiv}
An order-preserving field homomorphism $\eta\colon \ff \to \ff'$ is a morphism of ordered valued fields if and only if there exists an order-preserving group homomorphism $\gamma\colon \Lambda \to \Lambda'$ such that $\gamma \circ v = v' \circ \eta$.
Equivalently, the following diagram commutes
\[\begin{tikzcd}
	{\ff^\times} & {(\ff')^\times} \\
	{\Lambda} & {\Lambda'.}
	\arrow["\eta", from=1-1, to=1-2]
	\arrow["v"', from=1-1, to=2-1]
	\arrow["v'", from=1-2, to=2-2]
	\arrow["\gamma"', from=2-1, to=2-2]
\end{tikzcd}\]
Furthermore, if $\eta$ is surjective, then so is $\gamma$.
\end{lemma}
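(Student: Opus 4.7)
The plan is to prove the two implications of the equivalence and then deduce the surjectivity claim. For the $(\Leftarrow)$ direction, suppose such a $\gamma$ exists. If $x \in \mathcal{O}$, either $x = 0$ (trivial) or $v(x) \geq 0$, so $v'(\eta(x)) = \gamma(v(x)) \geq \gamma(0) = 0$ since $\gamma$ is order-preserving and a group homomorphism. Hence $\eta(x) \in \mathcal{O}'$, so $\eta$ is a morphism of ordered valued fields.

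For the $(\Rightarrow)$ direction, the key step is to construct $\gamma$ directly using the surjectivity of $v$. For $\lambda \in \Lambda$, choose $x \in \ff^\times$ with $v(x) = \lambda$ and set $\gamma(\lambda) \coloneqq v'(\eta(x))$. The main thing to verify is well-definedness: if $v(x) = v(y) = \lambda$, then $v(xy^{-1}) = 0$, so $xy^{-1} \in \mathcal{O}^\times$ (both $xy^{-1}$ and $yx^{-1}$ lie in $\mathcal{O}$). Applying the hypothesis $\eta(\mathcal{O}) \subseteq \mathcal{O}'$ to both $xy^{-1}$ and its inverse shows $\eta(xy^{-1}) \in (\mathcal{O}')^\times$, hence $v'(\eta(x)) = v'(\eta(y))$. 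Once well-defined, the homomorphism property of $\gamma$ follows immediately from the multiplicativity of $\eta$ and the additivity of $v$, $v'$. Order-preservation reduces, by the homomorphism property, to showing $\lambda \geq 0 \Rightarrow \gamma(\lambda) \geq 0$, which follows by picking a representative $x \in \mathcal{O}$ and using $\eta(\mathcal{O}) \subseteq \mathcal{O}'$. The identity $\gamma \circ v = v' \circ \eta$ holds by construction on $\ff^\times$, and trivially at $0$ with the convention $v(0) = \infty$.

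Finally, if $\eta$ is surjective, then given any $\mu \in \Lambda'$ we use surjectivity of $v'$ to write $\mu = v'(x')$ with $x' \in (\ff')^\times$, then pick $x \in \ff$ with $\eta(x) = x'$. Since field homomorphisms are injective, $x' \neq 0$ forces $x \in \ff^\times$, and $\gamma(v(x)) = v'(\eta(x)) = \mu$, so $\gamma$ is surjective.

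I do not expect any serious obstacle; the only subtle point is verifying well-definedness of $\gamma$, which hinges on the observation that $v(x) = 0$ combined with $\eta(\mathcal{O}) \subseteq \mathcal{O}'$ forces $v'(\eta(x)) = 0$ (by applying the inclusion to both $x$ and $x^{-1}$).
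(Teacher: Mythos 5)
Your proof is correct and follows essentially the same route as the paper: construct $\gamma$ on representatives via $\gamma(v(x)) = v'(\eta(x))$, check well-definedness by applying $\eta(\mathcal{O})\subseteq\mathcal{O}'$ to a unit and its inverse, and deduce the homomorphism, order, and surjectivity properties exactly as the paper does (the paper phrases everything with $-v$ instead of $v$, but this is only a sign convention yielding the same $\gamma$).
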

\begin{proof}
    Let $\eta$ be a morphism of ordered valued fields. For $\lambda \in \Lambda$, let $a\in \ff^\times$ such that $(-v)(a) = \lambda$.
    We then define $\gamma(\lambda) \coloneqq (-v')(\eta(a))$.
    This is well-defined, because if $a,b \in \ff^\times$ satisfying $(-v)(a)=(-v)(b)=\lambda$, then $a/b \in \mathcal{O}$ and $b/a \in \mathcal{O}$, so $(-v')(\eta(a/b)) = 0 $ and thus $(-v')(\eta(a)) = (-v')(\eta(b))$.
    
    The map $\gamma$ is a group homomorphism since for all   for all $a,b\in \ff^{\times}$, we have
    \begin{align*}
                \gamma\left((-v)(a) + (-v)(b) \right) 
                = \gamma((-v)(ab)) & = (-v')(\eta(ab)) \\
                = (-v)(\eta(a)) + (-v)(\eta(b))
                & = \gamma(-v(a)) + \gamma(-v(b)).
    \end{align*}
    We now show that $\gamma$ is order-preserving.
    It suffices to show that if $a \in \ff^\times$ with $0 \leq -v(a)$, then $0 \leq \gamma(a)$.
    The condition $0 \leq -v(a)$ is equivalent to $a \in \mathcal{O}$, and thus $\eta(a) \in \mathcal{O}'$, so $0 \leq -v'(\eta(a))=\gamma(a)$.

    If, on the other hand, we start with an order-compatible group homomorphism $\gamma\colon \Lambda \to \Lambda'$, then for all $a\in \ff^\times$ with $(-v)(a) \leq 0$, we have $(-v')(\eta(a)) = \gamma((-v)(a)) \leq 0$.

    Assume now that $\eta$ is surjective.
    Let $\lambda' \in \Lambda'$, and choose $x' \in \ff'$ with $v'(x')=\lambda'$. 
    Since $\eta$ is surjective, there exists $x \in \ff$ with $\eta(x)=x'$.
    Then $v(x) \in \Lambda$ satisfies
    \[\gamma(v(x))=v'(\eta(x))=v'(x')=\lambda',\]
    so $\gamma$ is surjective.
\end{proof}

Assume from now on that $\ff$, $\ff'$ are real closed fields. Semisimple algebraic groups $\mathbf{G}$ are $\ff$-split if and only if they are $\ff'$-split \cite[Theorem 5.17]{appenzeller2024semialgebraic}.
This enables to use the same split torus $\mathbf{S}$ in the construction of the symmetric buildings for $\mathbf{G}(\ff)$ and $\mathbf{G}(\ff')$.
We will later see that $\build_S$ is independent of the choice of $\mathbf{S}$ (\Cref{prop:isomorphism_of_group_implies_isomorphism_of_buildings}).

\begin{thm}[{\Cref{intro:corol_FieldExt}}]
\label{thm:corol_FieldExt}
    Let $\ff, \ff'$ be real closed valued fields. Let $\mathbf{G} < \mathrm{SL}_n$ be a connected semisimple self-adjoint linear algebraic $\mathbb{Q}$-group with reduced root system, $\mathbf{S}$ a maximal $\ff$-split (and hence $\ff'$-split) torus all of whose elements are self-adjoint, and $\build$ and $\build'$ the symmetric buildings associated to $\mathbf{G}(\ff)$ and $\mathbf{G}(\ff')$ respectively.
    Let $\eta \colon \ff \rightarrow \ff'$ a morphism of ordered valued fields and $\rho \colon \mathbf{G}(\ff) \to \mathbf{G}(\ff')$ the induced group homomorphism.
    Then there exists an equivariant morphism of buildings $m \colon \build \rightarrow \build'$.
    
    Additionally, if $\gamma \colon \Lambda \to \Lambda'$ denotes the morphism of the value groups induced by $\eta$, then
    \begin{enumerate}[label=(\alph*)]
        \item \label{item: injective eta and gamma gives injective morphism} if $\gamma$ is injective, then $m$ is injective.
        \item \label{item: surjective eta and gamma gives surjective morphism} if $\eta$ is surjective, then $m$ is surjective.
        \item \label{item: bijective eta and gamma gives bijective morphism} if $\eta$ and $\gamma$ are isomorphisms, then $m$ is an isomorphism.
    \end{enumerate}
\end{thm}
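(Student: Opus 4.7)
The plan is to apply \Cref{thm:baby_morphismofGbuildings} with $G = \mathbf{G}(\ff)$, $G' = \mathbf{G}(\ff')$, and the standard charts $f_0$, $f_0'$ of the two symmetric buildings from \Cref{ex:SymmetricBuilding1}. Since $\mathbf{S}$ is $\ff$-split if and only if it is $\ff'$-split, the restricted root system $\Sigma$ and its coroot system $\Sigma^\vee$ coincide in both cases, so the model apartments are $\aa = \operatorname{Span}_\qq(\Sigma^\vee) \otimes_\qq \Lambda$ and $\aa' = \operatorname{Span}_\qq(\Sigma^\vee) \otimes_\qq \Lambda'$ sharing the same spherical Weyl group $\sphwg$. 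I take $\tau \coloneqq (\Id, \gamma, \Id)$ as the apartment morphism; by \Cref{lem:morphism_apartment} it suffices to note that $\Id \otimes \gamma$ sends the translation group $\Lambda^r$ into $(\Lambda')^r$ and trivially commutes with the $\sphwg$-action on $\operatorname{Span}_\qq(\Sigma^\vee)$.

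Next I verify the three hypotheses of \Cref{thm:baby_morphismofGbuildings}. Condition \ref{condition:baby_stabilizerofapoint} reduces via \Cref{prop:homogeneous_stabilizers} to $\rho(\mathbf{G}(\mathcal{O})) \subseteq \mathbf{G}(\mathcal{O}')$, which is immediate from $\eta(\mathcal{O}) \subseteq \mathcal{O}'$ applied entrywise. Condition \ref{condition:baby_stabilizerofchart} is analogous, using that $\eta$ sends $\mathbf{S}(\ff)$ into $\mathbf{S}(\ff')$ (as $\mathbf{S}$ is defined over $\qq$) while preserving $\mathcal{O}$-entries. For condition \ref{condition:baby_Afw}, given $x \in \aa$, I pick $a \in A_\ff$ with $a.f_0(0) = f_0(x)$, which exists by transitivity of $A_\ff$ on $f_0(\aa)$. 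The compatibility condition \eqref{eq:BH_compatible} gives $(-v)(\chi_\alpha(a)) = \alpha(x)$ for every $\alpha \in \Sigma$; since $\chi_\alpha$ is defined over $\qq$, we have $\chi_\alpha(\rho(a)) = \eta(\chi_\alpha(a))$, and \Cref{lem:field_morphism_equiv} yields $(-v')(\chi_\alpha(\rho(a))) = \gamma(\alpha(x)) = \alpha(\tau(x))$. Applying \eqref{eq:BH_compatible} for $\build'$ then gives $\rho(a).f_0'(0) = f_0'(\tau(x))$. Thus \Cref{thm:baby_morphismofGbuildings} produces the desired $\rho$-equivariant morphism $m$.

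For the additional statements, recall from \Cref{ex:SymmetricBuilding2} that $G$ and $G'$ act transitively on their respective buildings and atlases. For (b), surjectivity of $\eta$ forces $\gamma$ surjective by \Cref{lem:field_morphism_equiv}, hence $\tau$ is surjective, and $\rho$ is surjective entrywise, so \Cref{thm:baby_morphismofGbuildings}\ref{thm:baby_epimorphism} applies. For (c), an isomorphism $\eta$ of ordered valued fields satisfies $\eta(\mathcal{O}) = \mathcal{O}'$, so $\rho$ is a group isomorphism and the stabilizer inclusions established above become equalities; \Cref{thm:baby_morphismofGbuildings}\ref{thm:baby_isomorphism} then applies.

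Part (a) is the most delicate and I expect it to be the main obstacle. Injectivity of $\gamma$ gives injectivity of $\tau$, and every field homomorphism is injective, so $\rho$ is injective; but the stabilizer equality $\rho(\operatorname{Stab}_G(f_0)) = \operatorname{Stab}_{G'}(f_0')$ demanded by \Cref{thm:baby_morphismofGbuildings}\ref{thm:baby_monomorphism} typically fails when $\ff'$ is strictly larger than $\ff$, so I cannot invoke that part directly. Instead I would rerun its proof, reducing to the implication ``$\rho(g) \in \operatorname{Stab}_{G'}(f_0') \Longrightarrow g \in \operatorname{Stab}_G(f_0)$'' for $g \in G$. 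For this, two observations suffice: first, an injective order-preserving homomorphism between totally ordered abelian groups is strictly order-preserving, so an entry $g_{ij} \in \ff$ lies in $\mathcal{O}$ if and only if $\eta(g_{ij}) \in \mathcal{O}'$; second, injectivity of $\eta$ pulls back the centralizer condition, since $\eta(g)\eta(s) = \eta(s)\eta(g)$ for all $s \in \mathbf{S}(\ff)$ forces $gs = sg$. Combined with the descriptions in \Cref{prop:homogeneous_stabilizers}, this yields the needed reverse containment and completes the proof of injectivity of $\psi$ and $\varphi$.
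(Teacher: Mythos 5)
Your construction of $m$ and your treatment of parts \ref{item: surjective eta and gamma gives surjective morphism} and \ref{item: bijective eta and gamma gives bijective morphism} follow the paper's proof essentially verbatim: same apartment morphism $(\Id,\gamma,\Id\otimes\gamma|_T\times\Id_{\sphwg})$, same use of \Cref{prop:homogeneous_stabilizers} and of the compatibility condition \eqref{eq:BH_compatible} together with $\chi_\alpha\circ\rho=\eta\circ\chi_\alpha$ to verify conditions \ref{condition:baby_stabilizerofapoint}--\ref{condition:baby_Afw} of \Cref{thm:baby_morphismofGbuildings}.

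The one place you genuinely diverge is part \ref{item: injective eta and gamma gives injective morphism}, and there your instinct is sound. The paper \emph{does} invoke \Cref{thm:baby_morphismofGbuildings}~\ref{thm:baby_monomorphism} directly, asserting the equality $\rho(\operatorname{Stab}_{\mathbf{G}(\ff)}(f_0))=\operatorname{Stab}_{\mathbf{G}(\ff')}(f_0')$ on the grounds that injectivity of $\gamma$ gives ``$\eta(\mathcal{O})=\mathcal{O}'$''. As you observe, injectivity of $\gamma$ only yields $\eta^{-1}(\mathcal{O}')=\mathcal{O}$, i.e.\ $\eta(\ff)\cap\mathcal{O}'=\eta(\mathcal{O})$; when $\eta$ is not surjective the asserted equality of stabilizers fails (already $A_{\ff'}(\mathcal{O}')$ contains diagonal elements with entries outside $\eta(\ff)$). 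Your repair --- rerunning the injectivity argument and replacing the equality by the implication $\rho(g)\in\operatorname{Stab}_{G'}(f_0')\Rightarrow g\in\operatorname{Stab}_G(f_0)$, proved from strict order-preservation of the injective $\gamma$ (so $g_{ij}\in\mathcal{O}\iff\eta(g_{ij})\in\mathcal{O}'$) and pullback of the centralizer condition along the injective $\rho$, combined with the description $\operatorname{Stab}_G(f_0)=\operatorname{Cent}_G(\mathbf{S}(\ff))\cap\mathcal{O}^{n\times n}$ from \Cref{prop:homogeneous_stabilizers} --- is correct and is exactly what the injectivity proof of $\varphi$ in \Cref{thm:baby_morphismofGbuildings} actually consumes (injectivity of $\psi$ needs only injectivity of $\tau$). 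So your version of \ref{item: injective eta and gamma gives injective morphism} is more careful than the paper's; the cost is that you cannot quote \ref{thm:baby_monomorphism} as a black box, and it would be cleaner still to record the weakened hypothesis $\rho(G)\cap\operatorname{Stab}_{G'}(f')\subseteq\rho(\operatorname{Stab}_G(f))$ as a variant of that statement.
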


\begin{proof}
Denote by $v \colon \ff^\times \to \Lambda$, $v'\colon (\ff')^\times \to \Lambda'$ the valuations.
    Recall that the buildings $\build$ and $\build'$ are of type $\aa = \aa(\Sigma^\vee, \Lambda, T)$ and $\aa' = \aa(\Sigma^\vee, \Lambda', T')$ with full translation groups $T = \aa$ and $T' = \aa'$, where $\Sigma^\vee$ is the coroot system of $\mathbf{G}(\mathbb{R})$ with basis $\Delta$.
    Let $\gamma \colon \Lambda \to \Lambda'$ be the morphism of ordered abelian groups as in \Cref{lem:field_morphism_equiv}.
    Taking $L = \Id_{\operatorname{Span}_{\mathbb{Q}}(\Sigma^\vee)}$, we obtain a $\mathbb{Q}$-linear map $ \tau  = \Id \otimes \gamma \colon \aa \to \aa'$.
    The identity $\sigma_s = \Id_{\sphwg} \colon \sphwg \to \sphwg$ on the spherical Weyl groups, allows us to define a group homomorphism of the full affine Weyl groups $\sigma \colon  \affwg \to \affwg$ such that $\tau=(\Id,\gamma,\sigma)$ is a morphism of apartments, see \Cref{lem:morphism_apartment}.

    The morphism $\eta \colon \ff \to \ff'$ defines a group homomorphism $\rho \colon \mathbf{G}(\ff) \to \mathbf{G}(\ff')$ defined by $\rho((a_{ij})_{i,j}) = (\eta(a_{ij}))_{i,j}$, since polynomial equations are preserved by $\eta$.
    This extension is compatible with subgroups and algebraic morphisms: the characters $\chi_\alpha \colon \mathbf{S}(\ff) \to \ff^\times$, $\chi_\alpha' \colon \mathbf{S}(\ff') \to (\ff')^\times$ for $\alpha \in \Sigma$ satisfy for instance $\chi_\alpha'(\rho(a)) = \eta(\chi_\alpha(a))$ for all $a \in \mathbf{S}(\ff)$, i.e.\ 
    \[\begin{tikzcd}
	{\mathbf{S}(\ff)} & {\mathbf{S}(\ff')} \\
	{\ff^{\times}} & {(\ff')^{\times}.}
	\arrow["\rho", from=1-1, to=1-2]
	\arrow["\chi_\alpha"', from=1-1, to=2-1]
	\arrow["\chi_\alpha'", from=1-2, to=2-2]
	\arrow["\eta"', from=2-1, to=2-2]
\end{tikzcd}\]

We apply \Cref{thm:baby_morphismofGbuildings} to the group homomorphism $\rho \colon \mathbf{G}(\ff) \to \mathbf{G}(\ff')$ and the morphism of apartments $\tau \colon \aa \to \aa'$ constructed above.
We know that $\mathbf{G}(\ff)$ acts transitively on $\build$ and $\atlas$, and we consider the standard charts $f_0 \colon \aa \to \build$, $f'_0 \colon \aa' \to \build'$ as in \Cref{ex:SymmetricBuilding1}.

Condition \ref{condition:baby_stabilizerofapoint} is satisfied since $\rho(\operatorname{Stab}_{\mathbf{G}(\ff)}(f_0(0))) =\rho( \mathbf{G}(\mathcal O) ) \subseteq \mathbf{G}(\mathcal O') = \operatorname{Stab}_{\mathbf{G}(\ff')}(f_0'(0))$ using \Cref{prop:homogeneous_stabilizers} and $\eta(\mathcal O) \subseteq \mathcal O'$.

By \Cref{prop:homogeneous_stabilizers}, the pointwise stabilizer of $f_{0}(\aa)$ is $A_\ff(\mathcal{O}) M_\ff = A_{f_0,\operatorname{Id}}$ and the stablizer of $f_0'(\aa)$ is $A_{\ff'}(\mathcal{ O}')M_{\ff'}$. Applying $\rho$ gives
\begin{equation}\label{Eqn: rho(A_{f_0,Id})}\tag{I}
    \rho(\operatorname{Stab}_{\mathbf{G}(\ff)}(f_0)) = \rho(A_\ff(\mathcal{O})) \rho(M_\ff) \subseteq A_{\ff'}(\mathcal{O}') M_{\ff'} = \operatorname{Stab}_{\mathbf{G}(\ff')}(f_0'),
\end{equation}
which is condition \ref{condition:baby_stabilizerofchart}.

For $x \in \aa$, let $a\in A_\ff$ such that $a.f_0(0) = f_0(x)$.
By the compatibility condition (\ref{eq:BH_compatible}), we have $-v(\chi_\alpha (a)) = \alpha(x)$.
Now
\begin{align*}
    -v(\chi_\alpha'(\rho(a))) &= -v(\eta(\chi_\alpha(a))) 
    = \gamma(-v(\chi_\alpha(a))) 
    = \gamma(\alpha(x)) 
    = \alpha(\tau(x))
\end{align*}
for all $\alpha \in \Sigma$, so by (\ref{eq:BH_compatible}), $\rho(a).f_0'(0) = f_0'(\tau(x))$ which shows \ref{condition:baby_Afw}.
We obtain an equivariant morphism of buildings $m \colon \build \to \build'$.

We now analyze when $m$ is injective, surjective, or an isomorphism, depending on $\eta$ and $\gamma$. 

To prove \ref{item: injective eta and gamma gives injective morphism}, suppose that $\gamma \colon \Lambda \rightarrow \Lambda'$ is injective.
Since any field morphism is injective, $\rho \colon \mathbf{G}(\ff) \rightarrow \mathbf{G}(\ff')$ is injective.
Now $L = \Id_{\operatorname{Span}_{\qq}(\Sigma^\vee)}$ is injective, and thus, by \ref{thm:baby_monomorphism} of \Cref{thm:baby_morphismofGbuildings}, it remains to check that 
\[
    \rho(\mathrm{Stab}_{\mathbf{G}(\ff)}(f_0))=\mathrm{Stab}_{\mathbf{G}(\ff')}(f_0').
\]
This equality holds precisely when the inclusion in \eqref{Eqn: rho(A_{f_0,Id})} is an equality.
Since $\gamma$ is injective, for every $a\in \ff$, it holds $(-v')(\eta(a)) = \gamma ( (-v)( a) ) = 0$ if and only if $(-v)(a) = 0$. In other words 
\[
    \eta(a) \in \mathcal{O}' \text{ if and only if } a\in \mathcal{O}.
\]
Hence $\eta(\mathcal{O}) = \mathcal{O}'$, and the inclusion in \eqref{Eqn: rho(A_{f_0,Id})} is indeed an equality.
Therefore $m$ is injective.

To prove \ref{item: surjective eta and gamma gives surjective morphism}, suppose that $\eta$ is surjective.
Then $\rho$ is surjective, and by the last part of \Cref{lem:field_morphism_equiv}, the map $\gamma$ is also surjective.
Since $L = \Id_{\operatorname{Span}_{\qq}(\Sigma^\vee)}$, by \Cref{thm:baby_morphismofGbuildings}~\ref{thm:baby_epimorphism}, the morphism $m$ is surjective provided that $\mathbf{G}(\ff')$ acts transitively on $\build'$ and $\atlas'$, which is the case for symmetric buildings, see \Cref{ex:SymmetricBuilding2}. Hence $m$ is surjective.

To prove \ref{item: bijective eta and gamma gives bijective morphism}, assume $\gamma$ and $\eta$ are isomorphisms.
We know that $\mathbf{G}(\ff')$ acts transitively on $\build'$ and $\atlas'$.
Then $\rho$ is a group isomorphism, $L = \Id_{\operatorname{Span}_{\qq}(\Sigma^\vee)}$ is a vector space isomorphism, $\sigma$ is an isomorphism of affine Weyl groups, and thus $\tau$ is an isomorphism of apartments. 
By \Cref{thm:baby_morphismofGbuildings}~\ref{thm:baby_isomorphism}, it remains to verify that the inclusions in \ref{condition:baby_stabilizerofapoint} and \ref{condition:baby_stabilizerofchart} are equalities.
The equality in \ref{condition:baby_stabilizerofapoint} is due to the injectivity of $\gamma$, since then $\eta(\mathcal{O}) = \mathcal{O'}$ (as in the proof of \ref{thm:baby_monomorphism} of this theorem) and $\mathbf{G}(\mathcal O) = \mathbf{G}(\mathcal O')$.
Equality in \ref{condition:baby_stabilizerofchart} follows similarly from the fact that $\rho(A_\ff(\mathcal O)) = A_{\ff'}(\mathcal O ')$ and \eqref{Eqn: rho(A_{f_0,Id})}.
Therefore, by \Cref{thm:baby_morphismofGbuildings}~\ref{thm:baby_isomorphism}, we conclude that $m$ is an isomorphism.
\end{proof}

This allows us to construct more examples of surjective morphisms between buildings.

\begin{example}\label{ex:morphism_puiseux}
    Let $\ff$ be the real closure of $\rr(X,Y)$ endowed with the following order: $f \in \rr(X,Y)$ is positive if $f(e^t,t)$ is positive for $t \in \rr$ large enough, see \Cref{example:OrderedFields}.
    With this order we have that $Y>r$ for all $r \in \rr$, and $X>Y^n$ for every $n \in \nn$.
    We have the order valuation $v=v_\textnormal{lex} \colon \ff^\times \to \Lambda$ as defined in \Cref{ex:OrderValuation}.
    One can check that $-v_\textnormal{lex}$ records for a polynomial in $\rr(X,Y)$ its multi-degree.
    Note that $\Lambda  \cong \qq^2$ is an ordered subgroup of $\rr^2$ endowed with the lexicographic order given by 
    \[ (a,b)\leq \left(a',b'\right) \textnormal{ if and only if } a<a' \textnormal{ or }(a=a' \textnormal{ and } b\leq b').\]
    We also see that for the order on $\ff$, the element $X$ is a big element and we denote by $v'=v_X \colon \ff^\times \to \Lambda' \subset \rr$ the associated valuation, see \Cref{subsection:OrderedFields}.
    Then $-v_X$ records for a polynomial only its degree in $X$, so $\Lambda' \cong \qq$.
    
    The identity is a morphism of ordered valued fields from $(\ff,v_\textnormal{lex}) \to (\ff,v_X)$ since the group homomorphism $\pr_1 \colon\qq^2 \to \qq$ given by projection on the first factor is order-preserving and $\pr_1((-v_\textnormal{lex})(f))=(-v_X)(f)$, see \Cref{lem:field_morphism_equiv}.  
    \[\begin{tikzcd}
	{\ff^\times} & {\ff^\times} \\
	{\qq^2} & {\qq}
	\arrow["\Id", from=1-1, to=1-2]
	\arrow["-v_\textnormal{lex}"', from=1-1, to=2-1]
	\arrow["-v_X", from=1-2, to=2-2]
	\arrow["\pr_1"', from=2-1, to=2-2]
\end{tikzcd}\]
    Furthermore, the identity is surjective.
    By \Cref{thm:corol_FieldExt}~\ref{item: surjective eta and gamma gives surjective morphism}, given any semisimple connected self-adjoint linear algebraic $\qq$-group with reduced root system $\mathbf{G}<\mathrm{SL}_n$, there exists a surjective 
    morphism $\build \to \build'$ between the associated symmetric buildings of types $\aa(\Sigma^\vee,\qq^2,(\qq^2)^n)$ respectively $\aa(\Sigma^\vee,\qq,\qq^n)$, that is induced by the identity $(\ff,v_\textnormal{lex}) \to (\ff,v_X)$.
\end{example}

\subsection{Functoriality under injective group morphisms}
We now turn to the functoriality statements under monomorphisms of algebraic groups (in the sense of \cite[Definition 5.11]{Milne}) for symmetric buildings.
We first need some preliminary considerations about root systems and Weyl groups.
We then prove the case of subgroups and finally discuss the general case.

\subsubsection{Extending Weyl group actions}
\label{subsection:inclusion_Weyl_group}

The goal of this subsection is the proof of \Cref{prop:subroot}, where under certain conditions we can extend a Weyl group action on a vector subspace to the ambient vector space.
The notion of regularity plays an important role in the proof of \Cref{prop:subroot}.
Let $V$ be vector space and $(\Sigma,V^\star)$ a root system.
Recall that for $\alpha \in \Sigma$
\[M_{\alpha} = \{x \in V \colon \alpha(x) =0 \}\]
is called the wall or hyperplane associated to $\alpha$.

\begin{defn}\label{def:regular} 
    A point in $V$ is \emph{regular in $V$} if it does not lie on a hyperplane of $V$ with respect to the root system $\Sigma$.
    If $S \subseteq V$ is a subset, a point in $S$ is \emph{$S$-regular in $V$} if it lies on the smallest amount of hyperplanes possible for a point in $S$.
\end{defn}

This means that every $S$-regular point in $V$ is ``most'' regular in $V$ among all points in $S$.

\begin{defn}
    For a subset $S \subseteq V$ we write
\[ \Sigma_S \coloneqq  \{ \alpha \in \Sigma \colon \alpha(S) = 0 \} \subseteq \Sigma.\] 
If $S$ consists of a single element $S = \{p\}$, we abbreviate $\Sigma_p \coloneqq \Sigma_{\{p\}}$.
\end{defn}

Note that $p$ is regular in $V$ if and only if $\Sigma_p = \emptyset$.
We also always have that if $T \subseteq S \subseteq V$ then $\Sigma_S \subseteq \Sigma_T$.
We will mostly apply the above notions when the subset $S$ is a vector subspace.

Let thus $V'$ be a vector space, and $V < V'$ a vector subspace.
Consider the respective duals $V^\star$, $(V')^{\star}$ of $V$, $V'$.
Let $(\Sigma,V^\star)$, $(\Sigma',(V')^\star)$ be root systems and $W$ and $W'$ denote their respective (spherical) Weyl groups.
We think of $W$ and $W'$ as acting on $V$ and $V'$ respectively.
With this we now have the following characterization of $V$-regular elements in $V'$.

\begin{lemma}\label{lem:regular}
    The following are equivalent for $p\in V$:
    \begin{enumerate}
        \item \label{item: p regular} $p$ is $V$-regular in $V'$,
        \item \label{item: sigma} for every $q \in V$, $|\Sigma_p'| \leq |\Sigma_q'|$,
        \item \label{item: complement}$p \in  V \setminus \bigcup_{\alpha' \in \Sigma' \setminus \Sigma_V'} M_{\alpha'}$,
        \item \label{item: sigma V}$\Sigma_p' = \Sigma_V'$.
    \end{enumerate}
\end{lemma}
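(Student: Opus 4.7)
The plan is to prove the chain of equivalences $(4) \Leftrightarrow (3) \Leftrightarrow (2) \Leftrightarrow (1)$, using as a pivot the observation that $\Sigma'_V \subseteq \Sigma'_q$ for every $q \in V$ (since a root that vanishes on all of $V$ vanishes on $q$). This makes $\Sigma'_V$ the ``floor'' of $\Sigma'_q$ as $q$ ranges over $V$, and the four conditions just express in four different ways that $p$ attains this floor.

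First I would handle $(3) \Leftrightarrow (4)$ as an unpacking of definitions: $p \in V \setminus \bigcup_{\alpha' \in \Sigma' \setminus \Sigma'_V} M_{\alpha'}$ means that no root $\alpha' \in \Sigma' \setminus \Sigma'_V$ satisfies $\alpha'(p) = 0$, i.e.\ $\Sigma'_p \cap (\Sigma' \setminus \Sigma'_V) = \emptyset$, which together with the always-valid inclusion $\Sigma'_V \subseteq \Sigma'_p$ is exactly $\Sigma'_p = \Sigma'_V$.

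Next, for $(2) \Leftrightarrow (4)$, I would first show that condition (4) is realizable for some $p_0 \in V$: the set $V \setminus \bigcup_{\alpha' \in \Sigma' \setminus \Sigma'_V} M_{\alpha'}$ is non-empty, because for $\alpha' \in \Sigma' \setminus \Sigma'_V$ the hyperplane $M_{\alpha'} \cap V$ is a proper $\mathbb{Q}$-subspace of $V$ (the restriction $\alpha'|_V$ is non-zero), and a $\mathbb{Q}$-vector space is never a finite union of proper subspaces. Given such a $p_0$, any $q \in V$ satisfies $|\Sigma'_{p_0}| = |\Sigma'_V| \leq |\Sigma'_q|$ by the floor inclusion, so (2) for $p$ amounts to saying $|\Sigma'_p| = |\Sigma'_V|$; combined with $\Sigma'_V \subseteq \Sigma'_p$ this gives $\Sigma'_p = \Sigma'_V$, i.e.\ (4), and the converse is immediate.

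Finally, $(1) \Leftrightarrow (2)$: the walls of $V'$ in the sense of \Cref{def:regular} are the hyperplanes $M_{\alpha'}$ for $\alpha' \in \Sigma'$, and since $M_{\alpha'} = M_{-\alpha'}$ the number of walls containing a point $q \in V$ equals $|\Sigma'_q|/2$. Hence ``$p$ lies on the smallest amount of hyperplanes of $V'$ among points of $V$'' is the same as $|\Sigma'_p| \leq |\Sigma'_q|$ for all $q \in V$.

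I expect the only genuinely substantive step to be the non-emptiness of the complement in (3), which needs the fact that a $\mathbb{Q}$-vector space is not a finite union of proper subspaces; everything else is bookkeeping around the inclusion $\Sigma'_V \subseteq \Sigma'_p$.
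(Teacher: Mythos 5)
Your proof is correct and follows essentially the same route as the paper's: both rest on the ``floor'' inclusion $\Sigma'_V \subseteq \Sigma'_q$ for all $q \in V$, the non-emptiness of $V \setminus \bigcup_{\alpha' \in \Sigma' \setminus \Sigma'_V} M_{\alpha'}$ (the paper justifies this by the walls being lower-dimensional and $\Sigma'$ finite, which is the same observation you make more explicitly), and treating $(1) \Leftrightarrow (2)$ as definitional. The only cosmetic differences are the order in which you chain the equivalences and your slightly more careful remark that counting hyperplanes through $q$ amounts to counting $|\Sigma'_q|$ up to the factor coming from $M_{\alpha'} = M_{-\alpha'}$.
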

\begin{proof}
(\ref{item: p regular}) $\iff$ (\ref{item: sigma}). This is the definition.

(\ref{item: complement}) $\Rightarrow$ (\ref{item: sigma V}). Consider $p\in V \setminus \bigcup_{\alpha' \in \Sigma' \setminus \Sigma_V'} M_{\alpha'}$. Since $p\in V$, $\Sigma_V' \subseteq \Sigma_p'$. If there was some $\alpha' \in \Sigma_p' \setminus \Sigma_V'$, then $\alpha' \in \Sigma' \setminus \Sigma_V'$ and by (\ref{item: complement}), $\alpha'(p)\neq 0$, which contradicts $\alpha' \in \Sigma_p'$.

(\ref{item: sigma V}) $\Rightarrow$ (\ref{item: sigma}). For every $q \in V$ we have $\Sigma_V' \subseteq \Sigma_q'$, so
$
|\Sigma_p'| = |\Sigma_V'| \leq |\Sigma_q'|.
$

(\ref{item: sigma}) $\Rightarrow$ (\ref{item: complement}). We note that the set in (\ref{item: complement}) is non-empty, since the walls $M_{\alpha'}$ are lower-dimensional and $\Sigma'$ is finite.
By the implication ``(\ref{item: complement}) $\Rightarrow$ (\ref{item: sigma V})", all elements $r \in V \setminus \bigcup_{\alpha' \in \Sigma' \setminus \Sigma_V'} M_{\alpha'} $ satisfy $\Sigma_r' = \Sigma_V'$. By assumption, $p$ verifies $|\Sigma_p'| \leq |\Sigma_r'|$, and since $\Sigma_r' = \Sigma_V' \subseteq \Sigma_p'$, we have $\Sigma_p' = \Sigma_V'$. Therefore $p \notin M_{\alpha'}$ for all $\alpha' \in \Sigma' \setminus \Sigma_V' = \Sigma' \setminus \Sigma_p'$.
\end{proof}

Similar as the openness of regular elements, it follows from \Cref{lem:regular}~(\ref{item: complement}), that the set of elements that are $V$-regular in $V'$ is open in $V$ as it is the complement of finitely many lower dimensional subspaces.

\begin{prop}\label{prop:subroot}
   Let $C_0$ and $C_0'$ be fundamental Weyl chambers in $V$ and $V'$ respectively with a common point in $ C_0 \cap C_0'$ that is $V$-regular in $V'$.
   Suppose that
    \begin{align}
        \tag{$\triangle$} \label{condition: subroot_cells} 
        \forall \, w \in W, w' \in W', x \in C_0 \cap C_0' \text{, if } w(x) \in w'(C_0') \text{, then } w(x)=w'(x).
    \end{align}
    Then there exists an injective group homomorphism $\sigma \colon W \to W'$ such that $\sigma(w)|_{V} = w$ for all $w\in W$.
\end{prop}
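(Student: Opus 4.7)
The plan is to lift the $W$-action on $V$ to a $W'$-action on $V'$ by choosing a sufficiently generic reference point in $C_0 \cap C_0'$, transporting the $W$-action pointwise via $(\triangle)$, and then verifying the homomorphism property. First, I would fix $x_0 \in C_0 \cap C_0'$ that is $V$-regular in $V'$, as provided by the hypothesis. Since by \Cref{lem:regular} the $V$-regular locus in $V$ is the complement of finitely many proper linear subspaces and hence open, a generic perturbation lets me additionally arrange that $x_0$ lies in the relative interior of $C_0 \cap C_0'$ (as a subset of $V$) and that every translate $w(x_0)$, for $w \in W$, is also $V$-regular in $V'$. The latter requirement avoids only finitely many proper linear subspaces of $V$, so it is satisfiable on a dense open subset of $C_0 \cap C_0'$.

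Next, for each $w \in W$, the closed $V'$-chambers cover $V'$, so I can pick some $w' \in W'$ with $w(x_0) \in w'(C_0')$; hypothesis $(\triangle)$ then forces $w(x_0) = w'(x_0)$. To upgrade this pointwise equality to the linear identity $w'|_V = w$, I would take a basis $v_1, \ldots, v_n$ of $V$ and consider the perturbed points $x_i \coloneqq x_0 + \epsilon v_i$ for $\epsilon > 0$ small enough that each $x_i$ lies in $C_0 \cap C_0'$ and is $V$-regular. By \Cref{lem:regular}, the face of the $V'$-hyperplane arrangement containing $w(x_0)$ is determined exactly by the walls $\{M_{\alpha'} : \alpha' \in \Sigma'_V\}$; since these same walls continue to vanish on $w(x_i) \in V$ while the remaining roots in $\Sigma' \setminus \Sigma'_V$ remain nonzero at $w(x_i)$ for sufficiently small $\epsilon$, the points $w(x_i)$ all lie in that same face, hence in the same closed chamber $w'(C_0')$. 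Applying $(\triangle)$ at each $x_i$ yields $w(x_i) = w'(x_i)$, and $\qq$-linearity of $w$ and $w'|_V$ on the basis $\{x_1 - x_0, \ldots, x_n - x_0\}$ of $V$ forces $w'|_V = w$. I then define $\sigma(w) \coloneqq w'$; injectivity of $\sigma$ is automatic since $\sigma(w)|_V = w$.

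The main obstacle is verifying the group homomorphism property, because the lift $\sigma(w)$ is a priori only defined up to the pointwise stabilizer $\langle r_{\alpha'} : \alpha' \in \Sigma'_V \rangle$ of $V$ in $W'$ (by Steinberg's fixed-point theorem). Both $\sigma(w_1 w_2)$ and $\sigma(w_1)\sigma(w_2)$ restrict to $w_1 w_2$ on $V$, so they differ by an element of this stabilizer. When $\Sigma'_V = \emptyset$---the main case in the applications to symmetric buildings, in which $V$ is a Cartan subspace and no root of $\Sigma'$ vanishes identically on $V$---the stabilizer is trivial and the homomorphism property is immediate. In general one must make a consistent choice within each coset, for instance by requiring $\sigma(w)$ to fix pointwise a chosen orthogonal complement of $V$ in $V'$ (using a $W'$-invariant inner product on $V'$); checking that such canonical lifts live in $W'$ is the delicate step, potentially requiring a refined use of $(\triangle)$.
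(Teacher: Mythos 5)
Your construction of the candidate lift is sound and essentially matches the paper's: pick a point $p\in C_0\cap C_0'$ that is $V$-regular in $V'$ (and, after a perturbation, regular in $V$), use $(\triangle)$ to get $w(p)=w'(p)$ for some chamber $w'(C_0')$ containing $w(p)$, and upgrade the pointwise identity to $w'|_V=w$ by perturbing $p$ inside $C_0\cap C_0'$ along a basis of $V$ and invoking linearity. That part would compile into a correct argument.

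The genuine gap is exactly where you flag it: the homomorphism property. When $\Sigma'_V\neq\emptyset$ the element $w'\in W'$ with $w'(C_0')\ni w(p)$ is not unique (see \Cref{fig:A1xA1}, where two chambers of $V'$ contain $w(p)$), so $\sigma$ is only defined up to the pointwise stabilizer of $V$ in $W'$, and an arbitrary choice in each coset need not be multiplicative. Your proposed fix---normalizing $\sigma(w)$ to fix an orthogonal complement of $V$ pointwise---does not work as stated: there is no reason such an element of the coset exists in the finite group $W'$, and you acknowledge you cannot verify it. The paper closes this gap by a different normalization (\Cref{lem:regular_unique_chamber}): it fixes the region $F=\bigcap_{\alpha'\in\Sigma'_{>0}\cap\Sigma'_V}H_{\alpha'}^+$, a chamber of the sub-arrangement of walls containing $V$, proves that for each $w$ there is a \emph{unique} $V'$-chamber $C'_w$ with $w(p)\in C'_w\subseteq F$, and defines $\sigma(w)$ as the unique element of $W'$ carrying $C_0'$ to $C'_w$. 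Since any such element restricts to $w$ on $V$ and hence preserves $F$, a direct computation gives $\sigma(w_1)\sigma(w_2)(C_0')=C'_{w_1w_2}$, i.e.\ multiplicativity. Without this (or an equivalent) canonical choice, your proof establishes the existence of set-theoretic lifts but not of a group homomorphism, which is the actual content of the proposition.
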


Note that condition (\ref{condition: subroot_cells}) implies that when $V=V'$ also $\Sigma = \Sigma'$.
The following example illustrates this condition in dimension two.

\begin{example}
    Assume $V'$ is two-dimensional and $\Sigma'$ is a root system of type $A_2$.
    For every one-dimensional subspace $V < V'$, there exist root systems of type $A_1$ in $V^\star$.
    Note that only when $V$ is orthogonal to a hyperplane of $\Sigma'$ is condition (\ref{condition: subroot_cells}) satisfied, see \Cref{fig:A1_in_A2} (left).
    This is for example the case for the root systems associated to $\operatorname{SL}_2 < \operatorname{SL}_3$.
    More examples of root systems where \Cref{prop:subroot} applies can be found in Figures~\ref{fig:intro:Sp4inSL4}, \ref{fig:A2_in_G2}, \ref{fig:A2_in_B3} and \ref{fig:A1xA1}.

    \begin{figure}[h]
        \centering
        \includegraphics[width=\linewidth]{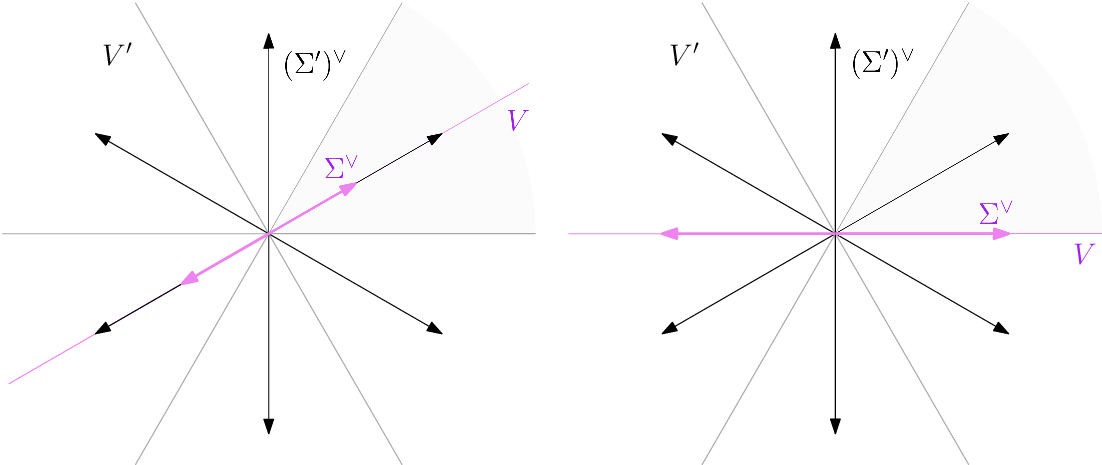} 
        \caption{The purple and black arrows depict the dual roots of $\Sigma$ respectively $\Sigma'$ of type $A_1$ respectively $A_2$.
        The associated Weyl groups $W$ and $W'$ satisfy condition (\ref{condition: subroot_cells}) on the left, but not on the right.
        }
        \label{fig:A1_in_A2}
    \end{figure}
\end{example}

Before we give the proof of \Cref{prop:subroot}, we first indicate how to construct $\sigma \colon W \to W'$ in the special case when there exists some $p \in C_0 \cap C_0'$ that is regular in $V$ and in $V'$, as for example in \Cref{fig:A2_in_B3}.
\begin{figure}[h]
  \centering
  \includegraphics[width=0.5\linewidth]{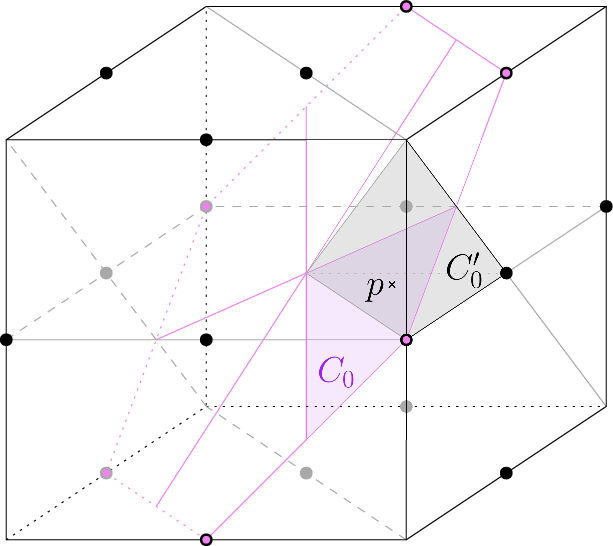} 
  \caption{A coroot system of type $A_2$ lying in type $B_3$ is illustrated by purple and black dots. In this situation there is a point $p \in C_0 \cap C_0'$ that is regular in $V'$.}
  \label{fig:A2_in_B3}
\end{figure}
In this case, for $w\in W$, the point $w(p)$ lies in a unique chamber  $w'(C_0')$ for some $w' \in W'$.
By condition (\ref{condition: subroot_cells}) we have $w'(p) = w(p)$, and since $W$ preserves regularity, $w(p)$ is also regular in $V'$. Since $W'$ acts simply transitively on chambers in $V'$, we conclude that $w'(C_0')$ is the only chamber of $V'$ that contains $w(p)$ and thus $w'$ is the unique element of $W'$ with $w'(p) = w(p)$. 
It is then straightforward to check that $\sigma(w)\coloneqq w'$ defines an injective group homomorphism satisfying $\sigma(w)|_V=w$.

We note that in general, there may not be a $p\in V$ that is regular in $V'$, see \Cref{fig:A1xA1}.
However, there always exists a point $p\in V$ that is $V$-regular in $V'$. Now, there may be multiple chambers in $V'$ containing a non-regular point $p$ respectively $w(p)$, but the following lemma provides a canonical chamber that allows to prove \Cref{prop:subroot} in full generality.

\begin{figure}[h]
  \centering
  \includegraphics[width=0.4\linewidth]{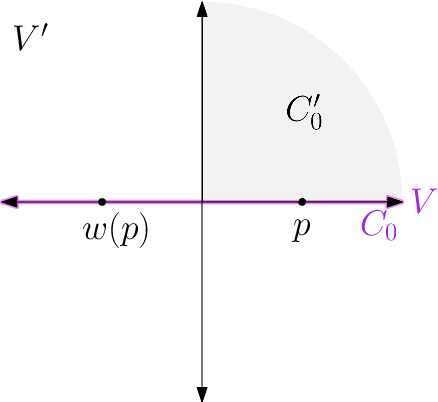} 
  \caption{A coroot system of type $A_1$ lying in type $A_1 \times A_1$ is illustrated in purple and black. The point $p\in C_0$ is not regular in $V'$, but $p$ is $V$-regular in $V'$. There are two chambers containing $w(p)$ and thus two choices for $\sigma(w) \in W'$.
  }
  \label{fig:A1xA1}
\end{figure}

\begin{lemma}\label{lem:regular_unique_chamber}
    Given vector spaces $V<V'$, root systems $(\Sigma,V^\star)$, $(\Sigma',(V')^\star)$ with (spherical) Weyl groups $W$, $W'$ and fundamental Weyl chambers $C_0$, $C_0'$ in $V$, $V'$ with a point $p\in C_0 \cap C_0'$ that is $V$-regular in $V'$. 
    There is a subset $F\subseteq V'$ such that for every $w\in W$ where $w(p)$ is $V$-regular in $V'$,
    \[
    C_w' \coloneqq  F \     \cap \!\!
    \bigcap_{\substack{ \alpha' \in \Sigma' \\ \alpha' (w(p)) >0}} H_{\alpha'}^+
    \]
    is the unique chamber of $V'$ such that $w(p) \in C_w' \subseteq F$.
    Moreover, if $w' \in W'$ satisfies $w'(C_0') = C_w'$ and $w'|_V = w$, then $w'(F) = F$. 
\end{lemma}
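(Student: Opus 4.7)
The plan is to take $F$ to be the cone cut out by those walls of $\Sigma'$ through $V$ that bound $C_0'$: writing $\Sigma'_+ \subseteq \Sigma'$ for the positive system determined by $C_0'$, set
\[
F \coloneqq \bigcap_{\alpha' \in \Sigma'_V \cap \Sigma'_+} H_{\alpha'}^+.
\]
Then $V \subseteq F$ and $C_0' \subseteq F$, and $F$ is a union of (closed) Weyl chambers of $V'$. The goal splits into showing that among the Weyl chambers lying in $F$ there is a unique one containing $w(p)$, and that this chamber agrees with the formula defining $C_w'$; then in a second step one uses the hypotheses on $w'$ to show $w'(F) = F$.

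For the first claim, my approach would be to fix $w \in W$ with $w(p)$ being $V$-regular in $V'$ and analyse $C_w'$ pair by pair of opposite roots $\{\alpha',-\alpha'\} \subseteq \Sigma'$. By \Cref{lem:regular} one has $\Sigma'_{w(p)} = \Sigma'_V$, so for $\alpha' \notin \Sigma'_V$ exactly one of $\pm\alpha'$ is strictly positive on $w(p)$ and the corresponding half-space appears in the defining intersection for $C_w'$; for $\alpha' \in \Sigma'_V$ neither of $\pm \alpha'$ contributes there, but the factor $F$ selects the half-space on the $C_0'$-side. Thus $C_w'$ is an intersection of exactly one half-space per pair of opposite roots, hence a Weyl chamber of $V'$, and $w(p) \in C_w'$ follows from checking the inequalities (strict for $\alpha' \notin \Sigma'_V$, equality for $\alpha' \in \Sigma'_V$ since $w(p) \in V$). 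Uniqueness among Weyl chambers in $F$ containing $w(p)$ follows by the same case analysis: the side for $\alpha' \notin \Sigma'_V$ is forced by the sign of $\alpha'(w(p))$, and the side for $\alpha' \in \Sigma'_V$ is forced by the containment in $F$.

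For the second claim, suppose $w' \in W'$ satisfies $w'(C_0')=C_w'$ and $w'|_V = w$. Since $F$ is cut out by $\Sigma'_V \cap \Sigma'_+$, it suffices to show $(w')^{-1}$ permutes this set. First, for $\alpha' \in \Sigma'_V$ and $v \in V$,
\[
\bigl((w')^{-1}.\alpha'\bigr)(v) = \alpha'(w'(v)) = \alpha'(w(v)) = 0
\]
because $w(v) \in V$; hence $(w')^{-1}(\Sigma'_V) \subseteq \Sigma'_V$, with equality by finiteness. Second, from $C_w' = w'(C_0') \subseteq F$, every $\alpha' \in \Sigma'_V \cap \Sigma'_+$ is nonnegative on $w'(C_0')$, so $(w')^{-1}.\alpha' \geq 0$ on $C_0'$ and therefore $(w')^{-1}.\alpha' \in \Sigma'_+$. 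Combining the two, $(w')^{-1}$ stabilises $\Sigma'_V \cap \Sigma'_+$, and $w'(F)=F$ follows directly.

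The main obstacle will be precisely this $w'$-invariance: the hypothesis $w'|_V = w$ alone does not prevent $w'$ from swapping walls through $V$, so the argument must genuinely invoke the extra geometric input $w'(C_0') = C_w' \subseteq F$ to pin $(w')^{-1}$ down on $\Sigma'_V \cap \Sigma'_+$. A smaller point to double-check in the first part is that $w(p) \in V$ automatically forces $\alpha'(w(p))=0$ for every $\alpha' \in \Sigma'_V$, which is what makes the pair-by-pair case analysis exhaustive and consistent.
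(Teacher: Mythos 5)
Your choice of $F$ and the overall architecture match the paper's proof, and your argument for the second claim (showing $(w')^{-1}$ stabilises $\Sigma'_V \cap \Sigma'_+$ by combining $w'|_V = w$ with $w'(C_0') = C_w' \subseteq F$) is a valid and somewhat more direct alternative to the paper's route, which instead observes that $w'$ preserves the hyperplane arrangement $\{M_{\alpha'} \colon \alpha' \in \Sigma'_V\}$, hence permutes the closures of the components of its complement, of which $F$ is the one containing both $C_0'$ and $C_w'$.

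There is, however, one genuine gap in the first part: the step ``$C_w'$ is an intersection of exactly one half-space per pair of opposite roots, hence a Weyl chamber of $V'$.'' That implication is false in general: a choice of one closed half-space per pair of opposite roots need not cut out a chamber --- in type $A_2$ the sign pattern $\alpha \geq 0$, $\beta \geq 0$, $\alpha+\beta \leq 0$ cuts out only the origin. Your case analysis does show that $w(p) \in C_w'$ and that $C_w'$ is convex and contained in a single chamber, but since $w(p)$ lies on every wall $M_{\alpha'}$ with $\alpha' \in \Sigma'_V$, it is not regular in $V'$, so its membership does not rule out that $C_w'$ is a proper (lower-dimensional) face rather than a full chamber. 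You need to exhibit a point of $C_w'$ that is regular in $V'$; the paper does this by perturbation, taking $q' = w(p) + v'$ with $v' \in C_0'$ regular in $V'$ and close enough to $0$ that $|\alpha'(v')| < \alpha'(w(p))$ for all $\alpha' \in \Sigma' \setminus \Sigma'_V$, and then checking the sign conditions directly. With that insertion the remainder of your argument, in particular the uniqueness step, which coincides with the paper's, goes through.
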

\begin{proof}
Let $\Sigma_{>0}'$ be the set of positive roots in $\Sigma'$ with respect to $C_0'$, $H_{\alpha'}^+ \coloneqq \{v' \in V' \colon \alpha'(v') \geq 0\}$ the positive halfspace with respect to $\alpha'\in \Sigma'$ and set
\[
F \coloneqq \bigcap_{\alpha' \in \Sigma_{>0}' \cap \Sigma_V'} H_{\alpha'}^+.
\]
We claim that this $F$ is the desired subset.
Let thus $w \in W$ such that $w(p)$ is $V$-regular in $V'$.
By \Cref{lem:regular}~(\ref{item: sigma V}) and the assumption that $p$ and $w(p)$ are $V$-regular in $V'$, we have $\Sigma_p' = \Sigma_V' = \Sigma_{w(p)}'$, so
\[
\Sigma_{>0}' = (\Sigma_{>0}'\cap \Sigma_V') \cup \{\alpha' \in \Sigma' \colon \alpha'(p)>0 \},
\]
and the fundamental Weyl chamber $C_0'$ can be described as
\[
    C_0' = \bigcap_{\alpha' \in \Sigma_{>0}'}  H_{\alpha'}^+
    = F \ \cap \!\!
    \bigcap_{\substack{ \alpha' \in \Sigma' \\ \alpha' (p) >0}} H_{\alpha'}^+.
\]
In fact, among the chambers containig $p$, $C_0'$ is the unique Weyl chamber such that $C_0' \subseteq F$. 
Now we claim that
\[
C_w' \coloneqq   F \     \cap \!\!
    \bigcap_{\substack{ \alpha' \in \Sigma' \\ \alpha' (w(p)) >0}} H_{\alpha'}^+
\]
is the unique chamber in $V'$ such that $w(p) \in C_w' \subseteq F$. 
We first convince ourselves that $C_w'$ is indeed a chamber of $V'$.
For this, recall that chambers are (closures of) connected components of $V' \setminus \bigcup_{\alpha' \in \Sigma'} M_{\alpha'}$, where the $M_{\alpha'} =  \{ v' \in V' \colon \alpha'(v') = 0\}$ are the walls.
For each wall $M_{\alpha'}$ with $\alpha' \in \Sigma' = \Sigma_V' \cup \{\alpha' \in \Sigma' \colon \alpha'(w(p)) \neq 0 \}$, the definition of $C_w'$ determines on which side $C_w'$ lies, so since $C_w'$ is connected (even convex), $C_w'$ is contained in a chamber.
To make sure it is not smaller than a chamber, it suffices to find a point $q'\in C_w'$ that is regular in $V'$: Let $v'\in C_0'$ be regular in $V'$ and close to $0$, so that $|\alpha'(v')| <\alpha'(w(p))$ for all $\alpha' \in \Sigma' \setminus \Sigma_{V}'$. Then $q' \coloneqq w(p) + v'$ is contained in $C_w'$ and regular in $V'$ because for $\alpha' \in \Sigma_{>0}' \cap \Sigma_V'$ we have
\begin{align*}
    \alpha'(q) &= \alpha'(w(p)) + \alpha'(v) = \alpha'(v) > 0
\end{align*}
and for $\alpha'$ with $\alpha'(w(p)) >0$ we have
\begin{align*}
    \alpha'(q) &= \alpha'(w(p)) + \alpha'(v') > 0
\end{align*}
since $v'$ was chosen close to $0$.
Hence $C_w'$ is a chamber.

Since $\Sigma_{w(p)}' = \Sigma_{V}'$ and $w(p) \in C_w'$, any chamber $C'$ that satisfies $w(p) \in C' \subseteq F$, also satisfies
\[
C' \subseteq \bigcap_{\substack{ \alpha' \in \Sigma' \\ \alpha' (w(p)) >0}} H_{\alpha'}^+,
\]
and is thus contained in $C_w'$, so $C_w'$ is unique.

We now prove the second statement of the lemma.
Recall that $W'$ acts on $\Sigma'$ by $w'(\alpha') \coloneqq \alpha' \circ (w')^{-1}$ for $w'\in W'$ and $\alpha' \in \Sigma'$. 
We remark that 
\begin{align*}
    w'(M_{\alpha'})&= 
    M_{w'(\alpha')}
\end{align*}
for all $w' \in W'$ and $\alpha' \in \Sigma'$. 
If now $w' \in W'$ such that $w'(C_0') = C_w'$ and $w'|_V = w$, we have for all $\alpha' \in \Sigma_V'$ and $v\in V$
\[
    (w'(\alpha'))(v) = \alpha'(w'^{-1}(v)) = \alpha'(w^{-1}(v)) =0,
\]
so $w'(\alpha') \in \Sigma_V'$. We conclude that the hyperplane arrangement $\mathcal{H} \coloneqq \{ M_{\alpha'} \colon \alpha' \in \Sigma_V'\}$ is invariant under $w'$.
Thus $w'$ permutes the connected components of the complement of $\mathcal{H}$.
One such (closure of a) connected component is $F$ for which we know that $C_0' \subseteq F$ and $C_w' \subseteq F$. 
Since $w'(C_0')=C_w'$ and $w'$ preserves connected components, we have $w'(F)= F$.
\end{proof}

We are now in the position to prove \Cref{prop:subroot}.

\begin{proof}[Proof of \Cref{prop:subroot}]
Let $p \in C_0 \cap C_0'$ be $V$-regular in $V'$.  Without loss of generality we may assume that $p$ is $V$-regular: 
by \Cref{lem:regular}~(\ref{item: complement}), the set of points in $V$ that are $V$-regular in $V'$ is given by $\{q \in V \colon \alpha'(q) \neq 0 \text{ for all } \alpha' \in \Sigma' \setminus \Sigma_V' \}$, so if $q\in C_0 \cap C_0'$ is $V$-regular in $V'$, then there exists a convex open neighborhood $U$ of $q$ in $V$, consisting of points that are $V$-regular in $V'$ and contained in $C_0'$.
The intersection $U \cap C_0$ has non-empty interior, because $C_0$ is convex and has non-empty interior, and any point $p$ of the interior is contained in $C_0\cap C_0'$, $V$-regular in $V'$, and regular in $V'$. 

We first prove that for all $w\in W$, $w(p)$ is also $V$-regular in $V'$. 
Recall that $W'$ acts on $\Sigma'$ by $w' (\alpha') = \alpha' \circ (w')^{-1}$ for all $w'\in W'$, $\alpha' \in \Sigma'$. Pick some $w' \in W'$ such that $w(p) \in w'(C_0')$. By Condition (\ref{condition: subroot_cells}), $w(p)=w'(p)$ and so
\begin{align*}
    \Sigma_{w(p)}' &=  \{\alpha' \in \Sigma' \colon \alpha'(w'(p)) = 0 \} 
    = \{ \alpha' \in \Sigma' \colon (w')^{-1}(\alpha')(p) = 0 \} \\
    &= w'\left( \left\{ \alpha' \in \Sigma' \colon \alpha'(p) = 0 \right\} \right)
    = w'(\Sigma_p') = w'(\Sigma_V'),
\end{align*}
where we used \Cref{lem:regular} (4) for the last equality.
Since we always have $\Sigma_V' \subseteq \Sigma_{w(p)}'$, and $w'$ is a permutation of the finite set $\Sigma'$, we have $\Sigma_V' = w'(\Sigma_V') = \Sigma_{w(p)}'$, which means that $w(p)$ is $V$-regular in $V'$ by \Cref{lem:regular}.

Now, we satisfy the conditions of \Cref{lem:regular_unique_chamber} to obtain a subset $F \subseteq V'$ such that for each $w\in W$,
\[
C_w' \coloneqq   F \     \cap \!\!
    \bigcap_{\substack{ \alpha' \in \Sigma' \\ \alpha' (w(p)) >0}} H_{\alpha'}^+.
\]
is the unique chamber in $V'$ such that $ w(p) \in C_w' \subseteq F$.
Since $W'$ acts simply-transitively on Weyl chambers, there exists a unique $w' \in W'$ with $w'(C_0') = C_w'$, and we define
    \[
        \sigma \colon W \to W', \quad \sigma(w)\coloneqq w'.
    \]
Before we prove that $\sigma$ is an injective group homomorphism, let us verify that $\sigma(w)|_{V}=w$ for all $w \in W$, which then implies $\sigma(w)(F) = F$ for all $w \in W$ by the last statement of \Cref{lem:regular_unique_chamber}.

Since $w(p) \in C_w' = w'(C_0')$, condition (\ref{condition: subroot_cells}) implies that $w(p)=w'(p)=\sigma(w)(p)$. By \Cref{lem:regular} (3), the set of elements that are $V$-regular in $V'$ is open, so for $x \in V$ in a small neighborhood around $p$, the element $x$ is also $V$-regular in $V'$.
Thus $\alpha'(x) >0$ if and only if $\alpha'(p)>0$ for all $\alpha' \in \Sigma'$.
Since $w$ is continuous and $w(p)$ is $V$-regular in $V'$, we also have that $\alpha'(w(p))>0$ if and only if $\alpha'(w(x))>0$.
Thus $w(x)\in C_w' = w'(C_0')$ and by condition (\ref{condition: subroot_cells}), $w(x) = \sigma(w)(x)$.
Since $w$ and $\sigma(w)|_V$ are linear maps that agree on an open neighborhood, we have that $w =\sigma(w)|_{V}$.
    
It remains to show that $\sigma \colon W \to W'$ is an injective group homomorphism. 
We use the action of $W'$ on $\Sigma'$ once more to note that
\begin{align*}
    w'(H_{\alpha'}^+)&= \{ w'(v') \in V'\colon \alpha'(v')\geq 0 \}
    = \{ v' \in V' \colon \alpha'((w')^{-1}(v'))\geq 0 \} \\
    & =\{ v' \in V' \colon w'(\alpha')(v')\geq 0 \}
    =H_{w'(\alpha')}^+
\end{align*}
for all $w' \in W'$ and $\alpha' \in \Sigma'$. For $w=\Id \in W$, $C_w'=C_0'$, so $\sigma(w)=\Id \in W'$. If now $w_1$, $w_2 \in W$, then 
\begin{align*}
    \sigma(w_1)\sigma(w_2)(C_0') 
    &= \sigma(w_1)(C_{w_2}')
    = \sigma(w_1)(F) \cap \!\! \!\!
     \bigcap_{\substack{ \alpha' \in \Sigma' \\ \alpha' (w_2(p)) >0}} \!\! \!\!  \sigma(w_1) \left( H_{\alpha'}^+ \right) \\
     &
     = F 
     \cap \!\! \!\! \!\! \!\! 
     \bigcap_{\substack{ \beta' \in \Sigma' \\ \beta' (\sigma(w_1)w_2(p)) >0}} \!\! \!\! \!\! \!\!  H_{\beta'}^+ 
     = F  
     \cap \!\! \!\! \!\! \!\! 
     \bigcap_{\substack{ \beta' \in \Sigma' \\ \beta' (w_1 w_2(p)) >0}} \!\! \!\! \!\! \!\!  H_{\beta'}^+ 
     = C_{w_1 w_2}',
\end{align*}
where we used the substitution $\beta' = \sigma(w_1)( \alpha' )$, $\alpha' = \beta' \circ \sigma(w_1)$ and the fact that $\sigma(w_1)(w_2(p))=w_1 w_2 (p)$ since $w_2(p)\in V$.
By the definition of $\sigma$, this means that $\sigma(w_1 w_2)=\sigma(w_1)\sigma(w_2)$.
Finally $\ker(\sigma) = \{\Id_{V}\}$.
Indeed, all other elements $w\in W\setminus \{\Id_{V}\}$ send $p$ to $w(p)=\sigma(w)(p) \neq p$, since $p$ is regular in $V$.
This concludes the proof that $\sigma \colon W \to W'$ is an injective group homomorphism with $\sigma(w)|_V = w$.
\end{proof}

\subsubsection{Subgroups}
\label{sec:functoriality_subgroups}
In this section, we show that the symmetric buildings constructed from subgroups induce an injective morphism of symmetric buildings.
Examples include $\operatorname{SL}_m < \operatorname{SL}_n$ for $m\leq n$ or $\operatorname{Sp}_{2n} < \operatorname{SL}_{2n}$ (see \Cref{fig:intro:Sp4inSL4}), as well as inclusions $\operatorname{SL}_2 < \mathbf{G}$ or $\operatorname{PGL}_2 < \mathbf{G}$ arising from the Jacobson--Morozov theorem, see \cite[(6.1.3.b.2), (6.2.3.b)]{BruhatTits} and \cite[Section 6.8]{appenzeller2024semialgebraic}.

For the remainder of this section, $\ff$ is a real closed field and $v\colon \ff^\times \to \Lambda$ an order-compatible valuation.
Let $\mathbf{G}<\mathbf{G'}<\operatorname{SL}_n$ be two Zariski-connected semisimple self-adjoint linear algebraic $\qq$-groups with reduced root systems.
Let $\mathbf{S}<\mathbf{S'}$ be maximal $\rr$-split tori of $\mathbf{G}$, $\mathbf{G'}$ consisting only of self-adjoint ($g= g^T$) elements and let $A_\ff, A_\ff'$ be the semialgebraically connected components of the $\ff$-extensions $\mathbf{S}_\ff, \mathbf{S}_\ff'$ of $\mathbf{S}$ respectively $\mathbf{S'}$ that contain the identity.

The goal is to show that the inclusion $\mathbf{G}(\ff)<\mathbf{G'}(\ff)$ induces an injective morphism from the symmetric building $\build$ associated to $\mathbf{G}(\mathbb{F})$ to the symmetric building $\build'$ associated to $\mathbf{G'}(\mathbb{F})$.
Let $\sphwg$, $\sphwg'$ denote the (spherical) Weyl groups of the root systems $\Sigma \subseteq \fraka^\star$, $\Sigma' \subseteq (\fraka')^\star$, where $\fraka$, $\fraka'$ are the Lie algebras of $A_\rr$, $A_\rr'$.
For more detailed definitions see \Cref{ex:SymmetricBuilding1}. 

The following lemmas are used to constuct a morphism of apartments $\aa \to \aa'$ in \Cref{prop:FunctorialitySubgroups_apartments}.
In particular, we verify the assumption (\ref{condition: subroot_cells}) of \Cref{prop:subroot}. We would like to express our gratitude for communication with Anne Parreau which lead to some of the ideas in this section.

\begin{lemma}\label{lem:condition_cells_for_symmetric_building}
    Let $C_0$ and $C_0'$ be fundamental Weyl chambers in $\fraka$, $\fraka'$. 
    For all $w\in \sphwg$ and $w' \in \sphwg'$, if $x\in C_0 \cap C_0'$ satisfies $w(x) \in w'(C_0')$, then $w(x) = w'(x)$.
\end{lemma}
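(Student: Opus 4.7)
The plan is to represent the Weyl group element $w \in \sphwg$ by a lift to the maximal compact subgroup $\mathbf{K}(\rr)$, use the inclusion $\mathbf{K}(\rr) \subseteq \mathbf{K}'(\rr)$ (which holds because both $\mathbf{G}$ and $\mathbf{G}'$ are self-adjoint, so $\mathbf{K}(\rr)=\mathbf{G}(\rr)\cap \operatorname{SO}_n \subseteq \mathbf{G}'(\rr)\cap \operatorname{SO}_n=\mathbf{K}'(\rr)$), and then pass from $\mathbf{K}'(\rr)$-orbits back to $\sphwg'$-orbits by a classical fact about restricted root systems.

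Concretely, I would proceed as follows. First, since $\mathbf{S}\subseteq \mathbf{S}'$ implies $A_\rr\subseteq A_\rr'$ and hence $\fraka\subseteq \fraka'$, the point $x\in C_0 \cap C_0'$ lies in $\fraka'$. Recalling that $\sphwg \cong N_\rr / M_\rr$ with $N_\rr = \operatorname{Nor}_{\mathbf{K}(\rr)}(A_\rr)$, lift $w$ to an element $k \in N_\rr \subseteq \mathbf{K}(\rr) \subseteq \mathbf{K}'(\rr)$ such that $\operatorname{Ad}(k)(x) = w(x)$. Then $x$ and $w(x)$ are two elements of $\fraka'$ in the same $\mathbf{K}'(\rr)$-orbit under the adjoint action on $\frakp'$.

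Next, I would invoke the classical theorem (Helgason, \emph{Differential Geometry, Lie Groups, and Symmetric Spaces}, Ch.\ VII, Prop.\ 2.2; or Knapp, \emph{Lie Groups Beyond an Introduction}, Prop.\ 7.30) that for any $y \in \fraka'$ one has $\mathbf{K}'(\rr)\cdot y \cap \fraka' = \sphwg' \cdot y$. Applied to our situation, this produces an element $w'' \in \sphwg'$ with $w''(x) = w(x)$. From the hypothesis $w(x) \in w'(C_0')$ we then get $(w')^{-1}(w''(x)) \in C_0'$. Since both $x$ and $(w')^{-1}(w''(x))$ belong to $C_0'$ and are related by the element $(w')^{-1}w'' \in \sphwg'$, the fundamental domain property of $C_0'$ forces $(w')^{-1}(w''(x)) = x$. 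Hence $w''(x) = w'(x)$, and therefore $w(x)=w''(x)=w'(x)$, which is the desired conclusion.

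The main technical point will be to justify the identification $\mathbf{K}'(\rr)\cdot x \cap \fraka' = \sphwg'\cdot x$ in our particular setting. However, since the Weyl group $\sphwg'$, the Cartan subalgebra $\fraka'$ and the maximal compact $\mathbf{K}'(\rr)$ are all built from the real Lie group $\mathbf{G}'(\rr)$ (the real closed field $\ff$ plays no role in the statement of the lemma), the standard real-Lie-group statement applies directly; this is also in line with how the paper already treats the root space decomposition in \Cref{ex:SymmetricBuilding1}. All other steps are formal, relying only on the fundamental domain property of $C_0'$ under $\sphwg'$.
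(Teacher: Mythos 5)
Your proof is correct and follows essentially the same route as the paper's: both arguments lift $w$ to an element of $\mathbf{K}(\rr)\subseteq\mathbf{K}'(\rr)$ and conclude from the fact that each $\mathbf{K}'(\rr)$-orbit meets the closed chamber $C_0'$ in exactly one point. The paper packages this last fact as the $K_\rr'$-invariance of the Cartan projection $\delta\colon X_\rr'\to C_0'$ applied to $\exp((w')^{-1}w(x))=(k_0')^{-1}k_0.\exp(x)$, whereas you phrase it on the Lie-algebra side via $\mathbf{K}'(\rr)\cdot y\cap\fraka'=\sphwg'\cdot y$ together with the fundamental-domain property of $C_0'$ --- the same content in an equally standard guise.
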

\begin{proof}
Let $w\in \sphwg$, $w' \in \sphwg'$ and $x\in C_0\cap C_0'$ such that $w(x)\in w'(C_0')$. 
    Denote by $\exp \colon \fraka' \to X_\rr'$ the Riemannian exponential map to the real symmetric space $X_\rr'$ and note that $\exp(\fraka) \subseteq X_\rr$. Recall from \cite[Proposition 7.32]{Knapp02} that there exist $k_0 \in K_\rr \subseteq K_\rr'$ and $k_0' \in K_\rr'$ such that $k_0.\exp(H)= \exp(w(H))$ for all $H\in \fraka$ and $k_0'.\exp(H')= \exp(w'(H'))$ for all $H'\in \fraka'$.
    Recall from \Cref{ex:SymmetricBuilding1} that the Cartan decomposition $G_{\mathbb{R}}=K_{\mathbb{R}}A_{\mathbb{R}}K_{\mathbb{R}}$ (postcomposed with $\log \colon A_\rr \to \fraka$) can be used to obtain a Cartan projection $\delta \colon X_\rr' \to C_0'$, $k_1'ak_2'.\Id \mapsto \log(a)$  which is invariant under the action of $K_\rr'$ in the sense that for all $q' \in X_\rr'$, $k' \in K_\rr'$
    \[
        \delta(k'.q') = \delta(q'), \text{ and }  \delta(\exp(y')) = y' \text{ for all $y' \in C_0'$}.
    \]
    We have $(w')^{-1}w(x) \in C_0'$ so that
    \[
    (w')^{-1}w(x)  = \delta( \exp((w')^{-1}w(x) ) ) = \delta((k_0')^{-1}k_0.\exp(x)) = \delta(\exp(x)) = x.
    \]
    Hence $w(x)=w'(x)$, which concludes the proof.
\end{proof}

Recall that the symmetric building $\build$ associated to $\mathbf{G}(\ff)$ has type $\aa = \aa(\Sigma^\vee, \Lambda, T)$ with full translation group $T=\aa$ and the symmetric building $\build'$ associated to $\mathbf{G}'(\ff)$ has type $\aa' = \aa((\Sigma')^\vee, \Lambda, T')$ with full translation group $T'=\aa'$.

\begin{lemma}\label{lem:FunctorialitySubgroups_L}
    The inclusion $\fraka \subseteq \fraka'$ restricts to an inclusion $\operatorname{Span}_{\qq}(\Sigma^\vee) \subseteq \operatorname{Span}_{\qq}((\Sigma')^\vee)$.
\end{lemma}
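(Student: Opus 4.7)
The plan is to identify both $\operatorname{Span}_{\qq}(\Sigma^\vee)\subseteq\fraka$ and $\operatorname{Span}_{\qq}((\Sigma')^\vee)\subseteq\fraka'$ with the $\qq$-forms of $\fraka$ and $\fraka'$ coming from the cocharacter lattices of $\mathbf{S}$ and $\mathbf{S}'$ respectively, and then exploit functoriality of cocharacters under the inclusion $\mathbf{S}\hookrightarrow\mathbf{S}'$.

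The first step is to recall that each coroot $\alpha^\vee\in\Sigma^\vee$ is the derivative at $1\in\mathbb{G}_m$ of the cocharacter $\mathbb{G}_m\to A_\rr$ obtained from the Jacobson--Morozov $\mathfrak{sl}_2$-triple attached to $\alpha$ (see e.g.\ \cite[(6.1.3.b.2)]{BruhatTits} and \cite[Section 6.8]{appenzeller2024semialgebraic}). Hence $\Sigma^\vee$ is contained in the image of the canonical embedding of the cocharacter lattice $X_*(\mathbf{S})$ into $\fraka$. Since $\mathbf{G}$ is semisimple, the coroots span $\fraka$ as an $\rr$-vector space, so $\operatorname{Span}_{\qq}(\Sigma^\vee)$ is a $\qq$-subspace of $\fraka$ of $\qq$-dimension equal to $\dim_\rr\fraka$. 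Comparing $\qq$-dimensions with the $\qq$-form $X_*(\mathbf{S})\otimes_{\zz}\qq$ which contains it yields the equality
\[
\operatorname{Span}_{\qq}(\Sigma^\vee) \ = \ X_*(\mathbf{S})\otimes_{\zz}\qq
\]
inside $\fraka$. The same argument applied to $\mathbf{G}'$ and $\mathbf{S}'$ gives $\operatorname{Span}_{\qq}((\Sigma')^\vee)=X_*(\mathbf{S}')\otimes_{\zz}\qq$.

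Finally, the inclusion $\mathbf{S}\hookrightarrow\mathbf{S}'$ of algebraic tori induces a functorial injection of cocharacter lattices $X_*(\mathbf{S})\hookrightarrow X_*(\mathbf{S}')$ that is compatible with the Lie algebra inclusion $\fraka\hookrightarrow\fraka'$ via the derivative at the identity. Tensoring with $\qq$ and chaining the two identifications gives
\[
\operatorname{Span}_{\qq}(\Sigma^\vee) \ = \ X_*(\mathbf{S})\otimes_{\zz}\qq \ \subseteq \ X_*(\mathbf{S}')\otimes_{\zz}\qq \ = \ \operatorname{Span}_{\qq}((\Sigma')^\vee),
\]
as claimed. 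The only subtle point is that different $\qq$-forms of $\fraka$ need not coincide as subsets of $\fraka$, so the identification in the previous paragraph genuinely uses that the coroots are integral cocharacters (and not merely $\rr$-vectors spanning $\fraka$); once this is in hand, the rest is pure functoriality.
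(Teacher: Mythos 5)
Your proof is correct and takes essentially the same route as the paper: both identify $\operatorname{Span}_{\qq}(\Sigma^\vee)$ with the rational cocharacter lattice $X_*(\mathbf{S})\otimes_{\zz}\qq$ inside $\fraka$ and then invoke functoriality of cocharacters under the inclusion $\mathbf{S}\hookrightarrow\mathbf{S}'$. The only difference is that you justify the identification explicitly (coroots as integral cocharacters via Jacobson--Morozov, plus a $\qq$-dimension count), whereas the paper delegates this step to a citation.
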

\begin{proof}
    Since we are in the context of algebraic groups, the coroot lattice $\operatorname{Span}_{\qq}(\Sigma^\vee)$ can be identified with the cocharacter lattice $X^\star(\mathbf{S}) \otimes_{\zz} \qq$ (for the root lattice and the characters, this identification is spelled out in \cite[Section 6.2]{appenzeller2024semialgebraic}).
    Elements in $X^\star(\mathbf{S})$, so-called cocharacters, are algebraic homomorphisms $\mathbb{G}_m \to \mathbf{S}$. 
    Every cocharacter of $\mathbf{S}$ gives rise to a cocharacter of $\mathbf{S}'$ by postcomposing with the inclusion $\mathbf{S} < \mathbf{S}'$, so 
    \[
        \operatorname{Span}_{\qq}(\Sigma^\vee)\cong X^\star(\mathbf{S})\otimes_\zz \qq \subseteq    X^\star(\mathbf{S}')\otimes_\zz \qq\cong\operatorname{Span}_\qq( (\Sigma')^\vee ).\qedhere
    \]
\end{proof}

\begin{prop}\label{prop:FunctorialitySubgroups_apartments}
    There is an injective morphism of apartments $\aa \to \aa'$.
\end{prop}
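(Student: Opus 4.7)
The plan is to construct the three components $(L, \gamma, \sigma)$ of the morphism separately and then verify compatibility. For $L$, I take the injective linear inclusion $\operatorname{Span}_{\qq}(\Sigma^\vee) \hookrightarrow \operatorname{Span}_{\qq}((\Sigma')^\vee)$ supplied by \Cref{lem:FunctorialitySubgroups_L}. For $\gamma$, I take $\gamma = \Id_\Lambda$. Then $\tau \coloneqq L \otimes \gamma \colon \aa \to \aa'$ is the natural injection of $\qq$-vector spaces obtained by tensoring with $\Lambda$.

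The heart of the proof is the construction of $\sigma$, which I do by applying \Cref{prop:subroot} to $V \coloneqq \operatorname{Span}_{\rr}(\Sigma^\vee) \simeq \fraka$ sitting inside $V' \coloneqq \operatorname{Span}_{\rr}((\Sigma')^\vee) \simeq \fraka'$. Two ingredients are needed: fundamental Weyl chambers $C_0 \subseteq V$ and $C_0' \subseteq V'$ sharing a point that is $V$-regular in $V'$, and the compatibility condition (\ref{condition: subroot_cells}). For the first, by \Cref{lem:regular}~(\ref{item: complement}) the set of $V$-regular-in-$V'$ points is the complement in $V$ of the finite union $\bigcup_{\alpha' \in \Sigma' \setminus \Sigma_V'} M_{\alpha'}$, hence open and dense in $V$. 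The set of $\Sigma$-regular points in $V$ is likewise open and dense, so I pick $p$ in their intersection. Such a $p$ lies in the interior of a unique chamber $C_0$ of $\Sigma$ in $V$, and I choose any chamber $C_0'$ of $\Sigma'$ whose closure contains $p$ (such a chamber exists since chambers of $\Sigma'$ cover $V'$). The second ingredient, condition (\ref{condition: subroot_cells}), is exactly the conclusion of \Cref{lem:condition_cells_for_symmetric_building}. Hence \Cref{prop:subroot} produces an injective group homomorphism $\sigma_s \colon \sphwg \to \sphwg'$ with $\sigma_s(w)|_V = w$ for all $w \in \sphwg$.

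To upgrade $\sigma_s$ to a homomorphism of affine Weyl groups, I apply \Cref{lem:morphism_apartment}. Since $T = \aa$ and $T' = \aa'$ are the full translation groups, $\tau(T) \subseteq T'$ is automatic. The commutativity of the required diagram on spherical elements follows from $\sigma_s(w) \circ L = L \circ w$: for $y \otimes \lambda \in \aa$ one computes $\tau(w(y \otimes \lambda)) = L(w(y)) \otimes \gamma(\lambda) = \sigma_s(w)(L(y)) \otimes \gamma(\lambda) = \sigma_s(w)(\tau(y \otimes \lambda))$, and extends by $\qq$-linearity. Thus $\sigma \coloneqq \tau|_T \times \sigma_s$ is a group homomorphism $\affwg \to \affwg'$ making $(L, \gamma, \sigma)$ a morphism of apartments. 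Injectivity of the morphism is immediate, since both $L$ and $\gamma$ are injective.

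The only real obstacle is finding the point $p$ simultaneously regular in $V$ and $V$-regular in $V'$, but this reduces to intersecting two open dense subsets of $V$. All other ingredients—the inclusion $L$, the relationship between the Weyl chambers encoded in (\ref{condition: subroot_cells}), and the extension of $\sigma_s$ to the full affine Weyl group—are handled by the preceding lemmas.
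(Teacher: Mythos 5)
Your proof is correct and follows essentially the same route as the paper: the linear inclusion $L$ from \Cref{lem:FunctorialitySubgroups_L}, $\gamma = \Id_\Lambda$, verification of condition (\ref{condition: subroot_cells}) via \Cref{lem:condition_cells_for_symmetric_building}, \Cref{prop:subroot} to produce $\sigma_s$, and \Cref{lem:morphism_apartment} to extend to the affine Weyl group. The only difference is that you choose $p$ in the intersection of two open dense subsets of $V$ to guarantee it is simultaneously regular in $V$ and $V$-regular in $V'$, which is a slightly more careful justification of the regularity hypothesis than the paper's bare assertion.
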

\begin{proof}
    Let $L \colon \operatorname{Span}_{\qq}(\Sigma^\vee) \to \operatorname{Span}_{\qq}((\Sigma')^\vee)$ be the inclusion from \Cref{lem:FunctorialitySubgroups_L} and $\gamma \colon \Lambda \to \Lambda$ the identity. We choose a fundamental Weyl chamber $C_0$ in $\fraka$ and a point $p\in \fraka$ regular in $\fraka$. Then choose a fundamental Weyl chamber $C_0'$ in $\fraka'$.
    Then $p$ is $\fraka$-regular in $\fraka'$, and by \Cref{lem:condition_cells_for_symmetric_building}, the conditions for \Cref{prop:subroot} are satisfied, so we obtain a group homomorphism $\sigma_s \colon \sphwg \to \sphwg'$ such that $\sigma_s(w)|_{\operatorname{Span}_{\qq}(\Sigma^\vee)} = w$ for all $w\in \sphwg$.
    The diagram
    \[
    \begin{tikzcd}
        \mathbb{A} \arrow[r, "L \otimes \gamma"] \arrow[d, "w"'] & \mathbb{A}' \arrow[d, "\sigma_s(w)"] \\
        \mathbb{A} \arrow[r, "L \otimes \gamma"']                & \mathbb{A}'
    \end{tikzcd}
    \]
    commutes for all $w\in \sphwg$.
    Moreover $T = \aa \subseteq \aa' = T'$, so by \Cref{lem:morphism_apartment}, $\sigma_s$ extends to $\sigma\colon \affwg \to \affwg'$ so that $(L,\gamma,\sigma)$ is a morphism of apartments. Both $L$ and $\gamma$ are injective, so the morphism is injective.
\end{proof}

We now have all the tools to prove \Cref{intro:thm:FunctorialitySubgroups}.

\begin{thm}[\Cref{intro:thm:FunctorialitySubgroups}]
\label{thm:FunctorialitySubgroups}
    Let $\mathbf{G}<\mathbf{G'}<\operatorname{SL}_n$ be two connected semisimple self-adjoint linear algebraic $\qq$-groups and $\mathbf{S}<\mathbf{S'}$ maximal $\rr$-split tori of $\mathbf{G}$, $\mathbf{G'}$ (all of whose elements are self-adjoint) such that the root systems are reduced.
    Let $\build$ and $\build'$ be the associated symmetric buildings.
    If $\mathbf{G}$ is $\rr$-split, then the inclusion $\mathbf{G}(\ff) \subseteq \mathbf{G}'(\ff)$ induces an equivariant injective morphism $\build \to \build'$ of buildings.
\end{thm}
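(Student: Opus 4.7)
The plan is to apply \Cref{thm:baby_morphismofGbuildings}~\ref{thm:baby_monomorphism} with $\rho \coloneqq \iota \colon G \hookrightarrow G'$, where $G \coloneqq \mathbf{G}(\ff)$ and $G' \coloneqq \mathbf{G}'(\ff)$, and with $\tau$ the injective morphism of apartments constructed in \Cref{prop:FunctorialitySubgroups_apartments}. I would pick the standard charts $f \coloneqq f_0$ and $f' \coloneqq f'_0$ of \Cref{ex:SymmetricBuilding1}, both sending $0$ to the base point $o = [\Id]$; by \Cref{ex:SymmetricBuilding2} both groups act transitively on their respective buildings and atlases, so it remains to verify the three conditions of the theorem.

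For condition \ref{condition:baby_stabilizerofapoint}, \Cref{prop:homogeneous_stabilizers} yields $\operatorname{Stab}_G(f(0)) = \mathbf{G}(\mathcal{O}) \subseteq \mathbf{G}'(\mathcal{O}) = \operatorname{Stab}_{G'}(f'(0))$. For \ref{condition:baby_stabilizerofchart}, since $\mathbf{G}$ is $\ff$-split by \cite[Theorem 5.17]{appenzeller2024semialgebraic}, the same proposition gives $\operatorname{Stab}_G(f) = \mathbf{S}(\ff) \cap \mathcal{O}^{n\times n}$ and $\operatorname{Stab}_{G'}(f') = \operatorname{Cent}_{G'}(\mathbf{S}'(\ff)) \cap \mathcal{O}^{n\times n}$, and the inclusion is immediate because $\mathbf{S} \subseteq \mathbf{S}'$ is abelian. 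For \ref{condition:baby_Afw}, given $x \in \aa$ I would take $a \in A_\ff \subseteq A'_\ff$ with $a.o = f(x)$, characterized by $(-v)(\chi_\alpha(a)) = \alpha(x)$ for all $\alpha \in \Sigma$ via (\ref{eq:BH_compatible}); by linearity this extends to the full character lattice of $\mathbf{S}$. Each character $\chi'_{\alpha'}$ of $\mathbf{S}'$ with $\alpha' \in \Sigma'$ restricts to a character of $\mathbf{S}$ corresponding to $\alpha'|_\fraka$, while the construction of $\tau$ via \Cref{lem:FunctorialitySubgroups_L} ensures $\alpha'(\tau(x)) = (\alpha'|_\fraka)(x)$. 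Substituting yields $(-v)(\chi'_{\alpha'}(a)) = \alpha'(\tau(x))$ for all $\alpha' \in \Sigma'$, so $\iota(a).o = f'(\tau(x))$ by (\ref{eq:BH_compatible}) applied in $\build'$.

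This produces an equivariant morphism $m = (\psi,\varphi,\tau) \colon \build \to \build'$. To apply the injectivity conclusion \ref{thm:baby_monomorphism}, I note that both $\rho = \iota$ and $\tau$ are injective. The literal stabilizer equality $\rho(\operatorname{Stab}_G(f)) = \operatorname{Stab}_{G'}(f')$ in the hypothesis typically fails when $\mathbf{G} \subsetneq \mathbf{G}'$; however, inspection of the proof reveals that only the weaker identity $\iota^{-1}(\operatorname{Stab}_{G'}(f')) = \operatorname{Stab}_G(f)$ is actually used to deduce $g_1^{-1}g_2 \in \operatorname{Stab}_G(f)$ from $\iota(g_1^{-1}g_2) \in \operatorname{Stab}_{G'}(f')$. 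This weaker identity holds here: for $g \in G$ with $\iota(g) \in \operatorname{Stab}_{G'}(f')$ one has $g \in \operatorname{Cent}_G(\mathbf{S}'(\ff)) \cap \mathcal{O}^{n\times n}$, and since $\mathbf{S}$ is a maximal torus of the $\ff$-split $\mathbf{G}$, $\operatorname{Cent}_G(\mathbf{S}'(\ff)) \subseteq \operatorname{Cent}_G(\mathbf{S}(\ff)) = \mathbf{S}(\ff)$, giving $g \in \mathbf{S}(\ff) \cap \mathcal{O}^{n\times n} = \operatorname{Stab}_G(f)$.

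The main conceptual obstacle behind the theorem is not in the above verification but in the construction of $\tau$, which has to match distinct Weyl group actions; that obstacle was already overcome in \Cref{prop:FunctorialitySubgroups_apartments} via the delicate argument of \Cref{prop:subroot}, and the remaining work is pure bookkeeping translating stabilizer and compatibility descriptions from $\mathbf{G}$ to $\mathbf{G}'$.
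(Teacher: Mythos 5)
Your proposal is correct and follows essentially the same route as the paper: same charts, same stabilizer computations via \Cref{prop:homogeneous_stabilizers}, same character-restriction argument for condition \ref{condition:baby_Afw}, and the same resolution of the failure of the literal hypothesis $\rho(\operatorname{Stab}_G(f))=\operatorname{Stab}_{G'}(f')$ — the paper likewise notes that hypothesis is not satisfied and verifies injectivity of $\psi$ and $\varphi$ directly using exactly the inclusion $\operatorname{Cent}_{\mathbf{G}(\ff)}(\mathbf{S}'(\ff))\cap\mathcal{O}^{n\times n}\subseteq\operatorname{Stab}_{\mathbf{G}(\ff)}(f_0)$ that you isolate as the ``weaker identity.''
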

\begin{proof}
    Let $o = [\Id] \in \build$ and $o' = [\Id] \in \build'$ be the base points.
    By \Cref{prop:FunctorialitySubgroups_apartments}, the inclusion $\tau \colon \mathbb{A} \to \mathbb{A}'$ is an injective morphism of apartments.
    We consider the standard charts $f_0 \colon \mathbb{A} \to \build$ and $f_0' \colon \mathbb{A}' \to \build'$ and apply \Cref{thm:baby_morphismofGbuildings}. 

By \Cref{prop:homogeneous_stabilizers}, $\operatorname{Stab}_{\mathbf{G}(\ff)}(o) = \mathbf{G}(\mathcal{O}) \subseteq \mathbf{G}'(\mathcal{O}) = \operatorname{Stab}_{{\mathbf{G}'(\ff)}}(o')$, so condition \ref{condition:baby_stabilizerofapoint} holds. Using that $\mathbf{G}$ is $\ff$-split in \Cref{prop:homogeneous_stabilizers}, we obtain that 
\[
    \operatorname{Stab}_{\mathbf{G}(\ff)}(f_0) = \mathbf{S}(\mathcal{O}) \subseteq \mathbf{S}'(\mathcal{O}) \subseteq \operatorname{Cent}_{\mathbf{G}'(\ff)}(\mathbf{S}'(\ff)) \cap \mathcal{O}^{n\times n} = \operatorname{Stab}_{\mathbf{G}'(\ff)}(f_0'),
    \]
    so condition \ref{condition:baby_stabilizerofchart} holds. For every $x\in \aa$ there exists $a\in A_\ff$ with $a.o = f_0(x)$. Consider $\alpha' \in \Sigma'$.
    Then $\alpha'|_{\operatorname{Span}_{\qq}(\Sigma^\vee)} = \sum_{i=1}^r q_i \alpha_i$ for some $\alpha_i \in \Sigma$, since in the theory of algebraic groups, the restriction of $\alpha'$ is a character of $\mathbf{G}(\ff)$.
    By (\ref{eq:BH_compatible}), we obtain
    \begin{align*}
        (-v)(\chi_{\alpha'}(a)) 
        &= (-v)\left( \prod_{i=1}^r \chi_{\alpha_i}(a)^{q_i}\right) 
        = \sum_{i=1}^r q_i(-v)(\chi_{\alpha_i}(a)) \\
        &= \sum_{i=1}^r q_i \alpha_i(x) 
        = \alpha'(x),
    \end{align*}
    which shows that $a.o' = f_0'(\tau(x))$, which is condition \ref{condition:baby_Afw}.
    Thus $m \coloneqq (\psi,\varphi,\tau)$ is an equivariant morphism of buildings.

    We check injectivity of $m$ directly, as the condition of \Cref{thm:baby_morphismofGbuildings}~\ref{thm:baby_monomorphism} is not satisfied.
    The map $\psi \colon \build \to \build'$ is injective, since if $g,h \in \mathbf{G}(\ff)$ satisfy $g.o'=h.o'$, then $h^{-1}g \in \operatorname{Stab}_{\mathbf{G}'(\ff)}(o')=\mathbf{G}'(\mathcal{O})$, see \Cref{prop:homogeneous_stabilizers}, but also $h^{-1}g \in \mathbf{G}(\ff)$, so $h^{-1}g \in \mathbf{G}(\ff) \cap \mathbf{G}'(\mathcal O) =  \mathbf{G}(\mathcal{O})=\operatorname{Stab}_{{\mathbf{G}(\ff)}}(o)$, so $g.o = h.o$. 
    Similarly, again using \Cref{prop:homogeneous_stabilizers}, the map $\varphi \colon \mathcal{A} \to \mathcal{A}'$ is injective because
    \begin{align*}
        &\operatorname{Stab}_{\mathbf{G}'(\ff)}(f_0') \cap \mathbf{G}(\ff) 
        = \operatorname{Cent}_{\mathbf{G}'(\ff)}(\mathbf{S}'(\ff)) \cap {\mathcal O }^{n\times n} \cap \mathbf{G}(\ff)  \\
        &=  \operatorname{Cent}_{\mathbf{G}(\ff)}(\mathbf{S}'(\ff)) \cap {\mathcal O }^{n\times n}
        \subseteq  \operatorname{Cent}_{\mathbf{G}(\ff)}(\mathbf{S}(\ff)) \cap {\mathcal O }^{n\times n}
        = \operatorname{Stab}_{\mathbf{G}(\ff)}(f_0),
    \end{align*}
    since $\mathbf{S}(\ff) \subseteq \mathbf S' (\ff)$.
    
    Finally, $\tau$ is an injective morphism of apartments, so $m$ is an equivariant injective morphism of buildings.
\end{proof}

\begin{remark}
Most of the assumptions in Theorem \ref{thm:FunctorialitySubgroups} are just there to guarantee the existence of the buildings $\build$ and $\build'$. However, the assumption that $\mathbf{G}$ is $\ff$-split is a technical assumption that implies $A_\ff M_\ff = \operatorname{Cent}_{\mathbf{G}(\ff)}(\mathbf{S}(\ff)) = \mathbf{S}(\ff)$ and allows to define the map $\varphi \colon \atlas \to \atlas'$. The same proof goes through when replacing that condition with $\operatorname{Cent}_{\mathbf{G}(\ff)}(\mathbf{S}(\ff)) \subseteq \operatorname{Cent}_{\mathbf{G}'(\ff)}(\mathbf{S}'(\ff))$.
The existence of a morphism between the associated symmetric buildings might still be true without the assumption that $\mathbf{G}$ is $\ff$-split, but then one has to deal with the anisotropic part of $\mathbf{T}$.
Examples where our proof does not work verbatum are given by $\mathbf{G}=\operatorname{SO}(3)$ (with the building consisting of a single point) in $\mathbf{G}'=\operatorname{SL}_3$, and $\operatorname{SO}(2,3)<\operatorname{SL}_5$.
\end{remark}

\subsubsection{Injective group morphisms}
The goal of this section is to generalize the result on subgroups to general injective morphisms of algebraic groups.
Note that we do not assume in this subsection that the algebraic groups in question are $\ff$-split.

We first show that if $\mathbf{G}$, $\mathbf{G}'$ are isomorphic, then their associated symmetric buildings are isomorphic.
In particular the buildings are independent of the choice of maximal self-adjoint $\rr$-split torus $\mathbf{S}$.
This is again an application of \Cref{thm:morphismofGbuildings}.
Together with the results of the previous subsection we then obtain the following.

\begin{prop} \label{prop:isomorphism_of_group_implies_isomorphism_of_buildings}
    Let $\mathbf{G}< \operatorname{SL}_{n}$ and $\mathbf{G}'< \operatorname{SL}_m$ be semisimple connected self-adjoint linear algebraic $\qq$-groups with reduced root systems and let $\build$, $\build'$ be the associated symmetric buildings.
    If $\rho \colon \mathbf{G} \to \mathbf{G}'$ is an isomorphism of algebraic groups defined over $\qq$ (or more generally over $\rr \cap \ff$), then $\build$ is (equivariantly) isomorphic to $\build'$.
\end{prop}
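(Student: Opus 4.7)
The plan is to apply Theorem~\ref{thm:baby_morphismofGbuildings}~\ref{thm:baby_isomorphism} after first modifying $\rho$ by an inner automorphism of $\mathbf{G}'$ to align the two chosen tori, and then twisting the resulting isomorphism back. Since $\rho(\mathbf{S})$ and $\mathbf{S}'$ are both maximal $\rr$-split tori of $\mathbf{G}'$, \cite[Theorem 20.9]{Borellinearalgebraixgroups} yields some $g' \in \mathbf{G}'(\rr)$ with $g'\rho(\mathbf{S})(g')^{-1} = \mathbf{S}'$. Setting $\tilde\rho \coloneqq \mathrm{int}(g') \circ \rho$ gives an algebraic isomorphism defined over $\rr \cap \ff$ that maps $\mathbf{S}$ to $\mathbf{S}'$ and, being semialgebraic, maps $A_\ff$ to $A_\ff'$.

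Next, I would build an isomorphism of apartments $\tau \colon \aa \to \aa'$ out of $\tilde\rho$. The restriction $\tilde\rho|_{\mathbf{S}} \colon \mathbf{S} \to \mathbf{S}'$ pulls back characters to characters, and because roots are the weights appearing in the adjoint representation it restricts to a bijection $\tilde\rho^* \colon \Sigma' \to \Sigma$. Dualizing yields a $\qq$-linear isomorphism $L \colon \operatorname{Span}_\qq(\Sigma^\vee) \to \operatorname{Span}_\qq((\Sigma')^\vee)$ of coroot spaces, and conjugating reflections by $L$ produces an isomorphism of spherical Weyl groups $\sigma_s \colon \sphwg \to \sphwg'$ satisfying $L \circ w = \sigma_s(w) \circ L$ for every $w \in \sphwg$. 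Since both translation groups are full, Lemma~\ref{lem:morphism_apartment} upgrades $(L, \operatorname{Id}_\Lambda, \sigma_s)$ to an isomorphism of apartments $\tau = (L, \operatorname{Id}_\Lambda, \sigma)$.

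I would then verify the hypotheses of Theorem~\ref{thm:baby_morphismofGbuildings}~\ref{thm:baby_isomorphism} for $\tilde\rho$, $\tau$, and the standard charts $f_0$, $f_0'$. Order-compatibility of $v$ forces every bounded element (in particular every element of $\rr \cap \ff$) into $\mathcal O$, so both $\tilde\rho$ and $\tilde\rho^{-1}$ preserve $\mathcal O$-points; combined with Proposition~\ref{prop:homogeneous_stabilizers} this makes the inclusions \ref{condition:baby_stabilizerofapoint} and \ref{condition:baby_stabilizerofchart} hold as equalities (for \ref{condition:baby_stabilizerofchart} one also uses that $\tilde\rho$ maps $\operatorname{Cent}_{\mathbf{G}(\ff)}(\mathbf{S}(\ff))$ onto $\operatorname{Cent}_{\mathbf{G}'(\ff)}(\mathbf{S}'(\ff))$). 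For \ref{condition:baby_Afw}, given $a \in A_\ff$ with $a.o = f_0(x)$, combining the compatibility \eqref{eq:BH_compatible} with the identities $\chi'_{\alpha'} \circ \tilde\rho|_{\mathbf{S}} = \chi_{\tilde\rho^*(\alpha')}$ and $\alpha' \circ L = \tilde\rho^*(\alpha')$ gives $\tilde\rho(a).o' = f_0'(\tau(x))$ for all $\alpha' \in \Sigma'$. Theorem~\ref{thm:baby_morphismofGbuildings}~\ref{thm:baby_isomorphism} thus produces a $\tilde\rho$-equivariant isomorphism $\tilde m \colon \build \to \build'$.

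Finally, to obtain a genuinely $\rho$-equivariant isomorphism I would post-compose $\tilde m$ with the automorphism of $\build'$ induced by the action of $(g')^{-1} \in \mathbf{G}'(\ff)$: since $\rho = \mathrm{int}((g')^{-1}) \circ \tilde\rho$, a direct computation shows that $m(x) \coloneqq (g')^{-1}.\tilde m(x)$ is $\rho$-equivariant, and it is still an isomorphism of buildings. The main obstacle is the second paragraph---carefully transporting all the combinatorial data (roots, coroots, characters, Weyl groups) through $\tilde\rho$ in a way that is compatible with the dualization pinning down $f_0$ via \eqref{eq:BH_compatible}; once this is arranged, each hypothesis of Theorem~\ref{thm:baby_morphismofGbuildings}~\ref{thm:baby_isomorphism} follows immediately.
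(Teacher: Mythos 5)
Your proposal is correct and follows essentially the same route as the paper: conjugate $\rho$ so that $\rho(\mathbf{S})=\mathbf{S}'$, build the apartment isomorphism $(L,\Id_\Lambda,\sigma)$ from the induced isomorphism of root systems via \Cref{lem:morphism_apartment}, and verify the three conditions of \Cref{thm:baby_morphismofGbuildings}~\ref{thm:baby_isomorphism} using \Cref{prop:homogeneous_stabilizers} and the compatibility condition (\ref{eq:BH_compatible}); your only deviations are cosmetic (you construct $L$ by pulling back characters rather than via $\operatorname{D}_{\Id}\rho$, and you explicitly twist back by $(g')^{-1}$ to restore $\rho$-equivariance, which the paper absorbs into ``we may assume''). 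One small correction: the conjugating element should be taken in $\mathbf{G}'(\rr\cap\ff)$ rather than $\mathbf{G}'(\rr)$, so that it acts on the $\ff$-points and on $\build'$.
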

\begin{proof}
    Let $\mathbf{S}$ and $\mathbf{S}'$ be maximal $\rr$-split tori of $\mathbf{G}$ and $\mathbf{G}'$, all of whose elements are self-adjoint. We note that $\rho(\mathbf{S})$ is again a maximal $\rr$-split torus, but it may not coincide with $\mathbf{S}'$. However, by \cite[Theorem 15.14]{Borellinearalgebraixgroups}, all $\rr$-split (and hence $(\rr\cap \ff)$-split \cite[Theorem 5.17]{appenzeller2024semialgebraic}) tori are conjugated in $\mathbf{G}(\rr\cap \ff)$, so up to postcomposing $\rho$ with this conjugation, we may assume that $\rho(\mathbf{S})=\mathbf{S}'$.
    
    Let $A_\rr$ and $A_\rr'$ be the semialgebraically connected components of $\mathbf{S}(\rr)$ and $\mathbf{S}'(\rr)$ that contain the identity and let $\fraka$ and $\fraka'$ be their respective Lie algebras. 
    The Lie algebra isomorphism $\operatorname{D}_{\Id}\rho \colon \operatorname{Lie}(\mathbf{G}(\rr)) \to \operatorname{Lie}(\mathbf{G}'(\rr))$ satisfies $\operatorname{D}_{\Id}\rho (\fraka) = \fraka'$ and sends root spaces to root spaces, inducing an isomorphism $\Sigma \to \Sigma'$, $\alpha \mapsto \alpha \circ \operatorname{D}_{\Id}\rho^{-1}$.
    This restricts to an isomorphism 
    \[
    L \colon \operatorname{Span}_{\qq}(\Sigma^\vee) \to \operatorname{Span}_\qq((\Sigma')^\vee), \quad  H \mapsto \operatorname{D}_{\Id}\rho (H),
    \]
    and induces an isomorphism 
    \[
    \sigma_s \colon \sphwg \to \sphwg', \quad w \mapsto \operatorname{D}_{\operatorname{Id}}\rho \circ w \circ \operatorname{D}_{\Id}\rho^{-1}.
    \]
    Together with the identity $\gamma=\Id_\Lambda \colon \Lambda \to \Lambda$, \Cref{lem:morphism_apartment} can be used to extend $\sigma_s$ to $\sigma\colon \affwg \to \affwg'$ such that $\tau \coloneqq (L,\gamma, \sigma)$ forms an isomorphism of apartments.

    We now check the conditions of \Cref{thm:baby_morphismofGbuildings}.
    The action of $\mathbf{G}(\ff)$ is transitive on $\build$ and $\atlas$.
    Let $f_0 \colon \aa \to \build$ and $f_0' \colon \aa' \to \build'$ be the standard charts.
    Since $\rho$ and $\rho^{-1}$ are defined over $\qq$, $\rho$ and $\rho^{-1}$ are entrywise defined by polynomials with coefficients in $\qq$ \cite[Lemma 4.1]{appenzeller2024semialgebraic}, so $\rho(\mathbf{G}(\mathcal{O})) = \mathbf{G}'(\mathcal{O})$ and by \Cref{prop:homogeneous_stabilizers}, $\rho(\operatorname{Stab}_{\mathbf{G}(\ff)}(f_0(0))) = \operatorname{Stab}_{\mathbf{G}'(\ff)}(f_0'(0))$ which is condition \ref{condition:baby_stabilizerofapoint}.

Since $\rho(A_\ff) = A_\ff'$, we have $\rho(\operatorname{Cent}_{\mathbf{G}(\ff)}(A_\ff) ) = \operatorname{Cent}_{\mathbf{G}'(\ff)}(A_\ff')$ and by \Cref{prop:homogeneous_stabilizers}, $\rho(\operatorname{Stab}_{\mathbf{G}(\ff)}(f_0) = \operatorname{Stab}_{\mathbf{G}'(\ff)}(f_0')$, which is condition \ref{condition:baby_stabilizerofchart}. 
For every $x = \sum_{\delta\in \Delta} \lambda_\delta \delta^\vee \in \aa$, there is some $a\in A_\ff$ such that $f_0(x) = a.o$ with $(-v)(\chi_\alpha(a)) = \alpha(x)$ for all $\alpha \in \Sigma$.
    By \cite[Lemma 6.1]{appenzeller2024semialgebraic} over $\rr$, for all $H\in \fraka$ and $\alpha \in \Sigma$, we have
    \[
    \chi_\alpha(\exp(H)) = e^{\alpha(H)} = e^{\alpha \circ \operatorname{D}_{\Id}\rho^{-1}  ( \operatorname{D}_{\Id}\rho (H) )  } = \chi_{\alpha \circ  \operatorname{D}_{\Id}\rho^{-1}} (\rho(\exp(H))).
    \]
    By the transfer principle (\Cref{thm_TarskiSeidenberg}), for $\alpha \in \Sigma$ and the corresponding $\alpha' \coloneqq \alpha \circ \operatorname{D}_{\Id} \rho^{-1} \in \Sigma'$, we have 
    \[\chi_\alpha(a) = \chi_{\alpha'}(\rho(a)).\]
    Thus
    \begin{align*}
        (-v)(\chi_{\alpha'}(\rho(a)) ) 
        &= (-v)(\chi_{\alpha}(a) )
        = \alpha(x) = \sum_{\delta \in \Delta} \lambda_\delta \alpha \left( \delta^\vee \right) \\
        & = \sum_{\delta \in \Delta} \lambda_\delta \left(\alpha \circ \operatorname{D}_{\Id}\rho^{-1}\right)(L(\delta^{\vee})) 
        = \alpha'\left( \tau (x) \right),
    \end{align*}
    so $\rho(a).o' = f_0'(\tau (x))$ and condition \ref{condition:baby_Afw} is satisfied. By \Cref{thm:baby_morphismofGbuildings}~\ref{thm:baby_isomorphism} we obtain that $m$ is an isomorphism of buildings $\build \xrightarrow{\sim} \build'$.
\end{proof}

We can now prove our most general result about functoriality for monomorphisms.
A morphism of linear algebraic groups $\rho \colon \mathbf{G} \to \mathbf{G}'$ is called \emph{transposition-invariant} if $\rho(g)^\top = \rho(g^\top)$ for all $g\in \mathbf{G}$.

\begin{thm}[\Cref{intro:thm:InjMorphBuildings}] \label{thm:inj_group_morphism_implies_inj_morphism_buildings}
    Let $\mathbf{G}< \operatorname{SL}_{n}$ and $\mathbf{G}'< \operatorname{SL}_m$ be semisimple connected self-adjoint linear algebraic $\qq$-groups with reduced root systems and let $\build$, $\build'$ be the associated symmetric buildings.
    If $\mathbf{G}$ is $\rr$-split and $\rho \colon \mathbf{G} \to \mathbf{G}'$ is a transposition-invariant monomorphism of algebraic groups defined over $\qq$ (or more generally over $\rr \cap \ff$), then there is an equivariant injective morphism $\build \to \build'$.
\end{thm}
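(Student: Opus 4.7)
The plan is to factor the monomorphism $\rho$ as a composition $\mathbf{G} \xrightarrow{\sim} \rho(\mathbf{G}) \hookrightarrow \mathbf{G}'$, and then obtain the desired equivariant injective morphism of buildings by combining \Cref{prop:isomorphism_of_group_implies_isomorphism_of_buildings} applied to the first factor with \Cref{thm:FunctorialitySubgroups} applied to the second.

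First I would verify that $\rho(\mathbf{G})$ qualifies as a linear algebraic $\qq$-group to which the symmetric building construction applies. Since $\rho$ is a monomorphism of algebraic groups in the sense of \cite{Milne}, it is a closed immersion, so $\rho(\mathbf{G}) < \mathbf{G}'$ is a Zariski-closed subgroup, isomorphic to $\mathbf{G}$ as a $\qq$-group. It is then automatically connected and semisimple; because $\rho$ is defined over $\qq$ (or over $\rr \cap \ff$), the image is defined over the same field. The transposition-invariance of $\rho$ ensures that $\rho(\mathbf{G})$ is closed under $g \mapsto g^\top$, hence self-adjoint. The root system of $\rho(\mathbf{G})$ coincides with that of $\mathbf{G}$ via the differential of $\rho$, and hence is reduced, and $\rho(\mathbf{G})$ is $\rr$-split.

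Second, I would make a compatible choice of maximal tori. Fix a maximal $\rr$-split torus $\mathbf{S} < \mathbf{G}$ consisting of self-adjoint elements. By transposition-invariance, $\rho(\mathbf{S})$ is a self-adjoint $\rr$-split torus of $\rho(\mathbf{G})$, maximal among such since $\rho \colon \mathbf{S} \to \rho(\mathbf{S})$ is an isomorphism. Since self-adjoint $\rr$-split tori exist in $\mathbf{G}'$ and are all conjugate under $\mathbf{G}'(\rr\cap\ff)$ by \cite[Theorem 5.17]{appenzeller2024semialgebraic} together with \cite[Theorem 20.9]{Borellinearalgebraixgroups}, we can choose a maximal $\rr$-split self-adjoint torus $\mathbf{S}' < \mathbf{G}'$ containing $\rho(\mathbf{S})$. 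By \Cref{rmk:independence_S}, the building $\build'$ is (up to equivariant isomorphism) independent of the chosen $\mathbf{S}'$, so we may compute it with this particular choice.

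Third, I would assemble the morphism. Viewing $\rho$ as an isomorphism $\mathbf{G} \xrightarrow{\sim} \rho(\mathbf{G})$ defined over $\qq$, \Cref{prop:isomorphism_of_group_implies_isomorphism_of_buildings} yields an equivariant isomorphism of symmetric buildings $\build \xrightarrow{\sim} \build^{\rho(\mathbf{G})}$, where $\build^{\rho(\mathbf{G})}$ is the symmetric building of $\rho(\mathbf{G})(\ff)$ built with the torus $\rho(\mathbf{S})$. Next, the inclusion $\rho(\mathbf{G}) < \mathbf{G}'$ together with the tori $\rho(\mathbf{S}) < \mathbf{S}'$ satisfies all the hypotheses of \Cref{thm:FunctorialitySubgroups} (in particular $\rho(\mathbf{G})$ is $\rr$-split since $\mathbf{G}$ is), producing an equivariant injective morphism $\build^{\rho(\mathbf{G})} \hookrightarrow \build'$. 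Composing these two gives the desired equivariant injective morphism $\build \to \build'$.

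The main technical obstacle is the second step: verifying that $\rho(\mathbf{S})$ can indeed be enlarged inside $\mathbf{G}'$ to a maximal $\rr$-split torus all of whose elements are self-adjoint, and invoking \Cref{rmk:independence_S} to ensure this choice does not affect the isomorphism class of $\build'$. The remaining verifications that $\rho(\mathbf{G})$ inherits the structural properties required to define the symmetric building are routine consequences of $\rho$ being a transposition-invariant monomorphism.
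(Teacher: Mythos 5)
Your proposal is correct and follows essentially the same route as the paper: factor $\rho$ through its image, apply \Cref{prop:isomorphism_of_group_implies_isomorphism_of_buildings} to the isomorphism $\mathbf{G} \xrightarrow{\sim} \rho(\mathbf{G})$, and apply \Cref{thm:FunctorialitySubgroups} to the inclusion into $\mathbf{G}'$. The only difference is packaging: the paper keeps a fixed $\mathbf{S}'$ and conjugates $\rho(\mathbf{G})$ by an element $g \in \mathbf{K}'(\rr\cap\ff)$ so that $c_g(\rho(\mathbf{S})) \subseteq \mathbf{S}'$, whereas you re-choose $\mathbf{S}'$ to contain $\rho(\mathbf{S})$ and then invoke independence of $\build'$ from that choice. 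These are equivalent, but two of your justifications need tightening. First, the claim that $\rho(\mathbf{S})$ extends to a maximal $\rr$-split torus of $\mathbf{G}'$ all of whose elements are self-adjoint does not follow merely from the existence and conjugacy of such tori in $\mathbf{G}'$: conjugating an arbitrary maximal split torus containing $\rho(\mathbf{S})$ into a self-adjoint one may move $\rho(\mathbf{S})$. What is actually needed is that transposition-invariance forces $\rho(\fraka) \subseteq \frakp'$ in the Cartan decomposition of $\operatorname{Lie}(\mathbf{G}'(\rr))$, so that $\rho(\fraka)$ extends to a maximal abelian subspace of $\frakp'$ (equivalently, the conjugating element can be taken in $\mathbf{K}'$, which preserves self-adjointness); this is precisely the role of \cite[Theorem 6.51]{Knapp02} in the paper's argument, and you should supply it rather than leave it as a flagged obstacle. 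Second, citing \Cref{rmk:independence_S} for the independence of $\build'$ from $\mathbf{S}'$ is circular, since that remark derives independence as a corollary of the very theorem you are proving; cite \Cref{prop:isomorphism_of_group_implies_isomorphism_of_buildings} directly instead (applied to $\Id_{\mathbf{G}'}$ with the two choices of torus), which is proven independently and gives exactly what you need.
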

\begin{proof}
    The image $\rho(\mathbf{G})$ of a morphism $\rho\colon \mathbf{G} \to \mathbf{G}'$ of algebraic groups is an algebraic group, and since $\rho$ is a monomorphism, $\mathbf{G}$ and $\rho(\mathbf{G})$ are isomorphic \cite[Corollary 5.22]{Milne}.
    In particular $\rho(\mathbf{G})$ is also semisimple, connected, $\rr$-split and has reduced root system. Since $\rho$ is transposition-invariant, $\rho(\mathbf{G})$ is self-adjoint ($g\in \rho(\mathbf{G})$ implies $g^\top \in \rho(\mathbf{G})$).

    Let $\mathbf{S}<\mathbf{G}$ and $\mathbf{S}'<\mathbf{G}'$ be maximal $\rr$-split tori consisting of self-adjoint elements.
    Now $\rho(\mathbf{S})$ is also an $\rr$-split torus, and it lies in a maximal $\rr$-split torus, which is conjugated by some element $g \in \mathbf{G'}(\rr \cap \ff)$ to $\mathbf{S}'$.
    In fact, by \cite[Theorem 6.51]{Knapp02}, even $g\in \mathbf{K}'(\rr \cap \ff) = \mathbf{G}'(\rr \cap \ff) \cap \operatorname{SO}_m(\rr \cap \ff)$ (here we used that $\rho$ has self-adjoint image, to make sure that $\rho(\fraka) \subseteq \frakp'$ for the Cartan decomposition $\operatorname{Lie}(\mathbf{G}'(\rr)) = \frakp' \oplus \frakk'$ associated to the standard Cartan involution $X \mapsto -X^\top$).
    If $c_g$ denotes that conjugation, $c_g(\rho(\mathbf{G}))$ is isomorphic to $\mathbf{G}$, so $c_g(\rho(\mathbf{G}))$ is semisimple, connected, $\rr$-split and has reduced root system.
    Since $g \in \mathbf{K}(\rr \cap \ff)$, $c_g(\rho(\mathbf{G}))$ is self-adjoint.
    Moreover $c_g(\rho(\mathbf{S})) \subseteq \mathbf{S}'$ is a $\rr$-split torus (maximal in $c_g(\rho(\mathbf{G}))$) and all its elements are self-adjoint (since those of $\mathbf{S}$ are).    
    Denote by $\rho(B)$ the building associated to $\rho(\mathbf{G})$ and by $c_g(\rho(\build))$ the building associated to $c_g(\rho(\mathbf{G}))$.
    Then by \Cref{prop:isomorphism_of_group_implies_isomorphism_of_buildings}, we get equivariant morphisms $\build \cong \rho(\build) \cong c_g(\rho(\build))$ and by \Cref{thm:FunctorialitySubgroups} we obtain an equivariant injective morphism
    $\build  \cong c_g(\rho(\build)) \to \build'$.
\end{proof}

\appendix
\section{Basics from real algebraic geometry}
\label{appendix}

\subsection{Ordered abelian groups and ordered fields}
\label{subsection:OrderedFields}
An \emph{ordered field} $\ff$ is a field with a total order that is compatible with the field operations.
An ordered field $\ff$ is called \emph{non-Archimedean} if there exists $x \in \ff$ with $x>n$ for all $n\in \nn$.
We ask any valuation $v$ on an ordered field $\ff$ to be \emph{order-compatible}, meaning that for all $x,y \in \ff$ with $0 < x \leq y$ we have $v(x) \geq v(y)$, i.e.\ $-v$ restricted to $\ff_{>0}$ is order-preserving. 
In this case, the field $\ff$ is necessarily non-Archimedean.

Two elements $x, y$ of an ordered abelian group $\Lambda$ (resp.\ an ordered field $\ff$) are in the same \emph{Archimedean class} if there exists $n \in \nn$ such that $|x|<n|y|$ and $|y|<n|x|$, where $|x|\coloneqq \max\{x,-x\}$. The set of Archimedean classes is called the \emph{rank} of $\Lambda$ (resp.\ $\ff$) and forms an ordered abelian group $\operatorname{rk}(\Lambda)$ (resp.\ $\operatorname{rk}(\ff)$), where addition and the order are induced from the ones in $\Lambda$ (resp.\ $\ff$). 
We have the following important theorem about ordered abelian groups.

\begin{thm}[Hahn's embedding theorem \cite{Gravett_OrderedAbelianGroups}]\label{thm:hahn}
    Every ordered abelian group $\Lambda$ can be viewed as a subgroup of the abelian additive group
\[
\mathfrak{R} \coloneqq \{ (x_i)_{i\in I} \in \rr^{I} \colon \operatorname{supp}((x_i)_{i\in I}) \text{ is a well-ordered subset of }I  \}
\]
where $I \coloneqq \operatorname{rk}(\Lambda)$, endowed with a lexicographical order.
\end{thm}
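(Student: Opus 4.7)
The plan is to prove Hahn's theorem by the classical route via Archimedean classes and a Zorn's lemma extension argument. Since the result is quoted from \cite{Gravett_OrderedAbelianGroups}, I only sketch the strategy rather than carrying out every verification.

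First, I would formalise the rank: two nonzero elements $x,y \in \Lambda$ are declared Archimedean equivalent if there exists $n \in \mathbb{N}$ with $|x| \leq n|y|$ and $|y| \leq n|x|$, and the set of equivalence classes inherits a total order by comparing representatives. Calling this ordered set $I$, I would introduce for each $i \in I$ the convex subgroups $\Lambda_{<i} \subsetneq \Lambda_{\leq i} \subseteq \Lambda$ consisting of all elements of Archimedean class strictly less than, resp.\ at most, $i$ (together with $0$). The quotient $\Lambda_{\leq i}/\Lambda_{<i}$ is then Archimedean by construction, so H\"older's theorem furnishes an order-preserving embedding $\iota_i \colon \Lambda_{\leq i}/\Lambda_{<i} \hookrightarrow \mathbb{R}$, which I would fix once and for all.

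Second, I would build $\Phi \colon \Lambda \to \mathfrak{R}$ by Zorn's lemma applied to the poset of pairs $(\Lambda',\Phi')$, where $\Lambda' \subseteq \Lambda$ is a subgroup and $\Phi' \colon \Lambda' \to \mathfrak{R}$ is an order-embedding whose $i$-th coordinate, restricted to $\Lambda' \cap \Lambda_{\leq i}$, factors through $\iota_i$. Chains pose no problem: the componentwise union is still a compatible partial embedding. The nontrivial step is the extension claim, namely that whenever $\Lambda' \subsetneq \Lambda$ one can extend $\Phi'$ to $\Lambda' + \mathbb{Z}a$ for any $a \notin \Lambda'$. The extension is constructed recursively, coordinate by coordinate: set the $i_0$-th coordinate of $\Phi(a)$ to $\iota_{i_0}([a])$ where $i_0$ is the Archimedean class of $a$, subtract a lift to reduce to an element of strictly smaller class, and iterate.

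The main obstacle, and the classical heart of Hahn's proof, is showing that this recursion produces an element of $\mathfrak{R}$, i.e.\ a sequence whose support in $I$ is well-ordered. This requires a transfinite argument: one must show that no strictly decreasing chain of Archimedean classes produced by the iteration can fail to be well-ordered, using the compatibility of $\Phi'$ with all the $\iota_i$ and the fact that the supports of the already-defined images are themselves well-ordered. Once this is in place, order-preservation of $\Phi$ is immediate, because the sign of $\Phi(x)$ in the lex order on $\mathfrak{R}$ is determined by its largest-support coordinate, which by construction matches the sign of $\iota_{i_0}([x])$ at the top Archimedean class of $x$; injectivity is likewise automatic. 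Hence a maximal compatible pair has $\Lambda' = \Lambda$, giving the desired embedding.
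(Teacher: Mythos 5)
The paper does not prove this statement at all: Hahn's embedding theorem is imported verbatim from the cited reference \cite{Gravett_OrderedAbelianGroups}, so there is no in-paper argument to compare against. Your outline does follow the classical route (Archimedean classes, H\"older's theorem on the quotients $\Lambda_{\leq i}/\Lambda_{<i}$, Zorn's lemma on partial embeddings compatible with the fixed maps $\iota_i$), which is essentially the strategy of Hahn, Gravett, and the later simplifications, so the \emph{approach} is the right one.

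As a proof, however, the proposal has a genuine gap, and you have located it yourself without closing it: the claim that the coordinate-by-coordinate recursion defining $\Phi(a)$ terminates in an element of $\mathfrak{R}$, i.e.\ produces a family with well-ordered support, is asserted to ``require a transfinite argument'' but that argument is never given. This is not a routine verification; it is the entire content of Hahn's theorem (for Archimedean $\Lambda$ the result is just H\"older, and everything else in your sketch is soft). The recursion ``subtract a lift and iterate'' need not stop after finitely many steps, and one must set up a transfinite induction and prove that the set of Archimedean classes visited cannot contain an infinite strictly decreasing sequence --- the standard proofs do this via a careful analysis of maximal ``nice'' subgroups or via ribbon/valuation-theoretic arguments, none of which appear here. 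Two smaller omissions: the extension step to $\Lambda' + \mathbb{Z}a$ must handle the case where $na \in \Lambda'$ for some $n \geq 2$ (one usually first replaces $\Lambda$ by its divisible hull $\Lambda \otimes_{\mathbb{Z}} \mathbb{Q}$, which is harmless for the embedding statement but should be said); and order-preservation is not quite ``immediate'' --- you need that the top coordinate of $\Phi(x)$ is exactly $\iota_{i_0}([x]) \neq 0$ for $i_0$ the Archimedean class of $x$, which is a property of the construction that has to be propagated through the Zorn argument as part of the inductive hypothesis, as you partly indicate. In short: right skeleton, but the load-bearing step is missing.
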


The following is a general construction to define order-compatible valuations on ordered fields.

\begin{example}[Order valuation]\label{ex:OrderValuation}
    Let $\ff$ be an ordered field and $\Lambda = \operatorname{rk}(\ff)$ the ordered abelian group of Archimedean classes. The map $ 0 \neq x \mapsto -[x] \in \Lambda$ is an order-compatible valuation called the \emph{order valuation}.
    Note that $\Lambda \neq \{0\}$ if and only if $\ff$ is non-Archimedean.
\end{example}

An ordered field can admit many order-compatible valuations, where the order valuation is in some sense the ``finest'' one, meaning that if $v$ is any other valuation it factors through the order valuation.
For example, if $\ff$ has a \emph{big element} $b$, i.e.\ for all $x \in \ff$ there exists $n\in \nn$ with $x<b^n$, then one can define an order-compatible rank-$1$ valuation $v_b \colon \ff^\times \to \rr$ by setting
\[ v_b(x) \coloneqq -\inf\left\{\frac{p}{q} \in \qq \colon x^q<b^p\right\},\]
mimicking the definition of the standard logarithm.
Any order-compatible rank-$1$ valuation on $\ff$ is a positive scalar multiple of $v_b$ for some big element $b\in\ff$.
Note that there are ordered fields that do not admit big elements, such as the hyperreals.

An ordered field is \emph{real closed} if every positive element is a square and every odd degree polynomial has a root.
Note that every ordered field has a \emph{real closure}, that means an algebraic field extension that is real closed and whose order extends the original one \cite[\S 1.3]{BochnakCosteRoy_RealAlgebraicGeometry}.

\begin{example}
\label{example:OrderedFields}
The real numbers $\rr$ and the real algebraic numbers $\overline{\qq}^r$ are both real closed.
The field of \emph{real Puiseux series} is the set of expressions
\[
\rr(X)^\wedge \coloneqq \Bigl\{  \sum_{k=k_0}^{\infty} c_k X^{k/m} \, \Bigm| \, k_0 \in \zz, \, m \in \nn \setminus\{0\} , \, c_k \in \rr \Bigr\},
\]
together with formal addition and multiplication.
An element $\sum_{k=k_0}^{\infty} c_k X^{k/m}$ is positive if $c_{k_0} > 0$.
With this order $\rr(X)^\wedge$ is real closed, see e.g.\ \cite[Theorem 2.91]{BasuPollackRoy_AlgorithmsRAG}.
The real closure of $\qq$ is $\overline{\qq}^r$.
The real closure of $\rr(X)$ (together with the order $X>0$ but $X<\lambda$ for all $\lambda \in \rr_{>0}$) is the field of real Puiseux series that are algebraic over $\rr(X)$.
\end{example}

Real closed fields play a crucial role in real algebraic geometry.

\subsection{Semialgebraic geometry}
We summarize general definitions and results from real algebraic geometry and set up notation.
We refer the reader to \cite{BochnakCosteRoy_RealAlgebraicGeometry}, in particular Chapters 1, 2 and 5, for more details and proofs.
The main objects of study in real algebraic geometry are semialgebraic sets.
From now on let $\ff$ be a real closed field.

\begin{defn}
\label{dfn_SemiAlgSet}
A subset $\mathcal{B} \subseteq \ff^n$ is a \emph{basic semialgebraic set}, if there exists a polynomial $f \in \ff[X_1,\ldots,X_n]$ such that
\[ \mathcal{B} = \mathcal{B}(f)= \{ x \in \ff^n \colon f(x)>0\}.\]
A subset $X \subseteq \ff^n$ is \emph{semialgebraic} if it is a Boolean combination of basic semialgebraic sets, i.e.\ $X$ is obtained by taking finite unions and intersections of basic semialgebraic sets and their complements.
Let $X \subseteq \ff^n$ and $Y \subseteq \ff^m$ be two semialgebraic sets.
A map $f \colon X \to Y$ is called \emph{semialgebraic} if its graph $\textrm{Graph}(f) \subseteq X \times Y$ is semialgebraic in $\ff^{n+m}$.
\end{defn}
Algebraic sets are semialgebraic and any polynomial or rational map is semialgebraic.

Note that if $\ff\neq \rr$, then $\ff$ is totally-disconnected in the order topology on $\ff$.
However we have the following notion of connectedness for semialgebraic sets.

\begin{defn} \label{dfn_SemiAlgConn}
A semialgebraic set $X \subseteq \ff^n$ is \emph{semialgebraically connected} if it cannot be written as the disjoint union of two non-empty semialgebraic subsets of $\ff^n$ both of which are closed in $X$.
\end{defn}

\begin{thm}[{\cite[Theorem 2.4.5]{BochnakCosteRoy_RealAlgebraicGeometry}}]
\label{thm_RConnSemiAlgConn}
A semialgebraic set of $\rr^n$ is connected if and only if it is semialgebraically connected.
Every semialgebraic set of $\rr^n$ has a finite number of connected components, which are semialgebraic.
\end{thm}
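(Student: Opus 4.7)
The plan is to reduce both claims to the existence of a cell decomposition for semialgebraic subsets of $\rr^n$, typically obtained via cylindrical algebraic decomposition (CAD). Concretely, CAD (proved by induction on $n$ using the Tarski–Seidenberg projection theorem and real root counting for parametric polynomials) asserts that every semialgebraic set $X \subseteq \rr^n$ can be partitioned into finitely many semialgebraic \emph{cells} $C_1, \ldots, C_N$, each of which is semialgebraically homeomorphic to an open box $(0,1)^{d_i}$ for some $d_i \in \{0,1,\ldots,n\}$. In particular, each cell $C_i$ is connected in the Euclidean topology.

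For the first claim, the forward direction is immediate: if $X$ is Euclidean-connected and $X = A \sqcup B$ with $A,B$ non-empty semialgebraic and closed in $X$, then the same decomposition witnesses a contradiction since semialgebraic subsets are in particular subsets. For the converse, assume $X$ is semialgebraically connected, and take a CAD $X = C_1 \sqcup \cdots \sqcup C_N$. Define an equivalence relation on $\{C_1,\ldots,C_N\}$ generated by $C_i \approx C_j$ whenever $\overline{C_i} \cap C_j \neq \emptyset$ or $C_i \cap \overline{C_j} \neq \emptyset$ (closures in $X$). Each equivalence class $\mathcal{E}$ gives a semialgebraic set $Y_{\mathcal{E}} = \bigsqcup_{C \in \mathcal{E}} C$ which is closed in $X$ by construction, and its complement in $X$ is a union of other classes, hence also semialgebraic and closed in $X$; thus $Y_{\mathcal{E}}$ is also open in $X$. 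Semialgebraic connectedness forces a single class, and one checks that the union of cells in a single class is Euclidean-connected: any two points lie in a chain of cells whose successive closures meet, and connectedness propagates along this chain since each cell is connected and a connected set together with a limit point remains connected.

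For the second claim, apply the same construction to an arbitrary semialgebraic $X$. The equivalence classes of cells produce finitely many semialgebraic subsets $Y_{\mathcal{E}_1}, \ldots, Y_{\mathcal{E}_k}$ which partition $X$; by the argument above each $Y_{\mathcal{E}_j}$ is Euclidean-connected and both open and closed in $X$. Hence they are exactly the Euclidean connected components of $X$, they are semialgebraic, and there are finitely many of them.

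The main obstacle is, as usual in real algebraic geometry, establishing the cell decomposition itself: one needs the Tarski–Seidenberg theorem together with a careful inductive construction ensuring cells are semialgebraically homeomorphic to open boxes. Once this is in hand, the proof of both claims reduces to the elementary combinatorial and topological argument above regarding equivalence classes of cells and propagation of connectedness along chains of adjacent cells.
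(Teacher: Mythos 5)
The paper does not prove this statement itself---it imports it verbatim from Bochnak--Coste--Roy (Theorem 2.4.5), whose proof follows exactly the route you take: cylindrical algebraic decomposition into finitely many cells semialgebraically homeomorphic to open boxes, then the adjacency/gluing argument among cells. Your proposal is correct as written (the sets $Y_{\mathcal{E}}$ are indeed clopen in $X$, since any point of $\overline{C_i}\cap X$ lies in some cell adjacent to $C_i$, and connectedness propagates along chains via the standard lemma that $A\cup B$ is connected when $A$, $B$ are connected with $\overline{A}\cap B\neq\emptyset$), with the only substantive input being the CAD cell structure, which you correctly identify as the main cost and which is precisely what the cited reference supplies.
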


From now on, denote by $\kk$ a real closed extension of $\ff$.

\begin{defn}
\label{dfn_ExtSemiAlgSets}
Let $X \subseteq \ff^n$ be a semialgebraic set given as
\[ X = \bigcup_{i=1}^s\bigcap_{j=1}^{r_i} \{x \in \ff^n \colon f_{ij}(x) \ast_{ij} 0 \},\]
with $f_{ij} \in \ff[X_1,\ldots,X_n]$ and $\ast_{ij}$ is either $<$ or $=$ for $i = 1, \ldots,s$ and $j=1,\ldots,r_i$.
The \emph{$\kk$-extension $X_{\kk}$} of $X$ is the set given by the same Boolean combination of sign conditions as $X$, more precisely
\[X_{\kk} = \bigcup_{i=1}^s\bigcap_{j=1}^{r_i} \{x \in \kk^n \colon f_{ij}(x) \ast_{ij} 0 \}.\]
\end{defn}

Note that $X_{\kk}$ is semialgebraic and depends only on the set $X$, and not on the Boolean combination describing it, see \cite[Proposition 5.1.1]{BochnakCosteRoy_RealAlgebraicGeometry}.
The proof of this is based on the Tarski--Seidenberg transfer principle.

\begin{thm}[Tarski--Seidenberg transfer principle, {\cite[Theorem 5.2.1]{BochnakCosteRoy_RealAlgebraicGeometry}}]
\label{thm_TarskiSeidenberg}
Let $X \subseteq \ff^{n+1}$ be a semialgebraic set.
Denote the projection $\pr \colon \ff^{n+1} \to \ff^n$ onto the first $n$ coordinates by $\pr$.
Then $\pr(X) \subseteq \ff^n$ is semialgebraic.
Furthermore, if $\kk$ is a real closed extension of $\ff$, and $\pr_{\kk} \colon \kk^{n+1} \to \kk$ is the projection on the first $n$ coordinates, then
\[\pr_{\kk}(X_{\kk}) =\pr(X)_{\kk}.\]
\end{thm}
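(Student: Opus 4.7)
The plan is to prove the statement in two steps. First I would establish that projections of semialgebraic sets are semialgebraic, which is the content of Tarski--Seidenberg proper. Second I would derive the compatibility formula $\pr_\kk(X_\kk) = \pr(X)_\kk$ by observing that the semialgebraic description produced in step one is uniform across real closed extensions.

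For step one, the approach is quantifier elimination in the theory of ordered fields. By \Cref{dfn_SemiAlgSet}, any semialgebraic set $X \subseteq \ff^{n+1}$ can be described by a Boolean combination of sign conditions on a finite family of polynomials $P_1, \ldots, P_r \in \ff[X_1, \ldots, X_n, Y]$. The projection $\pr(X)$ is then
\[
\pr(X) = \{ x \in \ff^n \colon \exists\, y \in \ff, \ (P_1(x,y), \ldots, P_r(x,y)) \text{ satisfies the given sign conditions}\}.
\]
The key technical lemma, which one can prove either via the Cohen--H\"ormander algorithm, Thom's lemma, or the theory of subresultants, produces a finite family $Q_1, \ldots, Q_s \in \ff[X_1, \ldots, X_n]$ such that the signs of $Q_j(x)$ determine, for each fixed $x$, which sign conditions on $(P_1(x,\cdot), \ldots, P_r(x,\cdot))$ are realized for some $y \in \ff$. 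This eliminates the existential quantifier on $y$ and replaces it by a quantifier-free Boolean combination of sign conditions on $Q_1, \ldots, Q_s$, so $\pr(X)$ is semialgebraic.

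For step two, the crucial point is that the polynomials $Q_1, \ldots, Q_s$ are constructed by purely algebraic operations (taking derivatives, leading coefficients, subresultants, etc.) on $P_1, \ldots, P_r$, all of which have coefficients in $\ff$. In particular, the resulting Boolean combination of sign conditions is uniform in the base field. Applying the same construction to $X_\kk$, which is defined by the same formula as $X$ interpreted over $\kk$, produces the same polynomials $Q_j$ now viewed in $\kk[X_1, \ldots, X_n]$. Hence $\pr_\kk(X_\kk)$ is described by exactly the same Boolean combination of sign conditions as $\pr(X)$, which by \Cref{dfn_ExtSemiAlgSets} is precisely $\pr(X)_\kk$.

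The main obstacle is step one, specifically proving the quantifier-elimination lemma. The combinatorics of sign patterns under elimination of $y$ is delicate: one needs to analyze how the roots of the univariate polynomials $P_i(x, Y) \in \ff[Y]$ interlace as $x$ varies, and ensure that the auxiliary polynomials $Q_j$ capturing this information have coefficients in $\ff$ (and not in some algebraic closure). Once this uniformity is secured, step two is essentially a syntactic observation, which is the modern formulation of the model-theoretic completeness of the theory of real closed fields.
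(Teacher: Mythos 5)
The paper does not prove this statement: it is imported verbatim as \cite[Theorem 5.2.1]{BochnakCosteRoy_RealAlgebraicGeometry} and used as a black box, so there is no internal proof to compare against. Judged on its own terms, your sketch follows exactly the route taken in the cited reference (quantifier elimination via sign determination for families of univariate polynomials, followed by the observation that the eliminating polynomials $Q_j$ are built by algebraic operations on the coefficients of the $P_i$ and hence define the same formula over any real closed extension). The outline is correct, and your identification of the uniformity of the $Q_j$ as the source of the identity $\pr_{\kk}(X_{\kk})=\pr(X)_{\kk}$ is the right key point.

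That said, as a proof the proposal has a genuine gap which you yourself flag: the entire mathematical content lives in the quantifier-elimination lemma producing $Q_1,\dots,Q_s\in\ff[X_1,\dots,X_n]$, and this is asserted rather than proven. Naming three possible methods (Cohen--H\"ormander, Thom's lemma, subresultants) is not a substitute for carrying one out; in particular the claim that the $Q_j$ can be taken with coefficients in $\ff$ itself (not merely in a real closure of a function field) is precisely where the work is. One further point to be careful about in step two: the paper's \Cref{dfn_ExtSemiAlgSets} defines $\pr(X)_\kk$ via \emph{some} Boolean description of $\pr(X)$, and the fact that the result is independent of the chosen description is itself a consequence of Tarski--Seidenberg (as the paper remarks after the definition). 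Your argument shows that \emph{one particular} description of $\pr(X)$ — the one by the $Q_j$ — extends to a description of $\pr_{\kk}(X_{\kk})$; to conclude $\pr_{\kk}(X_{\kk})=\pr(X)_{\kk}$ for the definition as stated you must either invoke that independence or order the development so that quantifier elimination is established first and independence of the description is derived from it. This is a standard foundational ordering issue, but a complete write-up would need to address it explicitly.
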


Finally, we have the following relation between extension of semialgebraic sets and semialgebraically connected components.

\begin{thm}[{\cite[Proposition 5.3.6 (ii)]{BochnakCosteRoy_RealAlgebraicGeometry}}]
\label{thm_ExtConnComp}
Let $X \subseteq \ff^n$ be semialgebraic.
Then $X$ is semialgebraically connected if and only if $X_{\kk}$ is semialgebraically connected. 
More generally, if $C_1, \ldots, C_m$ are the semialgebraically connected components of $X$, then $(C_1)_{\kk}, \ldots, (C_m)_{\kk}$ are the semialgebraically connected components of $X_{\kk}$.
\end{thm}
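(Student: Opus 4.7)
The plan is to reduce the statement to two ingredients: (i) semialgebraic connectedness is preserved under the extension $X \rightsquigarrow X_{\kk}$, and (ii) the partition of a semialgebraic set into its connected components is expressible by first-order formulas and hence transfers via Tarski--Seidenberg (\Cref{thm_TarskiSeidenberg}).

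First I would establish (i) in two steps. One characterizes semialgebraic connectedness of a semialgebraic set $X \subseteq \ff^n$ via a cylindrical algebraic decomposition (CAD): choosing a CAD adapted to $X$, one obtains finitely many semialgebraic cells $S_1,\dots,S_N$ partitioning $X$, each semialgebraically homeomorphic to an open box, hence semialgebraically connected, together with an adjacency graph $G_X$ whose edges record when $\overline{S_i} \cap S_j \neq \emptyset$. Then $X$ is semialgebraically connected iff $G_X$ is connected. Crucially, the data defining the CAD cells and the incidence relations are first-order in the parameters of the polynomials cutting out $X$; therefore the same CAD, interpreted over $\kk$, produces a cell decomposition of $X_{\kk}$ with the identical adjacency graph $G_{X_{\kk}}=G_X$. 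Each extended cell $(S_i)_{\kk}$ remains semialgebraically homeomorphic to an open box (this uses \Cref{thm_TarskiSeidenberg} applied to the defining homeomorphisms), hence semialgebraically connected. It follows that $X$ is semialgebraically connected iff $X_{\kk}$ is.

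For (ii), suppose $X = C_1 \sqcup \cdots \sqcup C_m$ is the decomposition into semialgebraic connected components. Extending to $\kk$ preserves disjoint unions since the equalities and inequalities defining the $C_i$ are first-order, so $X_{\kk} = (C_1)_{\kk} \sqcup \cdots \sqcup (C_m)_{\kk}$. Each $C_i$ is both open and closed in $X$ (semialgebraic sets are locally semialgebraically connected, again by a CAD argument), and openness/closedness is a first-order property of the defining polynomials, so each $(C_i)_{\kk}$ is open and closed in $X_{\kk}$. By (i), each $(C_i)_{\kk}$ is semialgebraically connected. A nonempty clopen semialgebraically connected subset is a union of components, but being connected it is exactly one component, which proves the claim. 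Specializing to the case $m=1$ gives the biconditional stated in the theorem.

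The main obstacle is the transfer of semialgebraic connectedness itself in step (i): connectedness is a priori a topological notion and not obviously first-order, and the totally disconnectedness of $\kk$ in the order topology (when $\kk \neq \rr$) rules out naive topological arguments. The CAD approach circumvents this by reducing connectedness to a finite combinatorial condition on an adjacency graph whose vertices and edges are defined by first-order formulas, after which Tarski--Seidenberg gives the result immediately. A more hands-on alternative would be to replace CAD by the equivalence between semialgebraic connectedness and semialgebraic path-connectedness, but making the latter transfer requires bounding the complexity of the connecting paths, which essentially reintroduces the CAD machinery.
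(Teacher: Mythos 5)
Your proposal is correct. Note that the paper gives no proof of this statement at all: it is quoted verbatim from the literature (Proposition 5.3.6(ii) of Bochnak--Coste--Roy), and your argument --- reduce connectedness to a finite adjacency graph of CAD cells, observe that cells, their extensions, the incidence relations, and clopenness are all first-order and hence transfer via Tarski--Seidenberg, then identify each $(C_i)_{\kk}$ as a single component because it is nonempty, clopen, and semialgebraically connected --- is essentially the standard proof given in that reference. The only points worth making explicit if you were to write this out in full are that nonemptiness of $\overline{S_i}\cap S_j$ transfers because closure commutes with extension (BCR 5.3.5) and a semialgebraic set is nonempty iff its extension is, and that the base case (an open box over $\kk$ is semialgebraically connected) rests on the intermediate value property for semialgebraic functions over a real closed field rather than on any topological connectedness of $\kk$.
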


\bibliographystyle{amsalpha}
\bibliography{main}

\end{document}